\newtheorem{theorem}{Theorem}[section]
\newtheorem{corollary}[theorem]{Corollary}
\newtheorem{lemma}[theorem]{Lemma}
\newtheorem{proposition}[theorem]{Proposition}
\newtheorem{remark}[theorem]{Remark}
\newenvironment{proof}[1][Proof]{\textbf{#1.} }{\hfill\rule{0.5em}{0.5em}}
{\catcode`\@=11\global\let\AddToReset=\@addtoreset
\AddToReset{equation}{section}

\AddToReset{theorem}{section}

\begin{document}

\title{A quasilinear elliptic equation with absorption term and Hardy
potential}
\author{Marie-Fran\c{c}oise Bidaut-V\'{e}ron\thanks{%
veronmf@univ-tours.fr- Institut Denis Poisson, CNRS-UMR 7013-Universit\'{e}
de Tours, 37200 Tours,France} \and Huyuan Chen\thanks{%
chenhuyuan@yeah.net-Department of Mathematics, Jiangxi Normal
University-Nanchang, Jianxi 330022, PR China}}
\date{}
\maketitle

\begin{abstract}
Here we study the positive solutions of the equation 
\begin{equation*}
-\Delta _{p}u+\mu \frac{u^{p-1}}{\left\vert x\right\vert ^{p}}+\left\vert
x\right\vert ^{\theta }u^{q}=0,\qquad x\in \mathbb{R}^{N}\backslash \left\{
0\right\} ,
\end{equation*}%
where $\Delta _{p}u= {div}(\left\vert \nabla u\right\vert ^{p-2}\nabla
u) $ and $1<p<N,q>p-1,\mu ,\theta \in \mathbb{R}.$ We give a complete
description of the existence and the asymptotic behaviour of the solutions
near the singularity $0,$ or in an exterior domain. We show that the global
solutions $\mathbb{R}^{N}\backslash \left\{ 0\right\} $ are radial and give
their expression according to the position of the Hardy coefficient $\mu $
with respect to the critical exponent $\mu _{0}=-(\frac{N-p}{p})^{p}.$ Our
method consists into proving that any nonradial solution can be compared to
a radial one, then making exhaustive radial study by phase-plane techniques.
Our results are optimal, extending the known results when $\mu =0$ or $p=2$,
with new simpler proofs. They make in evidence interesting phenomena of
nonuniqueness when $\theta +p=0$, and of existence of locally constant
solutions when moreover $p>2$ .
\end{abstract}

\tableofcontents

\section{Introduction}

Here we study the positive solutions $u$ in a domain $\Omega $ of $\mathbb{R}%
^{N}$ of the quasilinear equation with a Hardy term 
\begin{equation}
-\Delta _{p}u+\mu \frac{u^{p-1}}{\left\vert x\right\vert ^{p}}+\left\vert
x\right\vert ^{\theta }u^{q}=0,  \label{pq}
\end{equation}%
where $\Delta _{p}$ is the $p$-Laplace operator $u\longmapsto \Delta _{p}u=%
 {div}(\left\vert \nabla u\right\vert ^{p-2}\nabla u)$ and 
\begin{equation*}
1<p<N,\qquad q>p-1,\qquad \mu ,\theta \in \mathbb{R}.
\end{equation*}%
We consider the problem of the isolated singularities at $0$ with $\Omega
=B_{r_{0}}\backslash \left\{ 0\right\} $ (a priori estimates, description of
the asymptotic behaviour of the solutions, existence of local solutions of
any possible type), the problem in an exterior domain with $\Omega =\mathbb{R%
}^{N}\backslash \overline{B_{r_{0}}}$ (same questions), and the global
problem in $\mathbb{R}^{N}\backslash \left\{ 0\right\} $ (existence of
solutions, radiality or nonradiality of the possible solutions).\bigskip

The case $p=2$ and $q>1,\theta =0$ was studied by Guerch and Veron \cite%
{GuVe}, who gave the precise behaviour near $0$ of the solutions of any sign
of equation 
\begin{equation}
-\Delta u+\mu \frac{u}{\left\vert x\right\vert ^{2}}+g(u)=0  \label{2g}
\end{equation}%
and isotropy results when $g(u)$ has a power-like growth. They extended many
properties the solutions of the classical problem 
\begin{equation*}
-\Delta u+u^{q}=0
\end{equation*}%
object of an impressive number of articles, starting to the pionneer papers 
\cite{BrVe},\cite{Ve81}, and also \cite{BrOs}.It is clear that problem (\ref%
{2g}) is deeply linked to the Hardy operator 
\begin{equation*}
v\longmapsto \mathcal{L}_{2,\mu }v=-\Delta v+\mu \frac{v}{\left\vert
x\right\vert ^{2}}
\end{equation*}%
for which the value $\mu =\mu _{0}=-(\frac{N-2}{2})^{2}$ plays an essential
role, due to the Hardy inequality in bounded $\Omega $ 
\begin{equation*}
\int_{\Omega }\left\vert \nabla u\right\vert ^{2}dx\geq (\frac{N-2%
}{2})^{2}\int_{\Omega }\frac{u^{2}}{\left\vert x\right\vert ^{2}}%
dx,\qquad \forall u\in W_{0}^{1,2}(\Omega ).
\end{equation*}%
In \cite{Ci}, Cirstea who considered the positive solutions of 
\begin{equation*}
-\Delta u+\mu \frac{u}{\left\vert x\right\vert ^{2}}+b(x)g(u)=0,
\end{equation*}%
where $g(u)/u^{q}$ and $b(x)/\left\vert x\right\vert ^{\theta }$ may have a
logarithmic behaviour, giving a precise behaviour near $0$, in the case $\mu
\geq -(\frac{N-2}{2})^{2}$, and near $\infty $ by Kelvin transform. In the
case of equation 
\begin{equation}
-\Delta u+\mu \frac{u}{\left\vert x\right\vert ^{2}}+\left\vert x\right\vert
^{\theta }u^{q}=0  \label{casp2}
\end{equation}%
with $\mu <-(\frac{N-2}{2})^{2}$ and $\theta +2>0$, Wei and Du \cite{WeDu}
gave a precise behaviour near $0$, and a uniqueness result of global
solutions in $\mathbb{R}^{N}\backslash \left\{ 0\right\} $. The problem of
existence and radiality of global solutions was then solved in \cite{CiFa}
for any value of $\theta $, by using thin techniques of supersolutions and
subsolutions.\bigskip

The case $p=2$ and $q<1$ was treated in \cite{BiGr}, showing in particular
the existence of many nonradial solutions, with possible dead cores.\bigskip

In case $p>1$, the quasilinear equation where $\mu =0,\theta =0,q>p-1$, 
\begin{equation}
-\Delta _{p}u+u^{q}=0  \label{sim}
\end{equation}%
was studied by \cite{FrVe} and \cite{VaVe}. Concerning the behaviour of any
solution $u$ in $B_{r_{0}}\backslash \left\{ 0\right\} $, it was shown that
three eventualities occur when $q<q_{c}=\frac{N(p-1)}{N-p}$: either $%
\lim_{\left\vert x\right\vert \longrightarrow 0}\left\vert x\right\vert ^{%
\frac{p}{q+1-p}}$ $u=C_{N,p,q}>0,$ or $\lim_{\left\vert x\right\vert
\longrightarrow 0}\left\vert x\right\vert ^{N-p}$ $u=k>0$, or $u$ can be
extended as a solution in $B_{r_{0}};$ and the singularity is removable if
and only if $q\geq q_{c}.$\bigskip

In case of quasilinear equation (\ref{pq}), the Hardy operator 
\begin{equation}
v\longmapsto \mathcal{L}_{p,\mu }v=-\Delta _{p}v+\mu \frac{v^{p-1}}{%
\left\vert x\right\vert ^{p}}  \label{lpmu}
\end{equation}%
plays an essential role, and the critical value is 
\begin{equation}
\mu _{0}=-(\frac{N-p}{p})^{p},  \label{muo}
\end{equation}%
coming from the extended Hardy inequality 
\begin{equation*}
\int_{\Omega }\left\vert \nabla u\right\vert ^{p}dx\geq (\frac{N-p%
}{p})^{p}\int_{\Omega }\frac{\left\vert u\right\vert ^{p}}{%
\left\vert x\right\vert ^{p}}dx,\qquad \forall u\in W_{0}^{1,p}(\Omega ).
\end{equation*}%
For a precise study of the Hardy operator, we refer for exemple to \cite%
{FraPi}. \medskip

Quasilinear equations involving this operator with a source term, of type 
\begin{equation*}
-\Delta _{p}u+\mu \frac{u^{p-1}}{\left\vert x\right\vert ^{p}}=u^{q},
\end{equation*}%
have been the object if an intensive study, starting from the pioneer
articles of Serrin \cite{Se64},\cite{Se65}, and \cite{GiSp}, \cite{CaGiSp}, 
\cite{SeZo}, when $\mu =0$; in particular in the case of critical growth $%
q=q_{s}=\frac{N(p-1)+p}{N-p.}$, for solutions with finite energy: the
problem of existence, uniqueness and behaviour of the radial solutions were
established in \cite{AbFePe}, sharp asymptotic estimates of the solutions
were given in \cite{Xi15}, \cite{Xi17}, and their radiality was obtained by
moving planes technique in \cite{OlScVa}.\bigskip

To our knowledge, equation (\ref{pq}) has been much less studied. We can
mention an article of \cite{FeTaWe} which extends the results of \cite{WeDu}
in restrictive conditions on the parameters. The Dirichlet problem in
bounded $\Omega $ for the nonhomogeneous equation $\mathcal{L}_{p,\mu
}v+v^{q}=h$ where $h\in L^{1}(\Omega )$, $h\neq 0$ was considered in \cite%
{Mi}. \bigskip

Here our purpose is to extend to equation (\ref{pq}) for any $p>1$ and $%
q>p-1 $, the study of \cite{Ci}, \cite{CiFa} relative to the case $p=2$, and
simplify the proofs, by a \textbf{quite different approach}. Our line of
attack is new, even in the case of equation (\ref{sim}). It is built on a
complete study of the radial case by phase-plane techniques. Indeed,
equation (\ref{pq}) is invariant by the scaling $T_{k}$ defined for any $k>0$
and $x\in \mathbb{R}^{N}\backslash \left\{ 0\right\} $ by 
\begin{equation}
T_{k}u(x)=k^{\gamma }u(kx),  \label{scal}
\end{equation}%
where 
\begin{equation}
\gamma =\frac{p+\theta }{q+1-p}.  \label{gam}
\end{equation}%
Then its radial formulation can be reduced to an autonomous system of order
2. A precise analysis of the phase-plane first implies many properties of
existence and possible uniqueness of local and global radial solutions. Then
our \textbf{key point} is to show that any nonradial solution can be
compared from above and below with a radial one, as shown at Theorem \ref%
{clef}. In that way we avoid a construction of supersolutions and
subsolutions, in general hard with a quasilinear equation such as (\ref{pq}%
). In particular we show that \textbf{all the global solutions in }$\mathbb{R%
}^{N}\backslash \left\{ 0\right\} $\textbf{\ are radial. }Our results are
optimal.

\section{Main results}

\subsection{Parameters of the study}

$\bullet $ We first consider the equation 
\begin{equation}
\mathcal{L}_{p,\mu }u=0  \label{Hardy}
\end{equation}%
and search of the form $u(x)=C\left\vert x\right\vert ^{-S},$ with $C>0;$
then 
\begin{equation}
\mathcal{L}_{p,\mu }u=0\Longleftrightarrow \varphi (S)=0;  \label{eqS}
\end{equation}%
where 
\begin{equation}
\varphi (S)=(p-1)\left\vert S\right\vert ^{p}-(N-p)\left\vert S\right\vert
^{p-2}S-\mu ;  \label{fau}
\end{equation}%
the function $\varphi $ admits a minimum value $\varphi (\frac{N-p}{p})=-(%
\frac{N-p}{p})^{p}-\mu =\mu _{0}-\mu $, thus such solutions exist if and
only if $\mu \geq \mu _{0}.$ \ In any case 
\begin{equation}
S_{1}>0,\quad \quad S_{2}\leq \frac{N-p}{p}\leq S_{1}.  \label{sel}
\end{equation}%
If $\mu >0$ we obtain two roots $S_{2}<0<S_{1}.$ If $\mu _{0}<\mu <0$ we
obtain two roots $0<S_{2}<S_{1}.$ If $\mu =0$, $S_{2}=0<S_{1}=\frac{N-p}{p-1}
$. If $\mu =\mu _{0}$, we find only one root $S_{1}=S_{2}=\frac{N-p}{p}$,
corresponding to solutions $u(x)=C\left\vert x\right\vert ^{-\frac{N-p}{p}}.$
Note that for $\mu =\mu _{0},$ equation (\ref{Hardy}) admits also radial
local positive solutions with a logarithmic behaviour :\textbf{\ }%
\begin{equation*}
\lim_{x\longrightarrow 0}\left\vert x\right\vert ^{\frac{N-p}{p}}\left\vert
\ln \left\vert x\right\vert \right\vert ^{-\frac{2}{p}}u(x)=C_{1}>0,\qquad 
\text{or }\mathbf{\ }\lim_{\left\vert x\right\vert \longrightarrow \infty
}\left\vert x\right\vert ^{\frac{N-p}{p}}\left\vert \ln \left\vert
x\right\vert \right\vert ^{-\frac{2}{p}}u(x)=C_{2}>0,
\end{equation*}%
(explicit and well known when\textbf{\ }$p=2$) see \textbf{\ \cite{ItaTa} }%
and a short proof at Lemma \ref{critic} below.\bigskip

\noindent $\bullet $ Next we consider equation (\ref{pq}), which takes the
form%
\begin{equation}
\mathcal{L}_{p,\mu }u+\left\vert x\right\vert ^{\theta }u^{q}=0  \label{pql}
\end{equation}%
and search a particular power solution of the form $u^{\ast }(x)=a^{\ast
}\left\vert x\right\vert ^{-\lambda }.$ If it exists, then necessarily $%
\lambda =\gamma $, defined at (\ref{gam}) and 
\begin{equation}
u^{\ast }(x)=a^{\ast }\left\vert x\right\vert ^{-\gamma }\text{ and }%
(a^{\ast })^{q+1-p}=\varphi (\gamma )>0,  \label{condi}
\end{equation}%
and the existence of $u$ only depends on the position of $\gamma $ with
respect to $S_{1},S_{2}.\bigskip $

\noindent $\bullet $ In the sequel we divide our analysis into 5 assumptions:%
\begin{equation}
\left\{ 
\begin{array}{cc}
\text{(}\mathcal{H}_{1}\text{):} & \mu \geq \mu _{0}\quad \text{and }\gamma
>S_{1}, \\ 
\text{(}\mathcal{H}_{2}\text{)}\text{: } & \mu \geq \mu _{0}\quad \text{and }%
\gamma <S_{2}, \\ 
\text{(}\mathcal{H}_{3}\text{)}\text{:} & \mu >\mu _{0}\quad \text{and }%
S_{2}\leq \gamma \leq S_{1}, \\ 
\text{(}\mathcal{H}_{4}\text{):} & \mu =\mu _{0}\quad \text{and }\gamma =%
\frac{N-p}{p}, \\ 
\text{(}\mathcal{H}_{5}\text{)}\text{:} & \mu <\mu _{0}.%
\end{array}%
\right.  \label{cases}
\end{equation}%
Then the solution $u^{\ast }$ exists exclusively in cases ($\mathcal{H}_{1}$%
),($\mathcal{H}_{2}$) ($\mathcal{H}_{5}$).\medskip

\ We note that $\gamma $\textbf{\ }has the sign of\textbf{\ }$p+\theta .$\
These different cases do not involve the sign of $\gamma .$ but a main
difference is the behaviour of the solution $u^{\ast }$, when it exists:
when $p+\theta >0$, then $u^{\ast }$ is singular, decreasing from $\infty $
as $r\rightarrow 0$ to $0$ as $r\rightarrow \infty ;$ when $p+\theta <0$,
then $u^{\ast }\in C(\left[ 0,\infty \right) $, increasing from $0$ to $%
\infty $.

When $p+\theta =0$, that means $\gamma =0$, equation (\ref{pq}) takes the
form 
\begin{equation}
-\Delta _{p}u+\frac{u^{p-1}(u^{q+1-p}+\mu )}{\left\vert x\right\vert ^{p}}=0,
\label{pqzero}
\end{equation}%
so that for $\mu <0,\mu \neq \mu _{0}$, the function $u^{\ast }\equiv
\left\vert \mu \right\vert ^{\frac{1}{q+1-p}}$ is a \textbf{constant solution%
}. This case has a specific interest. Indeed there hold phenomena of \textbf{%
nonuniqueness} of solutions in a neighborhood of $0$, such that $%
\lim_{r\longrightarrow 0}u=\left\vert \mu \right\vert ^{\frac{1}{q+1-p}}$,
possibly global, see Remark \ref{nonuniq}. Moreover when $p>2$, we show the
existence of \textbf{locally constant solutions}, but not identically
constant, near $0$ or $\infty $, and possibly global. It can happen under ($%
\mathcal{H}_{2}$) when $\mu _{0}\leq \mu <0$, or ($\mathcal{H}_{5}$).

\subsection{Local and global behaviour of the solutions}

Next we give our main results on the behaviour near $0$ or $\infty $ of the
solutions, and on the existence and possible uniqueness of the global
solutions.\ \textbf{\ }All the following theorems extend to the case $p>1$
the results of \cite{Ci} and \cite{CiFa} relative to equation (\ref{casp2}).
Note that the behaviour of the solutions in $\mathbb{R}^{N}\backslash 
\overline{B_{r_{0}}}$ cannot be obtained from the ones in $%
B_{r_{0}}\backslash \left\{ 0\right\} $ by Kelvin transform, see also Remark %
\ref{Kelvin} below.

We say that $u\geq 0$ is a solution of equation (\ref{pq}) in $\Omega
=B_{r_{0}}\backslash \left\{ 0\right\} $ (resp. $\Omega =\mathbb{R}%
^{N}\backslash \overline{B_{r_{0}}}$) if $u\in L_{loc}^{\infty }(\Omega )$, $%
\left\vert \nabla u\right\vert \in L_{loc}^{p}(\Omega )$ and 
\begin{equation*}
\int_{\Omega }\left\vert \nabla u\right\vert ^{p-2}\nabla u.\nabla
\varphi dx+\int_{\Omega }(\mu \frac{u^{p-1}}{\left\vert
x\right\vert ^{p}}+\left\vert x\right\vert ^{\theta }u^{q})\varphi
dx=0,\qquad \forall \varphi \in C_{c}^{1}(\Omega ).
\end{equation*}%
It follows from \cite[Theorem 1]{To} that $u\in C^{1}(\Omega )$.

Our results are given following the different assumptions (\ref{cases}):

\begin{theorem}
\label{H1nonrad}\textbf{Case} ($\mathcal{H}_{1}$) Let $\mu \geq \mu _{0}$
and $\gamma >S_{1}.$

\noindent (i) Let $u$ be any positive solution in $B_{r_{0}}\backslash
\left\{ 0\right\} .$ Then

$\bullet $ either 
\begin{equation}
\lim_{x\rightarrow 0}\left\vert x\right\vert ^{\gamma }u=a^{\ast },
\label{ra}
\end{equation}

$\bullet $ or 
\begin{equation}
\left\{ 
\begin{array}{ccc}
\lim_{x\rightarrow 0}\left\vert x\right\vert ^{S_{1}}u=k_{1}>0, &  & \text{%
if }\mu >\mu _{0}, \\ 
\lim_{x\rightarrow 0}\left\vert x\right\vert ^{\frac{N-p}{p}}(\left\vert \ln
\left\vert x\right\vert \right\vert ^{-\frac{2}{p}})u=\ell >0, &  & \text{if 
}\mu =\mu _{0},%
\end{array}%
\right.  \label{rb}
\end{equation}%
\noindent

$\bullet $ or 
\begin{equation}
\lim_{x\rightarrow 0}\left\vert x\right\vert ^{S_{2}}u=k_{2}>0.  \label{rc}
\end{equation}

\noindent (ii) Let $u$ be any positive solution in $\mathbb{R}^{N}\backslash 
\overline{B_{r_{0}}}$. Then

\begin{equation}
\lim_{\left\vert x\right\vert \rightarrow \infty }\left\vert x\right\vert
^{\gamma }u=a^{\ast }.  \label{rd}
\end{equation}%
There exist solutions of each type.\medskip

\noindent (iii) There exist global solutions in $\mathbb{R}^{N}\backslash
\left\{ 0\right\} $, all of them are radial:

$\bullet $ $u^{\ast }=a^{\ast }\left\vert x\right\vert ^{-\gamma },$

$\bullet $ if $\mu >\mu _{0},$ for any $k_{1}>0$, there exists a unique
global solution such that 
\begin{equation}
\lim_{r\rightarrow 0}\left\vert x\right\vert ^{S_{1}}u=k_{1}>0,\qquad
\lim_{r\rightarrow \infty }\left\vert x\right\vert ^{\gamma }u=a^{\ast },
\label{re}
\end{equation}

$\bullet $ if $\mu =\mu _{0},$ for any $\ell >0,$ there exists a unique
global solution such that%
\begin{equation}
\lim_{x\rightarrow 0}\left\vert x\right\vert ^{\frac{N-p}{p}}\left\vert \ln
\left\vert x\right\vert \right\vert ^{-\frac{2}{p}}u=\ell ,\qquad
\lim_{\left\vert x\right\vert \rightarrow \infty }\left\vert x\right\vert
^{\gamma }u=a^{\ast }.  \label{rf}
\end{equation}
\end{theorem}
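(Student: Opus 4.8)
The strategy is to first establish the radial theory completely by phase-plane analysis, then transfer it to nonradial solutions via the comparison result announced as Theorem \ref{clef}. For part (i), I would begin with the radial case: using the scaling invariance $T_k$, the radial ODE for $u(r)$ reduces to an autonomous planar system in the variables obtained by setting $u(r)=r^{-\gamma}w(t)$, $t=\ln r$ (so that $u^\ast$ corresponds to the stationary point $w\equiv a^\ast$), together with a companion variable built from $r^{p-1}|u'|^{p-2}u'$. One linearizes at the two stationary points --- the origin (corresponding to solutions behaving like the Hardy solutions $|x|^{-S_i}$) and the point $a^\ast$ --- and reads off the local phase portrait. Under ($\mathcal{H}_1$), $\gamma>S_1\geq S_2$, which fixes the signs of the relevant eigenvalues and forces every trajectory defined near $t=-\infty$ (i.e. near $x=0$) to emanate either from $a^\ast$, giving \eqref{ra}, or from the origin along the $S_1$- or $S_2$-direction, giving \eqref{rb} or \eqref{rc}; the degenerate case $\mu=\mu_0$ ($S_1=S_2=\frac{N-p}{p}$) produces the logarithmic correction in \eqref{rb}, exactly as in Lemma \ref{critic}. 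The exhaustiveness of these three alternatives for radial solutions, plus standard a priori bounds (e.g. a Keller--Osserman-type estimate coming from the absorption term $|x|^\theta u^q$) to rule out trajectories escaping to infinity in finite "time," closes the radial part.

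For a general (possibly nonradial) positive solution $u$ in $B_{r_0}\setminus\{0\}$, I would invoke Theorem \ref{clef}: $u$ is squeezed between two radial solutions $\underline{u}\le u\le \overline{u}$ on a smaller punctured ball. Since both bounding radial solutions fall into one of the three categories above, a comparison/sandwiching argument pins down the order of magnitude of $u$ near $0$; to upgrade "order of magnitude" to the sharp limits \eqref{ra}--\eqref{rc} one rescales, $u_\lambda(x)=\lambda^\gamma u(\lambda x)$, uses interior $C^{1,\alpha}$ estimates (from \cite{To}) to extract a locally uniform limit along $\lambda\to 0$, and identifies the limit as a radial solution of the limiting equation (or of $\mathcal{L}_{p,\mu}v=0$ in the cases \eqref{rb}, \eqref{rc}), which by the radial classification must be $u^\ast$, or $|x|^{-S_1}$, or $|x|^{-S_2}$ respectively; isotropy then follows. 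Part (ii), the exterior problem, is handled by the same machinery run at $t=+\infty$ instead of $t=-\infty$: under ($\mathcal{H}_1$) the phase portrait shows $a^\ast$ is the only attainable limit as $r\to\infty$, because the branches leaving the origin point the wrong way, which yields \eqref{rd}. Existence of solutions of every listed type is obtained by shooting in the phase plane: choosing initial data on the appropriate stable/unstable manifold of each stationary point produces a radial solution with the prescribed behavior.

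For part (iii), a global solution must exhibit one of the behaviors of part (i) at $0$ and the behavior \eqref{rd} at $\infty$; by Theorem \ref{clef} again it is radial, so it corresponds to a complete trajectory of the planar system connecting a stationary point at $t=-\infty$ to $a^\ast$ at $t=+\infty$. The constant trajectory gives $u^\ast$. When $\mu>\mu_0$, the relevant connection is the one-dimensional unstable manifold of the origin along the $S_1$-eigendirection flowing into $a^\ast$; monotone dependence on the shooting parameter, together with the scaling $T_k$ which multiplies $k_1$ by $k^{S_1-\gamma}\ne 1$, gives existence and uniqueness for each prescribed $k_1$, hence \eqref{re}. When $\mu=\mu_0$ the origin is a degenerate node and the outgoing trajectory carries the logarithmic factor, yielding \eqref{rf} with the same uniqueness argument; one must also check no global trajectory leaves along the $S_2$-direction (it would have to re-enter a forbidden region, contradicting the absorption estimate). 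The main obstacle is Theorem \ref{clef} --- constructing the radial barriers for a nonradial solution of the quasilinear, Hardy-perturbed equation --- but since that is proved separately and may be assumed here, the remaining difficulty is purely the bookkeeping of the phase portrait, ensuring that under ($\mathcal{H}_1$) no trajectory other than the three listed can reach $x=0$ and none other than $u^\ast$-type reaches $x=\infty$, which the sign condition $\gamma>S_1$ makes clean.
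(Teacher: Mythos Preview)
Your overall strategy matches the paper's: complete radial classification via the autonomous phase-plane system, then transfer to general solutions through Theorem~\ref{clef}, and upgrade order-of-magnitude control to sharp limits via a rescaling/compactness argument (which is exactly the content of the paper's Propositions~\ref{exact} and~\ref{excrit}). For parts (i) and (ii) your plan is essentially what the paper does; one point worth sharpening is that Theorem~\ref{clef} gives not just $v\le u\le w$ but also $w\le cv$ with the Harnack constant~$c$, and it is this two-sided control that forces the bounding radial solutions $v,w$ to have the \emph{same} asymptotic type near~$0$, so that $u$ is genuinely pinned down. Also, for cases \eqref{rb} and \eqref{rc} the correct blow-up scale is $|x|^{S_i}$ (or $|x|^{\frac{N-p}{p}}|\ln|x||^{-2/p}$ when $\mu=\mu_0$), not $|x|^{\gamma}$; the limit equation is then $\mathcal{L}_{p,\mu}w=0$ rather than the full equation, and the identification step uses a strong maximum principle comparison with the explicit Hardy solution, not just the radial classification.

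There is, however, a genuine gap in your treatment of part~(iii). You write ``by Theorem~\ref{clef} again it is radial,'' but Theorem~\ref{clef} only yields a two-sided radial sandwich $v\le u\le w\le cv$; it does not by itself force $u$ to be radial. The paper obtains radiality the other way around: first, parts (i)--(ii) give that any global solution $u$ has a well-defined limit $k_1=\lim_{x\to 0}|x|^{S_1}u$ (or the logarithmic variant) and $\lim_{|x|\to\infty}|x|^{\gamma}u=a^*$. Then one proves \emph{uniqueness among all solutions with prescribed $k_1$} by the supersolution trick: if $u,\widetilde u$ are two such solutions, $(1+\varepsilon)\widetilde u$ is a strict supersolution of \eqref{pq} (here $q>p-1$ is essential), dominates $u$ near both $0$ and $\infty$, hence on every annulus by the comparison principle of Corollary~\ref{DG}, and letting $\varepsilon\to 0$ gives $\widetilde u\ge u$; symmetrically $u\ge\widetilde u$. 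Since Theorem~\ref{H1rad} supplies a \emph{radial} global solution with the same $k_1$, uniqueness forces $u$ to coincide with it. Your ``monotone dependence on the shooting parameter'' and the scaling $T_k$ give existence for every $k_1$ and uniqueness \emph{within the radial class}, but not uniqueness against a possibly nonradial competitor, which is exactly what is needed to conclude radiality.
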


\begin{theorem}
\label{H2nonrad}\textbf{Case} ($\mathcal{H}_{2}$): Let $\mu \geq \mu _{0}$
and $\gamma <S_{2}$

\noindent (i) Let $u$ be any positive solution in $B_{r_{0}}\backslash
\left\{ 0\right\} $. Then%
\begin{equation}
\lim_{x\rightarrow 0}\left\vert x\right\vert ^{\gamma }u=a^{\ast }.
\label{aut1}
\end{equation}%
(ii) Let $u$ be any positive solution in $\mathbb{R}^{N}\backslash \overline{%
B_{r_{0}}}$. Then

\noindent $\bullet $ either 
\begin{equation}
\lim_{\left\vert x\right\vert \rightarrow \infty }\left\vert x\right\vert
^{\gamma }u=a^{\ast },  \label{aut2}
\end{equation}%
$\bullet $ or 
\begin{equation}
\left\{ 
\begin{array}{ccc}
\lim_{\left\vert x\right\vert \rightarrow \infty }\left\vert x\right\vert
^{S_{2}}u=k_{2}>0, &  & \text{if }\mu >\mu _{0}, \\ 
\lim_{\left\vert x\right\vert \rightarrow \infty }\left\vert x\right\vert ^{%
\frac{N-p}{p}}\left\vert \ln \left\vert x\right\vert \right\vert ^{-\frac{2}{%
p}}u=\ell >0, &  & \text{if }\mu =\mu _{0},%
\end{array}%
\right.  \label{autnew}
\end{equation}%
$\bullet $ or 
\begin{equation*}
\lim_{\left\vert x\right\vert \rightarrow \infty }\left\vert x\right\vert
^{S_{1}}u=k_{1}>0.
\end{equation*}%
there exist solutions of each type.

\noindent (iii) There exist global solutions in $\mathbb{R}^{N}\backslash
\left\{ 0\right\} $, given by:

$\bullet $ $u^{\ast }=a^{\ast }\left\vert x\right\vert ^{-\gamma }.$

$\bullet $ if $\mu >\mu _{0}$, for any $k_{2}>0$, there exist a unique
global solution (then radial), such that 
\begin{equation}
\lim_{x\longrightarrow 0}\left\vert x\right\vert ^{\gamma }u=a^{\ast
}.\qquad \lim_{\left\vert x\right\vert \rightarrow \infty }\left\vert
x\right\vert ^{S_{2}}u=k_{2}>0,  \label{aut4}
\end{equation}

$\bullet $ if $\mu =\mu _{0},$ for any $\ell >0,$ there exists a unique
global solution such that%
\begin{equation}
\lim_{x\longrightarrow 0}\left\vert x\right\vert ^{\gamma }u=a^{\ast
},\qquad \lim_{\left\vert x\right\vert \rightarrow \infty }\left\vert
x\right\vert ^{\frac{N-p}{p}}(\left\vert \ln \left\vert x\right\vert
\right\vert ^{\frac{2}{p}})u(x)=\ell >0.  \label{aut3}
\end{equation}
\end{theorem}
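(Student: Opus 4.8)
The plan is to obtain both statements from the complete radial classification together with the comparison result of Theorem \ref{clef}. By that theorem, any positive solution $u$ of (\ref{pq}) in $B_{r_0}\backslash\{0\}$ (resp.\ in $\mathbb{R}^N\backslash\overline{B_{r_0}}$) is trapped, on a slightly smaller punctured ball (resp.\ larger exterior domain), between two \emph{radial} solutions $\underline u\le u\le\overline u$ of the same equation. Since under $(\mathcal{H}_2)$ the three candidate rates $|x|^{-\gamma},|x|^{-S_1},|x|^{-S_2}$ (together with the logarithmic correction of Lemma \ref{critic} when $\mu=\mu_0$) are pairwise comparable as $|x|\to0$ and as $|x|\to\infty$, it suffices to (a) carry out the radial analysis and (b) check that the radial envelopes $\underline u,\overline u$ lie in the same regime as $u$, which then transfers to $u$; in particular, once one knows that every radial solution near $0$ tends to $u^\ast$, the squeezing gives (\ref{aut1}) at once.

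For the radial study I would exploit the scaling invariance (\ref{scal}): setting $u(r)=r^{-\gamma}y(t)$ with $t=\ln r$ turns (\ref{pql}) into an autonomous planar system in $(y,z)$, $z$ being the natural (weighted) logarithmic derivative of $y$. Under $(\mathcal{H}_2)$ the power solution $u^\ast$ of (\ref{condi}) exists and is the finite stationary point $M^\ast=(a^\ast,0)$; because $\gamma<S_2\le S_1$, the pure powers $r^{-S_1},r^{-S_2}$ of $\mathcal{L}_{p,\mu}$ correspond to stationary points on the axis $\{y=0\}$ that can be approached only as $t\to+\infty$ (i.e.\ at $r=\infty$), these being distinct for $\mu>\mu_0$ and coalescing into a single degenerate point for $\mu=\mu_0$, whose normal form produces exactly the logarithmic rate of Lemma \ref{critic}. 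The Keller--Osserman type a priori bound $u(x)\le C|x|^{-\gamma}$ near $0$ (valid here since $q>p-1$ and $\mu\ge\mu_0$) shows $y$ is bounded as $t\to-\infty$, so the trajectory stays in a compact part of the positive quadrant and must converge to a stationary point; a linearization at $M^\ast$ together with an invariant-region argument using $\gamma<S_2$ rules out the axis points and forces $y(t)\to a^\ast$, which is (i).

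As $t\to+\infty$, the a priori bound again confines the trajectory, and the local analysis at $M^\ast$ and at the axis point(s) — hyperbolic when $\mu>\mu_0$, with a separate logarithmic treatment when $\mu=\mu_0$ — yields precisely the three exit behaviors of (ii), and conversely furnishes radial solutions realizing each of them. The global solutions of (iii) are the heteroclinic orbits joining $M^\ast$ at $t=-\infty$ to an axis point at $t=+\infty$: the scaling $T_k$ acts on such orbits, and this action is parametrized by the constant $k_2$ (resp.\ by $\ell$ when $\mu=\mu_0$), giving both existence and uniqueness; $u^\ast$ itself is the equilibrium orbit. Finally, to pass from radial to nonradial I would invoke Theorem \ref{clef} once more: near $0$ both envelopes of $u$ satisfy (\ref{aut1}); near $\infty$ the regime of $u$ is fixed by its radial envelopes, and the $C^1$ estimates of \cite{To} (already used for the regularity statement) upgrade the rate into the corresponding limit for $u$. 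A global solution lies between two radial ones with the \emph{same} prescribed behavior at both ends, so radial uniqueness forces the envelopes to coincide, whence radiality.

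The main obstacle is the fine phase-plane analysis at the two ends: under $(\mathcal{H}_2)$ one must genuinely use $\gamma<S_2$ to exclude every near-$0$ behavior other than $u^\ast$ (controlling the flow where $y$ is large and where the Hardy part would otherwise dominate with the wrong sign), and one must handle the degenerate merging of the two axis equilibria when $\mu=\mu_0$ by a logarithmic normal form rather than a hyperbolic linearization. By comparison, invoking Theorem \ref{clef} is routine, though transferring exact constants (not merely rates) at $\infty$ still requires a short scaling-and-continuity argument.
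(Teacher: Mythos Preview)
Your overall strategy coincides with the paper's: sandwich $u$ between two radial solutions via Theorem \ref{clef}, invoke the radial classification (Theorem \ref{H2rad}), and read off the behavior. For (i) the squeezing is immediate since every radial solution near $0$ satisfies $\lim r^{\gamma}u=a^{\ast}$. For (ii) the paper proceeds essentially as you outline, but what you call ``the $C^{1}$ estimates of \cite{To} upgrade the rate into the corresponding limit'' is not a regularity statement alone: it is the content of Propositions \ref{exact} and \ref{excrit}, which combine a rescaling, Tolksdorf's $C^{1,\alpha}$ bounds for compactness, \emph{and} the tangency principle of \cite[Lemma 1.3]{FrVe} (plus a preliminary step showing $\limsup=\lim$ on spheres). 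Without that tangency argument, knowing $u\asymp |x|^{-S_{i}}$ does not by itself yield a single constant $k_{i}$ rather than an oscillating spherical profile. Your closing phrase ``scaling-and-continuity'' points in the right direction but omits the key comparison step.

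There is a genuine gap in your treatment of (iii). You claim that the radial envelopes $\underline u\le u\le\overline u$ from Theorem \ref{clef} have ``the same prescribed behavior at both ends, so radial uniqueness forces the envelopes to coincide, whence radiality''. This fails: Theorem \ref{clef} only gives $\underline u\le\overline u\le c\,\underline u$, so at infinity one has $\lim |x|^{S_{2}}\underline u=k_{2}'$ and $\lim |x|^{S_{2}}\overline u=k_{2}''$ with $k_{2}'\le k_{2}''\le c\,k_{2}'$, and in general $k_{2}'\ne k_{2}''$. The envelopes are then two \emph{distinct} members of the one-parameter heteroclinic family, and radial uniqueness (which is uniqueness \emph{for a given} $k_{2}$) does not collapse them. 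The paper's argument is different: first extract the exact limit $k_{2}$ of the given nonradial $u$ via Proposition \ref{exact}; let $\tilde u$ be the unique radial global solution with that same $k_{2}$; then $(1+\varepsilon)\tilde u$ is a supersolution (since $q>p-1$) which dominates $u$ both near $0$ (where both behave like $a^{\ast}|x|^{-\gamma}$) and near $\infty$ (where $(1+\varepsilon)k_{2}>k_{2}$), so Corollary \ref{DG} on exhausting annuli gives $(1+\varepsilon)\tilde u\ge u$; letting $\varepsilon\to0$ and arguing symmetrically yields $u=\tilde u$. This $(1+\varepsilon)$--comparison is the missing idea.
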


\begin{theorem}
\label{H3nonrad} \textbf{Case} ($\mathcal{H}_{3}$) Let $\mu >\mu _{0}$ and $%
S_{2}\leq \gamma \leq S_{1}.$Then

\noindent (i) Let $u$ be any positive solution in $B_{r_{0}}\backslash
\left\{ 0\right\} $. Then $\gamma <S_{1}$ and 
\begin{eqnarray*}
\lim_{x\longrightarrow 0}\left\vert x\right\vert ^{S_{2}}u &=&k_{2}>0\qquad 
\text{if }S_{2}<\gamma <S_{1}, \\
\lim_{x\longrightarrow 0}\left\vert x\right\vert ^{S_{2}}(\left\vert \ln
\left\vert x\right\vert \right\vert )^{-\frac{1}{q+1-p}}u &=&\alpha
_{N,p,q}>0\qquad \text{if }\gamma =S_{2}\neq 0, \\
\lim_{x\longrightarrow 0}\left\vert \ln \left\vert x\right\vert \right\vert
^{-\frac{p-1}{q+1-p}}u &=&\delta _{N,p,q}>0\qquad \text{if }\gamma =S_{2}=0.
\end{eqnarray*}%
(ii) Let $u$ be any positive solution in $\mathbb{R}^{N}\backslash \overline{%
B_{r_{0}}}$. Then $S_{2}<\gamma $ and 
\begin{eqnarray*}
\lim_{\left\vert x\right\vert \longrightarrow \infty }\left\vert
x\right\vert ^{S_{1}}u &=&k_{1}>0\qquad \text{if }S_{2}<\gamma <S_{1}, \\
\lim_{\left\vert x\right\vert \longrightarrow \infty }\left\vert
x\right\vert ^{S_{1}}(\left\vert \ln \left\vert x\right\vert \right\vert )^{-%
\frac{1}{q+1-p}}u &=&\beta _{N,p,q}\qquad \text{if }\gamma =S_{1}.
\end{eqnarray*}%
There exist solutions of each type.

\noindent (iii) There is no global positive solution.
\end{theorem}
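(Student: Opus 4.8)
The plan is to reduce everything to the radial case and then read the conclusions off the phase plane. Two facts from the preceding sections are used: the comparison principle of Theorem \ref{clef}, by which every positive nonradial solution is squeezed between two radial solutions, so it suffices to classify positive \emph{radial} solutions (the conclusions being transferred back to the nonradial case through Theorem \ref{clef}); and the reduction, through $r=|x|$, $u(r)=r^{-\gamma}v(t)$, $t=\ln r$, of the radial form of $(\ref{pql})$ to an autonomous planar system for $(v,w)$, with $w$ an appropriate companion variable and with $T_{k}$ acting as the translation $t\mapsto t+\ln k$. The feature special to $(\mathcal{H}_{3})$ is that $\varphi(\gamma)\le 0$: the power solution $u^{\ast}=a^{\ast}|x|^{-\gamma}$ does not exist and the only finite stationary point of the system is the origin $O=(0,0)$ (when $\varphi(\gamma)>0$ there would be a second one $M^{\ast}$, attached to $u^{\ast}$, which here has collapsed onto $O$). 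The homogeneous Hardy profiles $|x|^{-S_{i}}$ correspond to $v(t)\sim c\,e^{(\gamma-S_{i})t}$, hence to the eigendirections at $O$ with characteristic exponents $\gamma-S_{2}\ge 0$ and $\gamma-S_{1}\le 0$; thus $O$ is a hyperbolic saddle when $S_{2}<\gamma<S_{1}$, and a degenerate (saddle--node type) point with a one-dimensional centre manifold when $\gamma=S_{1}$ or $\gamma=S_{2}$, which is the origin of the logarithmic corrections.

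For part (i): a positive solution in $B_{r_{0}}\backslash\{0\}$ is a trajectory defined for $t\to-\infty$ and remaining in $\{v>0\}$. A Keller--Osserman type barrier argument, coming from the absorption term, first shows that $v$ stays bounded as $t\to-\infty$; since $O$ is the only stationary point with $v\ge 0$, the trajectory must tend to $O$. A trajectory entering $O$ backward in time either follows the eigendirection with exponent $\gamma-S_{2}>0$, giving $v(t)\sim c\,e^{(\gamma-S_{2})t}$, i.e. $\lim_{x\to0}|x|^{S_{2}}u=k_{2}>0$; or, if $\gamma-S_{2}=0$, it follows the centre manifold, on which the linear part is annihilated by $\varphi(\gamma)=0$ and $v$ solves a reduced equation $v'=-Kv^{q}(1+o(1))$ (with a different balance when $S_{2}=0$, the equation then being $-\Delta_{p}u+|x|^{-p}u^{q}=0$), whose integration gives the rates $|\ln|x||^{-1/(q+1-p)}$, resp. $|\ln|x||^{-(p-1)/(q+1-p)}$. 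The exponent $\gamma-S_{1}\le 0$ cannot be used backward in time while keeping $v$ bounded unless $\gamma=S_{1}$, and a direct analysis of the centre manifold at this degenerate $O$ then shows that no positive solution exists in a punctured ball, i.e. $\gamma<S_{1}$. Existence of each type comes from the (un)stable and centre manifold theorems at $O$, the translation $T_{k}$ producing all admissible values of $k_{2}$ (resp. $\alpha_{N,p,q}$, $\delta_{N,p,q}$).

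Part (ii) is the mirror image of (i), obtained by examining $t\to+\infty$: the roles of $S_{1}$ and $S_{2}$, and of the stable and unstable manifolds of $O$, are exchanged, so that $\lim_{|x|\to\infty}|x|^{S_{1}}u=k_{1}>0$ when $S_{2}<\gamma<S_{1}$, with the logarithmic rate when $\gamma=S_{1}$, and $\gamma=S_{2}$ is impossible. For part (iii): a global positive solution reduces to a trajectory defined on all of $\mathbb{R}$; by (i) it tends to $O$ as $t\to-\infty$ and by (ii) it tends to $O$ as $t\to+\infty$, hence it is a homoclinic orbit of $O$. But the system admits a standard Lyapunov (energy) function $E(v,w)$ whose derivative along trajectories keeps a constant sign and vanishes only at stationary points — the dissipation, produced by the term $\frac{N-1}{r}|u'|^{p-2}u'$, does not degenerate since $1<p<N$ — so $E$ is strictly monotone along nonconstant trajectories and homoclinic orbits are excluded. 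Hence there is no global radial solution, and by Theorem \ref{clef} no global nonradial solution either.

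The main obstacle is the treatment of the resonant cases $\gamma=S_{1}$ and $\gamma=S_{2}$, particularly $\gamma=S_{2}=0$: there $O$ is degenerate, and a careful centre-manifold reduction is needed both to pin down the precise logarithmic exponents and, for $\gamma=S_{1}$ near $0$ (resp. $\gamma=S_{2}$ near $\infty$), to establish the non-existence of positive solutions. A secondary technical point is to verify that the trajectories under consideration are globally defined on the relevant $t$-interval and never leave $\{v>0\}$, which rests on the a priori upper bounds near $0$ and near $\infty$.
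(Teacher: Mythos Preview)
Your high-level strategy matches the paper's: reduce to the radial case via Theorem~\ref{clef}, then classify radial solutions by phase-plane analysis. However, the phase plane you use is essentially system~(\ref{SXY}) with $v=X=r^{\gamma}u$, and you treat the origin $O=(0,0)$ as a saddle with eigenvalues $\gamma-S_{1},\gamma-S_{2}$. This is legitimate only for $p=2$: for $p\neq 2$ the system~(\ref{SXY}) is not $C^{1}$ at $(0,0)$ --- the derivative of $|Y|^{(2-p)/(p-1)}Y$ at $Y=0$ is $0$ or $+\infty$, and likewise for $\mu X^{p-1}$ --- so no linearisation is available and your two ``eigendirections'' are not eigendirections of any linearised flow. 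The paper explicitly abandons~(\ref{SXY}) for this reason (see the discussion opening \S4.2) and introduces the $(G,V)$ system~(\ref{SGV}), in which your single degenerate $O$ unfolds into \emph{two} genuine hyperbolic saddles $\mathbf{A}_{1},\mathbf{A}_{2}$ on the axis $\{V=0\}$, each with a clean linearisation (Lemma~\ref{fixai}). The whole radial classification under $(\mathcal{H}_{3})$ (Theorem~\ref{H3rad}) is read off the unique admissible trajectories $\Theta_{1},\Theta_{2}$ at these two points and the invariant regions $\mathcal{R}_{3}',\mathcal{R}_{4}'$ between them --- including the logarithmic rates at $\gamma=S_{1}$ or $\gamma=S_{2}$, which come from the centre manifold at $\mathbf{A}_{1}$ or $\mathbf{A}_{2}$, not at a single $O$.

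Your argument for~(iii) has a separate gap. You exclude a homoclinic by strict monotonicity of an energy, but the paper's Pohozaev energy~(\ref{fv}) satisfies $\mathcal{E}_{t}=-D(\cdots)$ with $D=N-p-p\gamma$, and since $S_{2}<\frac{N-p}{p}<S_{1}$ the value $\gamma=\frac{N-p}{p}$ lies inside $(\mathcal{H}_{3})$; there $D=0$, $\mathcal{E}$ is conserved, and your monotonicity fails. The paper instead rules out global radial solutions geometrically: $\Theta_{2}$ (the only admissible trajectory leaving $\mathbf{A}_{2}$, hence the only one describing solutions near $0$) remains in the positively invariant region $\mathcal{R}_{3}'$ and blows up at finite $R$, while $\Theta_{1}$ (the only one landing at $\mathbf{A}_{1}$) remains in $\mathcal{R}_{4}'$; since $\mathcal{R}_{3}'\cap\mathcal{R}_{4}'=\varnothing$, no trajectory does both. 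Finally, note that the squeezing $v\le u\le w\le cv$ from Theorem~\ref{clef} only gives $u\asymp|x|^{-S_{i}}$; the precise limit $\lim_{x\to 0}|x|^{S_{i}}u=k_{i}>0$ in the nonradial case requires the additional convergence result of Proposition~\ref{exact}, which you do not invoke.
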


\begin{theorem}
\label{H4nonrad}\textbf{Case} ($\mathcal{H}_{4}$) Let $\mu =\mu _{0}$ and $%
\gamma =\frac{N-p}{p}$. Then

\noindent (i) there is no global positive solution.

\noindent (ii) All the solutions in $B_{r_{0}}\backslash \left\{ 0\right\} .$
(resp. $\mathbb{R}^{N}\backslash \overline{B_{r_{0}}}$) satisfy 
\begin{equation*}
\lim_{x\longrightarrow 0}\left\vert x\right\vert ^{\frac{N-p}{p}}\left\vert
\ln \left\vert x\right\vert \right\vert ^{\frac{2}{q+1-p}}u=c_{N,p,q}>0,%
\text{ (resp. }\lim_{\left\vert x\right\vert \longrightarrow \infty
}\left\vert x\right\vert ^{\frac{N-p}{p}}\left\vert \ln \left\vert
x\right\vert \right\vert ^{\frac{2}{q+1-p}}u=c_{N,p,q}).
\end{equation*}%
There exist solutions of each type for any $p>1,$moreover \textbf{explicit}
if $p=2$.
\end{theorem}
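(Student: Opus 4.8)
The plan is to reduce to the radial case and then carry out a phase--plane analysis of a genuinely degenerate autonomous system. By the comparison principle of Theorem~\ref{clef}, every positive solution in $B_{r_0}\backslash\{0\}$ (resp. in $\mathbb{R}^N\backslash\overline{B_{r_0}}$) is trapped between two radial solutions, and a global positive solution is dominated by a radial one on $\mathbb{R}^N\backslash\{0\}$; since in each class all radial solutions will be seen to share the same leading term, it suffices to prove (i)--(ii) for radial $u=u(r)$. Now ($\mathcal{H}_4$) means $\mu=\mu_0$ \emph{and} $\gamma=\frac{N-p}{p}$, so $\varphi(\gamma)=\mu_0-\mu=0$: the power profile $u^\ast$ has degenerated to $0$, and there is one natural change of unknown, $u(r)=r^{-(N-p)/p}w(t)$ with $t=\ln r$. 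Using that $r^{-(N-p)/p}$ generates the kernel of $\mathcal{L}_{p,\mu_0}$ on powers, the radial form of (\ref{pql}) becomes the autonomous equation
\[
\frac{d}{dt}\bigl(|\dot w-\gamma w|^{p-2}(\dot w-\gamma w)\bigr)+\gamma\,|\dot w-\gamma w|^{p-2}(\dot w-\gamma w)=\mu_0 w^{p-1}+w^{q},\qquad \gamma=\tfrac{N-p}{p},\ \mu_0=-\gamma^{p},
\]
or the planar system $\dot w=\gamma w+\psi^{-1}(P)$, $\dot P=-\gamma P+\mu_0 w^{p-1}+w^q$, with $P=\psi(\dot w-\gamma w)$ and $\psi(s)=|s|^{p-2}s$. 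For $p=2$ this is $\ddot w=w^{q}$, integrable by quadrature, which is the origin of the explicit formulas.

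The core of the proof is the behaviour of this system near its only relevant equilibrium $(w,P)=(0,0)$, onto which \emph{both} the trivial solution and $u^\ast$ have collapsed; it is non--hyperbolic, and for $p\neq2$ the field is not even $C^{1}$ there (since $\psi^{-1}(P)=|P|^{(2-p)/(p-1)}P$). Expanding in the regime $\dot w\ll w$ one checks that, precisely because of the double criticality $\mu=\mu_0$, $\gamma=\frac{N-p}{p}$, the zeroth-order-in-$\dot w$ part of $\gamma\psi(z)$ equals $\mu_0 w^{p-1}$ and the first-order-in-$\dot w$ parts of $\dot{\psi(z)}$ and $\gamma\psi(z)$ cancel, so the effective balance involves only the quadratic quantities $w\ddot w$, $\dot w^{2}$ and the absorption $w^{q}$, and reads $(p-1)\gamma^{p-2}\bigl(\tfrac{p-2}{2}\dot w^{2}+w\ddot w\bigr)\simeq w^{\,q-p+3}$. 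This forces $w(t)\sim c_{N,p,q}|t|^{-2/(q+1-p)}$ as $t\to-\infty$ (resp. $t\to+\infty$), with $c_{N,p,q}>0$ determined by the leading balance (one finds $c^{\,q+1-p}=\tfrac{2(p-1)(q+1)}{(q+1-p)^{2}}\bigl(\tfrac{N-p}{p}\bigr)^{p-2}$, which reduces to $\tfrac{2(q+1)}{(q-1)^2}$ when $p=2$). I would make this rigorous through the desingularising substitution $w=|t|^{-2/(q+1-p)}\zeta(\tau)$, $\tau=\ln|t|$, turning it into a system with a \emph{hyperbolic} equilibrium at $\zeta=c_{N,p,q}$; together with the a priori bound $r^{(N-p)/p}u\le C$ and the classification of the remaining trajectories, its stable and unstable manifolds give the sharp asymptotics of (ii) and, via the scaling $T_k$, the one--parameter families of solutions of each type near $0$ and near $\infty$ (explicit for $p=2$ from $(\dot w)^{2}=\tfrac{2}{q+1}w^{q+1}$).

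Finally, for the nonexistence of global solutions in part (i): by the reduction it suffices to exclude radial ones, and a radial global solution would, by the behaviour just described, satisfy $w(t)\to0$ as $t\to\pm\infty$, so the positive function $w$ would attain an interior maximum at some $t_\ast$. There $\dot w(t_\ast)=0$, hence $P(t_\ast)=-(\gamma w(t_\ast))^{p-1}$, and substituting in the system gives $\dot P(t_\ast)=\gamma(\gamma w(t_\ast))^{p-1}+\mu_0 w(t_\ast)^{p-1}+w(t_\ast)^{q}=w(t_\ast)^{q}>0$ (using $\mu_0=-\gamma^{p}$); since $\ddot w=\gamma\dot w+(\psi^{-1})'(P)\dot P$ and $(\psi^{-1})'(P(t_\ast))>0$, this gives $\ddot w(t_\ast)>0$, contradicting the maximum (for $p=2$, $\ddot w=w^{q}>0$ at any critical point already does it). The same computation shows no closed orbit can exist, so there is no global positive solution.

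I expect the real obstacle to be the degenerate asymptotics of the second paragraph: the equilibrium $(0,0)$ is non--hyperbolic and, for $p\neq2$, the vector field is not Lipschitz there, so neither a direct linearisation nor the centre--manifold theorem applies; one must identify the correct weighted rescaling $w=|t|^{-2/(q+1-p)}\zeta$, control the neglected lower--order terms, and thereby both pin down the logarithmic exponent $2/(q+1-p)$ and compute the constant $c_{N,p,q}$ — after which the existence statements and part (i) are comparatively routine.
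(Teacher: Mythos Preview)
Your reduction to the radial case via Theorem~\ref{clef} and your argument for (i) are sound and essentially the same as the paper's strategy (the paper obtains (i) implicitly from the phase--plane description, but your interior--maximum computation $\ddot w(t_\ast)>0$ is a clean direct variant).

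For (ii), however, you take a genuinely different route. Your $(w,P)$ system is the paper's $(X,Y)$ system~(\ref{SXY}), and you correctly identify that the equilibrium is non--hyperbolic and (for $p\neq2$) not even $C^1$, proposing to overcome this by a desingularising rescaling $w=|t|^{-2/(q+1-p)}\zeta(\ln|t|)$. That programme can be made to work, but the paper avoids the difficulty altogether: the Pohozaev--type energy $\mathcal{E}$ of Lemma~\ref{ener} satisfies $\mathcal{E}_t=-D(\,\cdots)$ with $D=N-p-p\gamma$, and the case $(\mathcal{H}_4)$ is \emph{exactly} the case $D=0$, so $\mathcal{E}$ is a first integral. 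On any trajectory converging to the unique fixed point $\mathbf{A}_1=\mathbf{A}_2$ one has $\mathcal{E}\to0$, hence $\mathcal{E}\equiv0$; in the $(G,V)$ variables of (\ref{SGV}) this reads $G_t+\tfrac{q+1-p}{q+1}V=0$, which combined with the second equation integrates to the scalar ODE
\[
G_t+\frac{q+1-p}{p}\,F(G)=0,
\]
solvable by quadrature for all $p>1$. Since $F(G)\sim c\,\overline{G}^{\,2}$ near $G_0$ with $c=\tfrac{p}{2(p-1)}G_0^{(2-p)/(p-1)}$, one reads off $V\sim \tfrac{p(q+1)}{c(q+1-p)^2}\,t^{-2}$ and hence $r^{(N-p)/p}u\sim c_{N,p,q}|\ln r|^{-2/(q+1-p)}$, with the explicit constant and with (\ref{co}) for $p=2$. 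So the ``real obstacle'' you flag disappears once one notices that the double criticality $\mu=\mu_0$, $\gamma=\tfrac{N-p}{p}$ is precisely the level set on which the natural energy is conserved; your desingularisation would recover the same asymptotics, but at greater cost and without the quadrature.
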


The last result concerns the case $(\mathcal{H}_{5}$) where $\mu <\mu _{0}.$
It extends the main result of \cite[Theorem 1.1]{WeDu} and the one of \cite[%
Theorem 1.1]{CiFa} to the case of quasilinear equation (\ref{pq}), and the
proof is quite shorter.

\begin{theorem}
\label{H5nonrad}\textbf{Case} ($\mathcal{H}_{5}$) Let $\mu <\mu _{0}$. Then

\noindent (i) there is the unique global solution in $\mathbb{R}%
^{N}\backslash \left\{ 0\right\} $, radial: 
\begin{equation}
u^{\ast }(x)=a^{\ast }\left\vert x\right\vert ^{-\gamma };  \label{ne}
\end{equation}%
(ii) any solution in $B_{r_{0}}\backslash \left\{ 0\right\} .$ (resp. $%
\mathbb{R}^{N}\backslash \overline{B_{r_{0}}}$) satisfies%
\begin{equation}
\lim_{\left\vert x\right\vert \longrightarrow 0}\left\vert x\right\vert
^{\gamma }u=a^{\ast }\text{ (resp. }\lim_{\left\vert x\right\vert
\longrightarrow \infty }\left\vert x\right\vert ^{\gamma }u=a^{\ast }\text{).%
}  \label{nou}
\end{equation}%
Moreover if $\gamma =0$ and $p>2$, then $u$ is constant for small $%
\left\vert x\right\vert $ small enough (resp. for large $\left\vert
x\right\vert $). There exist local solutions of each type.
\end{theorem}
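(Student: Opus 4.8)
The plan is to reduce everything to the radial autonomous system obtained from the scaling $T_k$ at (\ref{scal}), and then to transfer the radial conclusions to general solutions via the comparison principle of Theorem \ref{clef}. First I would set $u(x)=r^{-\gamma}w(t)$ with $t=\ln r$, so that the radial version of (\ref{pql}) becomes an autonomous second-order system in $(w,w')$; the stationary points are $w\equiv 0$ and $w\equiv a^{\ast}$, where $(a^{\ast})^{q+1-p}=\varphi(\gamma)$. The crucial arithmetic input is that when $\mu<\mu_0$ one has $\varphi(\gamma)>0$ for \emph{every} $\gamma$ (indeed $\varphi\ge\mu_0-\mu>0$ everywhere), so $u^{\ast}$ always exists; moreover $\varphi$ has \emph{no real root}, i.e. $S_1,S_2$ do not exist, which is exactly why the "other" asymptotic regimes present in Theorems \ref{H1nonrad}--\ref{H3nonrad} disappear here. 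A phase-plane analysis — linearizing at $a^{\ast}$ and using the energy/Lyapunov functional associated with the autonomous system, together with monotonicity of the Pohozaev-type functional — should show that every radial trajectory defined near $t=-\infty$ (i.e. near $x=0$) converges to $a^{\ast}$, and similarly every radial trajectory defined near $t=+\infty$ converges to $a^{\ast}$; the only trajectory defined on all of $\mathbb{R}$ is the constant $w\equiv a^{\ast}$, giving (i) in the radial class and (ii) for radial $u$.

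Next I would remove the radiality assumption. For a positive solution $u$ in $B_{r_0}\setminus\{0\}$, one compares $u$ on a sphere $|x|=r_1<r_0$ with the radial solutions having boundary data $\max_{|x|=r_1}u$ and $\min_{|x|=r_1}u$; by Theorem \ref{clef} (the comparison of any nonradial solution with radial ones) $u$ is squeezed between two radial solutions, each of which satisfies $\lim_{|x|\to 0}|x|^{\gamma}\,\cdot=a^{\ast}$ by the radial analysis, hence $\lim_{|x|\to 0}|x|^{\gamma}u=a^{\ast}$. The exterior-domain statement is symmetric, replacing $r\to 0$ by $r\to\infty$ (note, as the authors warn in Remark \ref{Kelvin}, that one should argue directly rather than via Kelvin transform, but the phase-plane picture at $t=+\infty$ is available by the same autonomous-system argument). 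For a global solution, both limits hold, and a maximum-principle / comparison argument on annuli forces $u\equiv u^{\ast}$: any global solution is trapped between two radial global solutions, and by the radial uniqueness the only radial global solution is $u^{\ast}$, so $u=u^{\ast}$, proving (i) in full. Existence of local solutions of each type (near $0$ and near $\infty$) follows from solving the ODE with prescribed Cauchy data at an interior radius and checking the trajectory stays in the relevant region.

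Finally, the statement about $\gamma=0$, $p>2$: here (\ref{pqzero}) holds, $u^{\ast}\equiv|\mu|^{1/(q+1-p)}$ is the constant solution, and the claim is that any local solution is \emph{identically constant} near the singularity (resp. near infinity), not merely asymptotically so. I would prove this by examining the autonomous system at the rest point $w=a^{\ast}$ when $\gamma=0$: because the $p$-Laplacian is degenerate and $p>2$, the linearization degenerates in a way that allows trajectories to reach the equilibrium \emph{in finite "time"} $t$, i.e. $u$ becomes exactly equal to $a^{\ast}$ for all $r$ below (resp. above) some threshold — this is the "locally constant solution" phenomenon flagged in the introduction. Concretely one shows that if $u(r_1)=a^{\ast}$ and $u'(r_1)=0$ at some $r_1$ then $u\equiv a^{\ast}$ on an interval, while the ODE analysis shows every non-constant trajectory approaching $a^{\ast}$ must actually hit it with zero derivative at a finite radius when $p>2$ (a sharp ODE estimate using the homogeneity $p-1>1$ of the principal term). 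The main obstacle I anticipate is precisely this last point: making rigorous the finite-time extinction of the phase variable for the degenerate system — one must rule out that the trajectory merely converges exponentially (which is what happens for $p=2$, where the solution is only asymptotically constant) — and this requires a careful comparison of the nonlinear ODE with an explicitly solvable model equation near the equilibrium. Everything else (the asymptotics, uniqueness of the global solution, and existence of local solutions of each type) is a routine consequence of the phase-plane description and the comparison Theorem \ref{clef}.
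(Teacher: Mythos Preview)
Your proposal is correct and follows essentially the same strategy as the paper: establish the radial picture by phase-plane analysis, then squeeze an arbitrary solution between two radial ones via Theorem \ref{clef}. The differences are mostly in packaging. You work in the variables $(w,w_t)$ with $w=r^{\gamma}u$ and invoke an energy/Lyapunov functional to force convergence to $a^{\ast}$; the paper instead uses its $(G,V)$ system (\ref{SGV}) and the general Lemma \ref{conver}, whose point is that Osserman's estimate bounds $V$, hence $(G,V)$ is bounded, and since for $\mu<\mu_0$ the \emph{only} fixed point is $\mathbf{M}_0$ (your observation that $\varphi$ has no real root is exactly this), Poincar\'e--Bendixson forces convergence to $\mathbf{M}_0$ once cycles are excluded. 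Note that the Pohozaev energy $\mathcal{E}$ of Lemma \ref{ener} is only strictly monotone when $D=N-p-p\gamma\neq 0$; in the borderline case $\gamma=\frac{N-p}{p}$ the paper rules out periodic orbits by a direct invariant-region argument rather than by energy, so your Lyapunov sketch would need a separate patch there. For the $\gamma=0$, $p>2$ clause your intuition (finite-time arrival at the equilibrium due to the degeneracy $p-1>1$) is precisely what the paper proves in Lemma \ref{gammazero}, via the differential inequality $G_{tt}+(N-p)G_t\sim k|G|^{\frac{2-p}{p-1}}G$ and an explicit integration showing $G^{\frac{p-2}{2(p-1)}}$ is linearly bounded in $t$; your ``comparison with an explicitly solvable model'' is exactly this step. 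One small imprecision: the solution is constant only on one side of the hitting radius (e.g.\ for $r\le\rho$), not on a full two-sided interval---the non-uniqueness of the degenerate Cauchy problem is what allows a nontrivial branch on the other side.
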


\subsection{Other formulation of the classification (\protect\ref{cases})%
\protect\bigskip}

We can formulate the different assumptions of (\ref{cases}) in another way,
distinguish according to the sign of $\gamma $: when $\mu \geq \mu _{0}$, we
define 
\begin{equation*}
\mathbf{q}_{1}=p-1+\frac{p+\theta }{S_{1}},\qquad \mathbf{q}_{2}=p-1+\frac{%
p+\theta }{S_{2}},\quad \text{if }\mu \neq 0.
\end{equation*}%
These two critical values of $q$ can be involved, according to the value of $%
\theta $ and $\mu $. For the case $\mu =\mu _{0}$, 
\begin{equation*}
\mathbf{q}_{1}=\mathbf{q}_{2}=\mathbf{q}_{s}:=\frac{N(p-1)+p+p\theta }{N-p}
\end{equation*}%
is the Sobolev radial exponent.\medskip

We get the following equivalences:

\noindent $\bullet $ when $p+\theta >0$, then $\mathbf{q}_{1}\leq \mathbf{q}%
_{2},$ and there holds $\mathbf{q}_{1}>p-1$, and $\mathbf{q}%
_{2}>p-1\Longleftrightarrow \mu <0$, and 
\begin{align*}
\text{(}\mathcal{H}_{1}\text{)}& \Longleftrightarrow \text{ }\mu \geq \mu
_{0}\qquad \text{and\quad\ }1<q<\mathbf{q}_{1}, \\
\text{(}\mathcal{H}_{2}\text{)}& \Longleftrightarrow 0>\mu \geq \mu
_{0}\qquad \text{and\quad\ }\mathbf{q}_{2}<q, \\
\text{(}\mathcal{H}_{3}\text{)}& \Longleftrightarrow \text{ }(\mu \geq 0%
\text{ and }\mathbf{q}_{1}\leq q)\quad \text{or\quad }(0>\mu >\mu _{0}\text{
and }\mathbf{q}_{1}\leq q\leq \mathbf{q}_{2}), \\
\text{(}\mathcal{H}_{4}\text{)}& \Longleftrightarrow \mu =\mu _{0}\qquad 
\text{and}\quad q=\mathbf{q}_{s};
\end{align*}

\noindent $\bullet $ when $p+\theta <0$, there holds $\mathbf{q}_{1}<p-1$,
and $\mathbf{q}_{2}>p-1\Longleftrightarrow \mu >0$, then ($\mathcal{H}_{1}$)
and ($\mathcal{H}_{4}$) are empty,%
\begin{eqnarray*}
\text{(}\mathcal{H}_{2}\text{)} &\Longleftrightarrow &\mu \leq 0\qquad \text{%
or\quad (}\mu >0\text{ and }1<q\leq \mathbf{q}_{2}\text{)}, \\
\text{(}\mathcal{H}_{3}\text{)} &\Longleftrightarrow &\text{ }\mu >0\qquad 
\text{and\quad }\mathbf{q}_{2}\leq q;
\end{eqnarray*}%
$\bullet $ when $p+\theta =0$, then $\gamma =0$, so ($\mathcal{H}_{1}$) and (%
$\mathcal{H}_{4}$) are still empty, and 
\begin{equation*}
\text{ (}\mathcal{H}_{2}\text{)}\Longleftrightarrow 0>\mu \geq \mu
_{0},\qquad \text{(}\mathcal{H}_{3}\text{) }\Longleftrightarrow \mu \geq 0,
\end{equation*}%
and $u^{\ast }$ exists for any $\mu <0$ and is constant: $u^{\ast }\equiv
a^{\ast }=(-\mu )^{\frac{1}{q+1-p}}.$\bigskip

Let us add a simple corollary relative to the case $\mu =0$, still studied
in \cite{FrVe} for $\theta =0$, and \cite{CiDu} for $\theta +p>0.$ We find
again and complete their results, in particular when $\theta +p\leq 0$,
which to our knowledge, is new: the existence of solutions in an exterior
domain such that $\lim_{\left\vert x\right\vert \rightarrow \infty
}u=k_{2}>0 $, first obtained in \cite{BiGa} is a \textbf{surprising}
significant result. Here $S_{2}=0$, $S_{1}=\frac{N-p}{p-1}$, and there is
only one critical value 
\begin{equation}
\mathbf{q}_{1}=\mathbf{q}_{c}=\frac{(p-1)(N+\theta )}{N-p}.  \label{qc}
\end{equation}%
When $\theta +p>0$, ($\mathcal{H}_{1}$)$\Longleftrightarrow $ $1<q<\mathbf{q}%
_{1}$, ($\mathcal{H}_{3}$)$\Longleftrightarrow $ $\mathbf{q}_{1}\leq q$, and
the other cases do not hold. When $\theta +p<0$, then ($\mathcal{H}_{2}$)
always hold.

\begin{corollary}
\label{muzero}Consider the Henon type equation with $\theta \in \mathbb{R}:$%
\begin{equation}
-\Delta _{p}u+\left\vert x\right\vert ^{\theta }u^{q}=0.  \label{ep}
\end{equation}

\noindent (i) Suppose $\theta +p>0$ and $q<\mathbf{q}_{c}$, where $\mathbf{q}%
_{c}$ is defined at (\ref{qc}).Then any solution $u\not\equiv 0$ in $%
B_{r_{0}}\backslash \left\{ 0\right\} $ satisfies either $\lim_{\left\vert
x\right\vert \rightarrow 0}\left\vert x\right\vert ^{\gamma }u=a^{\ast }$,
or $\lim_{\left\vert x\right\vert \rightarrow 0}\left\vert x\right\vert
^{N-p}u=k_{1}>0$, or $u$ extends as a solution in $B_{r_{0}}.$ Any positive
solution in $\mathbb{R}^{N}\backslash \overline{B_{r_{0}}}$ satisfies $%
\lim_{\left\vert x\right\vert \rightarrow \infty }\left\vert x\right\vert
^{\gamma }u=a^{\ast }.$ The global solutions in $\mathbb{R}^{N}\backslash
\left\{ 0\right\} $ are radial, given by

$\bullet $ $u^{\ast }=a^{\ast }\left\vert x\right\vert ^{-\gamma }.$

$\bullet $ for any $k_{1}>0$,there exist a unique global solution such that 
\begin{equation*}
\lim_{r\rightarrow 0}\left\vert x\right\vert ^{N-p}u=k_{1}>0,\qquad
\lim_{r\rightarrow \infty }\left\vert x\right\vert ^{\gamma }u=a^{\ast }.
\end{equation*}

\noindent (ii) Suppose $\theta +p>0$ and $q\geq \mathbf{q}_{c}$. Any
solution $u\not\equiv 0$ in $B_{r_{0}}\backslash \left\{ 0\right\} $ extends
a solution in $B_{r_{0}}.$ Any positive solution in $\mathbb{R}%
^{N}\backslash \overline{B_{r_{0}}}$ satisfies $\lim_{\left\vert
x\right\vert \rightarrow \infty }\left\vert x\right\vert ^{N-p}u=k_{1}>0.$
There is no positive solution in $\mathbb{R}^{N}\backslash \left\{ 0\right\}
.\medskip $

(iii\noindent ) Suppose $\theta +p<0$, hence $\gamma <0$. Then any solution $%
u\not\equiv 0$ in $B_{r_{0}}\backslash \left\{ 0\right\} $ satisfies $%
\lim_{\left\vert x\right\vert \rightarrow 0}\left\vert x\right\vert ^{\gamma
}u=a^{\ast }.$ Any solution in $\mathbb{R}^{N}\backslash \overline{B_{r_{0}}}
$ satisfies either $\lim_{\left\vert x\right\vert \rightarrow \infty
}\left\vert x\right\vert ^{\gamma }u=a^{\ast }$, or $\lim_{\left\vert
x\right\vert \rightarrow \infty }u=k_{2}>0$, or $\lim_{\left\vert
x\right\vert \rightarrow \infty }\left\vert x\right\vert ^{N-p}u=k_{1}>0.$
The global solutions are radial:

$\bullet $ $u^{\ast }=a^{\ast }\left\vert x\right\vert ^{-\gamma }.$

$\bullet $ For any $k_{1}>0$,there exist a unique global (increasing)
solution such that 
\begin{equation*}
\lim_{x\longrightarrow 0}\left\vert x\right\vert ^{\gamma }u=a^{\ast
},\qquad \lim_{\left\vert x\right\vert \rightarrow \infty }u=k_{1}>0.
\end{equation*}%
(iv) Suppose $\theta +p=0.$ Then there is no global solution $u\not\equiv 0$%
. Any solution in $B_{r_{0}}\backslash \left\{ 0\right\} $ satisfies%
\begin{equation*}
\lim_{x\longrightarrow 0}\left\vert \ln \left\vert x\right\vert \right\vert
^{-\frac{p-1}{q+1-p}}u=\delta _{N,p,q}>0.
\end{equation*}%
Any solution $u\not\equiv 0$ in $\mathbb{R}^{N}\backslash \overline{B_{r_{0}}%
}$ satisfies 
\begin{equation*}
\lim_{\left\vert x\right\vert \rightarrow \infty }\left\vert x\right\vert
^{N-p}u=k_{1}>0,
\end{equation*}%
and there exist such solutions for any $k_{1}>0$.\medskip
\end{corollary}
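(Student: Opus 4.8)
The plan is to obtain the corollary directly from the general Theorems \ref{H1nonrad}, \ref{H2nonrad} and \ref{H3nonrad} by specializing them to $\mu =0$, the value at which the whole classification collapses onto the single critical exponent $\mathbf{q}_c$.

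First I would record the elementary facts valid at $\mu =0$: one has $\mu _0<0=\mu $, the two roots of $\varphi $ in (\ref{fau}) are $S_2=0$ and $S_1=\frac{N-p}{p-1}$, the special solution of (\ref{condi}) is $u^{\ast }=a^{\ast }|x|^{-\gamma }$ with $(a^{\ast })^{q+1-p}=\varphi (\gamma )$, and $\mathbf{q}_1=\mathbf{q}_c=\frac{(p-1)(N+\theta )}{N-p}$ is the only finite critical value, the second one degenerating because $S_2=0$. Using the reformulation of (\ref{cases}) in Section 2.3, I would then match each part of the corollary with one of the hypotheses: for $\theta +p>0$, the condition $q<\mathbf{q}_c$ is exactly $(\mathcal{H}_1)$ and $q\geq \mathbf{q}_c$ is exactly $(\mathcal{H}_3)$ with $S_2<\gamma \leq S_1$; for $\theta +p<0$ one always lies in $(\mathcal{H}_2)$; and for $\theta +p=0$ one lies in $(\mathcal{H}_3)$ with the degenerate equality $\gamma =S_2=0$. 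The hypotheses $(\mathcal{H}_4)$ and $(\mathcal{H}_5)$ never occur here.

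Next I would translate the relevant theorem in each part. For (i), Theorem \ref{H1nonrad}(i) with $\mu >\mu _0$ gives, for a solution in $B_{r_0}\backslash \{0\}$, the trichotomy $|x|^{\gamma }u\to a^{\ast }$, or $|x|^{S_1}u\to k_1$, or $|x|^{S_2}u=u\to k_2$; in the last instance $u$ is bounded near $0$, and since $\theta >-p$ makes $|x|^{\theta }u^{q}\in L_{loc}^1$, the standard removable-singularity result for $\mathcal{L}_{p,0}$ shows $u$ extends to a solution in $B_{r_0}$, which is the third alternative of the statement; the exterior and global assertions are Theorem \ref{H1nonrad}(ii)-(iii) read off verbatim. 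Parts (iii) and (ii) are the same mechanical reading of Theorems \ref{H2nonrad} and \ref{H3nonrad}: in (iii) the branch $|x|^{S_2}u\to k_2$ becomes $u\to k_2$, while in (ii) the conclusion $|x|^{S_2}u\to k_2$ of Theorem \ref{H3nonrad}(i) again amounts to boundedness near $0$, hence removability, and when $q=\mathbf{q}_c$ exactly ($\gamma =S_1$) one checks from the radial analysis that no solution with a genuine singularity at $0$ exists, so every punctured-ball solution extends. Finally in (iv), $\gamma =S_2=0$, and Theorem \ref{H3nonrad}(i),(iii) give the logarithmic profile near $0$ and the non-existence of global solutions; the exterior asymptotics are not literally the ``$S_2<\gamma $'' regime of Theorem \ref{H3nonrad}(ii), so I would argue separately that the only admissible behaviour at infinity is $|x|^{S_1}u\to k_1$ --- read off from the radial phase-plane study, where no equilibrium $u^{\ast }$ is available --- and transfer it to arbitrary solutions via the comparison Theorem \ref{clef}, existence for each $k_1>0$ coming from the corresponding radial construction.

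The corollary carries essentially no new content beyond the general theorems, so there is no serious obstacle. The only points requiring a little care are the removable-singularity step in the borderline situations where $S_2=0$, so that the ``$k_2$''-branch of the general theorems degenerates into mere boundedness near the singularity, and the degenerate sub-case $\gamma =S_2=0$ of part (iv), whose exterior asymptotics fall outside the generic regime of Theorem \ref{H3nonrad}(ii) and need the short separate argument indicated above. Checking that the exponents displayed in the statement coincide with $S_1$, $S_2$ and $\gamma $ evaluated at $\mu =0$ is then routine.
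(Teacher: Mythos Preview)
Your proposal is correct and takes essentially the same approach as the paper: the corollary is stated without proof, being presented as a direct specialization of Theorems \ref{H1nonrad}--\ref{H3nonrad} once one notes (as the paper does in the paragraph preceding the corollary) that for $\mu=0$ one has $S_2=0$, $S_1=\frac{N-p}{p-1}$, and the single critical value $\mathbf{q}_1=\mathbf{q}_c$, together with the equivalences of Section~2.3 identifying each sign of $\theta+p$ with one of the hypotheses $(\mathcal{H}_1)$, $(\mathcal{H}_2)$, $(\mathcal{H}_3)$. You are in fact slightly more careful than the paper in flagging the removable-singularity step behind the ``$u$ extends'' alternatives and in noting that the exterior case of part~(iv) (where $\gamma=S_2=0$) is not literally covered by the strict inequality $S_2<\gamma$ in Theorem~\ref{H3nonrad}(ii) and must be read off from the radial phase-plane plus Theorem~\ref{clef}.
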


\begin{remark}
\label{expli}Moreover, when $\theta =-\frac{p(N-1)}{p-1}$, thus $\theta +p<0$%
, there exist \textbf{explicit} of equation (\ref{ep}), given by 
\begin{equation*}
u(r)=(c+\epsilon d_{p,q,N}r^{\frac{p-N}{p-1}})^{-\frac{p}{q+1-p}},\qquad
\epsilon =\pm 1,\quad c\in \mathbb{R}\backslash \left\{ 0\right\} ,
\end{equation*}%
where $d_{p,q,N}=\frac{(p-1)(q-p+1)}{p(N-p)}(\frac{p}{(p-1)(q+1)})^{\frac{1}{%
p}},$ global if $c>0$ and $\epsilon =1,$ local near $0$ if $c<0$ and $%
\epsilon =1,$ or near $\infty $ if $c>0$ and $\epsilon =-1,$ see also \cite[%
Theorem 4.17]{BiGa}.
\end{remark}

\section{First specific radial cases}

Here we only indicate the method of the proofs, since the results will be
covered by the description of the general case.

\subsection{The radial case $p>1,\protect\mu =0,\protect\theta \in \mathbb{R}
$}

In the case $\mu =0$, equation (\ref{pq}) reduces to the equation (\ref{ep}%
). The radial solutions of Henon type equations with the two signs 
\begin{equation}
-\Delta _{p}u-\varepsilon \left\vert x\right\vert ^{\theta }u^{q}=0,\qquad
\varepsilon =\pm 1  \label{epsi}
\end{equation}%
have been exhaustively studied for $q>p-1$ in \cite{BiGa}, for any value of $%
N,p,\theta $, by a phase-plane technique. Here we are concerned by the case $%
p<N$, see \cite[Theorems 4.9,4.14,4.15]{BiGa} for noncritical cases, and 
\cite[Theorems 4.10,4.18,4.19]{BiGa} for critical ones.

\subsection{The radial case $p=2,\protect\mu ,\protect\theta \in \mathbb{R}$}

When $p=2<N$, equation (\ref{pq}) reduces to equation (\ref{casp2}), studied
in \cite{Ci},\cite{WeDu},\cite{CiFa}. In the radial case, the study can also
be reduced completely to the case $\mu =0$, by an elementary proof:

\begin{lemma}
\label{prod}Let $p=2<N$ and $\mu \geq \mu _{0}=-\left( \frac{N-2}{2}\right)
^{2}.$ Let 
\begin{equation*}
u=r^{-S_{i}}w,\quad \text{with }i=1\text{ or }2,
\end{equation*}%
where $S_{1},S_{2}$ are the roots of (\ref{fau}), given by 
\begin{eqnarray*}
S_{1} &=&\frac{N-2+\sqrt{(N-2)^{2}+4\mu }}{2}=\frac{N-2}{2}+\sqrt{\mu -\mu
_{0}}, \\
S_{2} &=&\frac{N-2-\sqrt{(N-2)^{2}+4\mu }}{2}=\frac{N-2}{2}-\sqrt{\mu -\mu
_{0}}.
\end{eqnarray*}%
Then $u$ is a radial solution of equation (\ref{casp2}) if and only if $w$
satisfies a Henon type equation in another dimension (possibly not an
integer) with a Henon-factor $\left\vert x\right\vert ^{\sigma }$, where $%
\sigma $ depends on $\theta $,$\mu $ and also $q$: 
\begin{equation}
-\Delta ^{(\mathbf{N)}}w+r^{\sigma }w^{q}=0,\qquad \mathbf{N}%
=N-2S_{i},\qquad \sigma =\theta -S_{i}(q-1).  \label{newN}
\end{equation}%
In particular, for $i=2,$ then $\mathbf{N}>2$.
\end{lemma}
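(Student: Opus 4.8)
The plan is to substitute $u=r^{-S_i}w$ directly into the radial form of equation (\ref{casp2}) and simplify, using the defining relation $\varphi(S_i)=0$ to kill the lower-order terms. First I would write the radial Laplacian in dimension $N$: for $u=u(r)$,
\[
\Delta u=u''+\frac{N-1}{r}u'.
\]
Plugging $u=r^{-S}w$ (writing $S=S_i$ for brevity) and computing $u'=-Sr^{-S-1}w+r^{-S}w'$ and $u''=S(S+1)r^{-S-2}w-2Sr^{-S-1}w'+r^{-S}w''$, one gets
\[
\Delta u=r^{-S}\Bigl(w''+\frac{N-2S-1}{r}w'+\frac{S(S+1)-(N-1)S}{r^{2}}w\Bigr)
=r^{-S}\Bigl(w''+\frac{N-2S-1}{r}w'+\frac{S^{2}-(N-2)S}{r^{2}}w\Bigr).
\]
The key observation is that the coefficient of $w/r^2$ is exactly $-\varphi(S)+\mu$ when $p=2$, since (\ref{fau}) gives $\varphi(S)=S^2-(N-2)S-\mu$. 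Hence $-\Delta u+\mu u/r^2$ produces, inside $r^{-S}(\cdots)$, the combination $-\bigl(S^2-(N-2)S\bigr)w/r^2+\mu w/r^2=\varphi(S)\,w/r^2=0$ by the choice $\varphi(S_i)=0$. So the zeroth-order term disappears and we are left with a pure $\mathbf{N}$-dimensional radial Laplacian $\Delta^{(\mathbf N)}w=w''+\frac{\mathbf N-1}{r}w'$ with $\mathbf N=N-2S$, exactly matching $N-2S_i-1$ in the first-order coefficient.

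Next I would handle the nonlinear term: $|x|^{\theta}u^q=r^{\theta}(r^{-S}w)^q=r^{\theta-Sq}w^q$, and pulling out the common factor $r^{-S}$ (to write the whole equation as $r^{-S}$ times something) leaves $r^{\theta-Sq+S}w^q=r^{\theta-S(q-1)}w^q$, so $\sigma=\theta-S_i(q-1)$ as claimed. Putting the two pieces together, $-\Delta u+\mu u/r^2+r^\theta u^q=0$ becomes $r^{-S}\bigl(-\Delta^{(\mathbf N)}w+r^\sigma w^q\bigr)=0$, i.e. (\ref{newN}). Since $r^{-S}>0$, the equivalence $u$ solves (\ref{casp2}) $\iff$ $w$ solves (\ref{newN}) is immediate in the radial setting. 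Finally, the remark $\mathbf N>2$ for $i=2$ follows from $S_2\le\frac{N-2}{2}$ (see (\ref{sel})), which gives $\mathbf N=N-2S_2\ge N-(N-2)=2$, and strictly $\mathbf N>2$ when $S_2<\frac{N-2}{2}$, i.e. whenever $\mu>\mu_0$; the borderline $\mu=\mu_0$ gives $S_2=\frac{N-p}{p}=\frac{N-2}{2}$ and $\mathbf N=2$ exactly — one should note that the claim "$\mathbf N>2$" is for $i=2$ reading $\mathbf N\ge 2$, or restrict to $\mu>\mu_0$, whichever the authors intend.

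There is essentially no serious obstacle here: the whole lemma is a one-line change of variables once one notices that the Hardy term and the singular part of the Laplacian of $r^{-S}w$ combine into $\varphi(S)w/r^2$. The only mild care needed is bookkeeping of exponents (making sure the first-order coefficient $N-2S-1$ and the new dimension $\mathbf N=N-2S$ are consistent, and that the factor $r^{-S}$ can legitimately be divided out), and noting that $\mathbf N$ need not be an integer, so "dimension $\mathbf N$" is to be understood purely formally as the equation $w''+\frac{\mathbf N-1}{r}w'=\cdots$ — which is exactly how $\Delta^{(\mathbf N)}$ should be read throughout. If one wants to avoid even writing the radial Laplacian, an alternative is to verify the identity on power functions $w=r^{-\beta}$ and invoke linearity of the computation, but the direct substitution is cleanest and I would present that.
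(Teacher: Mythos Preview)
Your proposal is correct and follows essentially the same direct-substitution route as the paper: compute $u',u''$, collect terms, use $\varphi(S_i)=0$ to kill the $w/r^2$ coefficient, and read off $\mathbf N=N-2S_i$ and $\sigma=\theta-S_i(q-1)$. Your observation about the borderline case $\mu=\mu_0$ (where $\mathbf N=2$ rather than $\mathbf N>2$) is sharper than the paper's statement; the paper's own proof in fact computes $\mathbf N-2=\sqrt{(N-2)^2+4\mu}$, which is strictly positive only for $\mu>\mu_0$, so your caveat is well taken.
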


\begin{proof}
By definition, $S^{2}-(N-2)S-\mu =0.$ Setting $u=r^{-S}w$, we find 
\begin{equation*}
u^{\prime }=r^{-S}w^{\prime }-Sr^{-S-1}w,\qquad u^{\prime \prime
}=r^{-S}w^{\prime \prime }-2Sr^{-S-1}w^{\prime }+S(S+1)r^{-S-2}w,
\end{equation*}%
then $u$ satisfies equation (\ref{casp2}) if and only if 
\begin{align*}
0& =-r^{-S}w^{\prime \prime }+2Sr^{-S-1}w^{\prime
}+S(S+1)r^{-S-2}w-(N-1)(r^{-S-1}w^{\prime }-Sr^{-S-2}w) \\
& +\mu r^{-S-2}w-r^{-Sq+\theta }w^{q} \\
& =r^{-S}(-w^{\prime \prime }-(N-2S-1)\frac{w^{\prime }}{r}+\frac{w}{r^{2}}%
(\mu -S^{2}+(N-2)S)-r^{\theta -S(q-1)}w^{q}) \\
& =r^{-S}(-w^{\prime \prime }-(N-2S-1)\frac{w^{\prime }}{r}-r^{\theta
-S(q-1)}w^{q})=r^{-S}(-\Delta ^{(\mathbf{N)}}w-r^{\sigma }w^{q}),
\end{align*}%
hence the conclusion. If $S=S_{2}$, then $\mathbf{N}-2=N-2S_{2}-2=\frac{1}{2}%
\sqrt{(N-2)^{2}+4\mu }>0.\medskip $
\end{proof}

As a consequence, all the radial study of equation (\ref{casp2}) can be
obtained by making for example the change $u=r^{-S_{2}}w$, and then applying
the results of \cite[Theorems 4.9,4.14;4.15]{BiGa} and \cite[Theorems
4.10,4.18,4.19]{BiGa} with $p=2.$ We find again the study of \cite[Chapter 7]%
{Ci} concerning the radial solutions of equation (\ref{casp2}).

\begin{remark}
The introduction of equation \ref{newN} for the radial solutions is new,
even if the change of unknown, first introduced in \cite{BiGr} for $q<1$,
was used in \cite{Ci} and \cite{CiFa}. It gives an interesting explanation
of the link between the case $\mu =0$ and the general case. For example: by
the change $u=r^{-S_{2}}w$, the radial solutions $u$ in $B_{r_{0}}\backslash
\left\{ 0\right\} $ such that the solutions $\lim_{r\longrightarrow
0}r^{S_{1}}u=k_{1}>0$ correspond to solutions $w$ such that $%
\lim_{r\longrightarrow 0}r^{\mathbf{N}-2}w=k_{1}$, and the solutions $u$
such that $\lim_{r\longrightarrow 0}r^{S_{2}}u=k_{2}>0$ correspond to the
solutions solutions $w$ such that $\lim_{r\longrightarrow 0}w=k_{2}>0$ and
then $w$ extend to a solution in $B_{r_{0}}.\medskip $
\end{remark}

\begin{remark}
\label{expli2}From this change of unknown, and Remark \ref{expli} we also
deduce explicit solutions of equation (\ref{casp2}):

$u(r)=r^{-S_{i}}(c\pm dr^{2-\mathbf{N}})^{-\frac{2}{q-1}}=r^{-S_{i}}(c\pm
dr^{2-N+2S_{i}})^{-\frac{2}{q-1}}$ when $\sigma =-2(\mathbf{N}-1)$, that
means%
\begin{equation*}
u_{1}(r)=r^{-S_{1}}(c\pm dr^{2\sqrt{\mu -\mu _{0}}})^{-\frac{2}{q-1}}\text{%
when }\theta =S_{1}(q-1)-2(N-1-2S_{1}),
\end{equation*}%
\begin{equation*}
u_{2}(r)=r^{-S_{2}}(c\pm dr^{-2\sqrt{\mu -\mu _{0}}})^{-\frac{2}{q-1}}\text{
when }\theta =S_{2}(q-1)-2(N-1-2S_{2}),
\end{equation*}%
where $c>0$ and $d=d_{N,q}>0.$ In particular we find again the global
solutions given in \cite[p.470]{CiFa}.
\end{remark}

\section{General radial case $p>1,\protect\mu ,\protect\theta \in \mathbb{R}$
\label{mainrad}}

Here we make a full description of the radial solutions of equation (\ref{pq}%
), where we find again in particular the specific cases above. \textbf{In
case} $p\neq 2$, \textbf{we cannot reduce the study to the case }$\mu =0$,
as we did at Lemma \ref{prod} in case $p=2.\bigskip $

The radial study by phase-plane techniques reducing the problem to an
autonomous system of two equations has the advantage that it gives a
generally \textbf{exhaustive description of the solutions}, and in a very
precise way. And it is the keypoint for obtaining the behaviours of all the
possibly nonradial solutions, see Theorem \ref{clef}. We consider the radial
form of equation (\ref{pq}):

\begin{equation}
\mathcal{L}_{p,\mu }^{rad}u+r^{\theta }u^{q}=0,\text{ where }\mathcal{L}%
_{p,\mu }^{rad}u=-\frac{d}{dr}(\left\vert u^{\prime }\right\vert
^{p-2}u^{\prime })-\frac{N-1}{r}\left\vert u^{\prime }\right\vert
^{p-2}u^{\prime }+\mu \frac{u^{p-1}}{r^{p}}.  \label{onep}
\end{equation}

\subsection{A common formulation of the problem}

For studying the l solutions of (\ref{onep}), and more generally of 
\begin{equation}
\mathcal{L}_{p,\mu }^{rad}u=\varepsilon r^{\theta }u^{q},\qquad \varepsilon
=\pm 1,  \label{gepsi}
\end{equation}%
the first attempt is to make a classical transformation, introduced in \cite%
{Bi} for the equation with a source term $-\Delta _{p}u=u^{q}$, 
\begin{equation}
X=r^{\gamma }u=e^{\gamma t}U,\qquad Y=-r^{(\gamma +1)(p-1)}\left\vert
u^{\prime }\right\vert ^{p-2}u^{\prime }=-e^{\gamma (p-1)t}\left\vert
U_{t}\right\vert ^{p-2}U_{t},\qquad t=\ln r,  \label{X}
\end{equation}%
where $\gamma $ has been defined at (\ref{gam}). For equation (\ref{gepsi}),
it leads to the autonomous system 
\begin{equation}
\left\{ 
\begin{array}{ccc}
X_{t} & = & \gamma X-\left\vert Y\right\vert ^{\frac{2-p}{p-1}}Y, \\ 
Y_{t} & = & -(N-p-\gamma (p-1))Y-\varepsilon X^{q}-\mu X^{p-1}.%
\end{array}%
\right.  \label{SXY}
\end{equation}%
A system of this type was still used in \cite{AbFePe} for equation (\ref%
{gepsi}) with $\varepsilon =1,\theta =0$, when $q=\frac{N(p-1)+p}{N-p}$ is
the critical exponent (which was the object of many works). In particular
system (\ref{SXY}) admits a Pohozaev type energy function, above all used
for $\varepsilon =1:$

\begin{lemma}
\label{ener}Consider the system (\ref{SXY}). Let 
\begin{equation*}
D=N-p-p\gamma =\frac{(N-p)(q-\mathbf{q}_{s})}{q+1-p},\text{ with }\mathbf{q}%
_{s}=\frac{N(p-1)+p+p\theta }{N-p},
\end{equation*}%
\begin{equation}
\mathcal{E=}\frac{p-1}{p}\left\vert Y\right\vert ^{\frac{p}{p-1}}-\gamma
XY+(D\left\vert \gamma \right\vert ^{p-2}\gamma -\mu )\frac{X^{p}}{p}%
+\varepsilon \frac{X^{q+1}}{q+1}.  \label{fv}
\end{equation}%
Then 
\begin{equation}
\mathcal{E}_{t}=-D(\gamma X-\left\vert Y\right\vert ^{\frac{2-p}{p-1}%
}Y)(\left\vert \gamma X\right\vert ^{p-2}(\gamma X)-Y)  \label{deri}
\end{equation}%
has the sign of $-D$, that means $\mathcal{E}$ is increasing $q<q_{s}$,
decreasing for $q>\mathbf{q}_{s}$ and constant for $q=\mathbf{q}_{s}.$
\end{lemma}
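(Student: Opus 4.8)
The plan is to verify \eqref{deri} by direct differentiation of the energy $\mathcal{E}$ along trajectories of \eqref{SXY}, and then to read off the sign from the algebraic identity. First I would compute $\mathcal{E}_t$ term by term, using $\frac{d}{dt}\left(\left\vert Y\right\vert ^{p/(p-1)}\right) = \frac{p}{p-1}\left\vert Y\right\vert ^{(2-p)/(p-1)}Y\,Y_t$ and $\frac{d}{dt}\left(X^{p}\right)=pX^{p-1}X_t$, and then substitute the expressions for $X_t$ and $Y_t$ from the system. The key structural fact that makes everything collapse is that $-\left\vert Y\right\vert ^{(2-p)/(p-1)}Y$ is (up to sign conventions) the quantity playing the role of $U_t$, so that $X_t = \gamma X - \left\vert Y\right\vert ^{(2-p)/(p-1)}Y$ can be written so that the combination $\gamma X - \left\vert Y\right\vert ^{(2-p)/(p-1)}Y$ appears naturally as one of the two factors in \eqref{deri}.

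The main bookkeeping step is to track the cancellations. When one differentiates $-\gamma XY$ one gets $-\gamma X_t Y - \gamma X Y_t$; the term $-\gamma X Y_t$ produces $\gamma(N-p-\gamma(p-1))XY + \varepsilon\gamma X^{q+1} + \mu\gamma X^p$. The $\varepsilon\gamma X^{q+1}$ piece must cancel against the contribution from $\frac{d}{dt}\left(\varepsilon X^{q+1}/(q+1)\right) = \varepsilon X^q X_t = \varepsilon X^q(\gamma X - \left\vert Y\right\vert ^{(2-p)/(p-1)}Y)$, leaving $-\varepsilon X^q\left\vert Y\right\vert ^{(2-p)/(p-1)}Y$; similarly the $\mu\gamma X^p$ and the $-\mu X^p/p$-derivative contributions must combine with the $\frac{p-1}{p}\left\vert Y\right\vert ^{p/(p-1)}$-derivative so that all terms not proportional to $D$ disappear. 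After the dust settles the surviving terms should all carry a factor $D = N-p-p\gamma$, and the remaining expression should factor as $-D$ times the product of $\gamma X - \left\vert Y\right\vert ^{(2-p)/(p-1)}Y$ and $\left\vert \gamma X\right\vert ^{p-2}(\gamma X) - Y$. I would organize this by grouping, after substitution, all terms according to whether they contain the factor $(N-p-\gamma(p-1))$ or the factor $D\left\vert \gamma\right\vert ^{p-2}\gamma$, since $D = (N-p) - p\gamma$ relates these; the identity $D\left\vert\gamma\right\vert^{p-2}\gamma$ appearing in the $X^p$ coefficient of $\mathcal{E}$ is precisely what is engineered so that the non-$D$ terms vanish.

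The hard part will be handling the $p$-Laplacian nonlinearity $\left\vert Y\right\vert ^{(2-p)/(p-1)}Y$ and the power $\left\vert\gamma X\right\vert^{p-2}(\gamma X)$ cleanly when $p\neq 2$, keeping signs straight (both $X$ and $\gamma$ can be negative, e.g. when $p+\theta<0$), and recognizing the final two-factor product without expanding it prematurely. It helps to introduce abbreviations, say $A = \gamma X - \left\vert Y\right\vert ^{(2-p)/(p-1)}Y = X_t$ and $B = \left\vert\gamma X\right\vert ^{p-2}(\gamma X) - Y$, compute $AB$ symbolically, and check that $\mathcal{E}_t + D\,AB = 0$ identically; this reduces the verification to a finite polynomial-type identity in $X$, $Y$ (with the fractional powers treated as formal symbols satisfying $\left(\left\vert Y\right\vert ^{(2-p)/(p-1)}Y\right)\cdot\left\vert Y\right\vert ^{(p-2)/(p-1)} \cdot(\text{sign}) = Y$-type relations). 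Once \eqref{deri} is established, the sign assertion is immediate: $A$ and $B$ are exactly the two factors, and one notes that $A$ and $B$ have the same sign because $s\mapsto \left\vert s\right\vert^{p-2}s$ is increasing, so that setting $f(Y) = \left\vert Y\right\vert^{(2-p)/(p-1)}Y$ (the inverse of $s \mapsto \left\vert s\right\vert^{p-2}s$) we have $A = \gamma X - f(Y)$ and $B = f^{-1}(\gamma X) - Y$ share the sign of $\gamma X - f(Y)$; hence $AB \geq 0$ and $\mathcal{E}_t$ has the sign of $-D$, which by the displayed formula for $D$ is the sign of $-(q-\mathbf{q}_s)$, giving monotone increase for $q<\mathbf{q}_s$, decrease for $q>\mathbf{q}_s$, and $\mathcal{E}$ constant when $q=\mathbf{q}_s$.
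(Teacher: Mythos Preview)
Your approach is essentially the same as the paper's: both differentiate $\mathcal{E}$ term by term, substitute the expressions for $X_t$ and $Y_t$ from \eqref{SXY}, and then factor out the common factor $X_t=\gamma X-\left\vert Y\right\vert^{\frac{2-p}{p-1}}Y$ to obtain the product form \eqref{deri}. The paper organizes the computation by introducing $k=N-p-\gamma(p-1)$ so that $D=k-\gamma$, grouping the $Y_t$–coefficient and the $X_t$–coefficient separately, and then observing that after substitution the $\varepsilon X^{q}$ and $\mu X^{p-1}$ contributions cancel, leaving only terms proportional to $D$; this is exactly the bookkeeping you outline.

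Two small remarks. First, your aside that ``both $X$ and $\gamma$ can be negative'' is not quite right: since $X=r^{\gamma}u$ with $u>0$, one always has $X>0$; only $\gamma$ (and hence $\gamma X$) can be negative, which is what matters for the signed powers. Second, your final sign argument---observing that $s\mapsto\left\vert s\right\vert^{p-2}s$ is an increasing bijection on $\mathbb{R}$ with inverse $s\mapsto\left\vert s\right\vert^{\frac{2-p}{p-1}}s$, so that $\gamma X-\left\vert Y\right\vert^{\frac{2-p}{p-1}}Y$ and $\left\vert\gamma X\right\vert^{p-2}(\gamma X)-Y$ always share the same sign---is more explicit than the paper, which simply asserts that $\mathcal{E}_t$ has the sign of $-D$ without spelling this out. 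That extra sentence is a genuine clarification.
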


\begin{proof}
By simple computation: we set $k=N-p-\gamma (p-1)$ and $D=k-\gamma
=N-p-p\gamma ;$ then 
\begin{align*}
\mathcal{E}_{t}& =\left\vert Y\right\vert ^{\frac{1}{p-1}}Y_{t}-\gamma
XY_{t}-\gamma X_{t}Y+(D\left\vert \gamma \right\vert ^{p-2}\gamma -\mu
)X^{p-1}X_{t}+\varepsilon X^{q}X_{t} \\
& =(\left\vert Y\right\vert ^{\frac{2-p}{p-1}}Y-\gamma X)(-kY+\varepsilon
X^{q}-\mu X^{p-1}) \\
& +(\gamma X-\left\vert Y\right\vert ^{\frac{2-p}{p-1}}Y)(\varepsilon
X^{q}-\gamma Y+(D\left\vert \gamma \right\vert ^{p-2}\gamma -\mu )X^{p-1}) \\
& =(\gamma X-\left\vert Y\right\vert ^{\frac{2-p}{p-1}}Y)(kY-\varepsilon
X^{q}+\mu X^{p-1}+\varepsilon X^{q}-\gamma Y+(D\left\vert \gamma \right\vert
^{p-2}\gamma -\mu )X^{p-1}) \\
& =(\gamma X-\left\vert Y\right\vert ^{\frac{2-p}{p-1}}Y)(A\left\vert \gamma
\right\vert ^{p-2}\gamma X^{p-1}+DY).
\end{align*}
\end{proof}

\subsection{New formulation as an autonomous system}

The system (\ref{SXY}) has the drawback to be singular at $X=0$ or $Y=0$,
according to the value of $p.$ But the main lack is that it has at most two
fixed points; and the study at $(0,0)$ is difficult, and does not show
easily the multiple possibilities of behaviour of the solutions. In \cite%
{BiGi} we have shown that an introduction of the function slope $\frac{%
ru^{\prime }}{u}$in such kind of problems allows to give much more
informations on the solutions, that is what we do in the sequel.
Nevertheless, the change of unknown that we had introduced in the case $\mu
=0$ of equation (\ref{epsi}) with $\varepsilon =-1$, defined by 
\begin{equation*}
S=-\frac{ru_{r}}{u},\qquad Z=r^{\theta +1}\frac{u^{q}}{\left\vert
u_{r}\right\vert ^{p-2}u_{r}},\qquad t=\ln r,
\end{equation*}%
leading to the system 
\begin{equation*}
\left\{ 
\begin{array}{ccc}
S_{t} & = & S(S-\frac{N-p}{p-1}+\frac{Z}{p-1}), \\ 
Z_{t} & = & Z(N+\theta -qS-Z),%
\end{array}%
\right.
\end{equation*}%
cannot be adapted to the case $\mu \neq 0.$ That is why we introduce another
form allowing to treat equation (\ref{pq}) \textbf{for any } $\mu \in 
\mathbb{R}$.

\begin{lemma}
Let $u$ be a radial positive solution of (\ref{pq}). For any $r>0$, let 
\begin{equation}
G(t)=\left\vert S\right\vert ^{p-2}S(r),\text{ where }S=-\frac{ru^{\prime }}{%
u},\qquad V=r^{\theta +p}u^{q+1-p},\qquad t=\ln r.  \label{gv}
\end{equation}%
Then $(G,V)$ satisfies the system 
\begin{equation}
\left\{ 
\begin{array}{ccc}
G_{t} & = & (p-1)\left\vert G\right\vert ^{\frac{p}{p-1}}-(N-p)G-\mu -V, \\ 
V_{t} & = & (q+1-p)V(\gamma -\left\vert G\right\vert ^{\frac{2-p}{p-1}}G).%
\end{array}%
\right.  \label{SGV}
\end{equation}%
\bigskip
\end{lemma}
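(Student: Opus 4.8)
The plan is to derive the system \eqref{SGV} by direct differentiation of the quantities $G$ and $V$ in \eqref{gv} with respect to $t=\ln r$, using the radial equation \eqref{onep}. First I would record the elementary chain-rule facts: for any function $f$ of $r$ one has $\frac{d}{dt}f = r f'$, and in particular $S = -ru'/u$ gives $u'/u = -S/r$. I will also want to express $|u'|^{p-2}u'$ and its logarithmic derivative in terms of $S$ and $u$: since $u'=-Su/r$, we get $|u'|^{p-2}u' = -|S|^{p-2}S\,u^{p-1}r^{-(p-1)} = -G\,u^{p-1}r^{-(p-1)}$, where $G=|S|^{p-2}S$.

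For the $V$-equation, I would compute $\frac{d}{dt}\ln V = (\theta+p) + (q+1-p)\frac{ru'}{u} = (\theta+p) - (q+1-p)S$. Factoring out $q+1-p$ and recalling $\gamma = (p+\theta)/(q+1-p)$ from \eqref{gam}, this is exactly $(q+1-p)(\gamma - S) = (q+1-p)(\gamma - |G|^{(2-p)/(p-1)}G)$, since $|G|^{(2-p)/(p-1)}G = S$. That gives the second line of \eqref{SGV} immediately. For the $G$-equation, I would start from $G = -r^{p-1}u^{-(p-1)}|u'|^{p-2}u'$ and differentiate; alternatively, and more cleanly, rewrite the radial PDE \eqref{onep} as $\frac{d}{dr}\bigl(|u'|^{p-2}u'\bigr) + \frac{N-1}{r}|u'|^{p-2}u' = \mu\frac{u^{p-1}}{r^p} + r^\theta u^q$, substitute $|u'|^{p-2}u' = -G\,u^{p-1}r^{-(p-1)}$, carry out the product-rule differentiation (which produces a term with $G_t$, a term with $(p-1)\frac{u'}{u} = -(p-1)S/r$ times $G$, and the $-(N-1)/r$ term), multiply through by the appropriate power of $r$ and $u^{-(p-1)}$, and collect. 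The term $-(p-1)S\cdot(-G) + \dots$ is where $(p-1)|G|^{p/(p-1)}$ appears, using $SG = S|S|^{p-2}S = |S|^p = |G|^{p/(p-1)}$; the $-(N-1)G$ from the curvature term combines with the $+G$ coming from differentiating $r^{p-1}$ (the $(p-1)G$ from $\frac{d}{dt}r^{p-1}/r^{p-1}$) to yield $-(N-p)G$; and $\mu u^{p-1}r^{-p}$ and $r^\theta u^q$ become, after the normalization, $-\mu$ and $-V$ respectively (the sign from moving them across). This reproduces $G_t = (p-1)|G|^{p/(p-1)} - (N-p)G - \mu - V$.

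The only real bookkeeping hazard is keeping the powers of $r$ and $u$ straight through the product-rule step for $G$, and making sure the signs of the $\mu$ and $V$ contributions land correctly; there is no conceptual obstacle, since $u>0$ guarantees all the expressions $u^{p-1}$, $u^{q+1-p}$ are well-defined and smooth, and $u\in C^1(\Omega)$ with $u'$ of one sign locally (or else $S$ vanishes, handled by continuity) makes the manipulations of $|u'|^{p-2}u'$ legitimate. I would close by remarking that the map $S\mapsto G=|S|^{p-2}S$ is a homeomorphism of $\mathbb{R}$ with inverse $G\mapsto |G|^{(2-p)/(p-1)}G$, so the change of variables \eqref{gv} is reversible and \eqref{SGV} is genuinely equivalent to \eqref{onep} for positive radial solutions. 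The expected main obstacle is purely notational: avoiding sign errors in the $G_t$ computation, which I would guard against by double-checking against the known $\mu=0$ system displayed just above the lemma (setting $\mu=0$ and matching, after the substitution $G = (p-1)^{-1}\bigl((p-1)S\bigr)$-type identifications, with the $(S,Z)$ system there).
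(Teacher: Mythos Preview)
Your proposal is correct and follows essentially the same route as the paper: a direct computation of $G_t$ and $V_t$ via the chain rule and substitution of the radial equation \eqref{onep}, with the paper organizing it by first passing to $U(t)=u(e^t)$ and then dividing by $U^{p-1}$, while you differentiate $G=-r^{p-1}u^{-(p-1)}|u'|^{p-2}u'$ and $\ln V$ directly. The only slip is cosmetic: the term from differentiating $r^{p-1}$ contributes $(p-1)G$, not ``$+G$'' as you first wrote (your parenthetical is correct), and it is this $(p-1)G$ that combines with $-(N-1)G$ to give $-(N-p)G$.
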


\begin{proof}
We set $t=\ln r$ and $u(r)=U(t)$, then $S=-\frac{U_{t}}{U},$ 
\begin{equation*}
u^{\prime }(r)=\frac{U_{t}}{r},\quad \left\vert u^{\prime }\right\vert
^{p-2}u^{\prime }=\frac{\left\vert U_{t}\right\vert ^{p-2}U_{t}}{r^{p-1}}%
,\quad \frac{d}{dr}(\left\vert u^{\prime }\right\vert ^{p-2}u^{\prime })=%
\frac{(\left\vert U_{t}\right\vert ^{p-2}U_{t})_{t}-(p-1)\left\vert
U_{t}\right\vert ^{p-2}U_{t}}{r^{p}}
\end{equation*}%
\begin{equation*}
-(\left\vert U_{t}\right\vert ^{p-2}U_{t})_{t}+(p-N)\left\vert
U_{t}\right\vert ^{p-2}U_{t}+\mu U^{p-1}+r^{\theta +p}U^{q}=0.
\end{equation*}%
Since $U$ is positive, dividing by $U^{p-1}$ we get 
\begin{equation*}
\frac{-(\left\vert U_{t}\right\vert ^{p-2}U_{t})_{t}}{U^{p-1}}+(N-p)\frac{%
\left\vert U_{t}\right\vert ^{p-2}U_{t}}{U^{p-1}}+\mu +e^{(\theta +p)t}U^{q%
\mathbf{-}p+1}=0.
\end{equation*}%
Defining $G$ and $V$ by (\ref{gv}) we obtain the system (\ref{SGV}).\bigskip
\end{proof}

\begin{remark}
Note the relations between systems (\ref{SXY}) and (\ref{SGV}): there holds 
\begin{equation}
V=X^{q+1-p},\qquad Y=GX^{p-1},  \label{xygv}
\end{equation}%
and the energy function is expressed in terms of $G,V$ by 
\begin{equation}
\mathcal{E}=V^{\frac{p}{q+1-p}}(\frac{p-1}{p}\left\vert G\right\vert ^{\frac{%
p}{p-1}}-\gamma G+\frac{D\left\vert \gamma \right\vert ^{p-2}\gamma -\mu }{p}%
-\frac{V}{q+1})=V^{\frac{p}{q+1-p}}(\frac{F(G)+D\left\vert \gamma
\right\vert ^{p-2}\gamma }{p}-\frac{V}{q+1}).  \label{derigv}
\end{equation}
\end{remark}

\begin{remark}
\label{rema}Note that the relation (\ref{eqS}) giving the possible roots $%
S_{1}$,$S_{2}$ can be written in terms of $G$ by 
\begin{equation*}
\varphi \mathcal{(S)}=0\Longleftrightarrow (p-1)\left\vert G\right\vert ^{%
\frac{p}{p-1}}-(N-p)G-\mu =0,
\end{equation*}%
and the difference $(p-1)\left\vert G\right\vert ^{\frac{p}{p-1}}-(N-p)G-\mu 
$ clearly appears in system (\ref{SGV}). The choice of the variable $G$
compared to $S$ is motivated by the fact that it leads to a simpler system:
indeed the system relative to $S,V$ is 
\begin{equation*}
\left\{ 
\begin{array}{ccc}
S_{t} & = & S^{2}-\frac{(N-p)}{p-1}S-\frac{1}{p-1}\left\vert S\right\vert
^{2-p}(\mu +V), \\ 
V_{t} & = & (q+1-p)V(\gamma -S),%
\end{array}%
\right.
\end{equation*}%
where the term $\left\vert S\right\vert ^{2-p}$ does not make the study
easy.\bigskip
\end{remark}

Next we apply the logarithmic transformation to the radial solutions $u$ of
equation (\ref{Hardy}). We consider in particular the case $\mu =\mu _{0},$
and find again a result of \cite[Theorem 2.1]{ItaTa} with a simple proof :

\begin{lemma}
\label{critic} Let $p>1$ and $\mu =\mu _{0}.$ Then for any $\ell >0$ there
exists a radial solution $u$ of equation (\ref{Hardy}) defined near $r=0$
(resp. near $\infty )$ such that 
\begin{equation}
\lim_{r\longrightarrow 0}r^{\frac{N-p}{p}}\left\vert \ln r\right\vert ^{-%
\frac{2}{p}}u(r)=\ell \qquad (\text{resp. }\lim_{r\longrightarrow \infty }r^{%
\frac{N-p}{p}}\left\vert \ln r\right\vert ^{-\frac{2}{p}}u(r)=\ell ).
\label{the}
\end{equation}
\end{lemma}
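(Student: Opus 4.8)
The equation $\mathcal{L}_{p,\mu_0}u=0$ in radial form has the explicit scaling exponent $S_1=S_2=\tfrac{N-p}{p}$, and the desired solution has $u\sim \ell\, r^{-(N-p)/p}|\ln r|^{2/p}$, i.e. a logarithmic correction to the pure power. The natural approach is to substitute this correction directly into the radial ODE and solve an asymptotic ODE, then upgrade the formal solution to a genuine one by a fixed-point / continuity argument, or alternatively to use the autonomous system \eqref{SGV} (with $V\equiv 0$, since \eqref{Hardy} is the case without the absorption term, so $V=r^{\theta+p}u^{q+1-p}$ is replaced by the pure Hardy system $G_t=(p-1)|G|^{p/(p-1)}-(N-p)G-\mu$). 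Since $\mu=\mu_0$, the one-dimensional autonomous equation for $G$ becomes $G_t=(p-1)|G|^{p/(p-1)}-(N-p)G+(\tfrac{N-p}{p})^p$, whose right-hand side is $(p-1)\big(|G|^{p/(p-1)}-\tfrac{N-p}{p-1}G+(\tfrac{N-p}{p})^{p/(p-1)}\cdot\text{(const)}\big)$ and vanishes exactly at $G_\ast=|\tfrac{N-p}{p}|^{p-2}\tfrac{N-p}{p}$, a double (degenerate) zero. Thus $G\to G_\ast$ as $t\to -\infty$ (resp. $t\to+\infty$) along a trajectory that approaches the fixed point at the rate dictated by the quadratic vanishing, namely $G-G_\ast \sim c/|t|$ as $|t|\to\infty$.

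First I would write the radial ODE for $u$, set $u(r)=r^{-(N-p)/p}w(\ln r)=e^{-\frac{N-p}{p}t}w(t)$, and compute that $w$ satisfies a second-order ODE in which the coefficient of $w$ vanishes (this is precisely what $\mu=\mu_0$ buys, and is the reason the leading power is the double root); one is left with an equation of the form $|w'|^{p-2}\big((p-1)w''+\text{lower order}\big)=0$ whose dominant balance, after the substitution $w(t)=|t|^{2/p}\psi(t)$, forces $\psi\to\text{const}$. Equivalently, and more cleanly, I would argue on the slope variable: from $S=-ru'/u$ and $G=|S|^{p-2}S$, the pure-Hardy system gives an autonomous scalar ODE for $G$ with a degenerate rest point $G_\ast$; linearizing fails (the linear part is zero), so I would use the quadratic term to get $\tfrac{d}{dt}(G-G_\ast) \approx \kappa (G-G_\ast)^2$ with $\kappa\neq 0$, integrate to get $G-G_\ast\sim -1/(\kappa t)$ as $t\to\mp\infty$, and then recover $u$ from $\ln u(r) = \ln u(r_0) - \int_{r_0}^{r} \tfrac{S(s)}{s}\,ds$. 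Translating $S = S_\ast + O(1/|t|)$ with $S_\ast=\tfrac{N-p}{p}$ into $u$ produces exactly $u(r)\sim \ell\, r^{-(N-p)/p}|\ln r|^{2/p}$, where the exponent $2/p$ on the logarithm comes from integrating the $1/|t|$ correction (the factor depends on $\kappa$, which must come out to give precisely $2/p$ — this is a short explicit computation with $\kappa$ determined by the second derivative of $(p-1)|G|^{p/(p-1)}-(N-p)G-\mu_0$ at $G_\ast$).

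To turn the asymptotic analysis into an existence statement "for any $\ell>0$", I would invoke the freedom in the constant of integration: the one-parameter family of trajectories of the scalar $G$-equation leaving the degenerate fixed point, combined with the one remaining quadrature for $u$ from $S$, gives precisely a one-parameter family of solutions indexed by the multiplicative constant $\ell$ (scaling $u\mapsto \lambda u$ is a symmetry of the linear equation \eqref{Hardy}). Concretely: fix one solution $u_1$ with $\lim r^{(N-p)/p}|\ln r|^{-2/p}u_1 = 1$, obtained by solving the ODE backward from a suitable initial condition near $r_0$ (standard ODE existence on $(0,r_0]$ once one checks the solution does not blow up or vanish — monotonicity of $S$ toward $S_\ast$ from the sign of $G_t$ handles this), then $u=\ell u_1$ works. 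The near-$\infty$ case is identical after $r\mapsto 1/r$ (which, for \eqref{Hardy}, maps solutions to solutions of the same type with $\tfrac{N-p}{p}$ unchanged since it is the self-dual exponent).

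The main obstacle is the degeneracy of the fixed point $G_\ast$: because the linearization vanishes, one cannot quote a stable-manifold theorem, and the rate of approach, the logarithmic exponent $2/p$, and the sign (so that the trajectory genuinely reaches $G_\ast$ rather than escaping) must all be extracted by hand from the quadratic — and, if the quadratic term also degenerates, higher-order — term in the Taylor expansion of the right-hand side at $G_\ast$; I expect one must check $\tfrac{d^2}{dG^2}\big[(p-1)|G|^{p/(p-1)}-(N-p)G\big]\big|_{G_\ast} = \tfrac{p}{p-1}|G_\ast|^{(2-p)/(p-1)}\cdot(\text{something})\neq 0$, which holds for all $p>1$, so the quadratic balance is the right one and yields the exponent $2/p$ after the elementary quadrature. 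Everything else — converting $S$-asymptotics to $u$-asymptotics, and the scaling argument for arbitrary $\ell$ — is routine.
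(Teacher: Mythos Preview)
Your approach is essentially the paper's: reduce the radial Hardy equation to the scalar autonomous ODE $G_t=F(G)$ with $F(G)=(p-1)|G|^{p/(p-1)}-(N-p)G-\mu_0$, identify the degenerate fixed point $G_0=(\tfrac{N-p}{p})^{p-1}$, use the quadratic vanishing $F(G)\sim c\,(G-G_0)^2$ with $c=\tfrac{p}{2}(\tfrac{N-p}{p})^{2-p}$ to get $G-G_0\sim -1/(ct)$, convert to $S$, integrate $-U_t/U=S$, and scale for arbitrary $\ell$.

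There is one step you underestimate. Knowing only $S=S_\ast+O(1/|t|)$, or even $S-S_\ast\sim \alpha/t$ with the correct constant, does \emph{not} yield a finite limit for $r^{(N-p)/p}|\ln r|^{-2/p}u(r)$: you must integrate $S-S_\ast-\alpha/t$ over an infinite interval, and $o(1/t)$ alone need not be integrable (think $1/(t\ln|t|)$). The paper closes this by bootstrapping once: from $\overline G\sim -1/(ct)$ one gets $G_t=c\,\overline G^2(1+O(1/|t|))$, hence after one more integration $\overline G=-1/(ct)+O(\ln|t|/t^2)$, so that $S-\tfrac{N-p}{p}=-\tfrac{2}{p}\cdot\tfrac{1}{t}+O(\ln|t|/t^2)$, and the remainder is integrable. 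You should make this refinement explicit; it is short but not optional.

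Two minor remarks. First, equation \eqref{Hardy} is not linear for $p\neq 2$; it is $(p-1)$-homogeneous, which is what makes $u\mapsto\lambda u$ a symmetry (the paper instead uses the dilation $u(r)\mapsto u(ar)$, which also works). Second, the inversion $r\mapsto 1/r$ is not a symmetry of the $p$-Laplace Hardy equation for $p\neq 2$; but you do not need it, since the autonomous ODE $G_t=F(G)\geq 0$ produces trajectories approaching $G_0$ both as $t\to-\infty$ (from $G>G_0$) and as $t\to+\infty$ (from $G<G_0$), covering the two cases directly.
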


\begin{proof}
Let again $G(t)=\left\vert S\right\vert ^{p-2}S(r),$ where $S=-\frac{%
ru^{\prime }}{u}$ and $t=\ln r.$ We get 
\begin{equation}
\mathcal{L}_{p,\mu }u=0\Longleftrightarrow G_{t}=F(G)=(p-1)\left\vert
G\right\vert ^{\frac{p}{p-1}}-(N-p)G-\mu  \label{equi}
\end{equation}%
$\bullet $ In the case $p=2$ then $G=S$ and $S_{t}=(S-\frac{N-2}{2})^{2},$
and by integration we obtain explicitely all the solutions of any sign of
the equation by $u(r)=r^{-\frac{N-2}{2}}(C_{1}+C_{2}\ln r),$ depending on
two parameters $C_{1},C_{2}\in \mathbb{R}.$

\noindent $\bullet $ In the general case $p>1$, the equation still admits
the solution $G\equiv G_{0},$ that means $-\frac{U_{t}}{U}=\frac{N-p}{p},$
which corresponds to the solutions $u(r)=Cr^{-\frac{N-p}{p}.}.$ The other
solutions are obtained by quadrature. Setting $G_{0}=(\frac{N-p}{p})^{p-1}$
and $G=G_{0}+\overline{G},$ there holds 
\begin{equation*}
F(G_{0}+\overline{G})=\frac{1}{2}F^{\prime \prime }(G_{0}+\tau \overline{G})%
\overline{G}^{2}=\frac{p}{2(p-1)}(G_{1}+\tau \overline{G})^{\frac{2-p}{p-1}}%
\overline{G}^{2}
\end{equation*}%
for some $\tau \in \left( 0,1\right) ,$ hence $F(G)\sim _{G\longrightarrow
G_{0}}c\overline{G}^{2}$ with $c=\frac{p}{2}(\frac{N-p}{p})^{2-p};$ so we
obtain a family with one parameter of solutions such that 
\begin{equation*}
t=C+\int_{G_{0}}^{G}\frac{dg}{F(g)}.
\end{equation*}%
defined on $(\infty ,C),$ or in $(C,\infty )$ and then $\overline{G}\sim
_{t\longrightarrow \pm \infty }\frac{1}{ct};$ then $G_{t}=\frac{p}{2(p-1)}%
G_{0}^{\frac{2-p}{p-1}}(\overline{G}^{2}+O(\overline{G}^{3})=\frac{p}{2(p-1)}%
G_{0}^{\frac{2-p}{p-1}}\overline{G}^{2}(1+O(\frac{1}{\left\vert t\right\vert 
});$ by integration $\overline{G}(t)=-\frac{1}{ct}+O(\frac{\left\vert \ln
\left\vert t\right\vert \right\vert }{\left\vert t\right\vert ^{2}})$; then 
\begin{equation*}
S=-\frac{U_{t}}{U}=\frac{N-p}{p}-\frac{(\frac{N-p}{p})^{2-p}}{(p-1)c}\frac{1%
}{t}+O(\frac{\ln \left\vert t\right\vert }{\left\vert t\right\vert ^{2}})=%
\frac{N-p}{p}-\frac{2}{p}\frac{1}{t}+O(\frac{\left\vert \ln \left\vert
t\right\vert \right\vert }{\left\vert t\right\vert ^{2}}),
\end{equation*}%
thus $\ln (Ue^{\frac{N-p}{p}t}\left\vert t\right\vert ^{-\frac{2}{p}})=O(%
\frac{\ln \left\vert t\right\vert }{\left\vert t\right\vert ^{2}})$, and $%
\frac{\ln \left\vert t\right\vert }{\left\vert t\right\vert ^{2}}$ is
integrable, then we obtain, $\lim_{t\longrightarrow \pm \infty }Ue^{\frac{N-p%
}{p}t}\left\vert t\right\vert ^{-\frac{2}{p}}=\ell _{\pm }\neq 0$. By a
scaling $u(r)=v(ar)$ letting the equation invariant, we obtain the existence
for any $\ell >0.$
\end{proof}

\subsection{Main radial results}

The proofs of the following theorems are postponed at Section \ref{App}.

\begin{theorem}
\label{H1rad}\textbf{Case }($\mathcal{H}_{1}$): Let $\mu \geq \mu _{0}$ and $%
\gamma >S_{1}.$ Then

$\bullet $ There exists a global particular solution $u^{\ast }=a^{\ast
}r^{-\gamma }$ of (\ref{onep}).

$\bullet $ If $\mu >\mu _{0}$, for any $k_{1}>0$, there exist a unique
global solution of (\ref{onep}) such that 
\begin{equation}
\lim_{r\rightarrow 0}r^{S_{1}}u=k_{1},\qquad \lim_{r\rightarrow \infty
}r^{\gamma }u=a^{\ast }.  \label{aa}
\end{equation}

$\bullet $ If $\mu =\mu _{0}$, for any $\ell >0$ there exists a unique
global solution such that 
\begin{equation}
\lim_{r\rightarrow 0}r^{\frac{N-p}{p}}(\left\vert \ln (r)\right\vert ^{-%
\frac{2}{p}})u(r)=\ell ,\qquad \lim_{r\rightarrow \infty }r^{\gamma
}u=a^{\ast },\qquad \text{if }\mu =\mu _{0}.  \label{aaa}
\end{equation}%
All the other solutions are local.

$\bullet $ There exist solutions in a finite interval $(0,R)$, such that
respectively 
\begin{equation}
\lim_{r\rightarrow 0}r^{\gamma }u=a^{\ast },\qquad \text{ }%
\lim_{r\rightarrow R}u=\infty ,  \label{bb}
\end{equation}%
\begin{equation}
\lim_{r\rightarrow 0}r^{\gamma }u=a^{\ast },\text{ \qquad\ }u(R)=0,
\label{cc}
\end{equation}%
\begin{equation}
\left\{ 
\begin{array}{c}
\lim_{r\rightarrow 0}r^{S_{1}}u=k_{1}>0\text{ if }\mu >\mu _{0} \\ 
\lim_{r\rightarrow 0}r^{\frac{N-p}{p}}(\left\vert \ln (r)\right\vert ^{-%
\frac{2}{p}})u(r)=\ell >0,\text{ if }\mu =\mu _{0}%
\end{array}%
\right. ,\quad \quad \lim_{r\rightarrow R}u=\infty ,  \label{dd}
\end{equation}%
\begin{equation}
\left\{ 
\begin{array}{c}
\lim_{r\rightarrow 0}r^{S_{1}}u=k_{1}>0\text{ if }\mu >\mu _{0} \\ 
\lim_{r\rightarrow 0}r^{\frac{N-p}{p}}(\left\vert \ln (r)\right\vert ^{-%
\frac{2}{p}})u(r)=\ell >0,\text{ if }\mu =\mu _{0}%
\end{array}%
\right. \text{\qquad }\lim_{r\rightarrow R}u=0,  \label{ee}
\end{equation}%
\begin{equation}
\lim_{r\rightarrow 0}r^{S_{2}}u=k_{2}>0,\quad \text{\quad\ }%
\lim_{r\rightarrow R}u=\infty .  \label{ff}
\end{equation}

$\bullet $ There exist solutions in an interval $(R,\infty )$ such that 
\begin{equation}
\lim_{r\rightarrow R}u=\infty ,\qquad \text{ }\lim_{r\rightarrow \infty
}r^{\gamma }u=a^{\ast }.  \label{gg}
\end{equation}

Any radial solution, local near $0$ or $\infty $ or global, has one of these
types.
\end{theorem}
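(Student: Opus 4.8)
The plan is to reduce everything to a phase‑plane study of the autonomous system (\ref{SGV}) in the closed half‑plane $\{V\ge 0\}$. Note that $V=r^{\theta+p}u^{q+1-p}>0$ for a positive solution, that $\{V=0\}$ is invariant, and that under $(\mathcal H_{1})$ we have $\gamma>S_{1}\ge\frac{N-p}{p}$, so $\gamma>0$, $p+\theta>0$ and $\varphi(\gamma)=(a^{\ast})^{q+1-p}>0$. Hence the system has three finite rest points in $\{V\ge0\}$: the points $M_{i}=(|S_{i}|^{p-2}S_{i},0)$, $i=1,2$, on the $G$‑axis, which correspond to the separated solutions $Cr^{-S_{i}}$ of $\mathcal L_{p,\mu}u=0$ and which coincide when $\mu=\mu_{0}$, and the interior point $M^{\ast}=(|\gamma|^{p-2}\gamma,\varphi(\gamma))$, which corresponds to $u^{\ast}=a^{\ast}r^{-\gamma}$. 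The first bullet of the theorem is then immediate.

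First I would linearize. Writing $\psi(G)=(p-1)|G|^{p/(p-1)}-(N-p)G-\mu$, the Jacobian at $M_{i}$ is triangular, with eigenvalue $\psi'(G_{i})$ along the axis (a positive multiple of $\varphi'(S_{i})$) and eigenvalue $(q+1-p)(\gamma-S_{i})$ transverse to it; since $\varphi'(S_{1})>0$, $\varphi'(S_{2})<0$ when $\mu>\mu_{0}$, and $\gamma>S_{1}\ge S_{2}$, the point $M_{1}$ is an unstable node (source) and $M_{2}$ is a saddle whose unstable branch points into $\{V>0\}$. At $M^{\ast}$ one computes $\mathrm{tr}\,J^{\ast}=\psi'(G^{\ast})=p\gamma-(N-p)=-D>0$ and $\det J^{\ast}=-\frac{(q+1-p)\varphi(\gamma)\gamma^{2-p}}{p-1}<0$, so $M^{\ast}$ is a saddle, with exactly one stable and one unstable branch in $\{V>0\}$ on each side. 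When $\mu=\mu_{0}$ the merged point $M_{1}=M_{2}=(((N-p)/p)^{p-1},0)$ is non‑hyperbolic: the eigenvalue along the axis is $0$ while the transverse one is still positive; here I would replace the linear analysis in the center (axis) direction by the quadrature expansion of Lemma \ref{critic}, which exhibits the trajectories leaving this point with the $|\ln r|^{-2/p}$ correction and thereby produces the logarithmic behaviours in (\ref{aaa}), (\ref{dd}), (\ref{ee}).

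Next I would organize $\{V>0\}$ by means of the nullclines: the $G$‑nullcline $\{V=\psi(G)\}$ (a strictly convex curve through $M_{1}$ and $M_{2}$, going to $+\infty$ as $|G|\to\infty$) and the $V$‑nullclines $\{V=0\}$ and $\{G=G^{\ast}\}$, which split the half‑plane into a small number of regions in which $(G_{t},V_{t})$ has constant signs, so that a trajectory in such a region is monotone in each coordinate and must either converge to a rest point or leave through a prescribed side. Combined with the strict monotonicity of the Pohozaev energy $\mathcal E$ of Lemma \ref{ener} (increasing here, since $D<0$), this excludes periodic orbits and homoclinic loops and forces every trajectory, followed in either time direction, either to tend to one of $M_{1},M_{2},M^{\ast}$ or to leave the phase space in finite time $t=\ln R$ — escape with $V\to+\infty$ meaning $\lim_{r\to R}u=\infty$, escape with $G\to+\infty$ meaning $u(R)=0$. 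Reading off the invariant manifolds then gives the whole list: $M^{\ast}$ is $u^{\ast}$; the stable branch of $M^{\ast}$ followed backward reaches $M_{1}$ (resp. the merged point if $\mu=\mu_{0}$), giving the global solution (\ref{aa}) (resp. (\ref{aaa})), with uniqueness automatic since a saddle branch is unique and the free parameter $k_{1}$ (resp. $\ell$) is restored by the time‑translation action of the scaling $T_{k}$, which moves the coefficient because $\gamma\neq S_{1}$; the two unstable branches of $M^{\ast}$ give (\ref{bb}) and (\ref{cc}); the remaining stable branch of $M^{\ast}$, followed backward to a finite $t$, gives (\ref{gg}); the source $M_{1}$ emits a one‑parameter fan of trajectories into $\{V>0\}$, split by the heteroclinic (\ref{aa}) into those that blow up at finite $R$ (giving (\ref{dd})) and those that vanish at finite $R$ (giving (\ref{ee})); and the single unstable branch of the saddle $M_{2}$ gives (\ref{ff}). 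A count of the dimensions of the stable/unstable manifolds, together with the trapping argument above, shows there is no other trajectory that reaches $r=0$ or $r=\infty$, which is the exhaustivity statement.

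The main obstacle is the global (non‑local) control of trajectories away from the rest points: proving that a trajectory which crosses the $G$‑nullcline neither spirals nor wanders, but either tends to $M^{\ast}$ or escapes the phase space in finite time, and that a trajectory defined up to $t=-\infty$ (resp. $t=+\infty$) cannot have $V$ or $G$ unbounded there. This is where the monotone energy $\mathcal E$ is essential, together with an a priori bound of the form $u\le Cr^{-\gamma}$ (which keeps $V$ bounded on the relevant end) and the fact that for $|G|$ large the term $(p-1)|G|^{p/(p-1)}$ dominates $G_{t}$ and drives $G$ off to infinity. A secondary delicate point is again the degenerate case $\mu=\mu_{0}$, where the precise logarithmic rate of the trajectory joining the merged rest point to $M^{\ast}$, and of the local trajectories in (\ref{dd})--(\ref{ee}), has to be obtained from the refined expansion of Lemma \ref{critic} rather than from hyperbolic theory.
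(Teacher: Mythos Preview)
Your proposal is correct and follows essentially the same route as the paper: both reduce to the phase plane of system (\ref{SGV}), identify the three fixed points $\mathbf{A}_{1},\mathbf{A}_{2},\mathbf{M}_{0}$ with the same linearizations (source, saddle, saddle), partition $\{V>0\}$ by the nullclines $\{V=F(G)\}$ and $\{G=G_{0}\}$ into sign-invariant regions, use the Pohozaev energy $\mathcal{E}$ together with the Osserman bound to rule out periodic orbits and unbounded behaviour at $t=\pm\infty$, and read off the list of trajectories exactly as you describe; the critical case $\mu=\mu_{0}$ is handled in both by the quadrature expansion of Lemma~\ref{critic} rather than hyperbolic theory. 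The only cosmetic differences are that the paper names the regions $\mathcal{R}_{1},\dots,\mathcal{R}_{5}$ explicitly and tracks each of the saddle trajectories $\mathcal{T}_{1},\dots,\mathcal{T}_{5}$ individually through them, whereas you summarize this in the language of stable/unstable manifolds.
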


\begin{theorem}
\label{H2rad}\textbf{Case }($\mathcal{H}_{2}$): Let $\mu \geq \mu _{0}$ and $%
\gamma <S_{2}.$ Then

$\bullet $ There exists a global particular solution $u^{\ast }=a^{\ast
}r^{-\gamma }$ of (\ref{onep}).

$\bullet $ If $\mu >\mu _{0}$, for any $k_{2}>0$, there exist a unique
global solution of (\ref{onep})such that 
\begin{equation}
\lim_{r\rightarrow 0}r^{\gamma }u=a^{\ast },\qquad \lim_{r\rightarrow \infty
}r^{S_{2}}u=k_{2}.  \label{AA}
\end{equation}

$\bullet $ If $\mu =\mu _{0}$, for any $\ell >0$ there exists a unique
global solution such that 
\begin{equation}
\lim_{r\rightarrow 0}r^{\gamma }u=a^{\ast },\qquad \lim_{r\rightarrow \infty
}r^{\frac{N-p}{p}}\left\vert \ln (r)\right\vert ^{\frac{2}{p}}u(r)=\ell .
\label{AAA}
\end{equation}%
All the other solutions are local, each of them has one of the following
types.

$\bullet $ There exists solutions in a finite interval $(R,\infty )$, such
that respectively 
\begin{equation}
\text{ }\lim_{r\rightarrow R}u=\infty ,\qquad \lim_{r\rightarrow \infty
}r^{\gamma }u=a^{\ast },  \label{BB}
\end{equation}%
\begin{equation}
\lim_{r\rightarrow R}u=0,\qquad \lim_{r\rightarrow \infty }r^{\gamma
}u=a^{\ast },  \label{CC}
\end{equation}%
\begin{equation}
\text{ }\lim_{r\rightarrow R}u=\infty ,\qquad \left\{ 
\begin{array}{c}
\lim_{r\rightarrow \infty }r^{S_{2}}u=k_{2}>0\text{ if }\mu >\mu _{0} \\ 
\lim_{r\rightarrow \infty }r^{\frac{N-p}{p}}(\left\vert \ln (r)\right\vert
-^{\frac{2}{p}})u(r)=\ell >0,\text{ if }\mu =\mu _{0}%
\end{array}%
\right. ,  \label{DD}
\end{equation}%
\begin{equation}
\lim_{r\rightarrow R}u=0,\qquad \left\{ 
\begin{array}{c}
\lim_{r\rightarrow \infty }r^{S_{2}}u=k_{2}>0\text{ if }\mu >\mu _{0} \\ 
\lim_{r\rightarrow \infty }r^{\frac{N-p}{p}}(\left\vert \ln (r)\right\vert
^{-\frac{2}{p}})u(r)=\ell >0,\text{ if }\mu =\mu _{0}%
\end{array}%
\right. ,  \label{EE}
\end{equation}%
\begin{equation}
\lim_{r\rightarrow R}u=\infty ,\qquad \lim_{r\rightarrow
0}r^{S_{1}}u=k_{1}>0,  \label{FF}
\end{equation}

$\bullet $ There exist solutions in an interval $(0,R)$ such that 
\begin{equation}
\lim_{r\rightarrow 0}r^{\gamma }u=a^{\ast },\qquad \lim_{r\rightarrow
R}u=\infty .  \label{GG}
\end{equation}

\noindent Moreover, if $p>2$ and $\gamma =0$, the solutions satisfying (\ref%
{AA}),(\ref{AAA}),(\ref{GG}) are constant near $0,$ the solutions satisfying
(\ref{BB}),(\ref{CC}) are constant near $\infty $.\bigskip
\end{theorem}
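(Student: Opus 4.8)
The plan is to translate the radial equation (\ref{onep}) into the planar autonomous system (\ref{SGV}) in the variables $(G,V)$, $t=\ln r$, and to read off every radial solution from its phase portrait under the standing hypotheses $\mu\ge\mu_0$ and $\gamma<S_2$. Two general features of (\ref{SGV}) will be used constantly: the half-plane $\{V>0\}$ is invariant (because $V_t$ is a multiple of $V$), and the line $\{V=0\}$ is invariant, on it the system reducing to $G_t=\psi(G):=(p-1)|G|^{p/(p-1)}-(N-p)G-\mu$, whose zeros are $G_1\ge G_2$, the images of $S_1\ge S_2$ under the increasing bijection $S\mapsto|S|^{p-2}S$ (equal precisely when $\mu=\mu_0$), cf. Remark \ref{rema}. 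Hence the finite rest points of (\ref{SGV}) are $A_1=(G_1,0)$, $A_2=(G_2,0)$ and the interior point $M_0=(\Gamma,V^\ast)$, where $\Gamma=|\gamma|^{p-2}\gamma$ and $V^\ast=\varphi(\gamma)=(a^\ast)^{q+1-p}>0$ — it is exactly the inequality $\gamma<S_2$ in (\ref{sel}) that makes $\varphi(\gamma)>0$, see (\ref{condi}) — and $M_0$ is the orbit corresponding to $u^\ast=a^\ast r^{-\gamma}$, which already gives the first bullet of the theorem. Moreover $\gamma<S_2$ forces $\Gamma<G_2$, so $M_0$ lies strictly to the left of $A_1,A_2$; and $\gamma<S_2\le\frac{N-p}{p}$ forces $D=N-p-p\gamma>0$, so by Lemma \ref{ener} the energy $\mathcal E$ is strictly increasing along every nonconstant orbit, which rules out periodic orbits and homoclinic loops.

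First I would linearise at the three rest points. At $A_2$ the Jacobian is triangular with eigenvalues $\psi'(G_2)\le0$ (strictly negative when $\mu>\mu_0$, since $G_2$ lies to the left of the minimum $((N-p)/p)^{p-1}$ of $\psi$) and $(q+1-p)(\gamma-S_2)<0$, so $A_2$ is a stable node; an orbit tending to $A_2$ as $t\to+\infty$ gives $S\to S_2$, i.e. $\lim_{r\to\infty}r^{S_2}u=k_2$, and when $\mu=\mu_0$ the two eigendirections coalesce, so the sharp rate must be obtained by a quadrature near the degenerate node exactly as in the proof of Lemma \ref{critic}, producing the factor $|\ln r|^{2/p}$. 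At $A_1$ the eigenvalues are $\psi'(G_1)>0$ (with eigenvector along the invariant $G$-axis) and $(q+1-p)(\gamma-S_1)<0$, so $A_1$ is a saddle whose unstable manifold lies on $\{V=0\}$; thus no genuine solution emanates from $A_1$ as $t\to-\infty$, while its one-dimensional stable manifold carries the solutions with $\lim_{r\to\infty}r^{S_1}u=k_1$, i.e. type (\ref{FF}). Finally, using $\psi'(\Gamma)=-D$ one finds that the Jacobian at $M_0$ has trace $-D<0$ and determinant $-(q+1-p)V^\ast|\gamma|^{2-p}/(p-1)<0$, so $M_0$ is a saddle; the scaling $T_k$ is a $t$-translation, so its single unstable orbit becomes a one-parameter family.

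Next I would follow the two branches of the unstable manifold $W^u(M_0)$, using that $M_0$ is the intersection of the nullclines $\{V=\psi(G)\}$ and $\{G=\Gamma\}$. On the branch leaving $M_0$ with $G$ increasing and $V$ decreasing, one checks that the region bounded by these two nullclines (roughly $\{\Gamma<G<G_2,\ 0<V<\psi(G)\}$) is positively invariant with $A_2$ as its only limit point, so this branch is captured by the node $A_2$: it is the unique global solution, of type (\ref{AA}) (of type (\ref{AAA}) when $\mu=\mu_0$), uniqueness for a prescribed $k_2>0$ (resp. $\ell>0$) following from the one-dimensionality of $W^u(M_0)$ together with the fact that $t$-translation sweeps all admissible values. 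On the other branch, with $G$ decreasing and $V$ increasing, $V\to+\infty$ at a finite $t$, which forces $u\to\infty$ at a finite radius $R$: this is type (\ref{GG}). The remaining orbits in $\{V>0\}$ are sorted the same way: by the planar Poincar\'e--Bendixson theorem together with the strict monotonicity of $\mathcal E$, each has $\alpha$- and $\omega$-limit among $\{A_1,A_2,M_0\}$ and the ends $\{u=0\}$, $\{u=\infty\}$ reached at a finite $t$; reading the linearised rates against the exponents $S_1,S_2,\gamma$ then identifies them with the types (\ref{BB}), (\ref{CC}) (the two branches of $W^s(M_0)$, running back to $\{u=\infty\}$, resp. to $\{u=0\}$), (\ref{DD}), (\ref{EE}) (orbits in the basin of $A_2$ coming back to $\{u=\infty\}$, resp. to $\{u=0\}$) and (\ref{FF}) (the branch of $W^s(A_1)$ into $\{V>0\}$), and shows conversely that each such orbit exists. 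That every solution defined in a punctured neighbourhood of $0$ is of type (\ref{AA}), (\ref{AAA}) or (\ref{GG}) — hence satisfies $r^\gamma u\to a^\ast$ — follows from the a priori bound $u\le C|x|^{-\gamma}$ near $0$ furnished by the absorption term, which keeps $(G,V)$ bounded as $t\to-\infty$ and pins the $\alpha$-limit to $M_0$; this is also the radial input needed for the comparison Theorem \ref{clef}.

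The final assertion, local constancy when $p>2$ and $\gamma=0$, I would treat separately: then $\Gamma=0$, and the field $-(q+1-p)V\,\mathrm{sign}(G)|G|^{1/(p-1)}$ in the $V$-equation is only H\"older — not Lipschitz — at $G=0$, so uniqueness of trajectories genuinely fails there; a one-line quadrature near $G=0$ shows that the orbits of types (\ref{AA}), (\ref{AAA}), (\ref{GG}) (resp. those of (\ref{BB}), (\ref{CC})) reach $M_0=(0,V^\ast)$ in finite $t$, which means $S\equiv0$, i.e. $u\equiv a^\ast$, on a whole interval of small (resp. large) $r$. I expect the two delicate points of the whole argument to be: extracting the sharp logarithmic exponent in the critical case $\mu=\mu_0$, where $A_2$ is degenerate and one must push the expansion beyond the linearisation as in Lemma \ref{critic}; and the exhaustive verification that the list of types above really covers all orbits, which near $G=0$ for $p>2$ must be carried out keeping the non-Lipschitz field in mind.
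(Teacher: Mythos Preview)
Your direct phase-plane analysis is essentially the argument the paper carries out in full detail for case $(\mathcal{H}_1)$; the paper's own proof of Theorem~\ref{H2rad}, however, does \emph{not} repeat this analysis. Instead it observes that under the transformation $t\mapsto -t$ together with an exchange $G_1\leftrightarrow G_2$, the phase portrait for $(\mathcal{H}_2)$ is sent to that of $(\mathcal{H}_1)$: the sink $\mathbf{A}_2$ becomes the source $\mathbf{A}_1$, the saddle $\mathbf{A}_1$ becomes the saddle $\mathbf{A}_2$, and the four saddle trajectories $\mathcal{T}_i$ at $\mathbf{M}_0$ swap roles pairwise. So the paper simply reads off (\ref{AA})--(\ref{GG}) from (\ref{aa})--(\ref{gg}) by this symmetry. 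Your route is more self-contained; the paper's is much shorter once $(\mathcal{H}_1)$ has been done. Both are legitimate. (A small slip: since $D>0$ here, Lemma~\ref{ener} gives $\mathcal{E}$ \emph{decreasing}, not increasing; this does not affect your use of it to exclude periodic orbits.)

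There is, however, a genuine gap in your uniqueness argument for the global solutions (\ref{AA})/(\ref{AAA}) in the special case $\gamma=0$, $p>2$. You derive uniqueness from the one-dimensionality of $W^{u}(\mathbf{M}_0)$, but precisely in this case the system (\ref{SGV}) is \emph{not} $C^1$ at $\mathbf{M}_0=(0,|\mu|)$ (the factor $|G|^{(2-p)/(p-1)}G$ is only H\"older there), so no linearisation is available and Lemma~\ref{fix} does not apply; Lemma~\ref{gammazero} only yields \emph{at least} one trajectory of each type $\mathcal{T}_1,\dots,\mathcal{T}_4$, not uniqueness. The paper flags this explicitly and closes the gap by an independent argument: if $u,\widetilde u$ are two global solutions with the same prescribed behaviour at $0$ and at $\infty$, then $(1+\varepsilon)\widetilde u$ is a supersolution of (\ref{pq}) dominating $u$ near both ends, hence everywhere by the weak comparison principle (Corollary~\ref{DG}); letting $\varepsilon\to 0$ gives $u\le\widetilde u$, and by symmetry $u=\widetilde u$. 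Your final paragraph correctly notes the non-Lipschitz field and the finite-time arrival at $\mathbf{M}_0$ (which produces the locally constant solutions), but you still need this comparison step to recover the uniqueness statement in (\ref{AA})/(\ref{AAA}) when $\gamma=0$, $p>2$.
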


\begin{theorem}
\label{H3rad}\textbf{\ Case }($\mathcal{H}_{3}$) Let $\mu >\mu _{0}$ and $%
S_{2}\leq \gamma \leq S_{1}.$ Then there is no global solution of (\ref{onep}%
). There exists solutions respectively on $(0,R)$ such that 
\begin{align}
\lim_{r\longrightarrow 0}r^{S_{2}}u& =k_{2}>0,\qquad \lim_{r\rightarrow
R}u=\infty ,\qquad \text{if }S_{2}<\gamma <S_{1},  \label{nn} \\
\lim_{r\longrightarrow 0}r^{S_{2}}(\left\vert \ln r\right\vert )^{\frac{1}{%
q+1-p}}u& =\alpha _{N,p,q},\qquad \lim_{r\rightarrow R}u=\infty ,\qquad 
\text{if }\gamma =S_{2}\neq 0,  \label{nnn} \\
\lim_{r\longrightarrow 0}\left\vert \ln r\right\vert ^{-\frac{p-1}{q+1-p}}u&
=\delta _{N,p,q},\qquad \lim_{r\rightarrow R}u=\infty ,\qquad \text{if }%
\gamma =S_{2}=0,  \label{nno}
\end{align}%
with $\alpha _{N,p,q}=(\frac{(p-1)(N-p-pS_{2})S_{2}^{p-2}}{q+1-p})^{\frac{1}{%
q+1-p}}$, $\delta _{N,p,q}=(\frac{(q+1-p)(N-p)^{-\frac{1}{p-1}}}{p-1})^{%
\frac{p-1}{q+1-p}}$; and on $(R,\infty )$ such that 
\begin{align}
\lim_{r\rightarrow R}u& =\infty ,\qquad \lim_{r\longrightarrow \infty
}r^{S_{1}}u=k_{2}>0,\qquad \text{if }S_{1}<\gamma <S_{2},  \label{rr} \\
\lim_{r\rightarrow R}u& =\infty ,\qquad \lim_{r\longrightarrow \infty
}r^{S_{1}}(\left\vert \ln r\right\vert )^{\frac{1}{q+1-p}}u=\beta
_{N,p,q},\qquad \text{if }\gamma =S_{1},  \label{rrr}
\end{align}%
with $\beta _{N,p,q}=(\frac{(p-1)(pS_{1}-N+p)S_{2}^{p-2}}{q+1-p})^{\frac{1}{%
q+1-p}}.$ Any local solution near $0$ or $\infty $ has one of these types;
and it is unique when $S_{2}<\gamma <S_{1}.$
\end{theorem}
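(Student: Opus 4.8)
The plan is to read off Theorem~\ref{H3rad} from the phase portrait of the autonomous system (\ref{SGV}) in the closed half-plane $\overline{\mathcal Q}=\{(G,V):V\ge 0\}$. By (\ref{gv}) a positive radial solution $u$ of (\ref{onep}) on an $r$-interval corresponds to an orbit of (\ref{SGV}) lying in $\mathcal Q=\{V>0\}$, the ends $r\to 0$ and $r\to\infty$ becoming $t=\ln r\to-\infty$ and $t\to+\infty$. The equilibria on $\{V=0\}$ are $P_1=(G_1,0)$ and $P_2=(G_2,0)$, where $G_i=|S_i|^{p-2}S_i$ and $F(G_i)=\varphi(S_i)=0$; an equilibrium with $V>0$ would force $|G|^{(2-p)/(p-1)}G=\gamma$ and $V=F(G)=\varphi(\gamma)$, which is impossible under $(\mathcal H_3)$ since there $\varphi(\gamma)\le 0$ --- with equality exactly when $\gamma\in\{S_1,S_2\}$, in which case the candidate point merges with $P_1$ or $P_2$. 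Writing $G_\gamma=|\gamma|^{p-2}\gamma$, so that $G_2\le G_\gamma\le G_1$, one checks on the vertical line $\{G=G_\gamma\}$ that $G_t=F(G_\gamma)-V=\varphi(\gamma)-V<0$; hence $\{G<G_\gamma\}$ is positively invariant and $\{G>G_\gamma\}$ negatively invariant, and this barrier is the crux of the argument.

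First I would describe the local solutions. Linearising (\ref{SGV}) at $P_2$ gives eigenvalues $F'(G_2)<0$, with eigenvector along $\{V=0\}$, and $(q+1-p)(\gamma-S_2)\ge 0$: for $S_2<\gamma$ the point $P_2$ is a hyperbolic saddle whose unstable manifold enters $\mathcal Q$ (into $\{G<G_2\}$), while its stable manifold lies on $\{V=0\}$; symmetrically $P_1$ has eigenvalues $F'(G_1)>0$ along $\{V=0\}$ and $(q+1-p)(\gamma-S_1)\le 0$, so for $\gamma<S_1$ it is a hyperbolic saddle whose stable manifold enters $\mathcal Q$ (from $\{G>G_1\}$) and whose unstable manifold lies on $\{V=0\}$. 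A solution near $r=0$ is an orbit defined on a half-line $(-\infty,t_0)$; since $P_1$ admits no orbit leaving it into $\mathcal Q$, and (by the blow-up estimate below) an orbit unbounded as $t\to-\infty$ would already escape at a finite time, its $\alpha$-limit set must be $\{P_2\}$. When $S_2<\gamma<S_1$ this pins the orbit to the single branch of the unstable manifold of $P_2$ in $\mathcal Q$, unique up to the $t$-translation induced by the scaling $T_k$ of (\ref{scal}), and the eigenvector direction gives $S\to S_2$, i.e.\ $\lim_{r\to 0}r^{S_2}u=k_2$. When $\gamma=S_2$ (resp.\ $\gamma=S_2=0$) the equilibrium $P_2$ is semi-hyperbolic, and a centre-manifold reduction --- equivalently a quadrature of the scalar equation $V_t\sim cV^2$ on the centre manifold (resp.\ of the corresponding degenerate normal form at $(0,0)$) --- produces the logarithmic rates of (\ref{nnn})--(\ref{nno}); matching the coefficient through $-F'(G_2)=N-p-pS_2$ and $\tfrac{dS}{dG}(G_2)=\tfrac{1}{p-1}|S_2|^{2-p}$ yields exactly $\alpha_{N,p,q}$ and $\delta_{N,p,q}$. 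The solutions near $r=\infty$ are treated symmetrically: an orbit on $(t_0,+\infty)$ whose $\omega$-limit is forced to be $\{P_1\}$, giving $\lim_{r\to\infty}r^{S_1}u=k_1$ when $\gamma<S_1$ and the logarithmic rate with $\beta_{N,p,q}$ when $\gamma=S_1$. In the hyperbolic cases the (un)stable manifold of a saddle is a single orbit, which is the uniqueness assertion.

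It remains to rule out global solutions and to enumerate the remaining types. There is no periodic orbit in $\mathcal Q$ (by the Poincar\'{e} index theorem it would enclose an equilibrium with $V>0$, of which there are none), so by Poincar\'{e}--Bendixson a global radial solution would be a heteroclinic from $P_2$ to $P_1$ inside $\mathcal Q$. But the unstable manifold of $P_2$ enters $\{G<G_2\}\subset\{G<G_\gamma\}$, a positively invariant set not containing $P_1$ (indeed $G_1\ge G_\gamma$, strictly when $\gamma<S_1$); there $V_t>0$, and once $V$ is large $G_t=F(G)-V$ drives $G\to-\infty$, after which comparing (\ref{SGV}) with its leading part --- $G_t\sim-V$, $V_t\sim(q+1-p)V|G|^{1/(p-1)}$, hence $V\sim\tfrac{(p-1)(q+1)}{p}|G|^{p/(p-1)}$ and $|G(t)|^{-1/(p-1)}\sim\tfrac{q+1-p}{p}(t_0-t)$ --- shows $(G,V)$ escapes to infinity at a finite $t_0=\ln R$, i.e.\ $\lim_{r\to R}u=\infty$, contradicting a solution defined for all $t$. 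The same barrier-and-escape analysis shows that every orbit in $\mathcal Q$ is either the unstable manifold of $P_2$ continued forward until it escapes at a finite $t_0$ --- the solutions on $(0,R)$ of types (\ref{nn})--(\ref{nno}) --- or the stable manifold of $P_1$ continued backward until it escapes at a finite $t_1$ --- the solutions on $(R,\infty)$ of types (\ref{rr})--(\ref{rrr}); no orbit survives on all of $\mathbb R$, which gives the non-existence of a global solution. The main obstacle is precisely this global control far out in $\overline{\mathcal Q}$: the finite-$R$ blow-up estimate for orbits not limiting onto $P_1$ or $P_2$, together with an exhaustive case split; a secondary delicate point is the Briot-Bouquet normal-form computation pinning down $\alpha_{N,p,q},\beta_{N,p,q},\delta_{N,p,q}$ when $\gamma\in\{S_1,S_2\}$ (with the extra degeneracy at $(0,0)$ when $\gamma=S_2=0$), and, when $p>2$, checking that the non-Lipschitz locus $\{G=0\}$ of the vector field is crossed transversally so that no spurious branching of orbits occurs.
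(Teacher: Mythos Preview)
Your proposal is correct and follows essentially the same route as the paper: phase-plane analysis of system (\ref{SGV}), linearisation at $\mathbf{A}_1,\mathbf{A}_2$ (the paper's notation for your $P_1,P_2$) via Lemma~\ref{fixai}, and centre-manifold asymptotics in the degenerate cases $\gamma\in\{S_1,S_2\}$. Your barrier line $\{G=G_\gamma\}$ is exactly the paper's line $\mathcal L$, and your positively/negatively invariant half-planes are a coarsening of the paper's regions $\mathcal R_3',\mathcal R_4'$; the paper derives the logarithmic constants $\alpha_{N,p,q},\beta_{N,p,q},\delta_{N,p,q}$ by the same direct expansion you describe. One minor point: in your escape estimate the ``leading part'' $G_t\sim -V$ is not quite right, since $F(G)\sim (p-1)|G|^{p/(p-1)}$ and $V$ are of the same order along the escaping orbit; the correct relation $V\sim\frac{(p-1)(q+1)}{p}|G|^{p/(p-1)}$ that you state comes from balancing both terms, and then $G_t\sim -\frac{(p-1)(q+1-p)}{p}|G|^{p/(p-1)}$ gives your blow-up rate $|G|^{-1/(p-1)}\sim\frac{q+1-p}{p}(t_0-t)$. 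This does not affect the conclusion.
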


\begin{theorem}
\label{H4rad}\textbf{Case} ($\mathcal{H}_{4}$) Suppose $\mu =\mu _{0}$ and $%
\gamma =\frac{N-p}{p}.$ Then there is no global solution of (\ref{onep}).
For any $R>0$,there exists solutions such that 
\begin{equation}
\lim_{r\longrightarrow R}u=\infty ,\qquad \lim_{r\longrightarrow \infty }r^{%
\frac{N-p}{p}}(\ln r)^{\frac{2}{q+1-p}}u=c_{N,p,q},\qquad R>0,  \label{ao}
\end{equation}%
\begin{equation}
\lim_{r\longrightarrow 0}r^{\frac{N-p}{p}}\left\vert \ln r\right\vert ^{%
\frac{2}{q+1-p}}u=c_{N,p,q},\qquad \lim_{r\longrightarrow R}u=\infty ,\qquad
R>0,  \label{bo}
\end{equation}%
with $c_{N,p,q}=(\frac{2(q+1)}{(q+1-p)^{2}}(\frac{N-p}{p})^{p-2})^{\frac{1}{%
q+1-p}}.$ All of the solutions have one of these types, and can be obtained
by quadratures. If $p=2,$ the solutions are \textbf{explicit, }given by%
\textbf{\ } 
\begin{equation}
u(r)=(\frac{2(q+1)}{(q-1)^{2}})^{\frac{1}{q-1}}r^{\frac{2-N}{2}}(\ln r\pm
\ln R)^{-\frac{2}{q-1}},\qquad R>0.  \label{co}
\end{equation}
\end{theorem}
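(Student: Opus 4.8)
The plan is to pass, via the change of unknowns (\ref{gv}), to the autonomous system (\ref{SGV}), and to exploit that in case $(\mathcal H_4)$ one has $\mu=\mu_0$, $\gamma=\frac{N-p}p$ and $q=\mathbf{q}_s$, so that $D=0$ and the energy $\mathcal E$ of Lemma \ref{ener} is a \emph{first integral}. Writing $\psi(G)=\frac{p-1}p|G|^{\frac p{p-1}}-\frac{N-p}pG+\frac1p\big(\tfrac{N-p}p\big)^p$ and $S(G)=\operatorname{sgn}(G)\,|G|^{\frac1{p-1}}$, one checks that $\psi\ge 0$ with a single (double) zero at $G_0=\big(\tfrac{N-p}p\big)^{p-1}$, that $\psi'(G)=S(G)-\gamma$, and that (\ref{SGV}) becomes
\begin{equation*}
G_t=p\psi(G)-V,\qquad V_t=(q+1-p)\,V\,(\gamma-S(G)),
\end{equation*}
while by (\ref{derigv}) and $D=0$ the quantity $\mathcal E=V^{\frac p{q+1-p}}\big(\psi(G)-\tfrac V{q+1}\big)$ is constant along orbits. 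The only stationary point in $\{V\ge 0\}$ is $M_0=(G_0,0)$, and its linearization there is \emph{nilpotent}; this degeneracy is exactly why no power solution $u^\ast$ survives (since $(a^\ast)^{q+1-p}=\varphi(\gamma)=\mu_0-\mu=0$) and why logarithmic corrections appear.

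The key point is that the curve $\Gamma=\{V=(q+1)\psi(G)\}$ — that is, $\{\mathcal E=0\}\cap\{V>0\}$ completed by $M_0$ — is \emph{invariant}: along it $G_t=-(q+1-p)\psi(G)\le 0$, vanishing only at $M_0$. Hence $\Gamma\setminus\{M_0\}$ consists of exactly two orbits, $\Gamma^+$ on which $G$ decreases monotonically from $+\infty$ to $G_0$, and $\Gamma^-$ on which $G$ decreases from $G_0$ to $-\infty$. Along $\Gamma^+$, for $G$ large $G_t\sim-\tfrac{(p-1)(q+1-p)}p\,G^{\frac p{p-1}}$, so $G$ reaches $+\infty$ at a \emph{finite} time $t_-=:\ln R$; since then $V\to\infty$ while $r\to R$, one gets $u\to\infty$ at $R$, whereas $(G,V)\to M_0$ as $t\to+\infty$, i.e.\ $r\to\infty$. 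By the symmetric computation along $\Gamma^-$, $(G,V)\to M_0$ as $t\to-\infty$ (i.e.\ $r\to 0$), while $G\to-\infty$ and $V\to\infty$ at a finite $t_+=:\ln R$, again giving $u\to\infty$ at $R$. Thus $\Gamma^+$ carries a solution on $(R,\infty)$ of type (\ref{ao}) and $\Gamma^-$ a solution on $(0,R)$ of type (\ref{bo}); the autonomy of the system (equivalently the scaling $T_k$) makes $R>0$ arbitrary.

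To read off the behaviour at the $M_0$-end and the constant $c_{N,p,q}$, I would expand to second order at $G_0$: with $g=G-G_0$, $\psi(G)\sim\tfrac12\psi''(G_0)g^2$, $\psi''(G_0)=\tfrac1{p-1}\big(\tfrac{N-p}p\big)^{2-p}$, so on $\Gamma$ one has $g_t\sim-\kappa g^2$ with $\kappa=\tfrac{q+1-p}{2(p-1)}\big(\tfrac{N-p}p\big)^{2-p}$, whence $g\sim\tfrac1{\kappa t}$; then, with $X=r^\gamma u$ and $t=\ln r$, $(\ln X)_t=\gamma-S(G)=-\psi'(G)\sim-\psi''(G_0)\,g\sim-\tfrac2{(q+1-p)\,t}$, which integrates to $X\sim c_{N,p,q}\,|t|^{-\frac2{q+1-p}}$, i.e.\ the limits (\ref{ao})--(\ref{bo}); the value of $c_{N,p,q}$ then drops out of $X^{q+1-p}=V=(q+1)\psi(G)$ to leading order. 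This also gives the ``quadratures'' claim: on $\Gamma$ the system reduces to the separable scalar equation $G_t=-(q+1-p)\psi(G)$, and $u$ is recovered from $\ln u=-\int S(G)\,dt$. When $p=2$ one has $\psi(G)=\tfrac12\big(G-\tfrac{N-2}2\big)^2$, the equation $G_t=-\tfrac{q-1}2\big(G-\tfrac{N-2}2\big)^2$ integrates elementarily, and substituting back yields the explicit formula (\ref{co}).

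It remains to show that these are \emph{all} the solutions near $0$ or $\infty$ and that there is no global one. Since $\mathcal E$ is a non-constant first integral whose only critical point is $M_0$, the system has no periodic orbit and no homoclinic loop (the level sets of $\mathcal E$ in $\{V>0\}$ are not closed, and an orbit homoclinic to $M_0$ would lie in $\{\mathcal E=0\}=\Gamma\cup\{V=0\}$, which is excluded); so by Poincar\'{e}--Bendixson an orbit bounded as $t\to+\infty$ (resp.\ $t\to-\infty$) tends to $M_0$, hence $\mathcal E\equiv\mathcal E(M_0)=0$ along it and it lies on $\Gamma$. Conversely I would show that an orbit which is \emph{unbounded} as $t\to+\infty$ (resp.\ $t\to-\infty$) actually reaches infinity in \emph{finite} $t$ — the escape-in-finite-time estimate, valid in the regions where $|G|$ or $V$ is large, obtained by comparing $dV/dG$ with the slope of $\Gamma$ and using that $\psi(G)$ grows like $|G|^{p/(p-1)}$; I expect this to be the main technical obstacle. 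Granting it, any solution near $0$ is carried by $\Gamma^-$ and any solution near $\infty$ by $\Gamma^+$; since neither orbit is defined for all $t\in\mathbb R$, there is no positive global solution, and Theorem \ref{H4rad} follows.
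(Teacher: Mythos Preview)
Your approach is essentially the same as the paper's: both exploit that $D=0$ makes $\mathcal E$ a first integral, identify the level set $\{\mathcal E=0\}\cap\{V>0\}=\{V=(q+1)\psi(G)\}$, reduce the dynamics on it to the scalar equation $G_t=-(q+1-p)\psi(G)$ (equivalently the paper's $G_t+\tfrac{q+1-p}{p}F(G)=0$), and read off the logarithmic asymptotics from the second-order expansion of $\psi$ (or $F$) at $G_0$.

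The one point you flag as ``the main technical obstacle'' --- showing that orbits not on $\Gamma$ escape to infinity in finite $t$ --- is not a gap: it is exactly the content of Lemma~\ref{conver}, which you should simply invoke. That lemma (via the Osserman estimate, Proposition~\ref{Osserman}) shows that for any radial solution $u$ defined near $r=0$ or $r=\infty$, the pair $(G,V)$ stays bounded and converges to the unique fixed point $(G_0,0)$; constancy of $\mathcal E$ then forces $\mathcal E\equiv 0$, hence the orbit lies on $\Gamma$. With this in hand, your argument that $\Gamma^+$ and $\Gamma^-$ are each defined only on a half-line in $t$ (because $G_t\sim -\tfrac{(p-1)(q+1-p)}{p}|G|^{p/(p-1)}$ for $|G|$ large) gives the nonexistence of global solutions, and the classification and quadrature statements follow exactly as you outline.
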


\begin{theorem}
\label{H5rad}\textbf{Case} ($\mathcal{H}_{5}$). Suppose $\mu <\mu _{0}.$
Then there exists a unique global solution: $u^{\ast }=a^{\ast }r^{-\gamma }$
of (\ref{onep}).

Moreover, there exist solutions satisfying any of the behaviours 
\begin{equation}
\lim_{r\rightarrow R}u=\infty ,\text{ \qquad }\lim_{r\rightarrow \infty
}r^{\gamma }u=a^{\ast },  \label{t1}
\end{equation}%
\begin{equation}
\lim_{r\rightarrow R}u=0,\qquad \lim_{r\rightarrow \infty }r^{\gamma
}u=a^{\ast },  \label{t2}
\end{equation}%
\begin{equation}
\lim_{r\rightarrow 0}r^{\gamma }u=a^{\ast },\qquad \lim_{r\rightarrow
R}u=\infty ,  \label{t3}
\end{equation}%
\begin{equation}
\lim_{r\rightarrow 0}r^{\gamma }u=a^{\ast },\text{\qquad\ }u(R)=0.
\label{t4}
\end{equation}

Any local solution near $0$ or $\infty $ has one of these types. Moreover,
if $p>2$ and $\gamma =0$, the solutions are constant near $\infty $, or near 
$0$.
\end{theorem}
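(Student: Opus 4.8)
The plan is to read everything off the autonomous system (\ref{SGV}) for the pair $(G,V)$ of (\ref{gv}): positive radial solutions of (\ref{onep}) correspond exactly to trajectories of (\ref{SGV}) in the open half–plane $\{V>0\}$, the radius being $r=e^{t}$. The hypothesis $\mu<\mu_{0}$ means precisely, by (\ref{fau}), that $\varphi(S)\geq\mu_{0}-\mu>0$ for every $S$; writing $S$ as a function of $G$ via $S=|G|^{(2-p)/(p-1)}G$ one has $F(G):=(p-1)|G|^{p/(p-1)}-(N-p)G-\mu=\varphi(S)$, so $F(G)>0$ for all $G$ and $F(G)\to+\infty$ as $|G|\to\infty$. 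Hence (\ref{SGV}) has no rest point on $\{V=0\}$, and its only rest point in $\{V\geq0\}$ is $M^{\ast}=(G_{\gamma},V^{\ast})$ with $G_{\gamma}=|\gamma|^{p-2}\gamma$ and $V^{\ast}=F(G_{\gamma})=\varphi(\gamma)=(a^{\ast})^{q+1-p}>0$ by (\ref{condi}); it is exactly $u^{\ast}=a^{\ast}r^{-\gamma}$. For $\gamma\neq0$ the field is $C^{1}$ near $M^{\ast}$ and its linearization there has trace $-D$ ($D=N-p-p\gamma$) and determinant $-\kappa$ with $\kappa=(q+1-p)|\gamma|^{2-p}V^{\ast}/(p-1)>0$; since $\kappa>0$, $M^{\ast}$ is a saddle, with one–dimensional stable and unstable manifolds $W^{s},W^{u}$; trajectories on $W^{s}$ are the solutions with $\lim_{r\to\infty}r^{\gamma}u=a^{\ast}$, those on $W^{u}$ the solutions with $\lim_{r\to0}r^{\gamma}u=a^{\ast}$.

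Next I would control the ideal boundary of $\{V>0\}$. Partition $\{V>0\}$ by the two nullclines $\{V=F(G)\}$ and $\{G=G_{\gamma}\}$ and record the constant sign of $(G_{t},V_{t})$ on each of the four regions. A short analysis shows the only escape modes of a trajectory leaving every compact subset of $\{V>0\}$ are $(G,V)\to(-\infty,+\infty)$ (i.e. $u\to\infty$) and $(G,V)\to(+\infty,0)$ (i.e. $u\to0$), and that both occur in finite $t$, hence at a finite positive radius $R=e^{t}$: indeed $F(G)$ grows like $|G|^{p/(p-1)}$ with $p/(p-1)>1$, so $\int^{\pm\infty}dG/|G|^{p/(p-1)}<\infty$, and $(\ln V)_{t}=(q+1-p)(\gamma-S(G))$ shows $V\to0$ forces $G\to+\infty$. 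Consequently a trajectory defined for all $t$ near $-\infty$ (a solution "near $0$") cannot escape backward, so it stays in a compact subset of $\{V>0\}$ and its $\alpha$–limit set, containing no cycle (none surrounds the saddle $M^{\ast}$, by index theory) and no rest point but $M^{\ast}$, equals $\{M^{\ast}\}$; thus it lies on $W^{u}$ and $\lim_{r\to0}r^{\gamma}u=a^{\ast}$. Tracking its two branches with the sign table, one escapes through $(G,V)=(-\infty,+\infty)$ in finite $t$ (type (\ref{t3})) and the other through $(+\infty,0)$ (type (\ref{t4})). Symmetrically a solution "near $\infty$" lies on $W^{s}$ and is, besides $u^{\ast}$, of type (\ref{t1}) or (\ref{t2}); since $W^{s},W^{u}$ are nonempty, all four behaviours occur.

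A global solution is a trajectory on all of $\mathbb{R}$; by the above it cannot escape $\{V>0\}$ in either time direction, so it is relatively compact and its $\alpha$– and $\omega$–limit sets are both $\{M^{\ast}\}$: it is either $M^{\ast}$ or a homoclinic orbit at $M^{\ast}$. The Pohozaev energy of Lemma \ref{ener} excludes the latter: by (\ref{derigv}), $\mathcal{E}_{t}$ has the sign of $-D$, so for $D\neq0$ the energy is strictly monotone along non–constant trajectories and cannot return to $\mathcal{E}(M^{\ast})$, whereas for $D=0$ (then $\gamma=\tfrac{N-p}{p}$ and $M^{\ast}$ lies at the minimum of $\{V=F(G)\}$) $\mathcal{E}$ is a first integral and one checks directly that $\{\mathcal{E}=\mathcal{E}(M^{\ast})\}\cap\{V>0\}$ is not a loop. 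Hence $u^{\ast}=a^{\ast}r^{-\gamma}$ is the unique global positive solution.

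Finally, when $\gamma=0$ the map $S\mapsto|G|^{(2-p)/(p-1)}G=\operatorname{sgn}(G)|G|^{1/(p-1)}$ has exponent $1/(p-1)$ at $G=0$: for $p>2$ this is $<1$, so the field (\ref{SGV}) is not Lipschitz at $M^{\ast}=(0,|\mu|)$, and a dominant–balance computation ($G\sim\tau^{\alpha}$ with $\alpha=2(p-1)/(p-2)$ as $\tau=T-t\to0$) shows that $W^{s}$– and $W^{u}$–branch trajectories reach $M^{\ast}$ in finite $t$, that is $u\equiv a^{\ast}$ for large resp.\ small $r$; for $1<p\leq2$, $\gamma=0$, the equilibrium is treated by a short direct argument yielding the usual asymptotic approach. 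The routine parts are the saddle structure and the finite–time escape estimates; the delicate points are (i) ruling out homoclinic orbits so that $u^{\ast}$ is the unique global solution — immediate from the monotone energy when $D\neq0$ but requiring a hands–on level–curve study when $D=0$ — and (ii) the loss of regularity of (\ref{SGV}) at the equilibrium when $\gamma=0$, which both forces the separate local analysis there and is exactly what produces the locally constant solutions when $p>2$.
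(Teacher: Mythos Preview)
Your proposal is correct and follows essentially the same phase--plane route as the paper: both reduce to system (\ref{SGV}), identify $\mathbf{M}_{0}$ as the unique fixed point (a saddle when $\gamma\neq0$), read the four local behaviours (\ref{t1})--(\ref{t4}) off the branches of the stable and unstable manifolds, and handle the $\gamma=0$, $p>2$ degeneration exactly as in Lemma~\ref{gammazero}. The only notable deviation is in how non--trivial recurrence is excluded: you invoke the index of a saddle to kill periodic orbits and then treat homoclinics explicitly via the monotone energy (for $D\neq0$) and a direct inspection of the level set $\{\mathcal{E}=\mathcal{E}(\mathbf{M}_{0})\}$ (for $D=0$), whereas the paper bundles both into Lemma~\ref{conver} using the energy when $D\neq0$ and a positively--invariant--region argument when $D=0$ --- your treatment is in fact slightly more explicit on the homoclinic point than the paper's terse ``reduced to the point $\mathbf{M}_{0}$''.
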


\begin{remark}
\label{nonuniq}When $\gamma =0$, Theorem \ref{H2rad} in case $\mu _{0}<\mu
<0 $ (resp. Theorem \ref{H5rad} in case $\mu <\mu _{0}$) shows in evidence a
phenomena of \textbf{nonuniquess} of solutions $u$ in $C^{1}(B_{r_{0}}%
\backslash \left\{ 0\right\} )\cap C(B_{r_{0}})$ of equation (\ref{pqzero}), 
\textbf{valid for any} $p>1$: besides the constant solution $u^{\ast
}(r)\equiv \left\vert \mu \right\vert ^{\frac{1}{q+1-p}}$, it admits
solutions satisfying (\ref{AA}),(\ref{AAA}),(\ref{GG}), (resp. (\ref{t3}),(%
\ref{t4})). Considering again $u=\left\vert \mu \right\vert ^{\frac{1}{q-1}%
}(1+\overline{u})$, this result can be compared to the nonuniqueness result
of \cite[Remark 5.1]{GuedVe} relative to the equation $-\Delta _{p}w+w=0$
for $p>2$.
\end{remark}

\section{Basic arguments for the nonradial case}

\subsection{The strong maximum principle}

\begin{theorem}
\label{SMP}(Strong Maximum Principle) Let $u$ be any nonnegative $C^{1}$%
solution of (\ref{pq}) in a domain $\omega $ such that $\overline{\omega }%
\subset \mathbb{R}^{N}\backslash \left\{ 0\right\} .$ Then either $u$ is
positive in $\omega $, or $u\equiv 0.$ As a consequence, any nonnegative
solution in $B_{r_{0}}\backslash \left\{ 0\right\} $ (resp. in $\mathbb{R}%
^{N}\backslash \overline{B_{r_{0}}}$) is positive, or $u\equiv 0$.
\end{theorem}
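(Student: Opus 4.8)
The statement is a Harnack-type strong maximum principle for the operator $\mathcal{L}_{p,\mu}u + |x|^\theta u^q$ away from the origin. The plan is to rewrite the equation in a neighborhood of any point $x_0\in\omega$ as a $p$-Laplacian-type inequality to which a known strong maximum principle applies. Fix a ball $B=B_\rho(x_0)$ with $\overline{B}\subset\mathbb{R}^N\setminus\{0\}$; on $\overline{B}$ the coefficients $|x|^{-p}$ and $|x|^\theta$ are bounded, say $|x|^{-p}\le c_0$ and $|x|^\theta\le c_1$ on $\overline B$. Since $u\ge 0$ is a $C^1$ solution, it satisfies weakly
\begin{equation*}
-\Delta_p u = -\mu\,\frac{u^{p-1}}{|x|^p} - |x|^\theta u^q \le c_2\,u^{p-1} \quad\text{in }B,
\end{equation*}
where $c_2 = |\mu|\,c_0 + c_1 \sup_{\overline B}u^{q-p+1}$ is finite because $u\in C^1(\overline B)$ and $q>p-1$, so $u^{q-p+1}$ is bounded on $\overline B$. (When $q<p-1$ one would instead write $u^q = u^{p-1}u^{q-p+1}$ only where $u>0$; but in the relevant range this is handled directly, and near a point where $u$ might vanish the term $|x|^\theta u^q\ge0$ only helps.) Thus $u$ is a nonnegative weak supersolution of $-\Delta_p v - c_2 v^{p-1} = 0$, i.e. $\Delta_p u \le c_2 u^{p-1}$ in the distributional sense on $B$.

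Next I invoke V\'azquez's strong maximum principle (J.L. V\'azquez, \emph{A strong maximum principle for some quasilinear elliptic equations}, Appl. Math. Optim. 1984): if $u\ge 0$ is $C^1$ and satisfies $\Delta_p u \le \beta(u)$ in a domain with $\beta$ nondecreasing, $\beta(0)=0$, and $\int_{0^+} (s\beta(s))^{-1/p}\,ds = +\infty$, then $u\equiv 0$ or $u>0$ throughout. Here $\beta(s)=c_2 s^{p-1}$, so $s\beta(s)=c_2 s^p$ and $(s\beta(s))^{-1/p}=c_2^{-1/p}s^{-1}$, whose integral near $0$ diverges; the hypothesis is met. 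Hence on each such ball $B$, either $u\equiv 0$ on $B$ or $u>0$ on $B$. A standard connectedness argument then upgrades this to $\omega$: the sets $\{u>0\}$ and $\mathrm{int}\{u=0\}$ are open, disjoint, and cover the connected set $\omega$, so one of them is all of $\omega$; in the latter case $u\equiv 0$.

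For the consequence, apply this with $\omega$ an annulus: given a nonnegative solution $u$ in $B_{r_0}\setminus\{0\}$ (resp. in $\mathbb{R}^N\setminus\overline{B_{r_0}}$), each annulus $\{r_1<|x|<r_2\}$ with closure in the domain is connected and has closure in $\mathbb{R}^N\setminus\{0\}$, so on it $u\equiv0$ or $u>0$; exhausting the domain by such annuli and using connectedness of $B_{r_0}\setminus\{0\}$ (resp. of the exterior domain, valid since $N\ge2$, indeed $N>p\ge1$) gives the dichotomy on the whole domain. The main obstacle is purely a matter of bookkeeping: making sure the absorption term $|x|^\theta u^q$ can be absorbed into a function $\beta(u)$ of the required form with the correct monotonicity and with the Osgood-type integral condition — which works precisely because the exponent $q$ satisfies $q\ge p-1$ (so $\beta(s)\ge c\,s^{p-1}$ with the critical integrability), and because $u\in C^1$ confines us to a compact set where all coefficients are controlled.
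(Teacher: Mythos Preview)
Your approach is essentially the same as the paper's: both reduce to V\'azquez's strong maximum principle by exhibiting a nondecreasing $\beta$ with $\beta(0)=0$, $\Delta_p u\le\beta(u)$, and $\int_{0^+}(s\beta(s))^{-1/p}\,ds=\infty$. The paper splits into the cases $\mu\le0$ and $\mu>0$, taking $\beta(s)=C_1 s^q$ in the first case and $\beta(s)=\mu C_2 s^{p-1}+C_1 s^q$ in the second, with constants depending only on the coefficients over $\overline\omega$; you treat both signs at once with $\beta(s)=c_2 s^{p-1}$, at the price of letting $c_2$ depend on $\sup_{\overline B}u$. Both choices satisfy the integral condition since $q>p-1$.

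One sign slip to fix: the displayed estimate you derive is $-\Delta_p u\le c_2 u^{p-1}$, which is a \emph{lower} bound on $\Delta_p u$ and hence the wrong direction for V\'azquez; the sentence that follows (``i.e.\ $\Delta_p u\le c_2 u^{p-1}$'') does not follow from it. What you actually need is the upper bound on $\Delta_p u$, and it holds with the very same $c_2$: from the equation,
\[
\Delta_p u \;=\; \mu\,\frac{u^{p-1}}{|x|^{p}}+|x|^{\theta}u^{q}\;\le\;|\mu|\,c_0\,u^{p-1}+c_1\bigl(\sup_{\overline B}u\bigr)^{q-p+1}u^{p-1}\;=\;c_2\,u^{p-1}.
\]
Replace the display by this inequality (and drop the ``supersolution of $-\Delta_p v-c_2 v^{p-1}=0$'' phrasing, which with the usual convention would mean the opposite inequality) and the argument is complete.
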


\begin{proof}
If $\mu \leq 0$, then setting $C_{1}=\max_{x\in \overline{\omega }%
}\left\vert x\right\vert ^{\theta }$, and $\beta _{1}(u)=C_{1}u^{q}$, 
\begin{equation*}
-\Delta _{p}u+\beta _{1}(u)\geq -\Delta _{p}u+\left\vert x\right\vert
^{\theta }u^{q}=\left\vert \mu \right\vert \frac{u^{p-1}}{\left\vert
x\right\vert ^{p}}\geq 0.
\end{equation*}%
If $\mu >0$ then setting $C_{2}=\max_{x\in \overline{\omega }}\left\vert
x\right\vert ^{-p}$ and $\beta _{2}(u)=\mu C_{2}u^{p-1}+u^{q}$, 
\begin{equation*}
-\Delta _{p}u+\beta _{2}(u)\geq 0.
\end{equation*}%
Then the result comes from the Strong Maximum Principle of \cite[Theorem 5]%
{Va}, since in any case $\int_{0}^{1}(s\beta _{i}(s))^{-\frac{1}{p}%
}ds=\infty $.
\end{proof}

\subsection{The Weak Comparison Principle}

We recall a main argument, due to \cite[Proposition 2.2]{DuGu}, leading to a
Weak Comparison Principle:

\begin{proposition}
\label{DuGu}\cite[Proposition 2.2]{DuGu} Let $\omega $ be a bounded domain
in $\mathbb{R}^{N}$, $A\in C_{b}\left( \omega \right) $ and $B\in
C^{+}\left( \omega \right) $, $B\not\equiv 0.$ Let $u$,$v\in C^{1}\left(
\omega \right) $ be two positive functions such that 
\begin{equation*}
-\Delta _{p}v+Av^{p-1}+Bg(v)\geq 0\geq -\Delta _{p}u+Au^{p-1}+Bg(u)
\end{equation*}%
in $\mathcal{D}^{\prime }(\omega )$, where $g\in C(\left[ 0,\infty \right) ,$
and $s\longmapsto g(s)/s^{p-1}$ is increasing on $(\inf_{\omega
}(u,v),\sup_{\omega }(u,v).$ If $\lim \sup_{d(x,\partial \omega
)\longrightarrow 0}(u-v)(x)\leq 0,$ then $u\leq v$ in $\omega $.
\end{proposition}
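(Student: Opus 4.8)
The plan is to run the classical hidden-convexity argument (Picone / Brezis--Oswald / D\'{\i}az--Sa\'{a}, in the form adapted to the $p$-Laplacian); since this is a standard fact one could also simply cite \cite{DuGu}, but the route is as follows. Arguing by contradiction, assume the open set $D=\{x\in\omega:u(x)>v(x)\}$ is nonempty. I would start with two preliminary observations coming from the hypotheses: first, the one-sided boundary condition $\limsup_{d(x,\partial\omega)\to0}(u-v)\le0$ together with $u,v\in C^{1}(\omega)$ forces $\{u\ge v+\delta\}$ to be a compact subset of $\omega$ for every $\delta>0$, on which $u$ and $v$ are bounded and bounded away from $0$; second, because $-\Delta_{p}$ is $(p-1)$-homogeneous and $s\mapsto g(s)/s^{p-1}$ is nondecreasing, $(1-\varepsilon)u$ is again a subsolution and $(1+\varepsilon)v$ again a supersolution for each small $\varepsilon>0$. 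Using the second observation to replace $u,v$ by $(1-\varepsilon)u,(1+\varepsilon)v$ and the first to localise, I would reduce to proving $u\le v$ under the extra assumption that the test functions below have compact support in $\omega$; the general statement follows on letting $\varepsilon\to0^{+}$.

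The core computation uses the Picone test pair
\[
\phi_{1}=\frac{(u^{p}-v^{p})^{+}}{u^{p-1}},\qquad \phi_{2}=\frac{(u^{p}-v^{p})^{+}}{v^{p-1}},
\]
which are nonnegative and, after the above reduction, admissible. Insert $\phi_{1}$ into the weak subsolution inequality for $u$, insert $\phi_{2}$ into the weak supersolution inequality for $v$, and subtract. The decisive feature of this choice is that $u^{p-1}\phi_{1}=v^{p-1}\phi_{2}=(u^{p}-v^{p})^{+}$, so the zeroth-order terms with coefficient $A$ \emph{cancel exactly}. The term with coefficient $B$ becomes $\int_{\omega}B\,(u^{p}-v^{p})^{+}\big(g(u)u^{1-p}-g(v)v^{1-p}\big)$, which is $\ge0$ since $B\ge0$ and $s\mapsto g(s)/s^{p-1}$ is nondecreasing on the relevant range; hence the remaining gradient term is $\le0$. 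On the other hand, expanding $\nabla\phi_{1},\nabla\phi_{2}$ and combining the Cauchy--Schwarz inequality with Young's inequality $p\alpha^{p-1}\beta\le(p-1)\alpha^{p}+\beta^{p}$ applied twice (with $\alpha=(v/u)|\nabla u|$, $\beta=|\nabla v|$, and with the roles of $u,v$ reversed) yields the $p$-Laplacian Picone inequality: the gradient integrand is pointwise $\ge0$, and vanishes exactly where $\nabla\log u=\nabla\log v$. Consequently the gradient and the $B$ terms both vanish, so $u/v$ is locally constant on $D$, equal on each connected component to some $c>1$; a component with nonempty boundary inside $\omega$ is impossible (on $\partial C\cap\omega$ one has $u=v$, contradicting $u/v=c>1$ on $C$), so $D$ would be open and closed in $\omega$, i.e. $u\equiv cv$ on a whole component of $\omega$, and subtracting the two differential inequalities then forces $B\big(g(cv)(cv)^{1-p}-g(v)v^{1-p}\big)\equiv0$, contradicting $B\not\equiv0$ together with the (strict) monotonicity of $g(s)/s^{p-1}$. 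Hence $D=\emptyset$, i.e. $u\le v$.

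The step I expect to be the main obstacle is not the algebra but the admissibility of the Picone quotients: starting only from $\limsup_{d(x,\partial\omega)\to0}(u-v)\le0$ and $u,v\in C^{1}(\omega)$, one must guarantee that $D$ (or an exhausting sequence of open subsets of it) lies in a compact subset of $\omega$ on which $u$ and $v$ are pinched between two positive constants, so that $\phi_{1},\phi_{2}\in W^{1,p}$ with compact support. This is precisely what the homogeneity rescalings $(1\mp\varepsilon)$ and the exhaustion by the sets $\{u\ge v+\delta\}$, followed by the passages $\delta\to0^{+}$ and $\varepsilon\to0^{+}$, are designed to handle; once admissibility is secured, the exact cancellation of the $A$-term, the sign of the $Bg$-term via monotonicity of $g(s)/s^{p-1}$, and the nonnegativity of the Picone integrand are all routine.
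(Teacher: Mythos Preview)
The paper does not give its own proof of this proposition: it is quoted verbatim as \cite[Proposition 2.2]{DuGu} and then simply applied (via Corollary~\ref{DG}) to the operator $\mathcal{L}_{p,\mu}$. So there is nothing to compare against in the paper itself.

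Your sketch is the standard Picone / D\'{\i}az--Sa\'{a} argument and is correct; this is essentially the route taken in the cited reference \cite{DuGu}. The crucial points --- exact cancellation of the $A$-term thanks to $u^{p-1}\phi_{1}=v^{p-1}\phi_{2}$, the sign of the $Bg$-term from the monotonicity of $g(s)/s^{p-1}$, and the pointwise nonnegativity of the $p$-Laplacian Picone integrand --- are all identified correctly, and your discussion of the admissibility of $\phi_{1},\phi_{2}$ via the $(1\mp\varepsilon)$-rescaling and exhaustion by $\{u\ge v+\delta\}$ is exactly the technical care needed under the one-sided boundary hypothesis. For the purposes of the present paper a simple citation suffices, but your argument would serve as a self-contained proof.
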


We apply this theorem to problem (\ref{pq}), with $A(x)=\mu \left\vert
x\right\vert ^{-p}$ and $g(s)=s^{q}$, when $q>p-1$, with $B(x)=\left\vert
x\right\vert ^{\theta }$ and $\omega $ is a domain such that $\overline{%
\omega }\subset \Omega \backslash \left\{ 0\right\} :$

\begin{corollary}
\label{DG}Let $\omega =B_{r_{2}}\backslash \overline{B_{r_{1}}}$, with $%
0<r_{1}<r_{2}$. Let $q>p-1$,and $\mathcal{L}_{p,\mu }$ be defined by (\ref%
{lpmu}) for any $\mu $,$\theta \in \mathbb{R}.$ Let $u$,$v\in C^{1}(\omega
)\cap C(\overline{\omega })$ such that 
\begin{equation*}
\mathcal{L}_{p,\mu }v+\left\vert x\right\vert ^{\theta }v^{q}\geq 0\geq 
\mathcal{L}_{p,\mu }u+\left\vert x\right\vert ^{\theta }u^{q}
\end{equation*}%
$a.e.$in $\omega $, and $u\leq v$ on $\partial \omega .$ Then $u\leq v$ in $%
\omega $.
\end{corollary}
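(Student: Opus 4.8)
The plan is to reduce Corollary \ref{DG} directly to Proposition \ref{DuGu} by checking each of the hypotheses in the concrete situation at hand. First I would set $\omega=B_{r_2}\backslash\overline{B_{r_1}}$, which is indeed a bounded domain in $\mathbb{R}^N$ whose closure avoids the origin, so the coefficient functions $A(x)=\mu\left\vert x\right\vert^{-p}$ and $B(x)=\left\vert x\right\vert^{\theta}$ are smooth and bounded on $\overline{\omega}$, with $A\in C_b(\omega)$ and $B\in C^{+}(\omega)$, $B\not\equiv 0$, since $\left\vert x\right\vert$ stays between the positive constants $r_1$ and $r_2$. Next I would take $g(s)=s^{q}$: because $q>p-1$, the quotient $g(s)/s^{p-1}=s^{q-p+1}$ is strictly increasing on $(0,\infty)$, hence in particular on the interval $(\inf_{\omega}(u,v),\sup_{\omega}(u,v))$, so the monotonicity hypothesis of Proposition \ref{DuGu} holds.

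The one genuinely substantive point is that Proposition \ref{DuGu} requires $u,v$ to be \emph{positive} in $\omega$, whereas the statement of Corollary \ref{DG} only assumes $u,v\in C^{1}(\omega)\cap C(\overline{\omega})$. Here I would invoke the Strong Maximum Principle (Theorem \ref{SMP}): since $\overline{\omega}\subset\mathbb{R}^N\backslash\{0\}$, each of $u$ and $v$ is either strictly positive in $\omega$ or identically $0$. If $v\equiv 0$, then $\mathcal{L}_{p,\mu}v+\left\vert x\right\vert^{\theta}v^{q}=0$, and the chain of inequalities forces $\mathcal{L}_{p,\mu}u+\left\vert x\right\vert^{\theta}u^{q}\le 0$ with $u\le 0$ on $\partial\omega$; applying the Strong Maximum Principle/comparison to $u$ alone (or noting $u\equiv 0$ is then the only nonnegative option consistent with $u\le v=0$ on the boundary together with the differential inequality) gives $u\le 0= v$. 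The case $u\equiv 0$ is immediate since then $u=0\le v$. So we may assume both $u$ and $v$ are strictly positive in $\omega$, and we are in the setting of Proposition \ref{DuGu}.

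Finally I would verify the boundary condition: the hypothesis $u\le v$ on $\partial\omega$ together with $u,v\in C(\overline{\omega})$ gives $\limsup_{d(x,\partial\omega)\to 0}(u-v)(x)\le 0$. The differential inequalities
\[
-\Delta_{p}v+A v^{p-1}+B g(v)\;\ge\;0\;\ge\;-\Delta_{p}u+A u^{p-1}+B g(u)
\]
in $\mathcal{D}'(\omega)$ are exactly the hypotheses $\mathcal{L}_{p,\mu}v+\left\vert x\right\vert^{\theta}v^{q}\ge 0\ge\mathcal{L}_{p,\mu}u+\left\vert x\right\vert^{\theta}u^{q}$ holding $a.e.$ in $\omega$ (which, for $C^1$ functions solving such an equation, can be read in the distributional sense). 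Proposition \ref{DuGu} then yields $u\le v$ in $\omega$, completing the proof. The only real obstacle is the positivity reduction above; everything else is a matter of matching notation, and I would keep that part brief.
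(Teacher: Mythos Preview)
Your proposal is correct and follows exactly the paper's approach: the paper simply remarks that Corollary~\ref{DG} is the specialization of Proposition~\ref{DuGu} with $A(x)=\mu|x|^{-p}$, $B(x)=|x|^{\theta}$, $g(s)=s^{q}$, and $\omega$ an annulus whose closure avoids the origin. One small caveat: your positivity reduction via Theorem~\ref{SMP} is not quite clean, since that theorem is stated for \emph{solutions} rather than sub/supersolutions (and the underlying V\'azquez argument does not obviously apply to the subsolution $u$); in the paper this issue simply does not arise because positivity of $u$ and $v$ is an ambient hypothesis throughout, so you can safely drop that paragraph.
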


\subsection{A priori Osserman's estimate near $0$ or $\infty $}

\begin{proposition}
\label{Osserman} Let $q>p-1$, and $\mu ,\theta \in \mathbb{R}$. Then there
exists a constant $C=C_{N,p,q,\theta ,\mu }>0$ such that, for any solution $%
u $ of (\ref{pq}) in $B_{2r_{0}}\backslash \left\{ 0\right\} $, (resp. in $%
\mathbb{R}^{N}\backslash \overline{B_{\frac{r_{0}}{2}}}$, resp. in $\mathbb{R%
}^{N}\backslash \left\{ 0\right\} $) and for any $x\in B_{r_{0}}\backslash
\left\{ 0\right\} $, (resp. $x\in $ $\mathbb{R}^{N}\backslash \overline{%
B_{r_{0}}}$, resp. $x\in \mathbb{R}^{N}\backslash \left\{ 0\right\} $),%
\begin{equation}
u(x)\leq C_{N,p,q,\theta ,\mu }\left\vert x\right\vert ^{-\frac{p+\theta }{%
q+1-p}},\qquad \forall x\in B_{r_{0}}\backslash \left\{ 0\right\} \text{ }(%
\text{resp. }x\in \mathbb{R}^{N}\backslash \overline{B_{r_{0}}},\text{ resp. 
}x\in \mathbb{R}^{N}\backslash \left\{ 0\right\} ).  \label{Oss}
\end{equation}
\end{proposition}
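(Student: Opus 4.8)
The plan is to reduce (\ref{Oss}) to a \emph{universal} bound at the centre of a fixed ball, by exploiting the scaling invariance (\ref{scal}), and then to get that bound by comparison with an explicit radial supersolution of boundary blow-up type.

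\textbf{Step 1 (localization).} For $x_{0}\neq 0$ put $\rho =\left\vert x_{0}\right\vert $. In each of the three situations the closed ball $\overline{B(x_{0},\rho /2)}$ lies in the domain of $u$ and avoids $0$ (e.g.\ if $u$ solves (\ref{pq}) in $B_{2r_{0}}\backslash \{0\}$ and $\left\vert x_{0}\right\vert \le r_{0}$, then $B(x_{0},\rho /2)\subset B_{3\rho /2}\subset B_{2r_{0}}$; the exterior and global cases are analogous), so $u\in C^{1}$ there by \cite[Theorem 1]{To}. Set $\widetilde{x}_{0}=x_{0}/\rho $ and $v=T_{\rho }u$, i.e.\ $v(y)=\rho ^{\gamma }u(\rho y)$. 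By the invariance of (\ref{pq}) under (\ref{scal}), $v$ is a $C^{1}$ solution of
\begin{equation*}
-\Delta _{p}v+\mu \left\vert y\right\vert ^{-p}v^{p-1}+\left\vert y\right\vert ^{\theta }v^{q}=0\quad \text{on }B(\widetilde{x}_{0},\tfrac{1}{2}),\qquad v(\widetilde{x}_{0})=\rho ^{\gamma }u(x_{0}),
\end{equation*}
which we may assume positive (otherwise $v\equiv 0$ by Theorem \ref{SMP} and (\ref{Oss}) is trivial). Since $\tfrac{1}{2}\le \left\vert y\right\vert \le \tfrac{3}{2}$ there, one has $\left\vert \mu \left\vert y\right\vert ^{-p}\right\vert \le C_{\ast }:=\left\vert \mu \right\vert 2^{p}$ and $\left\vert y\right\vert ^{\theta }\ge c_{\ast }:=\min (2^{-\theta },(\tfrac{3}{2})^{\theta })>0$, constants depending only on $p,\theta ,\mu $. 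It thus suffices to bound $v(\widetilde{x}_{0})$ by a constant $C_{N,p,q,\theta ,\mu }$, for then $u(x_{0})\le C_{N,p,q,\theta ,\mu }\rho ^{-\gamma }=C_{N,p,q,\theta ,\mu }\left\vert x_{0}\right\vert ^{-\gamma }$, which is (\ref{Oss}) because $\gamma =\frac{p+\theta }{q+1-p}$.

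\textbf{Step 2 (barrier and comparison).} Using $-C_{\ast }\le \mu \left\vert y\right\vert ^{-p}$ and $c_{\ast }\le \left\vert y\right\vert ^{\theta }$, the function $v$ is a subsolution of the autonomous equation $-\Delta _{p}w-C_{\ast }w^{p-1}+c_{\ast }w^{q}=0$. I would build a radial supersolution that is $+\infty $ on a small sphere: let $\Phi $ be the radial large (boundary blow-up) solution of $-\Delta _{p}w+\tfrac{c_{\ast }}{2}w^{q}=0$ on a ball $B(0,R')$, which exists precisely because $q>p-1$ is the $p$-Laplacian Keller--Osserman condition $\int ^{\infty }t^{-(q+1)/p}\,dt<\infty $; by the scaling $\Phi \mapsto \lambda \Phi (\lambda ^{(q+1-p)/p}\,\cdot \,)$ its centre value is $\Phi (0)=m\,(2/c_{\ast })^{1/(q+1-p)}(R')^{-p/(q+1-p)}$ with $m$ absolute, and $\Phi $ is radially increasing (since $\Delta _{p}\Phi >0$ and $\Phi '(0)=0$). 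Choosing $R'\in (0,\tfrac12]$ small enough that $\Phi (0)\ge (2C_{\ast }/c_{\ast })^{1/(q+1-p)}$ forces $\tfrac{c_{\ast }}{2}\Phi ^{q}\le c_{\ast }\Phi ^{q}-C_{\ast }\Phi ^{p-1}$ on $B(0,R')$, hence $-\Delta _{p}\Phi -C_{\ast }\Phi ^{p-1}+c_{\ast }\Phi ^{q}\ge 0$, with $R'$ and $A_{0}:=\Phi (0)$ depending only on $N,p,q,\theta ,\mu $. Translating the barrier to $\omega :=B(\widetilde{x}_{0},R')\subset B(\widetilde{x}_{0},\tfrac12)$ and applying Proposition \ref{DuGu} with $A\equiv -C_{\ast }$, $B\equiv c_{\ast }$, $g(s)=s^{q}$ (admissible: $s\mapsto s^{q+1-p}$ is increasing, $v$ is bounded on $\overline{\omega }$ while $\Phi \to +\infty $ on $\partial \omega $, so $\limsup _{d(y,\partial \omega )\to 0}(v-\Phi )(y)=-\infty \le 0$) gives $v\le \Phi (\cdot -\widetilde{x}_{0})$ on $\omega $, so $v(\widetilde{x}_{0})\le A_{0}$, and (\ref{Oss}) follows with $C_{N,p,q,\theta ,\mu }=A_{0}$.

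The hard part will be the \textbf{existence of the radial large solution on balls} together with the precise dependence $R'(A)\to 0$ of its blow-up radius: one solves the radial ODE $(r^{N-1}\left\vert \Phi '\right\vert ^{p-2}\Phi ')'=\tfrac{c_{\ast }}{2}r^{N-1}\Phi ^{q}$ from the origin and uses the superlinearity $q>p-1$ to get blow-up at a finite radius that shrinks under scaling --- this is the classical Keller--Osserman phenomenon for the $p$-Laplacian. A minor but essential subtlety is that the Hardy coefficient $\mu \left\vert x\right\vert ^{-p}$ carries no sign, which is why one passes through its lower bound $-C_{\ast }$ and absorbs the term $-C_{\ast }w^{p-1}$ into $c_{\ast }w^{q}$ for $w$ large; the choice of $R'$ is exactly what makes this absorption possible.
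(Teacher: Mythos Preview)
Your proof is correct and proceeds by a genuinely different route from the paper's. The paper does not exploit the scaling invariance at the outset; instead it stays at the original scale, uses Young's inequality to write
\[
-\Delta _{p}u+\tfrac{1}{2}\left\vert x\right\vert ^{\theta }u^{q}\leq c_{p,q,\mu }\left\vert x\right\vert ^{-\frac{pq+\theta (p-1)}{q+1-p}},
\]
so that the Hardy term is converted into an explicit \emph{forcing} on the right, and then invokes as a black box the Keller--Osserman type estimate of \cite{VaVe}: if $-\Delta _{p}\psi +a\psi ^{q}\leq b$ on $B_{R}$ then $\psi (0)\leq C_{N,p,q}(aR^{p})^{-1/(q+1-p)}+(b/a)^{1/q}$, applied on the ball $B(x_{0},\left\vert x_{0}\right\vert /2)$ with $a\sim \left\vert x_{0}\right\vert ^{\theta }$ and $b\sim \left\vert x_{0}\right\vert ^{-(pq+\theta (p-1))/(q+1-p)}$; both terms then collapse to $\left\vert x_{0}\right\vert ^{-\gamma }$. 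Your argument instead rescales first to a unit configuration, freezes the coefficients to constants $(-C_{\ast },c_{\ast })$, and builds the barrier yourself as a boundary blow-up solution of $-\Delta _{p}\Phi +\tfrac{c_{\ast }}{2}\Phi ^{q}=0$, choosing the blow-up radius $R'$ small so that $\Phi (0)$ is large enough to absorb $-C_{\ast }\Phi ^{p-1}$ into $c_{\ast }\Phi ^{q}$; the conclusion then follows from Proposition~\ref{DuGu}. What you gain is that the argument is self-contained modulo the (classical) existence of the radial large solution, and the role of the Keller--Osserman condition $q>p-1$ is made completely explicit through the blow-up radius scaling $\Phi (0)\sim (R')^{-p/(q+1-p)}$; what the paper gains is brevity, since the estimate (\ref{esti}) from \cite{VaVe} already packages the same Keller--Osserman mechanism together with the forcing term in one line.
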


\begin{proof}
The estimate is classical in case $\mu >0.$ In the general case, we write,
since $q>p-1$, 
\begin{eqnarray*}
-\Delta _{p}u+\left\vert x\right\vert ^{\theta }u^{q} &\leq &\left\vert \mu
\right\vert \frac{u^{p-1}}{\left\vert x\right\vert ^{p}}=\left\vert \mu
\right\vert (\left\vert x\right\vert ^{\theta }u^{q})^{\frac{p-1}{q}%
}\left\vert x\right\vert ^{-\frac{pq+\theta (p-1)}{q}} \\
&\leq &\left\vert \mu \right\vert (\varepsilon ^{\frac{q}{p-1}}\left\vert
x\right\vert ^{\theta }u^{q}+\varepsilon ^{-\frac{q}{q-p+1}}(\left\vert
x\right\vert ^{-\frac{pq+\theta (p-1)}{q}})^{\frac{q}{q-p+1}}\leq \frac{%
\left\vert x\right\vert ^{\theta }u^{q}}{2}+c_{p,q,\mu }\left\vert
x\right\vert ^{-\frac{pq+\theta (p-1)}{q-p+1}},
\end{eqnarray*}%
where $c_{p,q,\mu }$ only depends on $p$,$q$,$\mu $ from H\"{o}lder
inequality with $\varepsilon =(\frac{1}{2\left\vert \mu \right\vert })^{%
\frac{p-1}{q}}$, then 
\begin{equation*}
-\Delta _{p}u+\frac{1}{2}\left\vert x\right\vert ^{\theta }u^{q}\leq
C_{p,q,\mu }\left\vert x\right\vert ^{-\frac{pq+\theta (p-1)}{q-p+1}}.
\end{equation*}%
Next we apply the following estimate, see for example \cite{VaVe}: if $\psi $
is a function such that in a ball $B_{R}$ 
\begin{equation*}
-\Delta _{p}\psi +a\psi ^{q}\leq b
\end{equation*}%
for some $a,b>0$, and $q>p-1$, then there exists $C_{N,p,q}>0$ depending on $%
N,p,q$ such that%
\begin{equation}
\psi (0)\leq C_{N,p,q}\left( \frac{1}{aR^{p}}\right) ^{\frac{1}{q+1-p}}+(%
\frac{b}{a})^{\frac{1}{q}}  \label{esti}
\end{equation}%
Applying (\ref{esti}) to the function $\psi (x)=u(x-x_{0})$ for any $%
x_{0}\in B_{\frac{r_{0}}{2}}\backslash \left\{ 0\right\} $, and $\left\vert
x-x_{0}\right\vert <R=\frac{\left\vert x_{0}\right\vert }{2}$, thus $\frac{%
\left\vert x_{0}\right\vert }{2}\leq \left\vert x\right\vert \leq \frac{%
3\left\vert x_{0}\right\vert }{2},$ and 
\begin{equation*}
a=2^{-(\theta +1)}\min \left\{ 1,3^{\theta }\right\} \left\vert
x_{0}\right\vert ^{\theta }\qquad b=\left\vert x_{0}\right\vert ^{\frac{%
pq+\theta (p-1)}{q-p+1}}\max \left\{ 2^{-\frac{pq+\theta (p-1)}{q-p+1}},(%
\frac{3}{2})^{\frac{pq+\theta (p-1)}{q-p+1}}\right\} ,
\end{equation*}%
we get 
\begin{eqnarray*}
u(x_{0}) &\leq &C_{N,p,q,\mu }\left( \frac{1}{a(\frac{\left\vert
x_{0}\right\vert }{2})^{p}}\right) ^{\frac{1}{q+1-p}}+(\frac{b}{a})^{\frac{1%
}{q}} \\
&\leq &C_{N,p,q,\theta ,\mu }(\left\vert x_{0}\right\vert ^{-\frac{p+\theta 
}{q+1-p}}+C_{N,p,q,\theta ,\mu }^{\prime }\left\vert x_{0}\right\vert ^{-(%
\frac{pq+\theta (p-1)}{q-p+1}+\theta )\frac{1}{q}}=C_{N,p,q,\theta ,\mu
}\left\vert x_{0}\right\vert ^{-\frac{p+\theta }{q+1-p}},
\end{eqnarray*}%
where $C_{N,p,q,\theta ,\mu }$ only depends on the parameters. Then we get (%
\ref{Oss}).The estimate is also valid for the exterior problem in $\mathbb{R}%
^{N}\backslash \overline{B_{\frac{r_{0}}{2}}},$ and the global problem in $%
\mathbb{R}^{N}\backslash \left\{ 0\right\} .$ \bigskip
\end{proof}

\begin{remark}
Note that Osserman's estimate holds without any restriction assumption: it
is valid for any $\mu \in \mathbb{R}$, and any $\theta \in \mathbb{R};$ in
particular $u$ is bounded when $p+\theta =0$, and $u$ tends to $0$ as $%
\left\vert x\right\vert \rightarrow 0$ when $p+\theta <0$.
\end{remark}

\subsection{Regularity result}

Followinq the method of \cite{FrVe}, we check that estimates of the function 
$u$ imply estimates of the gradient:

\begin{proposition}
\label{regu}Let $q>p-1.$ Let $u$ be a solution of (\ref{pq}) in $%
B_{r_{0}}\backslash \left\{ 0\right\} $(resp. $\mathbb{R}^{N}\backslash 
\overline{B_{r_{0}}})$ such that 
\begin{equation*}
u(x)\leq C_{1}\left\vert x\right\vert ^{-\delta }\qquad \text{in }%
B_{r_{0}}\backslash \left\{ 0\right\} \quad (\text{resp.}\mathbb{R}%
^{N}\backslash \overline{B_{r_{0}}})
\end{equation*}%
for some $\delta \leq \gamma $ (resp. $\delta \geq \gamma ).$ Then 
\begin{equation}
\left\vert \nabla u(x)\right\vert \leq C_{2}\left\vert x\right\vert
^{-(\delta +1)}\qquad \text{in }B_{\frac{r_{0}}{2}}\backslash \left\{
0\right\} ,  \label{hou}
\end{equation}%
and there exists $\alpha \in (0,1)$ only depending of $N,p,q,\delta $ such
that, for any $x,x^{\prime }\in $ $B_{\frac{r_{0}}{2}}\backslash \left\{
0\right\} $ $($resp. $\mathbb{R}^{N}\backslash \overline{B_{2r_{0}}})$ 
\begin{equation}
\left\vert \nabla u(x)-\nabla u(x^{\prime })\right\vert \leq
C_{3}(\left\vert x^{\prime }\right\vert ^{-\delta }+\left\vert x\right\vert
^{-\delta })\left\vert x\right\vert ^{-(1+\alpha )}\left\vert x-x^{\prime
}\right\vert ^{\alpha }.  \label{gou}
\end{equation}%
\medskip
\end{proposition}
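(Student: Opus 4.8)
The strategy is a standard rescaling argument combined with interior regularity for the $p$-Laplacian. The plan is to freeze the problem at scale $|x_0|$ around an arbitrary point $x_0$, rescale so that the annulus of radius comparable to $|x_0|$ becomes a fixed ball $B_1$, observe that the rescaled function solves an equation of the same type with uniformly controlled coefficients and a uniformly bounded right-hand side (this is where the hypothesis $\delta\le\gamma$, resp. $\delta\ge\gamma$, is used), and then invoke the interior $C^{1,\alpha}$ estimates of Tolksdorf \cite{To} and DiBenedetto to get bounds on the gradient and its H\"older seminorm in $B_{1/2}$. Undoing the rescaling produces \eqref{hou} and \eqref{gou}.

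More concretely: fix $x_0$ with $0<|x_0|<r_0/2$ (resp. $|x_0|>2r_0$). Set $\rho=|x_0|$ and define on $B_1$ the rescaled function
\[
v(y)=\rho^{\delta}\,u(x_0+\tfrac{\rho}{4}y),\qquad y\in B_1 .
\]
Then $v$ satisfies, in $B_1$,
\[
-\Delta_p v+\tilde\mu(y)\,\frac{v^{p-1}}{|y+\tfrac{4x_0}{\rho}|^{p}}\Bigl(\tfrac{\rho}{4}\Bigr)^{p}\rho^{-p}\cdots
\]
— more cleanly, a direct computation using $u\mapsto\rho^{\delta}u(x_0+\frac{\rho}{4}\,\cdot\,)$ turns \eqref{pq} into
\[
-\Delta_p v+\mu\,\tfrac{(\rho/4)^p}{|x_0+(\rho/4)y|^p}\,v^{p-1}
+\Bigl(\tfrac{\rho}{4}\Bigr)^p|x_0+(\rho/4)y|^{\theta}\rho^{-\theta-p-\delta(q+1-p)}\cdot\rho^{\delta(q+1-p)+\theta+p}\rho^{-\delta q}\,v^q=0,
\]
and on $B_1$ one has $|x_0+(\rho/4)y|\sim\rho$, so all the coefficients are bounded above and below by constants depending only on $N,p,q,\theta,\mu$. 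The absorption coefficient is $\le$ a constant; and because $u(x)\le C_1|x|^{-\delta}$ with $\delta\le\gamma$ (resp. $\delta\ge\gamma$), one checks $v\le C$ on $B_1$ and the $v^q$-coefficient is bounded by a constant. (The sign condition on $\delta-\gamma$ is exactly what makes the power of $\rho$ multiplying $|x|^{\theta}v^q$ nonpositive, resp. so that, combined with $|x_0|<r_0/2$ resp. $>2r_0$, it stays bounded.) Thus $v$ solves $-\Delta_pv=f$ in $B_1$ with $\|v\|_{L^\infty(B_1)}+\|f\|_{L^\infty(B_1)}\le C_{N,p,q,\theta,\mu}$.

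Now apply the interior gradient bound and $C^{1,\alpha}$ estimate for such equations (\cite[Theorem~1]{To}; see also DiBenedetto): there are $\alpha\in(0,1)$ and $C$, depending only on $N,p,q,\delta$ (through the structure constants above), with
\[
\|\nabla v\|_{L^\infty(B_{1/2})}\le C,\qquad
\frac{|\nabla v(y)-\nabla v(y')|}{|y-y'|^{\alpha}}\le C\quad\text{for }y,y'\in B_{1/2}.
\]
Translating back via $\nabla v(y)=\frac{\rho}{4}\rho^{\delta}\nabla u(x_0+\frac{\rho}{4}y)$ gives, at $y=0$, the bound $|\nabla u(x_0)|\le C\rho^{-(\delta+1)}$, which is \eqref{hou}; and the H\"older estimate, after covering the segment (or using that any two points $x,x'$ with $|x-x'|$ small relative to $|x|$ lie in one such rescaled ball, while for $|x-x'|$ comparable to $|x|$ one simply adds the two gradient bounds), yields \eqref{gou}. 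The final step is a routine book-keeping of exponents to match the stated form $(\,|x'|^{-\delta}+|x|^{-\delta})|x|^{-(1+\alpha)}|x-x'|^{\alpha}$.

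The main obstacle is not conceptual but the careful verification that, under precisely the hypothesis $\delta\le\gamma$ (resp. $\delta\ge\gamma$) together with the restriction to $B_{r_0/2}\setminus\{0\}$ (resp. $\mathbb{R}^N\setminus\overline{B_{2r_0}}$), the rescaled right-hand side $f$ is bounded by a constant \emph{independent of $x_0$}: the exponent of $\rho$ multiplying the nonlinear term is $\rho^{\,p+\theta-\delta(q+1-p)}=\rho^{(q+1-p)(\gamma-\delta)}$ on powers of $u$, and one must track how the bound $u\le C_1|x|^{-\delta}$ interacts with this to keep everything uniform; the Hardy term contributes $\rho^{\,p-p}=1$ times a bounded coefficient since $|x_0+(\rho/4)y|\sim\rho$, so it is harmless. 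Once this uniformity is in place, the regularity theory of Tolksdorf and DiBenedetto does the rest.
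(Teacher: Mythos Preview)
Your proposal is correct and follows essentially the same rescaling-plus-Tolksdorf strategy as the paper, including the identification of the key exponent $\rho^{(q+1-p)(\gamma-\delta)}$ and the case split ($|x-x'|$ small versus comparable to $|x|$) for the H\"older estimate. The only cosmetic difference is that the paper rescales about the origin on a fixed annulus, setting $u_R(\xi)=R^{\delta}u(R\xi)$ for $\xi\in B_7\setminus\overline{B_1}$, which keeps the Hardy coefficient in the clean form $\mu/|\xi|^{p}$, whereas you rescale about $x_0$ on a ball; both choices give the same uniform bounds.
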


\begin{proof}
Let $u$ be a solution of (\ref{pq}) in $B_{r_{0}}\backslash \left\{
0\right\} .$ By the scaling (\ref{scal}) we can assume $r_{0}=1.$ Let $%
\Gamma =B_{7}\backslash \overline{B_{1}}$ and $\Gamma ^{\prime
}=B_{6}\backslash \overline{B_{2}}$. For any $R\in (0,\frac{1}{7}),$ we set $%
u_{R}(\xi )=R^{\delta }u(R\xi )$ for any $\xi \in \Gamma $. Then 
\begin{equation*}
-\Delta _{p}u_{R}(\xi )+\frac{\mu u_{R}^{p-1}(\xi )}{\left\vert \xi
\right\vert ^{p}}+R^{(q+1-p)(\gamma -\delta )}u_{R}^{q}(\xi )=0
\end{equation*}%
for $\xi \in \Gamma $. Then $u_{R}$ is bounded in $\Gamma :$ 
\begin{equation*}
u_{R}(\xi )\leq C_{1}\left\vert \xi \right\vert ^{\delta }\leq C_{1}\max
\left\{ 1,7^{\delta }\right\} .
\end{equation*}%
Moreover $R^{(q+1-p)(\gamma -\delta )}$ stays bounded on $(0,\frac{1}{7})$,
thanks to the assumptions on $\delta $ and $q,$ thus $\Delta _{p}u_{R}$ is
bounded in $\Gamma $. Then from Tolksdorf results of \cite[Theorem 1]{To}, $%
\left\vert \nabla u_{R}\right\vert $ is bounded in $C^{0,\alpha }(\Gamma
^{\prime })$ independently of $R$ for some $\alpha \in (0,1)$. For $x\in $ $%
B_{\frac{1}{2}}\backslash \left\{ 0\right\} $ there exists $R\in (0,\frac{1}{%
7})$ such that $2R<\left\vert x\right\vert <6R$.Then (\ref{hou}) holds.
Moreover, let $x,x^{\prime }\in B_{\frac{1}{2}}\backslash \left\{ 0\right\} $%
. First assume $\left\vert x\right\vert \leq \left\vert x^{\prime
}\right\vert \leq 2\left\vert x\right\vert $. Then $\xi =\frac{x}{R}$ and $%
\xi ^{\prime }=\frac{x^{\prime }}{R}\in \Gamma ^{\prime }$, and 
\begin{equation*}
\left\vert \nabla u(x)-\nabla u(x^{\prime })\right\vert =R^{-1-\delta
}\left\vert \nabla u_{R}(\xi )-\nabla u_{R}(\xi ^{\prime })\right\vert \leq
CR^{-1-\delta }\left\vert \xi -\xi ^{\prime }\right\vert ^{\alpha }\leq
C_{2}\left\vert x\right\vert ^{-(\delta +1+\alpha )}\left\vert x-x^{\prime
}\right\vert ^{\alpha }.
\end{equation*}%
Next assume $\left\vert x^{\prime }\right\vert \geq 2\left\vert x\right\vert 
$. Then 
\begin{equation*}
\left\vert \nabla u(x)-\nabla u(x^{\prime })\right\vert \leq C(\frac{%
\left\vert x\right\vert ^{-\delta }}{\left\vert x\right\vert }+\frac{%
\left\vert x^{\prime }\right\vert ^{-\delta }}{\left\vert x\right\vert }%
)\leq 2C\frac{\left\vert x^{\prime }\right\vert ^{-\delta }+\left\vert
x\right\vert ^{-\delta }}{\left\vert x\right\vert ^{\alpha +1}}\left\vert
x^{\prime }-x\right\vert ^{\alpha },
\end{equation*}%
implying (\ref{gou}). We get analogous results hold in $\mathbb{R}%
^{N}\backslash \overline{B_{r_{0}}}$ when $\delta \geq \gamma .\medskip $
\end{proof}

Moreover in the case of a logarithmic type estimate, we obtain the following:

\begin{proposition}
\label{loga} Let $q>p-1$ and $s>0.$ Let $u$ be a solution of (\ref{pq}) in $%
B_{r_{0}}\backslash \left\{ 0\right\} $ with $r_{0}\leq 1,$ (resp. in $%
\mathbb{R}^{N}\backslash \overline{B_{r_{0}}}$ with $r_{0}\geq 1$) such that 
\begin{equation*}
u(x)\leq C_{1}\left\vert x\right\vert ^{-\delta }(\left\vert \ln \left\vert
x\right\vert \right\vert +1)^{s}\qquad \text{in }B_{r_{0}}\backslash \left\{
0\right\}
\end{equation*}%
for some $\delta <\gamma $ (resp. $\delta >\gamma ).$ Then for any $%
x,x^{\prime }\in $ $B_{\frac{r_{0}}{2}}\backslash \left\{ 0\right\} $ (resp. 
$\mathbb{R}^{N}\backslash \overline{B_{2r_{0}}}$), 
\begin{equation*}
\left\vert \nabla u(x)\right\vert \leq C_{2}\left\vert x\right\vert
^{-(\delta +1)}(\left\vert \ln \left\vert x\right\vert \right\vert +1)^{s}
\end{equation*}%
\begin{equation*}
\left\vert \nabla u(x)-\nabla u(x^{\prime })\right\vert \leq
C_{3}(\left\vert x^{\prime }\right\vert ^{-\delta }+\left\vert x\right\vert
^{-\delta })\left\vert x\right\vert ^{-(1+\alpha )}(\left\vert \ln
\left\vert x\right\vert \right\vert +1)^{s}\left\vert x-x^{\prime
}\right\vert ^{\alpha }.
\end{equation*}
\end{proposition}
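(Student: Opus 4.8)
The plan is to run the argument of Proposition \ref{regu}, the one genuinely new feature being the bookkeeping of the logarithmic factor. Consider first the interior case, so $r_0\le 1$ and $\delta<\gamma$. For $R\in(0,r_0/7)$ I would set, on the fixed annulus $\Gamma=\overline{B_7}\setminus B_1$,
\begin{equation*}
u_R(\xi)=R^{\delta}(1+|\ln R|)^{-s}\,u(R\xi),\qquad \xi\in\Gamma ,
\end{equation*}
so that $R\xi$ ranges over $\overline{B_{7R}}\setminus B_R\subset B_{r_0}\setminus\{0\}$. Using the $(p-1)$-homogeneity of $\Delta_p$, the dilation $x=R\xi$, and the identity $p+\theta=\gamma(q+1-p)$, exactly as in Proposition \ref{regu} one finds that $u_R$ solves
\begin{equation*}
-\Delta_p u_R+\mu\frac{u_R^{p-1}}{|\xi|^{p}}+R^{(q+1-p)(\gamma-\delta)}(1+|\ln R|)^{s(q+1-p)}\,|\xi|^{\theta}u_R^{q}=0\qquad\text{in }\Gamma .
\end{equation*}

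The point to check next is that the coefficient $R^{(q+1-p)(\gamma-\delta)}(1+|\ln R|)^{s(q+1-p)}$ stays bounded on $(0,r_0/7)$; since $\gamma-\delta>0$ and $q+1-p>0$, a positive power of $R$ dominates any power of $|\ln R|$, so this coefficient in fact tends to $0$ as $R\to0^{+}$. Likewise, for $\xi\in\Gamma$ one has $R|\xi|\le r_0\le1$ and $1\le|\xi|\le7$, hence $|\ln(R|\xi|)|\le|\ln R|$, and the hypothesis on $u$ gives
\begin{equation*}
u_R(\xi)\le C_1|\xi|^{-\delta}\bigl(\tfrac{1+|\ln(R|\xi|)|}{1+|\ln R|}\bigr)^{s}\le C_1\max\{1,7^{|\delta|}\}
\end{equation*}
uniformly in $R$. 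Then $\Delta_p u_R$ is bounded in $\Gamma$ independently of $R$, and Tolksdorf's estimate \cite[Theorem 1]{To} yields $\alpha\in(0,1)$ with $\|\nabla u_R\|_{C^{0,\alpha}(\Gamma')}\le C$ for all such $R$, where $\Gamma'=\overline{B_6}\setminus B_2$.

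Finally I would transfer these uniform bounds back to $u$. For $x\in B_{r_0/2}\setminus\{0\}$, pick $R$ comparable to $|x|$ with $x/R\in\Gamma'$; from $\nabla u(x)=R^{-1-\delta}(1+|\ln R|)^{s}\nabla u_R(x/R)$ and $(1+|\ln R|)^{s}\le C(1+|\ln|x||)^{s}$ the pointwise gradient bound follows. For the Hölder estimate, when $|x|\le|x'|\le2|x|$ one chooses a single $R$ comparable to $|x|$ with both $x/R,x'/R\in\Gamma'$ and uses the uniform $C^{0,\alpha}$ bound on $\nabla u_R$; when $|x'|>2|x|$ one argues directly from the gradient bound, using $|x-x'|\ge|x'|-|x|\ge|x|$ and, since $|x|<|x'|<r_0\le1$, $(1+|\ln|x'||)^{s}\le(1+|\ln|x||)^{s}$, just as in Proposition \ref{regu}. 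The exterior case ($r_0\ge1$, $\delta>\gamma$) is symmetric, letting $R\to\infty$: there $\gamma-\delta<0$ makes the coefficient tend to $0$, and for $\xi\in\Gamma$ one has $R|\xi|\ge R\ge1$, so $|\ln(R|\xi|)|\le\ln R+\ln 7$.

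The only real obstacle is the boundedness of the coefficient of $u_R^{q}$, and this is exactly why the strict inequality $\delta<\gamma$ (resp.\ $\delta>\gamma$) must be imposed here, unlike in Proposition \ref{regu}: for $\delta=\gamma$ that coefficient would be $(1+|\ln R|)^{s(q+1-p)}$, which is unbounded, and the rescaling scheme would collapse. Once this is granted, everything else parallels Proposition \ref{regu} essentially line by line.
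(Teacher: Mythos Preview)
Your proposal is correct and follows essentially the same approach as the paper's own proof: rescale by $R^{\delta}$ times the logarithmic weight, derive the equation for $u_R$ on a fixed annulus, check that both $u_R$ and the coefficient in front of $u_R^{q}$ are uniformly bounded (the latter using the strict inequality $\delta<\gamma$), apply Tolksdorf's $C^{1,\alpha}$ estimate, and transfer back. Your use of $(1+|\ln R|)^{-s}$ in place of the paper's $|\ln R|^{-s}$ and your retention of the factor $|\xi|^{\theta}$ are harmless cosmetic improvements; the explanation of why equality $\delta=\gamma$ is excluded is a nice addition not spelled out in the paper.
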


\begin{proof}
As above we can assume $r_{0}=1.$ Let $u$ be a solution of (\ref{pq}) in $%
B_{1}\backslash \left\{ 0\right\} .$Taking $u_{R}(\xi )=R^{\delta
}\left\vert \ln R\right\vert ^{-s}u(R\xi )$ for any $\xi \in \Gamma ,$ we
obtain 
\begin{equation*}
-\Delta _{p}u_{R}(\xi )+\frac{\mu u_{R}^{p-1}(\xi )}{\left\vert \xi
\right\vert ^{p}}+R^{(q+1-p)(\gamma -\delta )}\left\vert \ln R\right\vert
^{s(q-p+1)}u_{R}^{q}(\xi )=0.
\end{equation*}%
The function $u_{R}$ is still bounded in $\Gamma $ because $s\geq 0:$ 
\begin{equation*}
u_{R}(\xi )\leq C_{1}\left\vert \xi \right\vert ^{\delta }(\frac{\left\vert
\ln (R\xi )\right\vert +1}{\left\vert \ln R\right\vert })^{s}\leq
C_{1}\left\vert \xi \right\vert ^{\delta }(1+\frac{\ln \xi +1}{\left\vert
\ln R\right\vert })^{s}\leq C_{1}\max \left\{ 1,7^{\delta }\right\} (1+\frac{%
\ln 7+1}{\ln 7})^{s}.
\end{equation*}%
And $R^{(q+1-p)(\gamma -\delta )}\left\vert \ln R\right\vert ^{s(q-p+1)}$ is
bounded on $(0,\frac{1}{7}),$ because $\delta <\gamma .$ Then we conclude as
before, and similarly in $\mathbb{R}^{N}\backslash \overline{B_{r_{0}}}.$
\end{proof}

\subsection{Harnack inequality}

This argument is fundamental, valid for $q>p-1:$

\begin{proposition}
\label{Harnack}.There exists a constant $c=c_{N,p,q,\mu ,\theta }>0$ such
that for any $r_{0}>0$ and any solution $u$ of (\ref{pq}) in $%
B_{r_{0}}\backslash \left\{ 0\right\} $ (resp. $\mathbb{R}^{N}\backslash 
\overline{B_{r_{0}}}$, resp. $\mathbb{R}^{N}\backslash \left\{ 0\right\} $ )%
\begin{equation}
\sup_{\left\vert x\right\vert =r}u(x)\leq c\inf_{\left\vert x\right\vert
=r}u(x),\qquad \forall r\in (0,\frac{r_{0}}{2})\text{ }(\text{resp. }%
(2r_{0},\infty ),\text{resp. }(0,\infty ))\text{.}  \label{har}
\end{equation}%
\medskip
\end{proposition}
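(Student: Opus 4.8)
The plan is to establish the Harnack inequality \eqref{har} by a standard scaling-and-covering argument, reducing the statement on spheres $\{|x| = r\}$ to the classical Harnack inequality of Serrin--Trudinger for quasilinear elliptic equations on balls of comparable size. First I would use the scaling invariance: by the transformation $T_k$ of \eqref{scal} it suffices to prove \eqref{har} for a fixed radius, say $r = 1$, provided the constant produced is independent of the scaling parameter; equivalently, I would work directly with the rescaled functions $u_\rho(y) = \rho^\gamma u(\rho y)$, which again solve \eqref{pq} on an annulus. The key preliminary input is the Osserman estimate of Proposition~\ref{Osserman}, which gives $u(x) \le C |x|^{-\gamma}$; hence on the annulus $\{ \tfrac{r}{2} < |x| < 2r\}$ the rescaled solution $u_r(y) = r^\gamma u(ry)$ is bounded by a constant $C$ depending only on $N,p,q,\theta,\mu$, uniformly in $r$.

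Next I would fix a point $x_0$ with $|x_0| = r$ and look at the ball $B_{r/4}(x_0)$, which lies inside the annulus $\{ \tfrac{r}{2} < |x| < 2r\}$ and avoids the origin. On this ball the zero-order coefficient $\mu |x|^{-p}$ is bounded (by $\mu r^{-p}$ times a dimensional constant), and after the rescaling $y \mapsto x_0/r + (x - x_0)/r$ the equation becomes $-\Delta_p v + \tilde\mu v^{p-1} + \tilde\theta(y) v^q = 0$ with coefficients bounded independently of $r$ and with $v = u_r$ bounded independently of $r$ by the previous step. Writing the absorption term as $\tilde\theta(y) v^{q} = \bigl( \tilde\theta(y) v^{q - (p-1)} \bigr) v^{p-1}$, the equation has the form $-\Delta_p v = c(y) v^{p-1}$ with $c \in L^\infty$ bounded uniformly (using $v \le C$), so the Harnack inequality of \cite{Se64} for nonnegative solutions of such equations applies on $B_{1/4}$ with a constant depending only on $N,p$ and the uniform bound on $c$: $\sup_{B_{1/8}(\text{center})} v \le c_0 \inf_{B_{1/8}(\text{center})} v$. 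Undoing the rescaling gives a local Harnack inequality $\sup_{B_{r/8}(x_0)} u \le c_0 \inf_{B_{r/8}(x_0)} u$ with $c_0$ independent of $r$ and $x_0$.

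Finally I would pass from this local ball-Harnack to the sphere statement by a chain/covering argument: the sphere $\{|x| = r\}$ is compact and connected, and it can be covered by a fixed number $m = m(N)$ of balls $B_{r/8}(x_i)$ with $|x_i| = r$, arranged so that consecutive balls overlap and any two points on the sphere are joined by a chain of at most $m$ such overlapping balls. Chaining the local Harnack inequalities along such a path yields $\sup_{|x| = r} u \le c_0^{m} \inf_{|x|=r} u$, which is \eqref{har} with $c = c_0^{m(N)}$. The same argument, with annuli $\{\tfrac{r}{2} < |x| < 2r\}$ still contained in the domain, covers the exterior case $\mathbb{R}^N \setminus \overline{B_{r_0}}$ (for $r > 2r_0$) and the global case $\mathbb{R}^N \setminus \{0\}$ verbatim, since in all three the relevant annulus lies in the domain. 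The main obstacle, and the only delicate point, is ensuring the absorption coefficient $c(y) = \tilde\theta(y) v^{q-(p-1)}$ is bounded uniformly in $r$: this is exactly where the Osserman estimate of Proposition~\ref{Osserman} is essential, and one must check that the rescaling exponent $\gamma = (p+\theta)/(q+1-p)$ makes $r^{(q+1-p)(\gamma - \gamma)} = 1$ so that no unbounded factor $r^{\text{power}}$ appears in front of the $v^q$ term — which is precisely the reason the scaling $T_k$ was chosen with this $\gamma$.
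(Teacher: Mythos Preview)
Your proof is correct and follows essentially the same route as the paper's: both write the equation on a ball $B(x_0,|x_0|/2)$ in the form $-\Delta_p u + \psi(x)\,u^{p-1}=0$ with $\psi$ bounded uniformly in $|x_0|$ thanks to the Osserman estimate, apply the Serrin--Trudinger Harnack inequality locally, and then chain a fixed number of overlapping balls along the sphere $\{|x|=r\}$. The only cosmetic differences are that the paper works directly with Trudinger's constant (which depends on $|x_0|\sup|\psi|$) rather than first rescaling, and uses balls of radius $|x_0|/6$ (ten of them) instead of your $r/8$ and $m(N)$.
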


\begin{proof}
It is analogous to the proof of \cite[Lemma 2.2]{FrVe} given in case $\mu
=\theta =0$ and is a consequence of the Osserman's estimate: for any $%
x_{0}\in B_{\frac{r_{0}}{2}}\backslash \left\{ 0\right\} ,$ we write the
equation (\ref{pq}) in a ball $B(x_{0},\frac{\left\vert x_{0}\right\vert }{2}%
)$ under the form 
\begin{equation*}
-\Delta _{p}u+\psi ^{p}u^{p-1}=0,
\end{equation*}%
where 
\begin{equation*}
\psi ^{p}=\frac{\mu }{\left\vert x\right\vert ^{p}}+\left\vert x\right\vert
^{\theta }u^{q+1-p}.
\end{equation*}%
From Trudinger \cite[p.724]{Tr}, there exists a constant $C_{1}$ depending
on $N,p,\left\vert x_{0}\right\vert \sup_{B(x_{0},\frac{\left\vert
x_{0}\right\vert }{2})}\left\vert \varphi \right\vert $ such that 
\begin{equation*}
\sup_{x\in B(x_{0},\frac{\left\vert x_{0}\right\vert }{6})}u(x)\leq
C\inf_{x\in B(x_{0},\frac{\left\vert x_{0}\right\vert }{6})}u(x).
\end{equation*}%
From (\ref{Oss}), $\left\vert x_{0}\right\vert \sup_{B(x_{0},\frac{%
\left\vert x_{0}\right\vert }{2})}\left\vert \psi \right\vert $ is bounded
by a constant only depending on $N,p,q,\theta ;$ then $C$ only depends on $%
N,p,q,\theta ,\mu .$ Then (\ref{har}) holds by connecting two points $%
x_{1},x_{2}$ such that $\left\vert x_{1}\right\vert =\left\vert
x_{2}\right\vert =r<\frac{r_{0}}{2}$ by 10 connected balls of radius $\frac{r%
}{6}.$
\end{proof}

\subsection{Existence of radial solutions in $\protect\omega %
=B_{r_{2}}\backslash \overline{B_{r_{1}}}$}

\begin{proposition}
\label{exir}$\ $Let $\mu \in \mathbb{R}.$ Then for any $0<R_{1}<R_{2}$ and
any $\ell _{1}\geq 0,\ell _{2}\geq 0,$ with $\ell _{1}+\ell _{2}\neq 0$,
there exists a unique positive function $v=v_{\ell _{1},\ell _{2}}$ in $%
\omega =B_{R_{2}}\backslash \overline{B_{R_{1}}}$ such that $v\in C(%
\overline{\omega })$ and 
\begin{equation}
\left\{ 
\begin{array}{c}
-\Delta _{p}v+\mu \frac{v^{p-1}}{\left\vert x\right\vert ^{p}}+\left\vert
x\right\vert ^{\theta }v^{q}=0\text{ in }\omega =B_{R_{2}}\backslash 
\overline{B_{R_{1}}}, \\ 
v(x)=\ell _{1}\text{ for }\left\vert x\right\vert =R_{1}, \\ 
v(x)=\ell _{2}\text{ for }\left\vert x\right\vert =R_{2},%
\end{array}%
\right.  \label{pro}
\end{equation}%
and it is radial.
\end{proposition}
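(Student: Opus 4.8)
The plan is to prove existence by the standard method of monotone iteration (sub- and supersolutions) combined with the Weak Comparison Principle of Corollary~\ref{DG} for uniqueness, and then to derive radiality a posteriori from uniqueness and the rotational invariance of the equation. First I would construct a supersolution. Since $q>p-1$, the constant-in-$r$ boundary data is bounded, and by the Osserman-type a priori estimate (Proposition~\ref{Osserman}) any solution of the Dirichlet problem (\ref{pro}) is automatically bounded above on the compact annulus $\overline{\omega}$ by a constant depending only on the data; more concretely, for a supersolution one can take a large constant $M\geq\max(\ell_1,\ell_2)$ when $\mu\geq 0$, since then $-\Delta_p M+\mu M^{p-1}/|x|^p+|x|^\theta M^q\geq 0$ on $\overline\omega\subset\mathbb{R}^N\setminus\{0\}$ (the singular weights are bounded there), and when $\mu<0$ one takes $M$ large enough that $|x|^\theta M^q\geq |\mu|\,|x|^{-p}M^{p-1}$ uniformly on $\overline\omega$, which is possible precisely because $q>p-1$ and $\overline\omega$ is a compact subset of $\mathbb{R}^N\setminus\{0\}$. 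For a subsolution, take $\underline v\equiv 0$: the function $0$ satisfies $-\Delta_p 0+\mu\cdot 0+|x|^\theta\cdot 0=0$, and $0\leq\ell_i$ on $\partial\omega$. Thus $0\leq\underline v\leq \overline v=M$ is an ordered pair of sub/supersolutions bracketing the data.

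Next I would run the monotone iteration scheme on the annulus. Because $s\mapsto \mu s^{p-1}/|x|^p+|x|^\theta s^q+Ks^{p-1}$ is increasing for $K$ large enough (uniformly on the bounded range $[0,M]$ and on $\overline\omega$), the map that sends $w$ to the solution $\tilde w$ of the linear-in-the-zeroth-order-term problem $-\Delta_p\tilde w+K\tilde w^{p-1}=Kw^{p-1}-\mu w^{p-1}/|x|^p-|x|^\theta w^q$ with boundary data $\ell_1,\ell_2$ is well defined (by standard theory for the $p$-Laplacian on the smooth annulus $\omega$, e.g. via minimization of a strictly convex functional) and order preserving. Starting from $\overline v=M$ and iterating produces a decreasing sequence bounded below by $0$; its limit is a weak solution $v\in C(\overline\omega)$ of (\ref{pro}), and interior $C^1$ regularity follows from \cite[Theorem 1]{To}. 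Positivity of $v$ in $\omega$ follows from the Strong Maximum Principle (Theorem~\ref{SMP}): $v$ is a nonnegative $C^1$ solution on any subdomain with closure in $\mathbb{R}^N\setminus\{0\}$, so either $v>0$ or $v\equiv 0$; the latter is excluded since $\ell_1+\ell_2\neq 0$.

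Uniqueness is immediate from Corollary~\ref{DG}: if $v_1,v_2$ are two positive $C^1(\omega)\cap C(\overline\omega)$ solutions with the same boundary data, then applying the Weak Comparison Principle in both directions gives $v_1\leq v_2$ and $v_2\leq v_1$ on $\omega$, hence $v_1=v_2$. Finally, radiality: for any rotation $\mathcal R\in O(N)$, the function $x\mapsto v(\mathcal Rx)$ solves the same equation on $\omega$ (which is rotation invariant) with the same boundary data (constant on each sphere $|x|=R_i$), so by uniqueness $v(\mathcal Rx)=v(x)$ for all $x\in\omega$; since this holds for every $\mathcal R$, $v$ depends on $|x|$ only. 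The main obstacle I anticipate is the construction of the supersolution in the delicate regime $\mu<0$ (including $\mu<\mu_0$): one must check that the Hardy term, which has the ``wrong'' sign, is genuinely dominated by the absorption term $|x|^\theta v^q$ on the annulus --- this is where $q>p-1$ is essential and where working on an annulus bounded away from the origin (rather than on a punctured ball) is what makes a bare constant supersolution work; on a punctured ball one would instead need a supersolution built from the explicit power $a^\ast r^{-\gamma}$ or the radial solutions of Section~\ref{mainrad}, but that refinement is not needed here.
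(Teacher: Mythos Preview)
Your proof is correct, but the existence step takes a different route from the paper's. The paper argues by direct minimization of the energy
\[
J(v)=\int_\omega\Bigl(|\nabla v|^p+\mu\frac{(v^+)^p}{|x|^p}+\frac{p}{q+1}|x|^\theta|v|^{q+1}\Bigr)\,dx
\]
over the radial affine space $W_\varphi=\{v\in W^{1,p}_{rad}(\omega):v=\varphi\text{ on }\partial\omega\}$; coercivity holds because on the annulus the absorption term $|v|^{q+1}$ with $q>p-1$ dominates the possibly negative Hardy contribution, and a minimizer $\widetilde v$ exists by weak compactness. Since $J(\widetilde v)\geq J(\widetilde v^{\,+})$, the positive part is also a minimizer and hence a nonnegative weak solution, to which Tolksdorf regularity and the Strong Maximum Principle then apply. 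Your sub/supersolution iteration is an equally legitimate approach and makes the role of $q>p-1$ very concrete (it is precisely what allows a large constant to be a supersolution when $\mu<0$), but it carries more overhead: one must solve the auxiliary problem $-\Delta_p\tilde w+K\tilde w^{p-1}=g$ at each step, verify order preservation, and justify passage to the limit in the quasilinear term. The variational argument dispatches existence in one stroke. For uniqueness (via Corollary~\ref{DG}) and radiality (via uniqueness plus rotational invariance of the problem), your reasoning coincides with the paper's.
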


\begin{proof}
From Corollary \ref{DG}, if such a positive solution exists, it is unique,
then it is radial. We proceed by minimisation in a space of radial
functions. Let $\varphi (x)=\ell _{1}+(\ell _{2}-\ell _{1})\frac{\left\vert
x\right\vert -r_{1}}{r_{2}-r_{1}},$ be a smooth radial function in $%
\overline{\omega }$ satisfying the boundary conditions. We define a function 
$J$ on $W_{\varphi }=\left\{ v\in W_{rad}^{1,p}(\omega )\mid v=\varphi \text{
on }\partial \omega \right\} $ by 
\begin{equation*}
J(v)=\int_{\omega }(\left\vert \nabla v\right\vert ^{p}+\mu \frac{%
(v^{+})^{p}}{\left\vert x\right\vert ^{p}}+\frac{p}{q+1}\left\vert
x\right\vert ^{\theta }\left\vert v\right\vert ^{q+1})dx.
\end{equation*}%
Then 
\begin{equation*}
J(v)\geq \int_{\omega }(\left\vert \nabla (v)\right\vert
^{p}-\left\vert \mu \right\vert c_{1}^{p}\left\vert v\right\vert
^{p}+c_{2}\left\vert v\right\vert ^{q})dx,
\end{equation*}%
with two constants $c_{1},c_{2}$ only depending on $r_{1},r_{2}$ and $\theta
,p,q.$ For $q>p-1,$ from the H\"{o}lder inequality, with $c_{3}$ depending
on $c_{1},c_{2}$ and $\left\vert \mu \right\vert $ 
\begin{equation*}
J(v)\geq \int_{\omega }(\left\vert \nabla v\right\vert ^{p}+\frac{%
c_{2}}{2}\left\vert v\right\vert ^{q}-c_{3})dx.
\end{equation*}%
Then $\lim_{v\in W_{\varphi },\left\Vert v\right\Vert _{W^{1,p}(\omega
)}\longrightarrow \infty }J(v)=\infty ,$ and by compacity, $\inf_{w\in
W_{\varphi }}J$ is attained at least at some $\widetilde{v}$ $\in W_{\varphi
}.$ When $\mu \geq 0,$ it is clear that the problem of minimisation admits a
unique solution, but not when $\mu <0.$ Concerning the question of
positivity, we observe that the function $\widetilde{v}$ $^{+}\in W_{\varphi
},$ thus $\widetilde{v}$ $^{+}\in C(\overline{\omega }),$ and satisfies $J(%
\widetilde{v})\geq J(\widetilde{v}_{+})$; then $J(\widetilde{v}%
_{+})=\inf_{v\in W_{\varphi }}$ $J(v),$ and $\widetilde{v}$ $^{+}$ satisfies
the equation (\ref{pq}) in $\mathcal{D}^{\prime }(\omega ),$ and the
conditions on $\partial \omega .$ From \cite[Theorem 1]{To}, $\widetilde{v}$ 
$^{+}\in C^{1}(\omega ).$ Hence $\widetilde{v}_{+}$ is a nonnegative
solution of problem (\ref{pro}). Moreover, from Theorem \ref{SMP}, either $%
\widetilde{v}^{+}\equiv 0,$ which contradicts the assumption $\ell _{1}+\ell
_{2}\neq 0;$ then $\widetilde{v}$ $^{+}>0.$ Then $\widetilde{v}$ $^{+}$ is
the unique solution of (\ref{pro}).\bigskip
\end{proof}

Next we give \textbf{the key-point of our study}, consequence of Osserman's
estimate and Harnack inequality. It is is a comparison from above and below
with radial solutions with the same behaviour

\begin{theorem}
\label{clef}Let $u$ be any positive solution of (\ref{pq}) in $%
B_{r_{0}}\backslash \left\{ 0\right\} $ (resp. $\mathbb{R}^{N}\backslash 
\overline{B_{r_{0}}}$ ) (resp. $\mathbb{R}^{N}\backslash \left\{ 0\right\} $%
). Then there exist radial solutions $v$ and $w$ such that 
\begin{equation*}
v\leq u\leq w\leq cv\text{ \qquad in }B_{\frac{r_{0}}{2}}\backslash \left\{
0\right\} \quad \text{(resp.}\mathbb{R}^{N}\backslash \overline{B_{2r_{0}}}%
\text{),(resp}.\mathbb{R}^{N}\backslash \left\{ 0\right\} \text{)},
\end{equation*}%
where $c=c_{N,p,q,\mu ,\theta }$ is the Harnack constant defined at
Proposition \ref{Harnack}.
\end{theorem}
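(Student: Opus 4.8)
The plan is to construct the radial barriers $v$ and $w$ on annuli that exhaust the punctured ball (or exterior domain, or punctured space), using the existence result of Proposition~\ref{exir}, and then to glue and pass to the limit. First I would fix attention on the case $\Omega=B_{r_{0}}\backslash\{0\}$; the other two cases are entirely parallel, with the roles of $0$ and $\infty$ interchanged in the exterior case, and with a two-sided exhaustion in the global case. For each $n$ large I would set $\omega_{n}=B_{r_{0}/2}\backslash\overline{B_{1/n}}$ and let $\ell_{1}^{(n)}=\inf_{|x|=1/n}u$, $\ell_{2}^{(n)}=\inf_{|x|=r_{0}/2}u$ for the lower barrier, and $\ell_{1}^{(n)}=\sup_{|x|=1/n}u$, $\ell_{2}^{(n)}=\sup_{|x|=r_{0}/2}u$ for the upper barrier. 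By Proposition~\ref{exir} there are unique radial solutions $v_{n},w_{n}$ of \eqref{pq} in $\omega_{n}$ taking these boundary values; by the Weak Comparison Principle (Corollary~\ref{DG}), since $u$ and these radial functions are all solutions, $v_{n}\leq u\leq w_{n}$ in $\omega_{n}$. Moreover $v_{n}$ is nondecreasing in $n$ and $w_{n}$ nonincreasing in $n$ on each fixed subannulus, again by Corollary~\ref{DG} applied on $\omega_{n}\cap\omega_{m}$, because enlarging the domain toward $0$ only lowers admissible subsolutions from above.

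The second step is the passage to the limit $n\to\infty$. On any compact subannulus $K\subset B_{r_{0}/2}\backslash\{0\}$ the sequences $v_{n},w_{n}$ are bounded: below by a positive constant from the Harnack inequality (Proposition~\ref{Harnack}) applied to $u$ together with $v_{n}\leq u$, and above by the Osserman estimate \eqref{Oss}, which bounds $w_{n}$ once we know $w_{n}\leq$ (a constant times $u$) on the outer sphere and use the comparison with the explicit Osserman-type supersolution on $\omega_{n}$; in fact, monotonicity in $n$ already gives uniform bounds from the first term $v_{1},w_{1}$. Then the regularity result of Proposition~\ref{regu}, i.e. the $C^{1,\alpha}_{loc}$ estimates coming from \cite[Theorem~1]{To}, gives equicontinuity of $\nabla v_{n},\nabla w_{n}$ on $K$, so along a subsequence $v_{n}\to v$, $w_{n}\to w$ in $C^{1}_{loc}(B_{r_{0}/2}\backslash\{0\})$, and $v,w$ are radial solutions of \eqref{pq} in $B_{r_{0}/2}\backslash\{0\}$ satisfying $v\leq u\leq w$ there. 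The monotonicity makes the limits independent of the subsequence.

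The third step is the inequality $w\leq cv$. By the Harnack inequality applied to the solution $u$, for every $r\in(0,r_{0}/2)$ we have $\sup_{|x|=r}u\leq c\inf_{|x|=r}u$. On the sphere $|x|=1/n$, this gives $\ell_{1}^{(n),\sup}\leq c\,\ell_{1}^{(n),\inf}$, and similarly on $|x|=r_{0}/2$. Hence on $\partial\omega_{n}$ we have $w_{n}\leq c\,v_{n}$ pointwise. Now $c\,v_{n}$ need not be a supersolution of \eqref{pq}, so I cannot directly compare $w_{n}$ with $c v_{n}$; instead I compare $w_{n}$ with the radial solution $\widetilde v_{n}$ of \eqref{pq} in $\omega_{n}$ whose boundary data are $c$ times those of $v_{n}$. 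Then $w_{n}\leq\widetilde v_{n}$ by Corollary~\ref{DG}. It remains to show $\widetilde v_{n}\leq c\,v_{n}$, and this is where one uses the structure of the equation: writing $\widehat v=c v_{n}$ one has $-\Delta_{p}\widehat v+\mu|x|^{-p}\widehat v^{\,p-1}+|x|^{\theta}\widehat v^{\,q}=c^{\,p-1}(-\Delta_{p}v_{n}+\mu|x|^{-p}v_{n}^{p-1})+c^{\,q}|x|^{\theta}v_{n}^{q}=(c^{\,q}-c^{\,p-1})|x|^{\theta}v_{n}^{q}\geq 0$ since $c\geq 1$ and $q>p-1$; thus $c\,v_{n}$ is a supersolution with the same boundary data as $\widetilde v_{n}$, so $\widetilde v_{n}\leq c v_{n}$ by Corollary~\ref{DG}, and therefore $w_{n}\leq c\,v_{n}$ throughout $\omega_{n}$. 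Passing to the limit yields $w\leq c\,v$ in $B_{r_{0}/2}\backslash\{0\}$.

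The main obstacle I expect is keeping the barriers honest near the inner boundary: one must make sure the comparison spheres $|x|=1/n$ stay inside the domain of $u$ (which is fine for $n$ large) and that the limit functions $v,w$ do not degenerate to $0$ or blow up on $|x|=r_{0}/2$ — this is precisely handled by Harnack-from-below and Osserman-from-above for $u$ itself, which transfer to $v_{n},w_{n}$ through the two-sided pinching $v_{n}\leq u\leq w_{n}$. For the global case $\mathbb{R}^{N}\backslash\{0\}$ the exhausting annuli are $B_{n}\backslash\overline{B_{1/n}}$, the same monotonicity and compactness arguments apply on every fixed compact subannulus, and the Harnack inequality there holds on all of $(0,\infty)$, so the bound $w\leq c v$ propagates to all of $\mathbb{R}^{N}\backslash\{0\}$. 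The exterior case is the mirror image, exhausting by $B_{n}\backslash\overline{B_{2r_{0}}}$ and using the $\delta\geq\gamma$ version of Proposition~\ref{regu}.
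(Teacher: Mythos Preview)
Your approach is essentially the paper's: build radial barriers $v_n,w_n$ on exhausting annuli via Proposition~\ref{exir}, compare with $u$ via Corollary~\ref{DG}, and use that $cv_n$ is a supersolution (since $q>p-1$ and $c\geq 1$) to get $w_n\leq cv_n$, then pass to the limit by $C^{1,\alpha}$ compactness. The key supersolution computation in your third step is exactly the paper's argument; the detour through the auxiliary solution $\widetilde v_n$ is correct but unnecessary, since once you have shown that $\widehat v=cv_n$ satisfies $\mathcal{L}_{p,\mu}\widehat v+|x|^\theta\widehat v^{\,q}\geq 0$ and $\widehat v\geq w_n$ on $\partial\omega_n$, Corollary~\ref{DG} gives $w_n\leq cv_n$ directly.

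Two points to fix. First, your monotonicity claim has the wrong sign: for $m>n$ one has $\omega_n\subset\omega_m$, and since $v_m$ is radial with $v_m\leq u$ in $\omega_m$, on the sphere $|x|=1/n$ one gets $v_m(1/n)\leq\min_{|x|=1/n}u=v_n(1/n)$, hence $v_m\leq v_n$ on $\omega_n$; so $v_n$ is nonincreasing and $w_n$ nondecreasing in $n$, the opposite of what you wrote, and therefore ``uniform bounds from $v_1,w_1$'' fails. Second, and relatedly, your lower/upper bounds in step two are not justified as stated: $v_n\leq u$ gives no lower bound on $v_n$, and the Osserman estimate as formulated applies to solutions in a full punctured ball, not to $w_n$ on an annulus. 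The clean fix---and this is how the paper orders things---is to establish $w_n\leq cv_n$ \emph{before} passing to the limit: together with $v_n\leq u\leq w_n$ this yields the two-sided pinching $u/c\leq v_n\leq u\leq w_n\leq cu$ on all of $\omega_n$, so the uniform bounds on compact subannuli come directly from $u$ (and Osserman applied to $u$), and the monotonicity discussion becomes irrelevant.
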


\begin{proof}
Let $u$ be any positive solution of (\ref{pq}) in $B_{r_{0}}\backslash
\left\{ 0\right\} $. From Proposition \ref{exir}$,$ for any $ {integer}$ 
$n\geq 1,$ there exist radial positive solutions of (\ref{pq}) in $%
B_{r_{0}}\backslash \overline{B_{r_{n}}}$ such that 
\begin{eqnarray}
v_{n}(x) &=&\min_{\left\vert y\right\vert =\frac{1}{n}}u(y)\text{ for }%
\left\vert x\right\vert =\frac{1}{n},\quad \quad v_{n}(x)=\min_{\left\vert
y\right\vert =\frac{r_{0}}{2}}u(y)\text{ for }\left\vert x\right\vert =\frac{%
r_{0}}{2},  \label{vn} \\
w_{n}(x) &=&\max_{\left\vert y\right\vert =\frac{1}{n}}u(y)\text{ for }%
\left\vert x\right\vert =\frac{1}{n},\quad \quad w_{n}(x)=\max_{\left\vert
y\right\vert =\frac{r_{0}}{2}}u(y)\text{ for }\left\vert x\right\vert =\frac{%
r_{0}}{2},  \label{wn}
\end{eqnarray}%
Then from Corollary \ref{DG} we get $v_{n}\leq u\leq w_{n}.$ Moreover there
holds from Proposition \ref{Harnack} 
\begin{equation*}
w_{n}(x)\leq c_{N,p,q,\mu ,\theta }v_{n},\qquad \text{for }\left\vert
x\right\vert =\frac{1}{n}\text{and for }\left\vert x\right\vert =\frac{r_{0}%
}{2},
\end{equation*}%
We consider the function $y_{n}=c_{N,p,q,\mu ,\theta }v_{n},$ which
satisfies 
\begin{equation*}
-\Delta _{p}y_{n}+\mu \frac{y_{n}^{p-1}}{\left\vert x\right\vert ^{p}}%
+c_{N,p,q,\mu ,\theta }^{p-1-q}\left\vert x\right\vert ^{\theta }y_{n}^{q}=0.
\end{equation*}%
Since $c_{N,p,q,\mu ,\theta }\geq 1$ and $q>p-1$, the function $y_{n}$ is a 
\textbf{supersolution} of the equation, and greater than $w_{n}$ for $%
\left\vert x\right\vert =\frac{1}{n}$and for $\left\vert x\right\vert =r_{0}$%
. From Corollary \ref{DG}, we get $w_{n}\leq y_{n}$, then 
\begin{equation*}
v_{n}\leq u\leq w_{n}\leq c_{N,p,q,\mu ,\theta }v_{n}\leq c_{N,p,q,\mu
,\theta }u
\end{equation*}%
in $\overline{B}_{\frac{r_{0}}{2}}\backslash B_{\frac{1}{n}}.$ Then from
Theorem \ref{Osserman}, 
\begin{equation*}
v_{n}\leq C_{N,p,q,\mu ,\theta }\left\vert x\right\vert ^{-\gamma },\qquad
w_{n}\leq c_{N,p,q,\mu ,\theta }C_{N,p,q,\mu ,\theta }\left\vert
x\right\vert ^{-\gamma }.
\end{equation*}%
For any fixed $\varepsilon \in (0,\frac{r_{0}}{8}),$ and $n>\frac{1}{%
\varepsilon },$ the sequences $v_{n}$ and $w_{n}$ are uniformly bounded in $%
C(\overline{B}_{\frac{r_{0}}{2}}\backslash B_{\varepsilon })$ and in $%
W^{1,p}(\overline{B}_{\frac{r_{0}}{2}}\backslash B_{\varepsilon }).$ As a
consequence, $\Delta _{p}v_{n}$ and $\Delta _{p}w_{n}$ are uniformly bounded
in $\overline{B}_{\frac{r_{0}}{2}-\varepsilon }\backslash B_{\varepsilon },$
thus from \cite[Theorem 1]{To}, $v_{n}$ and $w_{n}$ are uniformly bounded in 
$C^{1,\alpha }(\overline{B}_{\frac{r_{0}}{2}-\varepsilon }\backslash
B_{\varepsilon })$ for some $\alpha >0.$ By a diagonal process, there exists
subsequences $v_{\nu }$ and $w_{\nu }$ converging strongly in $%
C_{loc}^{1}(B_{\frac{r_{0}}{2}}\backslash \left\{ 0\right\} )$ and weakly in 
$W_{loc}^{1,p}(B_{\frac{r_{0}}{2}}\backslash \left\{ 0\right\} ))$ to radial
functions $v$ and $w,$ solutions of (\ref{pq}), such that $v\leq u\leq w\leq
cv$ in $B_{\frac{r_{0}}{2}}\backslash \left\{ 0\right\} .$

We get the same conclusions in $\mathbb{R}^{N}\backslash \overline{B_{r_{0}}}
$ (resp. $\mathbb{R}^{N}\backslash \left\{ 0\right\} )$ by appying
Proposition \ref{exir} in $B_{n}\backslash \overline{B_{2r_{0}}}$ (resp. in $%
B_{n}\backslash \overline{B_{\frac{1}{n}}}$ ).\bigskip
\end{proof}

\subsection{Precise convergence results}

Finally give a more precise behaviour of the possibly nonradial solutions of
Hardy type, extending some results of \cite{FrVe}.$\medskip $

\textbf{Notation:} $u\asymp _{x\longrightarrow 0}\widetilde{u}$ means that
there exists constants $C_{1},C_{2}>0$ such that $C_{1}\widetilde{u}\leq
u\leq C_{2}\widetilde{u}$ near $0.\medskip $

\begin{proposition}
\label{exact}Suppose that $u$ is a solution in $B_{r_{0}}\backslash \left\{
0\right\} $ of (\ref{pq}) such that $u\asymp _{x\longrightarrow 0}\left\vert
x\right\vert ^{-S_{i}}$ $(i=1$ or $2,S_{i}\neq 0$), where $S_{i}$ $<\gamma $
(resp. $u\asymp _{x\longrightarrow \infty }\left\vert x\right\vert ^{-S_{i}}$
where $S_{i}$ $>\gamma $ ).Then there exists $k_{i}>0$ such that 
\begin{equation*}
\lim_{x\longrightarrow 0}\left\vert x\right\vert ^{S_{i}}u=k_{i}\text{
(resp. }\lim_{x\longrightarrow \infty }\left\vert x\right\vert
^{S_{i}}u=k_{i}\text{)}
\end{equation*}
\end{proposition}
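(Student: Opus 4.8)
The plan is to work with the radial comparison functions produced by Theorem \ref{clef}, translate the hypothesis $u\asymp_{x\to 0}|x|^{-S_i}$ into the phase-plane picture governed by system (\ref{SGV}), and read off the precise limit from the dynamics near the corresponding fixed point. First I would apply Theorem \ref{clef} to get radial solutions $v\le u\le w\le cv$ in $B_{r_0/2}\backslash\{0\}$; combined with the assumption $u\asymp|x|^{-S_i}$, this forces $v\asymp w\asymp|x|^{-S_i}$ as $x\to 0$ as well. So it suffices to prove the statement for a \emph{radial} solution, say for $v$, and then for $w$, and finally use a squeeze together with the Harnack constant to conclude the limit for $u$ itself — but actually, once we know $\lim_{x\to 0}|x|^{S_i}v=k$ exists and similarly for $w$, the inequality $v\le u\le w\le cv$ only gives $k\le \liminf|x|^{S_i}u\le \limsup|x|^{S_i}u\le ck$, which is not enough. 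Hence the real content must be extracted at the level of the radial ODE, and then a separate argument (monotonicity of the quotient $u/v$ via the comparison principle Corollary \ref{DG}, or a direct scaling/compactness argument on $T_\lambda u$) is needed to upgrade to the genuine limit for $u$. The cleanest route: prove the exact radial limit first, then show any solution trapped between two radial solutions having the \emph{same} exact limit must itself have that limit.

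For the radial statement, recall from (\ref{gv}) that a radial solution corresponds to a trajectory $(G(t),V(t))$ of (\ref{SGV}) with $t=\ln r$. The hypothesis $v\asymp_{x\to 0}|x|^{-S_i}$ means $S=-rv'/v$ stays bounded and bounded away from values that would contradict $\asymp$, so $G(t)=|S|^{p-2}S$ is bounded as $t\to-\infty$; by Osserman's estimate (Proposition \ref{Osserman}) and the assumption $S_i<\gamma$, one checks $V(t)=r^{\theta+p}v^{q+1-p}\to 0$ as $t\to-\infty$ (since $v\asymp|x|^{-S_i}$ gives $V\asymp|x|^{\theta+p-S_i(q+1-p)} = |x|^{(q+1-p)(\gamma-S_i)}\cdot(\text{const})\to 0$ because $\gamma-S_i>0$). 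Thus the trajectory approaches the invariant line $\{V=0\}$, on which the $G$-dynamics is the scalar equation $G_t=F(G)=(p-1)|G|^{p/(p-1)}-(N-p)G-\mu$ whose zeros are exactly $G_i:=|S_i|^{p-2}S_i$ (Remark \ref{rema}). Standard $\omega$-limit/invariant-region analysis for the planar system (the fixed point on $\{V=0\}$ at $G=G_i$), together with the boundedness just established, forces $G(t)\to G_i$, i.e. $S(t)\to S_i$, as $t\to-\infty$. The precise rate then comes from linearizing: writing $S=S_i+\psi$, one gets $\psi_t = F'(G_i)\,(\text{const})\,\psi + O(\psi^2)+O(V)$ with $F'(G_i)\ne 0$ when $S_i\ne 0$ (this is where the hypothesis $S_i\ne 0$ enters — at $S_i=0$ the linearization degenerates, matching the logarithmic cases excluded here), so $\psi$ and $V$ decay at least like powers of $r$, hence $\int_0^{r}\psi(s)\,ds/s$ converges, and from $-\,d(\ln v)/dt = S = S_i+\psi$ we get $\ln(r^{S_i}v) = \int \psi\,dt + \text{const} \to$ a finite limit; exponentiating gives $\lim_{r\to 0} r^{S_i}v = k_i$ for some $k_i>0$.

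The main obstacle is the last upgrade step: passing from "$v$ and $w$ each have an exact limit" to "$u$ has an exact limit," since a priori the radial barriers from Theorem \ref{clef} need not have the \emph{same} constant $k_i$, and even if they did the $\asymp$-sandwich loses a factor $c$. I would handle this by the scaling method: set $u_\lambda(x) = \lambda^{S_i} u(\lambda x)$ for $\lambda\to 0$; the hypothesis $u\asymp|x|^{-S_i}$ makes $\{u_\lambda\}$ locally uniformly bounded above and below on $\mathbb{R}^N\backslash\{0\}$, and by the gradient estimates of Proposition \ref{regu} (applied with $\delta=S_i<\gamma$) the family is precompact in $C^1_{loc}$. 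Any limit $U$ of $u_{\lambda_n}$ solves the \emph{limiting} equation $-\Delta_p U + \mu U^{p-1}/|x|^p = 0$ (the absorption term $|x|^\theta u^q$ scales away precisely because $S_i<\gamma$, so $\lambda^{(q+1-p)(\gamma-S_i)}\to 0$), is positive, and satisfies $U\asymp|x|^{-S_i}$, hence is a positive solution of the Hardy equation $\mathcal L_{p,\mu}U=0$ with exact growth $|x|^{-S_i}$; the classification of such solutions (they are exactly $C|x|^{-S_i}$, again using $S_i\ne 0$ to exclude the logarithmic branch) forces $U = k_i|x|^{-S_i}$ for a \emph{unique} $k_i$ independent of the subsequence. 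Therefore $u_\lambda\to k_i|x|^{-S_i}$ along the full sequence $\lambda\to 0$, which is exactly $\lim_{x\to 0}|x|^{S_i}u = k_i$. The exterior case $x\to\infty$ with $S_i>\gamma$ is identical after the substitution $r\mapsto 1/r$ (noting, as remarked in the paper, that this is a change of variable in the radial ODE, not a Kelvin transform of the PDE).

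\hfill$\square$
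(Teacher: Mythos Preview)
Your scaling/compactness argument (the second half of the proposal) is indeed close to what the paper does in its step~(ii), and the observation that the absorption term $|x|^\theta u^q$ scales away precisely when $S_i<\gamma$ is correct. However, there is a genuine gap at the point where you assert that the subsequential limit $U=k_i|x|^{-S_i}$ has a constant $k_i$ \emph{independent of the subsequence}. Even granting the Liouville statement that every positive solution of $\mathcal L_{p,\mu}U=0$ in $\mathbb R^N\setminus\{0\}$ with $U\asymp|x|^{-S_i}$ is of the form $C|x|^{-S_i}$, nothing in your argument prevents different subsequences $\lambda_n\to 0$ from producing different constants $C$. The classification identifies the \emph{shape} of each limit, not its amplitude.

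The paper closes this gap by a preliminary step that you have omitted: it sets $k=\limsup_{x\to 0}|x|^{S_i}u(x)$ and $\widetilde k(r)=\sup_{|x|=r}|x|^{S_i}u(x)$, and shows via a touching argument (if $\widetilde k(r)$ did not converge to $k$, the quotient $u/(Mv_i)$ with $v_i=|x|^{-S_i}$ would attain an interior maximum in some annulus, contradicting the strong maximum principle for $\mathcal L_{p,\mu}$, since $Mv_i$ is a solution and $u$ a subsolution) that in fact $\widetilde k(r)\to k$. This is what supplies, after passing to the scaling limit, both the upper bound $w\le kv_i$ and a touching point $w(\widetilde\xi)=kv_i(\widetilde\xi)$; the strong maximum principle of \cite[Lemma~1.3]{FrVe} then forces $w\equiv kv_i$, pinning down the value $k$ uniquely. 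Your radial phase-plane detour is correct but unnecessary for this proposition: the paper works directly with the possibly nonradial $u$ throughout.
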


\begin{proof}
Here we proceed as in \cite{FrVe} for the case $\mu =0,$ $i=1$. We consider
for example the case where $u\asymp _{x\longrightarrow 0}\left\vert
x\right\vert ^{-S_{i}}$, with $S_{i}$ $<\gamma $. Let $v_{i}=\left\vert
x\right\vert ^{-S_{i}},$ and 
\begin{equation*}
k=\lim \sup_{x\longrightarrow 0}\frac{u}{v_{i}}(x)\text{ \qquad and\qquad\ }%
\widetilde{k}(r)=\sup_{\left\vert x\right\vert =r}\frac{u}{v_{i}}(x),\quad
\forall r\in \left( 0,r_{0}\right) .
\end{equation*}

(i) We first check that 
\begin{equation}
\lim_{r\longrightarrow 0}\widetilde{k}(r)=k.  \label{tri}
\end{equation}%
Indeed there holds $\lim \sup_{r\longrightarrow 0}\widetilde{k}(r)=k$.
Suppose that (\ref{tri}) is false, then $\lim \inf_{r\longrightarrow 0}%
\widetilde{k}(r)=k_{0}<k$. There exists $r^{\star }<r_{0}$ such that $%
\widetilde{k}(r^{\ast })>\frac{k_{0}+k}{2}.$ There exists a decreasing
sequence $(r_{n})_{n\geq 1}\rightarrow 0$ such that $\widetilde{k}%
(r_{n})\rightarrow k_{0},$ then $\widetilde{k}(r_{n})<\frac{k_{0}+k}{2}$ and 
$r_{n}<r^{\star }<r_{0}$ for $n\geq n_{0}$ if $n_{0}$ is large enough. Then $%
M=\sup_{\overline{B_{r_{n_{0}}}}\backslash B_{r_{n}}}\frac{u}{v_{i}}$ is not
attained on the boundary, but in an interior point $x_{0}\in $ $%
B_{r_{n_{0}}}\backslash \overline{B_{r_{n}}}.$ But there holds 
\begin{equation*}
\mathcal{L}_{p,\mu }(Mv_{i})=-\Delta _{p}(Mv_{i})+\mu \frac{(Mv_{i})^{p-1}}{%
\left\vert x\right\vert ^{p}}=0\geq \mathcal{L}_{p,\mu }u,
\end{equation*}%
and $\left\vert \nabla v_{i}\right\vert $ does not vanish in $%
B_{r_{n_{0}}}\backslash \overline{B_{r_{n}}},$ and $u\leq Mv_{i}$ in $%
B_{r_{n_{0}}}\backslash \overline{B_{r_{n}}}$ with $u(x_{0})=Mv_{i}(x_{0}).$
Then%
\begin{equation*}
- {div}(\left\vert \nabla Mv_{i}\right\vert ^{p-2}\nabla
Mv_{i}-\left\vert \nabla u\right\vert ^{p-2}\nabla u+\Phi (Mv_{i}-u)\geq 0,%
\text{ }
\end{equation*}%
where $\Phi =\mu \frac{(Mv_{i})-u^{p-1}}{\left\vert x\right\vert
^{p}(Mv_{i}-u)}$ is bounded in $B_{r_{n_{0}}}\backslash \overline{B_{r_{n}}}$%
, since $u$ and $v$ are positive. As in \cite[Lemma 1.3]{FrVe}, we get $%
u\equiv Mv_{i}$ in $B_{r_{n_{0}}}\backslash \overline{B_{r_{n}}},$ which is
contradictory. \medskip

(ii) Next we consider for some $r_{0}\in (0,1)$ the scaled function for
given $r>0$ 
\begin{equation*}
u_{r}(\xi )=\frac{u(r\xi )}{v_{i}(r)}\qquad \text{for }0<\left\vert \xi
\right\vert <\frac{r_{0}}{r}
\end{equation*}%
and choose $\xi _{r}\in S^{N-1}$ such that $\widetilde{k}(r)=r^{S_{i}}u(r\xi
_{r})$. By computation, $u_{r}$ satisfy the equation 
\begin{equation*}
-\Delta _{p}u_{r}(\xi )+\mu \frac{u_{r}^{p-1}}{\left\vert \xi \right\vert
^{p}}+r^{d_{i}}u_{r}^{q}=0
\end{equation*}%
with $d_{i}=(q-p+1)(\gamma -S_{i})>0$, then $\lim_{r\longrightarrow
0}r^{d_{i}}=0$ as $r\longrightarrow 0$. Moreover, by assumption there holds $%
u(x)\leq C\left\vert x\right\vert ^{-S_{i}}$ in $B_{\frac{r_{0}}{2}%
}\backslash \left\{ 0\right\} $, implying precise estimates of the gradient
from Proposition \ref{regu} with $\delta =S_{i}$: for any $\left\vert \xi
\right\vert <\frac{r_{0}}{2r},\left\vert \xi ^{\prime }\right\vert <\frac{%
r_{0}}{2r}$, 
\begin{eqnarray*}
u_{r}(\xi ) &\leq &C\left\vert \xi \right\vert ^{-S_{i}},\qquad \left\vert
\nabla u_{r}(\xi )\right\vert \leq C\left\vert x\right\vert ^{-(S_{i}+1)}, \\
\left\vert \nabla u_{r}(\xi )-\nabla u_{r}(\xi ^{\prime })\right\vert &\leq
&C\frac{\left\vert \xi \right\vert ^{-S_{i}}+\left\vert \xi ^{\prime
}\right\vert ^{-S_{i}}}{\left\vert \xi \right\vert ^{\alpha +1}}\left\vert
\xi -\xi ^{\prime }\right\vert ^{\alpha }.
\end{eqnarray*}%
Otherwise, by definition, 
\begin{equation*}
\frac{u_{r}(\xi )}{v_{i}(\xi )}=(r\xi )^{S_{i}}u(r\xi )\leq \widetilde{k}%
(r\left\vert \xi \right\vert ),\qquad \frac{u_{r}(\xi _{r})}{v_{i}(\xi _{r})}%
=r^{S_{i}}u(r\xi _{r})=\widetilde{k}(r).
\end{equation*}%
Thus for any sequence $\widetilde{r_{n}}\rightarrow 0$ we can extract a
subsequence $r_{n}$ such that $u_{r_{n}}$ converges in $C^{1,loc}(\mathbb{R}%
^{N}\backslash \left\{ 0\right\} $ to a function $w\geq 0$, and $\xi
_{r_{n}} $ converges to $\widetilde{\xi }\in S^{N-1}.$ Then 
\begin{equation*}
-\Delta _{p}w+\mu \frac{w^{p-1}}{\left\vert \xi \right\vert ^{p}}=0\text{ in 
}\mathbb{R}^{N}\backslash \left\{ 0\right\} ,
\end{equation*}%
and $w(\xi )\leq kv_{i}(\xi )$ for any $\xi \in \mathbb{R}^{N}\backslash
\left\{ 0\right\} ,$ with $w(\widetilde{\xi })=k$ from (\ref{tri}). Note
that $kv_{i}$ is also a solution of this equation, and $\left\vert \nabla
v_{i}\right\vert $ does not vanishes in $\mathbb{R}^{N}\backslash \left\{
0\right\} ,$ then as above $w\equiv kv_{i}$ from \cite[Lemma 1.3]{FrVe}.
Since $\widetilde{r_{n}}$ is arbitrary, $\lim_{r\longrightarrow 0}u_{r}(\xi
)=k\left\vert \xi \right\vert ^{-S_{i}},$ then taking $\left\vert \xi
\right\vert =1,$ we obtain 
\begin{equation*}
\lim_{x\longrightarrow 0}\left\vert x\right\vert ^{S_{i}}u(x)=k.
\end{equation*}%
We get analogous results near $\infty ,$ by using Proposition \ref{regu} in $%
\mathbb{R}^{N}\backslash \overline{B_{r_{0}}}$ with $\delta =S_{i}>\gamma $%
.\medskip \medskip $\varpi $
\end{proof}

In the particular case $\mu =\mu _{0}$ where $S_{1}=S_{2}=\frac{N-p}{p}%
<\gamma $ (resp. $\frac{N-p}{p}>\gamma )$ there exists two types of radial
singular solutions near $0$ (resp. near $\infty $ ) there still exists the
functions $r\longmapsto \ell r^{-\frac{N-p}{p}},$ $\ell >0$ for which the
proposition is still valid, but also some functions $v$ such that $%
\lim_{r\longrightarrow 0}r^{\frac{N-p}{p}}\left\vert \ln r\right\vert
^{-1}v(r)=\ell ,$ see Lemma \ref{critic}. We get the following:

\begin{proposition}
\label{excrit} Let $\mu =\mu _{0}.$ Suppose that $u$ is a solution in $%
B_{r_{0}}\backslash \left\{ 0\right\} $ (resp $\mathbb{R}^{N}\backslash 
\overline{B_{r_{0}}}$) of (\ref{pq}) such that $u\asymp _{x\longrightarrow
0}\left\vert x\right\vert ^{-\frac{N-p}{p}}\left\vert \ln \left\vert
x\right\vert \right\vert ^{\frac{2}{p}}$ and $\frac{N-p}{p}<\gamma $ (resp. $%
u\asymp _{x\longrightarrow \infty }\left\vert x\right\vert ^{-\frac{N-p}{p}%
}(\ln \left\vert x\right\vert )^{\frac{2}{p}}$ and $\frac{N-p}{p}>\gamma $
).Then there exists $k>0$ such that 
\begin{equation*}
\lim_{x\longrightarrow 0}\left\vert x\right\vert ^{-\frac{N-p}{p}}\left\vert
\ln \left\vert x\right\vert \right\vert ^{-\frac{2}{p}}u=k\text{ (resp. }%
\lim_{\left\vert x\right\vert \longrightarrow \infty }\left\vert
x\right\vert ^{-\frac{N-p}{p}}(\ln \left\vert x\right\vert )^{-\frac{2}{p}%
}u=k\text{)}
\end{equation*}
\end{proposition}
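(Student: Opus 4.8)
The plan is to run the argument of Proposition~\ref{exact} on a logarithmic scale, replacing the reference solution $v_i(x)=|x|^{-S_i}$ by the radial solution with logarithmic behaviour produced by Lemma~\ref{critic}. I argue near $0$; the case near $\infty$ is entirely symmetric, the r\^ole of $S_i<\gamma$ being played by $\tfrac{N-p}{p}>\gamma$. Fix a radial solution $v$ of $\mathcal{L}_{p,\mu_0}v=0$, defined and positive on some $B_{r_1}\setminus\{0\}$, with $\lim_{r\to0}r^{\frac{N-p}{p}}|\ln r|^{-2/p}v(r)=1$ (Lemma~\ref{critic}); shrinking $r_0$ we may assume $u,v>0$ on $B_{r_0}\setminus\{0\}$, and the hypothesis $u\asymp_{x\to0}|x|^{-\frac{N-p}{p}}|\ln|x||^{2/p}$ gives $u\asymp v$ there. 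Put $k=\limsup_{x\to0}(u/v)(x)$ and $\widetilde k(r)=\sup_{|x|=r}(u/v)(x)$, so that $0<k<\infty$; note that $S=-rv'/v\to\tfrac{N-p}{p}\neq0$ by Lemma~\ref{critic}, hence $|\nabla v|$ does not vanish near $0$. The first step, that $\lim_{r\to0}\widetilde k(r)=k$, is copied from step (i) of Proposition~\ref{exact}: if $\liminf_{r\to0}\widetilde k(r)=k_0<k$, then on a suitable annulus $M=\sup(u/v)$ is attained at an interior point, while $\mathcal{L}_{p,\mu_0}(Mv)=0\geq\mathcal{L}_{p,\mu_0}u$ (the absorption term is nonnegative), $u\leq Mv$ with equality at that point, and $|\nabla v|\neq0$, so \cite[Lemma~1.3]{FrVe} forces $u\equiv Mv$, a contradiction.

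Next comes the blow-up. For small $r>0$ set $u_r(\xi)=u(r\xi)/v(r)$ on $0<|\xi|<r_0/r$; a direct computation gives
\[
-\Delta_p u_r+\mu_0\frac{u_r^{\,p-1}}{|\xi|^p}+\varepsilon(r)\,|\xi|^{\theta}u_r^{\,q}=0,\qquad \varepsilon(r)=v(r)^{\,q+1-p}\,r^{\,p+\theta}.
\]
Since $p+\theta=\gamma(q+1-p)$ and $v(r)\leq C\,r^{-\frac{N-p}{p}}|\ln r|^{2/p}$, one gets $\varepsilon(r)\leq C'\,r^{(q+1-p)(\gamma-\frac{N-p}{p})}|\ln r|^{2(q+1-p)/p}\to0$, because $\gamma>\tfrac{N-p}{p}$ makes the power of $r$ dominate the logarithm. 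From $u(x)\leq C|x|^{-\frac{N-p}{p}}(|\ln|x||+1)^{2/p}$ with $\tfrac{N-p}{p}<\gamma$, Proposition~\ref{loga} (with $\delta=\tfrac{N-p}{p}$, $s=2/p$) furnishes bounds on $u_r$ and $\nabla u_r$ uniform in $r$ on compact subsets of $\mathbb{R}^N\setminus\{0\}$, hence, via \cite[Theorem~1]{To}, uniform $C^{1,\alpha}_{loc}$ bounds. Thus along any sequence $r_n\to0$ a subsequence of $u_{r_n}$ converges in $C^1_{loc}(\mathbb{R}^N\setminus\{0\})$ to some $w\geq0$ solving $\mathcal{L}_{p,\mu_0}w=0$ in $\mathbb{R}^N\setminus\{0\}$.

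It remains to identify $w$. By the precise asymptotics of $v$, $v(r\xi)/v(r)\to|\xi|^{-\frac{N-p}{p}}$ locally uniformly as $r\to0$; since $u_r(\xi)\big/(v(r\xi)/v(r))=u(r\xi)/v(r\xi)\leq\widetilde k(r|\xi|)$, the first step gives $w(\xi)\leq k|\xi|^{-\frac{N-p}{p}}$. Choosing $\xi_r\in S^{N-1}$ with $u_r(\xi_r)=\widetilde k(r)$ and letting $r=r_n\to0$ along the subsequence, $w(\widetilde\xi)=k$ with $\widetilde\xi=\lim\xi_{r_n}\in S^{N-1}$. But $k|\xi|^{-\frac{N-p}{p}}$ is itself a solution of $\mathcal{L}_{p,\mu_0}(\cdot)=0$ — this is exactly the content of $\varphi(\tfrac{N-p}{p})=\mu_0-\mu=0$ — with nonvanishing gradient, so $w\equiv k|\xi|^{-\frac{N-p}{p}}$ by \cite[Lemma~1.3]{FrVe}. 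The limit being the same for every sequence, $u_r(\xi)\to k|\xi|^{-\frac{N-p}{p}}$ for each $\xi$; restricting to $|\xi|=1$ gives $u(r\xi)/v(r)\to k$ uniformly on $S^{N-1}$, and combined with $v(r)\sim r^{-\frac{N-p}{p}}|\ln r|^{2/p}$ this yields $\lim_{x\to0}|x|^{\frac{N-p}{p}}|\ln|x||^{-2/p}u=k$ (and similarly near $\infty$).

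The one genuinely delicate point is the choice of scaling in the blow-up: one must divide by $v(r)$ rather than by $r^{-\frac{N-p}{p}}$ so that the normalizing ratio $v(r\xi)/v(r)$ still has a limit, and then verify that the rescaled zeroth-order coefficient $\varepsilon(r)$ tends to $0$. This is precisely where the hypothesis $\tfrac{N-p}{p}<\gamma$ enters (and why a polynomial-beats-logarithm estimate is needed), and it is the same inequality that makes the logarithmically-weighted gradient estimates of Proposition~\ref{loga} applicable. Everything else is a routine transcription of the non-logarithmic argument.
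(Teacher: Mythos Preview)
Your argument is correct and follows the paper's scheme almost verbatim: same reference function $v$ from Lemma~\ref{critic}, same first step $\widetilde k(r)\to k$ via the tangency principle, same blow-up $u_r(\xi)=u(r\xi)/v(r)$ with the rescaled coefficient $\varepsilon(r)=(r^{\gamma}v(r))^{q+1-p}\to0$ thanks to $\gamma>\tfrac{N-p}{p}$, and the same compactness route through Proposition~\ref{loga}. The one genuine point of departure is the identification of the limit $w$. The paper invokes the global Liouville theorem of \cite[Example~1.1]{FraPi}, which asserts that the only positive solutions of $\mathcal{L}_{p,\mu_0}w=0$ on all of $\mathbb{R}^N\setminus\{0\}$ are the pure powers $\ell|\xi|^{-(N-p)/p}$, and then reads off $\ell=k$ from the value at $\widetilde\xi$. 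You instead first establish the pointwise upper bound $w(\xi)\le k|\xi|^{-(N-p)/p}$ (using $v(r\xi)/v(r)\to|\xi|^{-(N-p)/p}$ and $\widetilde k(r|\xi|)\to k$) and then reapply the tangency principle \cite[Lemma~1.3]{FrVe} exactly as in Proposition~\ref{exact}. Your route is slightly more self-contained, avoiding the external Liouville reference; the paper's route is shorter once that reference is granted.
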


\begin{proof}
Consider for example a solution $u$ in $B_{r_{0}}\backslash \left\{
0\right\} $ with such a behaviour, let $v_{0}$ be the unique radial solution
of the equation 
\begin{equation*}
\mathcal{L}_{p,\mu }(v_{0})=-\Delta _{p}v_{0}+\mu _{0}\frac{v_{0}^{p-1}}{%
\left\vert x\right\vert ^{p}}=0
\end{equation*}%
such that $\lim_{r\longrightarrow 0}r^{\frac{N-p}{p}}\left\vert \ln
r\right\vert ^{-\frac{2}{p}}v_{0}(r)=1,$ given at Lemma \ref{critic}. Let 
\begin{equation*}
k=\lim \sup_{x\longrightarrow 0}\frac{u}{v_{0}}(x)>0\text{ \qquad and\qquad\ 
}\widetilde{k}(r)=\sup_{\left\vert x\right\vert =r}\frac{u}{v_{0}}(x),\quad
\forall r\in \left( 0,r_{0}\right) .
\end{equation*}%
(i) We first check that $\lim_{r\longrightarrow 0}\widetilde{k}(r)=k,$
exactly as before, where $v_{i}$ is replaced by $v_{0}.\medskip $

\noindent (ii) Next consider for some $r_{0}\in (0,1)$ and given $r>0$ 
\begin{equation*}
u_{r}(\xi )=\frac{u(r\xi )}{v_{0}(r)}\qquad \text{for }0<\left\vert \xi
\right\vert <\frac{r_{0}}{r}.
\end{equation*}%
and we choose $\xi _{r}\in S^{N-1}$ such that $\widetilde{k}%
(r)=v_{0}(r)u(r\xi _{r})$. Then $u_{r}$ satisfies the equation 
\begin{equation*}
-\Delta _{p}u_{r}(\xi )+\mu \frac{u_{r}^{p-1}}{\left\vert \xi \right\vert
^{p}}+(r^{\gamma }v_{0})^{q+1-p}u_{r}^{q}=0
\end{equation*}%
Since $\frac{N-p}{p}<\gamma $ there holds $\lim_{r\longrightarrow
0}(r^{\gamma }v_{0})^{q+1-p}=0$. By hypothesis, for $u(x)\leq Cv_{0}\left(
\left\vert x\right\vert \right) \leq C_{1}\left\vert x\right\vert ^{-\frac{%
N-p}{p}}\left\vert \ln \left\vert x\right\vert \right\vert ^{\frac{2}{p}}$
in $B_{\frac{r_{0}}{2}}\backslash \left\{ 0\right\} .$ From Proposition \ref%
{loga}, for any $x,x^{\prime }\in $ $B_{\frac{r_{0}}{2}}\backslash \left\{
0\right\} ,$ 
\begin{equation*}
\left\vert \nabla u(x)\right\vert \leq C_{2}\left\vert x\right\vert ^{-\frac{%
N}{p}}(\left\vert \ln \left\vert x\right\vert \right\vert +1)^{\frac{2}{p}%
}\leq 2^{\frac{2}{p}}C_{2}\left\vert x\right\vert ^{-\frac{N}{p}}\left\vert
\ln \left\vert x\right\vert \right\vert ^{\frac{2}{p}}
\end{equation*}%
\begin{equation*}
\left\vert \nabla u(x)-\nabla u(x^{\prime })\right\vert \leq 2^{\frac{2}{p}%
}C_{3}(\left\vert x^{\prime }\right\vert ^{-\frac{N-p}{p}}+\left\vert
x\right\vert ^{-\frac{N-p}{p}})\left\vert x\right\vert ^{-(1+\alpha
)}\left\vert \ln \left\vert x\right\vert \right\vert ^{\frac{2}{p}%
}\left\vert x-x^{\prime }\right\vert ^{\alpha }.
\end{equation*}%
Then for $\left\vert \xi \right\vert ,\left\vert \xi ^{\prime }\right\vert <%
\frac{r_{0}}{2r}<\frac{1}{2r},$ 
\begin{equation*}
u_{r}(\xi )\leq C\frac{v_{0}(r\xi )}{v_{0}(r)}\leq C_{1}\left\vert \xi
\right\vert ^{-\frac{N-p}{p}}(\frac{\left\vert \ln r\xi \right\vert }{%
\left\vert \ln r\right\vert })^{\frac{2}{p}}\leq C_{1}\left\vert \xi
\right\vert ^{-\frac{N-p}{p}}(1+\frac{\left\vert \ln \xi \right\vert }{%
\left\vert \ln r\right\vert })^{\frac{2}{p}}
\end{equation*}%
\begin{equation*}
\left\vert \nabla u_{r}(\xi )\right\vert =\frac{r}{v_{0}(r)}\left\vert
\nabla u(r\xi )\right\vert \leq C_{4}r\left\vert r\xi \right\vert ^{-\frac{N%
}{p}}\frac{\left\vert \ln r\xi \right\vert ^{\frac{2}{p}}}{r^{-\frac{N-p}{p}%
}\left\vert \ln r\right\vert ^{\frac{2}{p}}}=C_{4}\left\vert \xi \right\vert
^{-\frac{N}{p}}(1+\frac{\left\vert \ln \xi \right\vert }{\left\vert \ln
r\right\vert })^{\frac{2}{p}}
\end{equation*}%
\begin{equation*}
\left\vert \nabla u_{r}(\xi )-\nabla u_{r}(\xi ^{\prime })\right\vert \leq
C_{3}r^{\alpha }(\left\vert \xi ^{\prime }\right\vert ^{-\frac{N-p}{p}%
}+\left\vert \xi \right\vert ^{-\frac{N-p}{p}})\left\vert \xi \right\vert
^{-(1+\alpha )}(1+\frac{\left\vert \ln \xi \right\vert }{\left\vert \ln
r\right\vert })^{\frac{2}{p}}\left\vert \xi -\xi ^{\prime }\right\vert
^{\alpha }.
\end{equation*}%
Then we still have that for any sequence $\widetilde{r_{n}}\longrightarrow
0, $ there exists a subsequence $r_{n}$ such that $u_{r_{\nu }}$ converges
in $C_{loc}^{1}(\mathbb{R}^{N}\backslash \left\{ 0\right\} )$ to a function $%
w,$ such that 
\begin{equation*}
\mathcal{L}_{p,\mu }(w)=-\Delta _{p}w(\xi )+\mu _{0}\frac{w^{p-1}}{%
\left\vert \xi \right\vert ^{p}}=0
\end{equation*}%
in $\mathbb{R}^{N}\backslash \left\{ 0\right\} ,$ and $\xi
_{r_{n}}\longrightarrow \xi _{0}\in S^{n-1},$ $u_{r_{n}}(\xi
_{r_{n}})\longrightarrow w(\xi _{0})$ and $\widetilde{k}(r_{n})=\frac{%
u_{r_{n}}(\xi _{r_{n}})}{v_{0}(r_{n})}$. But from \cite[Example 1.1]{FraPi},
the only solutions of this equation are the functions $\xi \longmapsto \ell
\left\vert \xi \right\vert ^{-\frac{N-p}{p}},$ $\ell >0.$ Then $w(\xi
_{0})=\ell \left\vert \xi _{0}\right\vert ^{-\frac{N-p}{p}}=\ell $ and $%
\widetilde{k}(r_{n})=u_{r_{n}}(\xi _{r_{n}})\longrightarrow k.$ Then $\ell
=k,$ thus $w$ is independant of the choice of $r_{n},$ then $%
\lim_{r\longrightarrow 0}u(r\xi )=k$ any $\xi \in \mathbb{R}^{N}\backslash %
\left[ 0\right] $, in particular for $\xi \in S^{n-1}.$ Then $%
\lim_{x\longrightarrow 0}\frac{u}{v_{0}}(x)=k.$ We get similar results in $%
\mathbb{R}^{N}\backslash \overline{B_{r_{0}}}$ when $\frac{N-p}{p}>\gamma $ .
\end{proof}

\section{Proof of the nonradial results\label{short}}

We begin by Theorem \ref{H5nonrad}, which is the simplest case, and the
proof is quite short.\medskip

\begin{proof}[Proof of Theorem \protect\ref{H5nonrad}]
Let $u$ be any solution in $B_{r_{0}}\backslash \left\{ 0\right\} $ in $%
B_{r_{0}}\backslash \left\{ 0\right\} .$ (resp. $\mathbb{R}^{N}\backslash 
\overline{B_{r_{0}}}$)$.$ From Theorem \ref{clef}, there exist two radial
solutions such that $v\leq u\leq w$ in $B_{\frac{r_{0}}{2}}\backslash
\left\{ 0\right\} $ (resp. in $\mathbb{R}^{N}\backslash \overline{B_{r_{0}}}$%
), (resp. $\mathbb{R}^{N}\backslash \left\{ 0\right\} $). From Theorem \ref%
{H5rad}$,$ where all the possible behaviours are described, there holds $%
\lim_{x\longrightarrow 0}\left\vert x\right\vert ^{\gamma }v(x)=a^{\ast
},\lim_{x\longrightarrow 0}\left\vert x\right\vert ^{\gamma }w(x)=a^{\ast }$
(resp $\lim_{x\longrightarrow \infty }\left\vert x\right\vert ^{\gamma
}v(x)=a^{\ast },\lim_{x\longrightarrow \infty }\left\vert x\right\vert
^{\gamma }w(x)=a^{\ast }$, resp. $\left\vert x\right\vert ^{\gamma
}v(x)=a^{\ast }=\left\vert x\right\vert ^{\gamma }w(x)).$ Hence we find $%
\lim_{x\longrightarrow 0}\left\vert x\right\vert ^{\gamma }u(x)=a^{\ast }$
(resp $\lim_{x\longrightarrow \infty }\left\vert x\right\vert ^{\gamma
}u(x)=a^{\ast }$, resp $\left\vert x\right\vert ^{\gamma }u(x)=a^{\ast }$%
).\bigskip
\end{proof}

Next we prove Theorem \ref{H1nonrad} (resp. \ref{H2nonrad}).\medskip

\begin{proof}[Proof of Theorem \protect\ref{H1nonrad}]
(i)\textbf{\ }Let $u$ be a solution in $B_{r_{0}}\backslash \left\{
0\right\} .$ From Theorem \ref{clef}, there exists radial functions such
that $v\leq u\leq w\leq cv$ in $B_{r_{0}}\backslash \left\{ 0\right\} .$
First suppose $\mu >\mu _{0}.$ From Theorem \ref{H1rad}, either $%
\lim_{r\longrightarrow 0}r^{\gamma }v=a^{\ast },$ then also $%
\lim_{r\longrightarrow 0}r^{\gamma }w=a^{\ast },$ then by squeezing $%
\lim_{x\longrightarrow 0}\left\vert x\right\vert ^{\gamma }u=a^{\ast }$; or $%
\lim_{r\longrightarrow 0}r^{S_{i}}v=k_{i}^{\prime }>0,$ with $i=1$ or $2$,
then also $\lim_{r\longrightarrow 0}r^{S_{i}}w=k_{i}^{\prime \prime }>0,$
implying that $u\asymp $ $\left\vert x\right\vert ^{-S_{i}},$ then $%
\lim_{x\longrightarrow 0}\left\vert x\right\vert ^{S_{i}}u=k_{i}>0$ from
Proposition \ref{exact}. For $\mu =\mu _{0}$ we still have a possibility
that $\lim_{x\rightarrow 0}\left\vert x\right\vert ^{\frac{N-p}{p}%
}(\left\vert \ln \left\vert x\right\vert \right\vert ^{-\frac{2}{p}%
})v(\left\vert x\right\vert )=\ell _{1}>0$, and $\lim_{x\rightarrow
0}\left\vert x\right\vert ^{\frac{N-p}{p}}\left\vert \ln \left\vert
x\right\vert \right\vert ^{-\frac{2}{p}}w(\left\vert x\right\vert )=\ell
_{2}>0$, then we get (\ref{autnew}) from Proposition \ref{excrit}.\medskip

(ii) Let $u$ be a solution in $\mathbb{R}^{N}\backslash \overline{B_{r_{0}}}%
. $ Then from Theorem \ref{H1rad}, $\lim_{r\longrightarrow \infty }r^{\gamma
}v=a^{\ast }=\lim_{r\longrightarrow \infty }r^{\gamma }w,$ thus $%
\lim_{\left\vert x\right\vert \longrightarrow \infty }\left\vert
x\right\vert ^{\gamma }u=a^{\ast }$.\medskip

(iii) Let $u$ be a solution in $\mathbb{R}^{N}\backslash \left\{ 0\right\} $%
. Then $v\leq u\leq w\leq cv$ in $\mathbb{R}^{N}\backslash \left\{ 0\right\} 
$. From Theorem \ref{H1rad}, all the behaviours of radial solutions in $%
\mathbb{R}^{N}\backslash \left\{ 0\right\} $ are still described. Either%
\textbf{\ }$\mu >\mu _{0}$ and $r^{\gamma }v\equiv r^{\gamma }v=a^{\ast },$
thus $\left\vert x\right\vert ^{\gamma }u\equiv a^{\ast }$. Or $\mu >\mu
_{0} $, and $\lim_{r\longrightarrow 0}r^{S_{1}}v=k_{i}^{\prime }>0$, and $%
\lim_{r\longrightarrow \infty }r^{\gamma }v=a^{\ast }$; then $%
\lim_{x\longrightarrow 0}\left\vert x\right\vert ^{S_{1}}u=k_{i}>0$ from
Proposition \ref{exact}, and $\lim_{\left\vert x\right\vert \longrightarrow
\infty }\left\vert x\right\vert ^{\gamma }u=a^{\ast }$. For given $k_{1}>0,$
there exists a unique function $u$ satisfying these two conditions. Indeed
if $\widetilde{u}$ is another such solution, then $(1+\varepsilon )%
\widetilde{u}$ is a supersolution, and near $0$ and $\infty ,$ it is greater
than $u,$ then $(1+\varepsilon )\widetilde{u}\geq u;$ then $\widetilde{u}%
\geq u,$ and $\widetilde{u}=u.$ By uniqueness $u$ is radial. Finally if $\mu
=\mu _{0},$ then $\lim_{x\rightarrow 0}\left\vert x\right\vert ^{\frac{N-p}{p%
}}\left\vert \ln \left\vert x\right\vert \right\vert ^{-\frac{2}{p}%
}u(x)=\ell >0$ from Proposition \ref{excrit}, and $\lim_{\left\vert
x\right\vert \longrightarrow \infty }\left\vert x\right\vert ^{\gamma
}u(x)=a^{\ast }.$ By comparison as above, it is unique and radial.\medskip
\end{proof}

The proofs of Theorem \ref{H2nonrad}, \ref{H3nonrad}, \ref{H4nonrad} are
analogous.

\begin{remark}
Compared to the proofs of \cite{FrVe} in the case $\mu =0$ and of \cite{CiFa}
in the case $p=2$, our proofs are much shorter: we do not need any
discussion on the cases where $\lim \sup_{\left\vert x\right\vert
\longrightarrow 0}\left\vert x\right\vert ^{S_{1}}u=\infty $ or $\lim
\inf_{\left\vert x\right\vert \longrightarrow 0}u=0$; and we do not require
a comparison with radial subsolutions or supersolutions which existence is
difficult to obtain, see \cite[Lemma 1.4]{FrVe} and \cite[Propositions
3.1,3.4]{Ci}.
\end{remark}

\section{Proofs of the radial results\label{App}}

\subsection{Fixed points of system (\protect\ref{SGV})}

We consider the system (\ref{SGV}), which takes the form

\begin{equation}
\left\{ 
\begin{array}{ccc}
G_{t} & = & F(G)-V, \\ 
V_{t} & = & (q+1-p)V(\gamma -\left\vert G\right\vert ^{\frac{2-p}{p-1}}G),%
\end{array}%
\right.  \label{GVF}
\end{equation}%
where we recall that $V>0$, $G=\left\vert S\right\vert ^{p-2}S$, and 
\begin{equation}
F(G)=\varphi (S)=(p-1)\left\vert G\right\vert ^{\frac{p}{p-1}}-(N-p)G-\mu ,
\label{Ff}
\end{equation}%
so we study the system for 
\begin{equation*}
(G,V)\in \mathbb{R\times }\left[ 0,\infty \right) .
\end{equation*}%
\medskip

$\bullet $ The system has \textbf{three possible fixed points}: when $\mu
\geq \mu _{0}$ we find two first points 
\begin{equation*}
\mathbf{A}_{1}=(G_{1},0)=(\left\vert S_{1}\right\vert ^{p-2}S_{1},0),\qquad 
\mathbf{A}_{2}=(G_{2},0)=(\left\vert S_{2}\right\vert ^{p-2}S_{2},0),
\end{equation*}%
eventually confounded when $\mu =\mu _{0},$ and eventually a third point in $%
\mathbb{R\times }\left( 0,\infty \right) $, 
\begin{equation*}
\mathbf{M}_{0}=(G_{0},V_{0})=(\left\vert \gamma \right\vert ^{p-2}\gamma
,(p-1)\left\vert \gamma \right\vert ^{p}-(N-p)\left\vert \gamma \right\vert
^{p-2}\gamma -\mu )
\end{equation*}%
under the condition $F(G_{0})>0,$ corresponding to the solution $u^{\ast }$
given by (\ref{condi}); if $\gamma =0,$ $\mu <0,$ then $\mathbf{M}%
_{0}=(0,\left\vert \mu \right\vert )$.$\medskip $

$\bullet $ In the sequel we use the \textbf{vanishing curves of the vector
field} in the phase-plane $\mathbb{R\times }\left( 0,\infty \right) $ 
\begin{equation*}
\mathcal{C=}\left\{ G_{t}=0\right\} =\left\{ V=F(G)\right\} ,
\end{equation*}%
\begin{equation*}
\left\{ V_{t}=0\right\} =\left\{ V=0\right\} \cup \mathcal{L},\mathcal{%
\qquad L}=\left\{ G=G_{0}=\left\vert \gamma \right\vert ^{p-2}\gamma
\right\} .
\end{equation*}%
Obviously $\mathcal{C}$ and $\mathcal{L}$ meet at $\mathbf{M}_{0}$ when it
exists. And $\left\{ V=0\right\} $ and $\mathcal{L}$ meet at point $%
L_{0}=(G_{0},0)=(\left\vert \gamma \right\vert ^{p-2}\gamma ,0).$ Moreover
when $\mu \geq \mu _{0}$, then $\mathcal{C}=\mathcal{C}_{1}\cup \mathcal{C}%
_{2},$ where $\mathcal{C}_{1}$ is the graph of a nonincreasing function $%
F_{1}$ such that $F_{1}(G_{1})=0$ and $\mathcal{C}_{2}$ is the graph of an
nondecreasing function $F_{2}$ such that $F_{2}(G_{2})=0$. When $\mu >\mu
_{0}$, the slope of $F_{1}$ at $\mathbf{A}_{1}$ (resp. of $F_{2}$ at $G_{2}$%
) is 
\begin{equation*}
F_{1}^{\prime }(G_{1})=p\left\vert G_{1}\right\vert ^{\frac{p-2}{p-1}%
}G_{1}-(N-p)>0\text{ \quad }(\text{resp. }F_{2}^{\prime }(G_{2})=pG_{2}^{%
\frac{1}{p-1}}-(N-p)<0).
\end{equation*}%
When $\mu <\mu _{0}$ (resp. $\mu =\mu _{0})$ the function $F$ has a positive
(resp. zero) minimum at $(\frac{N-p}{p})^{p-1}$.$\medskip $

\begin{remark}
\label{Vzero}Consider the eventual trajectories $\mathcal{T}$ located on the
axis $\left\{ V=0\right\} ,$ which are \textbf{nonadmissible} for our
purpose. We claim that the union of their adherence covers the axis, so that 
\textbf{any trajectory with a point in }$\mathbb{R}\times (0,\infty )$%
\textbf{\ stays in it}. Indeed in case $V=0,$ the system reduces to equation 
\begin{equation*}
G_{t}=(p-1)\left\vert G\right\vert ^{\frac{p}{p-1}}-(N-p)G-\mu =F(G),
\end{equation*}
corresponding to the solutions of the Hardy equation $\mathcal{L}_{p,\mu
}(u)=0,$ see (\ref{equi}). Thus in any interval $\mathcal{I}$ where $%
F(G)\neq 0,$ we can express $t$ as a function of $G$: for fixed $\widetilde{G%
}$ $\in \mathcal{I}$, we write $t(G)=t(\widetilde{G})+\int_{\widetilde{G}%
}^{G}\frac{dg}{F(g)}$. If $\mu <\mu _{0},$ then $\mathcal{I=}\mathbb{R}$,
and the integral converges at $\pm \infty ,$ then $G$ describes $\mathbb{R}$
as $t$ describes $(t(-\infty ),t(\infty ))$. When $\mu \geq \mu _{0},$ the
integral still converges as $G\rightarrow \pm \infty ,$ and diverges at $%
G_{1},G_{2}$; taking $\mathcal{I}=(-\infty ,G_{2})$ (resp. $(G_{1},\infty )$%
) $G$ describes $\mathcal{I}$ as $t$ describes $(t(-\infty ),\infty )$
(resp. $(-\infty ,t(\infty )$); if $G_{1}\neq G_{2},$ taking $\mathcal{I=}%
(G_{1},G_{2})$, $G$ describes $\mathcal{I}$ as $t$ describes $\mathbb{R}$.
\end{remark}

Next we give a general property of the trajectories of system (\ref{SGV}).

\begin{lemma}
\label{conver}For any solution $u$ of (\ref{onep}) defined near $r=0$ (resp.
near $r=\infty ),$ then $(G,V)$ converge to one of the fixed points $\mathbf{%
M}_{0},\mathbf{A}_{1},\mathbf{A}_{2}$ as $t\rightarrow -\infty $ (resp. $%
t\rightarrow \infty $).\bigskip
\end{lemma}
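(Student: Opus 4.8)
The plan is to read the statement off the autonomous system (\ref{SGV}): to a radial positive solution $u$ of (\ref{onep}) defined near $r=0$ there corresponds, through $t=\ln r$, $G=|S|^{p-2}S$, $S=-ru'/u$, $V=r^{\theta+p}u^{q+1-p}$, an orbit $t\mapsto(G(t),V(t))$ of (\ref{SGV}), defined on a left half-line $(-\infty,t_0)$ and lying in $\mathbb{R}\times(0,\infty)$ by Remark \ref{Vzero}; I must show its $\alpha$-limit set is one of the three fixed points $\mathbf{M}_0,\mathbf{A}_1,\mathbf{A}_2$. The case of $u$ near $r=\infty$ is identical, with $t\to-\infty$ replaced by $t\to+\infty$ (the $\omega$-limit set), since Osserman's estimate (Proposition \ref{Osserman}) and everything below hold equally in $\mathbb{R}^N\setminus\overline{B_{r_0}}$. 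The two tools are the Poincar\'e--Bendixson theorem and the energy $\mathcal{E}$ of Lemma \ref{ener}.

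\emph{Step 1: the orbit is bounded as $t\to-\infty$.} Since $V=X^{q+1-p}$ with $X=r^\gamma u$, the estimate $u\le C|x|^{-\gamma}$ of Proposition \ref{Osserman} gives $0<V(t)\le M$ for all $t<t_0$, for some constant $M$. For $G$ I use the first equation $G_t=F(G)-V$, with $F(G)=(p-1)|G|^{p/(p-1)}-(N-p)G-\mu$: since $p/(p-1)>1$, there are $G^\ast>0$ and $c>0$ with $F(G)-V\ge c|G|^{p/(p-1)}>0$ whenever $|G|\ge G^\ast$, uniformly in $t$. Hence $G$ is strictly increasing on any time interval contained in $\{|G|\ge G^\ast\}$, and estimating $\tfrac{d}{dt}|G|^{-1/(p-1)}\le -c/(p-1)$ there and integrating backward shows that $\{|G|\ge G^\ast\}\cap(-\infty,t_0)$ is contained in a bounded interval $[t_0-\Delta,t_0)$; in particular $|G(t)|<G^\ast$ for $t<t_0-\Delta$. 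So $(G,V)$ stays in a fixed compact set $K\subset\mathbb{R}\times[0,\infty)$ as $t\to-\infty$. The right-hand side of (\ref{SGV}) is continuous on $K$ (both $|G|^{(2-p)/(p-1)}G$ and the product defining $V_t$ extend continuously across $G=0$ and $V=0$), so by standard arguments the $\alpha$-limit set $\mathcal{A}$ of the orbit is a nonempty, compact, connected, invariant subset of $K$.

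\emph{Step 2: identification of $\mathcal{A}$.} By Poincar\'e--Bendixson, $\mathcal{A}$ is a single equilibrium, a periodic orbit, or a graphic (a connected union of equilibria joined by homoclinic/heteroclinic orbits forming a closed chain); the only equilibria are $\mathbf{M}_0$ (present iff $F(G_0)>0$) and $\mathbf{A}_1,\mathbf{A}_2$ (present iff $\mu\ge\mu_0$). I rule out periodic orbits and homoclinic loops: when $q\ne\mathbf{q}_{s}$, i.e. $D\ne0$, the energy $\mathcal{E}$ is strictly monotone along every nonconstant orbit by Lemma \ref{ener} (its derivative vanishes only where $G=G_0$, which along a nonconstant orbit happens only at isolated times), so no orbit is periodic and no homoclinic loop exists; when $q=\mathbf{q}_{s}$, necessarily $\gamma=\tfrac{N-p}{p}$, and then either $\mu\ge\mu_0$, so $\mathbf{M}_0$ does not exist and the open invariant set $\{V>0\}$ contains no equilibrium (forbidding periodic orbits and homoclinic loops there by the index theorem), or $\mu<\mu_0$, so $G_0$ is the minimum point of $F$, $F'(G_0)=0$, and the Jacobian of (\ref{SGV}) at $\mathbf{M}_0$ is the antidiagonal matrix with entries $-1$ and $-(q+1-p)F(G_0)S'(G_0)<0$, whose eigenvalues are real of opposite sign, so $\mathbf{M}_0$ is a saddle and, being then the unique equilibrium, can be neither enclosed by a periodic orbit nor carry a homoclinic loop. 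Finally, a genuine heteroclinic cycle cannot occur: on the invariant axis $\{V=0\}$ the dynamics reduces to $G_t=F(G)$, which admits at most the single heteroclinic $\mathbf{A}_2\to\mathbf{A}_1$ (when $G_2<G_1$), not a closed chain; and $\mathbf{M}_0$ cannot be linked to the axis equilibria since $\{V>0\}$ is invariant. Hence $\mathcal{A}$ is a single equilibrium, i.e. one of $\mathbf{M}_0,\mathbf{A}_1,\mathbf{A}_2$.

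The main obstacle is Step 1. The bound on $V$ is immediate from Osserman's estimate, but bounding $G$ — equivalently the logarithmic slope $S=-ru'/u$ — is not: the bound $u\le C|x|^{-\gamma}$ together with the gradient estimate of Proposition \ref{regu} only yields $|S(r)|\le Cr^{-\gamma}/u(r)$, which is useless when $u$ decays more slowly than $r^{-\gamma}$ near $0$ (as happens in cases $\mathcal{H}_1$, $\mathcal{H}_3$). It is the superlinearity of $F$ at infinity, exploited through the differential inequality above, that rescues the argument.
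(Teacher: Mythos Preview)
Your strategy matches the paper's: bound $V$ by Osserman, bound $G$ via the superlinear growth of $F$, then invoke Poincar\'e--Bendixson together with the energy $\mathcal{E}$. The paper's Step~1 argues by a dichotomy (either $G$ is eventually monotone, whence $G_t\sim(p-1)|G|^{p/(p-1)}$ forces a contradiction by integration, or $G$ oscillates, whence $G_t(t_n)=0$ gives $V(t_n)=F(G(t_n))\to\infty$); your differential-inequality/invariant-region route is a clean alternative. One correction: the inequality $\tfrac{d}{dt}|G|^{-1/(p-1)}\le -c/(p-1)$ holds on $\{G>G^\ast\}$ but reverses sign on $\{G<-G^\ast\}$. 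The conclusion survives once you treat the two half-lines separately, using that $\{G\ge G^\ast\}$ is forward-invariant while $\{G\le -G^\ast\}$ is backward-invariant, so neither can be occupied on an unbounded backward time-interval.

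In Step~2 you are more explicit than the paper about the full Poincar\'e--Bendixson trichotomy, which is good; two small points need fixing. First, the axis heteroclinic runs $\mathbf{A}_1\to\mathbf{A}_2$ (since $F<0$ on $(G_2,G_1)$), not the reverse. Second, the clause ``$\mathbf{M}_0$ cannot be linked to the axis equilibria since $\{V>0\}$ is invariant'' is not a valid reason: orbits in $\{V>0\}$ do converge to the boundary points $\mathbf{A}_i$ in infinite time, so heteroclinic connections through $\{V>0\}$ exist. What actually excludes a closed chain when $D\ne 0$ is the energy: $\mathcal{E}$ is strictly monotone on every nonconstant orbit in $\{V>0\}$ and vanishes at $\mathbf{A}_1,\mathbf{A}_2$, so along any heteroclinic in $\{V>0\}$ the endpoint values of $\mathcal{E}$ are ordered, which forbids a cycle. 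For $D=0$ your own case split already suffices (either $\mathbf{M}_0$ is absent or it is the sole equilibrium). For the subcase $D=0$, $\mu<\mu_0$, the paper avoids the index-theory argument by a direct phase-plane observation: the region $\{V>F(G),\,G<G_0\}$ is positively invariant, so a periodic orbit around $\mathbf{M}_0$ is impossible; this has the advantage of not relying on $C^1$-regularity of the vector field at $G=0$ when $p>2$ (cf.\ Remark~\ref{singu}), which your index route would need to address.
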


\begin{proof}
From the Osserman's estimate (\ref{Oss}), for such a solution, $V\ $is
bounded near $t=-\infty $ (resp. near $t=\infty $). Suppose that $G$ is
unbounded near $t=-\infty $ (resp. near $t=\infty )$. Either $G$ is monotone
near $t=-\infty $ (resp. near $t=\infty )$ and $G$ tends to $\pm \infty ,$
then $G_{t}\sim (p-1)\left\vert G\right\vert ^{\frac{p}{p-1}};$ by
integration, we deduce that $G\ $tends to $0,$ which is contradictory. Or
there exists a monotone sequence $t_{n}\rightarrow -\infty $ (resp. $%
t_{n}\rightarrow \infty $) such that $G_{t}(t_{n})=0,$ and $\left\vert
G(t_{n})\right\vert \rightarrow \infty $ then $V(t_{n})=F(G(t_{n}))%
\rightarrow \infty ,$ which is still contradictory. Then $(V,G)$ is bounded.
From the Poincar\'{e}-Bendixon Theorem, either it converges to a fixed
point, or it has a limit cycle around a fixed point. If it is $\mathbf{A}%
_{1} $ or $\mathbf{A}_{2},$ it converges to the point. If it is $\mathbf{M}%
_{0}=(G_{0},V_{0}),$ then $F(G_{0})>0$. Then either $\mu >\mu _{0}$ and then 
$\gamma \neq \frac{N-p}{p},$ or $\mu <\mu _{0}$. Suppose that there exists a
periodic trajectory $\mathcal{T}$ around $\mathbf{M}_{0}$. It is impossible
when $\gamma \neq \frac{N-p}{p}$: indeed the energy function $\mathcal{E}$
defined at (\ref{derigv}) satisfies%
\begin{equation*}
\mathcal{E}_{t}=-DV^{\frac{p}{p-1}}(\gamma -\left\vert G\right\vert ^{\frac{%
2-p}{p-1}}G)(\left\vert \gamma \right\vert ^{p-2}\gamma -G)
\end{equation*}%
with $D=N-p-p\gamma \neq 0$ , then $D\mathcal{E}_{t}<0,$ up to a finite
number of points where $G=\left\vert \gamma \right\vert ^{p-2}\gamma $. Then 
$(V,G)$ converges to $\mathbf{M}_{0}$. Next assume $\mu <\mu _{0}$ and $%
\gamma =\frac{N-p}{p}$. Consider a solution $t\in \mathbb{R}\longmapsto
(G(t),V(t))$ with period $\mathcal{P}$ on the trajectory $\mathcal{T}$.
There exist at least a value $t_{1}\in \left[ 0,\mathcal{P}\right] $ where $%
G(t_{1})=G_{0}$ and $V(t_{1})>V_{0}$; then at $P(t_{1})=(G_{0},V(t_{1}))$, $%
\mathcal{T}$ enters the region $\left\{ V>F(g),G<G_{0}\right\} $; this
region is positively invariant since the field on the curve $\mathcal{C}_{2}$
is directed by $(0,1)$ and the field on $\mathcal{L}$ by $(-1,0)$ (see
Figure 8); so $\mathcal{T}$ stays in it for $t>t_{1}$, and cannot turn
around $\mathbf{M}_{0}$, and we still get a contradiction.
\end{proof}

\subsubsection{Behaviour near the fixed point $\mathbf{M}_{0}$}

Next we precise the nature of the point $\mathbf{M}_{0}$. In that case some
new phenomena appear in the particular case $\theta +p=0$, equivalently $%
\gamma =0$.

\begin{lemma}
\label{fix} Suppose that $\mathbf{M}_{0}$ exists, and $\gamma \neq 0$. Then $%
\mathbf{M}_{0}$ is a saddle point: the eigenvalues $\lambda _{1}<0<\lambda
_{2}$ are the roots of the trinom 
\begin{equation*}
T(\lambda )=\lambda ^{2}-(p\gamma -N+p)\lambda -\frac{q+1-p}{p-1}\left\vert
\gamma \right\vert ^{2-p}V_{0}.
\end{equation*}%
Some corresponding eigenvectors are 
\begin{equation*}
\overrightarrow{w_{1}}=(1,p\gamma -N+p-\lambda _{1})=(1,\lambda _{2}),\qquad 
\overrightarrow{w_{2}}=(1,p\gamma -N+p-\lambda _{2})=(1,\lambda _{1}).
\end{equation*}%
There exist precisely two trajectories $\mathcal{T}_{1},\mathcal{T}_{2}$%
,converging to $\mathbf{M}_{0}$ as $t\rightarrow \infty $, directed by $%
\overrightarrow{w_{1}}$, with slope $\lambda _{2}:$ a trajectory $\mathcal{T}%
_{1}$ with $G<G_{0}$ (resp. $\mathcal{T}_{2}$ with $G>G_{0}$) as $%
t\rightarrow \infty ;$ and two trajectories $\mathcal{T}_{3},\mathcal{T}_{4}$%
,converging to $\mathbf{M}_{0}$ as $t\rightarrow -\infty $, directed by $%
\overrightarrow{w_{2}}$, with slope $\lambda _{1}:$a trajectory $\mathcal{T}%
_{3}$ such that $G<G_{0}$ (resp. $\mathcal{T}_{4}$ with $G>G_{0}$) as $%
t\rightarrow -\infty $.
\end{lemma}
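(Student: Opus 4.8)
The plan is to linearize system (\ref{GVF}) at $\mathbf{M}_0$ and then read off every assertion from standard hyperbolic fixed-point theory. First I would note that the two hypotheses are exactly what make this legitimate: since $\mathbf{M}_0$ exists we have $V_0 = F(G_0)>0$, and since $\gamma\neq 0$ the abscissa $G_0 = |\gamma|^{p-2}\gamma$ is nonzero, so both $G\mapsto F(G)$ and $G\mapsto |G|^{\frac{2-p}{p-1}}G = |G|^{\frac{1}{p-1}}\operatorname{sgn} G$ are $C^\infty$ (in particular $C^1$) in a neighbourhood of $G_0$; hence the right-hand side of (\ref{GVF}) is $C^1$ near $\mathbf{M}_0$. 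This is precisely where $\gamma\neq 0$ is used: for $p>2$ the map $G\mapsto |G|^{\frac{1}{p-1}}\operatorname{sgn} G$ is only H\"older at $G=0$ and the linearization below would not be available.

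Next I would compute the Jacobian at $\mathbf{M}_0$. Using $F'(G_0)=p|\gamma|\operatorname{sgn}\gamma-(N-p)=p\gamma-N+p$, the derivative $\frac{d}{dG}\bigl(|G|^{\frac{1}{p-1}}\operatorname{sgn} G\bigr)\big|_{G_0}=\frac{1}{p-1}|\gamma|^{2-p}$, and the relation $\gamma-|G_0|^{\frac{1}{p-1}}\operatorname{sgn} G_0=0$, one gets
\[
J=\begin{pmatrix} p\gamma-N+p & -1 \\ -\dfrac{q+1-p}{p-1}\,|\gamma|^{2-p}V_0 & 0 \end{pmatrix},
\]
whose characteristic polynomial is exactly the trinom $T(\lambda)$ of the statement, since $\operatorname{tr}J=p\gamma-N+p$ and $\det J=-\frac{q+1-p}{p-1}|\gamma|^{2-p}V_0$. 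Because $q>p-1$, $\gamma\neq 0$ and $V_0>0$, we have $\det J<0$, so the eigenvalues are real and of opposite sign, $\lambda_1<0<\lambda_2$; hence $\mathbf{M}_0$ is a hyperbolic saddle. For the eigenvectors I would solve $(p\gamma-N+p-\lambda)w^{(1)}=w^{(2)}$ and apply the Vi\`ete relation $\lambda_1+\lambda_2=p\gamma-N+p$, which replaces $p\gamma-N+p-\lambda_1$ by $\lambda_2$ and $p\gamma-N+p-\lambda_2$ by $\lambda_1$, giving $\overrightarrow{w_1}=(1,\lambda_2)$ (for $\lambda_1$) and $\overrightarrow{w_2}=(1,\lambda_1)$ (for $\lambda_2$).

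Finally, for the four trajectories I would invoke the local stable and unstable manifold theorem for the $C^1$ planar flow of (\ref{GVF}) near the hyperbolic point $\mathbf{M}_0$: there is a $C^1$ local stable manifold $W^s$, tangent at $\mathbf{M}_0$ to the eigenline spanned by $\overrightarrow{w_1}$, every orbit of which tends to $\mathbf{M}_0$ as $t\rightarrow\infty$, and a $C^1$ local unstable manifold $W^u$, tangent to the eigenline spanned by $\overrightarrow{w_2}$, every orbit of which tends to $\mathbf{M}_0$ as $t\rightarrow-\infty$; moreover every orbit converging to $\mathbf{M}_0$ forward (resp. backward) in time eventually lies on $W^s$ (resp. $W^u$). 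Each of $W^s$, $W^u$ being a one-dimensional curve, deleting $\mathbf{M}_0$ splits it into exactly two orbits, and since $\overrightarrow{w_1}$, $\overrightarrow{w_2}$ have nonzero first component, $W^s$ and $W^u$ are locally graphs over the $G$-variable, so their two branches lie respectively in $\{G<G_0\}$ and $\{G>G_0\}$: these are $\mathcal{T}_1$, $\mathcal{T}_2$ (with limiting slope $\lambda_2$) and $\mathcal{T}_3$, $\mathcal{T}_4$ (with limiting slope $\lambda_1$). The only point needing care is the $C^1$ regularity of the field near $\mathbf{M}_0$, settled in the first step; after that the argument is entirely standard and I foresee no real obstacle.
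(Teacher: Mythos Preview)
Your proposal is correct and follows essentially the same route as the paper: linearize system (\ref{GVF}) at $\mathbf{M}_0$, obtain the Jacobian and the characteristic polynomial $T(\lambda)$, and conclude from $\det J<0$ that $\mathbf{M}_0$ is a saddle. The paper's own proof is in fact briefer than yours---it writes down the linearized system (\ref{linea}), reads off $T(\lambda)$, notes the product of roots is negative, and leaves the eigenvectors and the four trajectories implicit---whereas you add the regularity discussion explaining why $\gamma\neq 0$ is needed (cf.\ Remark~\ref{singu} in the paper), the Vi\`ete identity for the eigenvectors, and the explicit appeal to the stable/unstable manifold theorem for $\mathcal{T}_1,\dots,\mathcal{T}_4$.
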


\begin{proof}
We set $G=(G_{0}+\overline{G},V_{0}+\overline{V})$, then the linearized
system at $\mathbf{M}_{0}$ is given by 
\begin{equation}
\left\{ 
\begin{array}{ccc}
\overline{G}_{t} & = & (p\gamma -N+p)\overline{G}-\overline{V}, \\ 
\overline{V}_{t} & = & -\frac{q+1-p}{p-1}\left\vert \gamma \right\vert
^{2-p}V_{0}\overline{G},%
\end{array}%
\right.  \label{linea}
\end{equation}%
so the eigenvalues, given by 
\begin{equation*}
\det (%
\begin{array}{cc}
p\gamma -N+p-\lambda & -1 \\ 
-\frac{q+1-p}{p-1}\left\vert \gamma \right\vert ^{2-p}V_{0} & -\lambda%
\end{array}%
)=0,
\end{equation*}%
are the roots of the trinom $T(\lambda );$ and the product of the roots is
negative, thus $\mathbf{M}_{0}$ is a saddle point. The eigenvectors can be
computed easily.\bigskip
\end{proof}

Next we study the particular case $\gamma =0$, equivalently $\theta +p=0$.
Equation (\ref{pq}) takes the form (\ref{pqzero}), and the solution $u^{\ast
}$ exists when $\mu <0$, and it is constant: $u^{\ast }\equiv \left\vert \mu
\right\vert ^{\frac{1}{q+1-p}}$. Here the study depends on $p$, in
particular we find the existence of \textbf{locally constant solutions near }%
$0$\textbf{\ or near }$\infty $\textbf{\ when }$p>2$:

\begin{lemma}
\label{gammazero}Assume $\gamma =0$. When $\mu <0$, $\mathbf{M}%
_{0}=(0,\left\vert \mu \right\vert )$ is well defined.\medskip

\noindent (i) For $p=2$, Lemma \ref{fix} still applies without change.
Denoting by $\lambda _{1}<0<\lambda _{2}$ the roots of $\lambda
^{2}+(N-2)\lambda -(q-1)\left\vert \mu \right\vert =0$, the solutions
corresponding to trajectories $\mathcal{T}_{3}$ and $\mathcal{T}_{4}$
satisfy 
\begin{equation*}
u-\left\vert \mu \right\vert ^{\frac{1}{q-1}}\sim _{r\longrightarrow
0}C_{i}r^{\lambda _{2}},\quad i=3,4,\quad C_{3}>0>C_{4},
\end{equation*}%
those corresponding to trajectories $\mathcal{T}_{1}$ and $\mathcal{T}_{2}$
satisfy 
\begin{equation*}
u-\left\vert \mu \right\vert ^{\frac{1}{q-1}}\sim _{r\longrightarrow \infty
}C_{i}r^{\lambda _{1}},\quad i=1,2,\quad C_{2}>0>C_{1}.
\end{equation*}

\noindent (ii) For $p\neq 2$ there exist at least a trajectory $\mathcal{T}%
_{1}$ with $G<0$ (resp. trajectory $\mathcal{T}_{2}$ with $G>0)$ converging
to $\mathbf{M}_{0}$ as $t\rightarrow \infty ;$ and a trajectory$\mathcal{T}%
_{3}$ such that $G<0$ (resp. a trajectory $\mathcal{T}_{4}$ with $G>0)$
converging to $\mathbf{M}_{0}$ as $t\rightarrow -\infty $.

$\bullet $ For $p<2$, the solutions corresponding to trajectories of types $%
\mathcal{T}_{3}$ and $\mathcal{T}_{4}$ satisfy 
\begin{equation*}
u-\left\vert \mu \right\vert ^{\frac{1}{q+1-p}}\sim _{r\longrightarrow
0}C_{i}\left\vert \ln r\right\vert ^{-\frac{p-1}{2-p},}\quad i=3,4,\quad
C_{3}>0>C_{4};
\end{equation*}%
the solutions corresponding to trajectories of types $\mathcal{T}_{1}$ and $%
\mathcal{T}_{2}$ satisfy 
\begin{equation*}
u-\left\vert \mu \right\vert ^{\frac{1}{q+1-p}}\sim _{r\longrightarrow
\infty }C_{i}r^{-\frac{N-p}{p-1,}}\quad i=1,2,\quad C_{2}>0>C_{1}.
\end{equation*}

$\bullet $ For $p>2$, the solutions $u$ corresponding to trajectories of
types $\mathcal{T}_{3}$ and $\mathcal{T}_{4}$ are constant near $r=0$, there
holds $\rho >0$ such that%
\begin{equation*}
u(r)-\left\vert \mu \right\vert ^{\frac{1}{q+1-p}}\sim _{r\longrightarrow
\rho }C_{i}((r-\rho )^{+})^{\frac{p}{p-2}},\quad i=3,4,\quad C_{3}>0>C_{4}.
\end{equation*}%
The solutions $u$ corresponding to trajectories $\mathcal{T}_{1}$ and $%
\mathcal{T}_{2}$ are constant near $r=\infty $, and there holds $R>0$ such
that 
\begin{equation*}
u(r)-\left\vert \mu \right\vert ^{\frac{1}{q+1-p}}=C_{i}((R-r)^{+})^{\frac{p%
}{p-2}},\quad i=1,2,\quad C_{2}>0>C_{1}.
\end{equation*}
\end{lemma}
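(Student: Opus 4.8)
The plan is to separate the smooth case $p=2$ from the case $p\neq 2$, the decisive point being that the vector field of the system (\ref{SGV}) is $C^{1}$ at $\mathbf{M}_{0}=(0,|\mu|)$ only for $p=2$: when $p\neq 2$ the terms $(p-1)|G|^{p/(p-1)}$ in $F(G)$ and $|G|^{(2-p)/(p-1)}G=S$ in the $V$-equation destroy differentiability at $G=0$, and it is exactly this which produces the new phenomena. In every case $\mathbf{M}_{0}$ is well defined because $\mu<0$ gives $F(0)=-\mu=|\mu|>0$; moreover $\gamma=0$ forces $\mathbf{q}_{s}=p-1<q$ and $D=N-p>0$, so by Lemma \ref{ener} the energy $\mathcal{E}$ of (\ref{derigv}) is a strict Lyapunov function, its derivative $\mathcal{E}_{t}=-DV^{p/(p-1)}|S|^{p}$ vanishing only on $\mathcal{L}=\{G=0\}$; by Lemma \ref{conver} any trajectory coming from a solution near $0$ or $\infty$ converges to $\mathbf{M}_{0}$, $\mathbf{A}_{1}$ or $\mathbf{A}_{2}$, and here we chase those converging to $\mathbf{M}_{0}$.

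For $p=2$ (part (i)) the field is smooth, $G=S$, and $\mathbf{M}_{0}=L_{0}$ lies on $\mathcal{L}$; Lemma \ref{fix} applies verbatim for $\gamma=0$ (note $|\gamma|^{2-p}=1$), producing a saddle with eigenvalues $\lambda_{1}<0<\lambda_{2}$, the roots of $\lambda^{2}+(N-2)\lambda-(q-1)|\mu|=0$, and four trajectories $\mathcal{T}_{1},\dots,\mathcal{T}_{4}$ along the eigendirections. Translating through $V=u^{q-1}=|\mu|+\overline{V}$ and $G=S=-ru'/u$: along $\mathcal{T}_{3},\mathcal{T}_{4}$ one has $\overline{V}\sim\mathrm{const}\cdot e^{\lambda_{2}t}$ as $t\to-\infty$, hence $u-|\mu|^{1/(q-1)}\sim C_{i}r^{\lambda_{2}}$; along $\mathcal{T}_{1},\mathcal{T}_{2}$, $u-|\mu|^{1/(q-1)}\sim C_{i}r^{\lambda_{1}}$ as $t\to+\infty$. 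The signs $C_{3}>0>C_{4}$ and $C_{2}>0>C_{1}$ come from $V_{t}=-(q-1)VS$ — so $V$ increases where $G<0$ and decreases where $G>0$ — together with the sign of $\lambda_{1}$ in the eigenvector entering each branch.

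For $p\neq 2$ (part (ii)) I set $\overline{G}=G$, $\overline{V}=V-|\mu|$, so that near $\mathbf{M}_{0}$ the system reads $\overline{G}_{t}=-(N-p)\overline{G}-\overline{V}+(p-1)|\overline{G}|^{p/(p-1)}$ and $\overline{V}_{t}=-(q+1-p)(|\mu|+\overline{V})\,\mathrm{sgn}(\overline{G})|\overline{G}|^{1/(p-1)}$. Existence of the four trajectories: for $1<p<2$ the field is $C^{1}$, so the stable-manifold theorem provides $\mathcal{T}_{1},\mathcal{T}_{2}$ (convergence as $t\to+\infty$) and the centre-manifold theorem provides $\mathcal{T}_{3},\mathcal{T}_{4}$ (as $t\to-\infty$), one branch in each of the sectors $\{G<0\}$, $\{G>0\}$; for $p>2$ I build them instead from explicit barriers (see the last paragraph). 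The rates follow from the dominant balance. On the slow trajectories $\mathcal{T}_{3},\mathcal{T}_{4}$ one has $\overline{G}\approx-\overline{V}/(N-p)$, and feeding this into the $\overline{V}$-equation gives, for $y=u-|\mu|^{1/(q+1-p)}$, a scalar equation $y_{t}=c\,\mathrm{sgn}(y)\,|y|^{1/(p-1)}(1+o(1))$ with an explicit $c>0$; integrating $y_{t}=c\,\mathrm{sgn}(y)\,|y|^{1/(p-1)}$ gives, when $1<p<2$ (exponent $>1$, field Lipschitz, convergence in infinite $t$), $|y|\sim C\,|\ln r|^{-(p-1)/(2-p)}$ as $t\to-\infty$. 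On $\mathcal{T}_{1},\mathcal{T}_{2}$ with $1<p<2$ the balance is rather $\overline{G}_{t}\approx-(N-p)\overline{G}$ with $\overline{V}$ (hence $y$) slaved and of lower order, and then integrating the $\overline{V}$-equation yields $u-|\mu|^{1/(q+1-p)}\sim C_{i}r^{-(N-p)/(p-1)}$ as $t\to+\infty$. When $p>2$ the exponent $1/(p-1)<1$, so $\int dy/|y|^{1/(p-1)}$ converges at $0$: the scalar flow reaches $\mathbf{M}_{0}$ at a \emph{finite} $t_{0}$, that is $u\equiv|\mu|^{1/(q+1-p)}$ for $r<\rho:=e^{t_{0}}$ (resp.\ for $r>R$), the \textbf{locally constant} solutions; the profile near $r=\rho$ is obtained directly from the radial equation, where the matching $(|u'|^{p-2}u')'\approx\kappa\,(u-|\mu|^{1/(q+1-p)})$, $\kappa>0$, forces the power $\alpha$ with $(\alpha-1)(p-1)-1=\alpha$, i.e.\ $\alpha=p/(p-2)$, so $u-|\mu|^{1/(q+1-p)}\sim C_{i}((r-\rho)^{+})^{p/(p-2)}$ (resp.\ $C_{i}((R-r)^{+})^{p/(p-2)}$). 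In every case the signs of the $C_{i}$ are fixed by which of the sectors $\{G<0\}$, $\{G>0\}$ the trajectory inhabits, exactly as for $p=2$.

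The main obstacle is making the reduction to $y_{t}=c\,\mathrm{sgn}(y)\,|y|^{1/(p-1)}$ rigorous in the non-smooth regime. For $1<p<2$ this is a centre-manifold reduction run for a merely $C^{1}$ vector field, with the $o(1)$ error kept under control along the invariant manifold. For $p>2$, where the field is non-Lipschitz at $\mathbf{M}_{0}$, I would argue with barriers: for suitable $C>0$ and small $\delta>0$ the functions $|\mu|^{1/(q+1-p)}+(C\mp\delta)((r-\rho)^{+})^{p/(p-2)}$ are a sub- and a super-solution of the radial equation near $r=\rho$ (and likewise with $(R-r)^{+}$ near $r=R$), and squeezing $u$ between them through the comparison principle of Corollary \ref{DG} yields at once the existence of $\mathcal{T}_{3},\mathcal{T}_{4}$ (and, by the same construction at infinity, of $\mathcal{T}_{1},\mathcal{T}_{2}$) and the stated profile. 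This is the $p$-Laplacian counterpart of the locally constant solutions of $-\Delta_{p}w+w=0$ from \cite[Remark 5.1]{GuedVe}.
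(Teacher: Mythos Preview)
Your treatment of $p\le 2$ is essentially the paper's: linearisation at $\mathbf{M}_{0}$ (legitimate because the field is $C^{1}$ there), stable and centre manifolds, and the same asymptotic bookkeeping via $\overline{G}\approx-\overline{V}/(N-p)$ on the slow branch.

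Where you diverge is $p>2$. The paper proves existence of the four trajectories by a topological phase-plane argument: each sector $\mathcal{J}_{i}$ is invariant in one time direction, and the points whose trajectory (in the other direction) exits through $\mathcal{C}$, respectively through $\mathcal{L}$, form two disjoint open subsets of $\mathcal{J}_{i}$; their complement is therefore nonempty and furnishes a trajectory that stays in $\mathcal{J}_{i}$ and converges to $\mathbf{M}_{0}$. For the profile, the paper derives the second-order equation $G_{tt}+(N-p)G_{t}=k|G|^{(2-p)/(p-1)}G(1+o(1))$, multiplies by $G_{t}$ and integrates to bound $G^{(p-2)/(2(p-1))}$ linearly in $t$; this forces the endpoint $T$ to be finite, and a second integration gives $G\sim c_{k}(t-T)^{2(p-1)/(p-2)}$, hence $U-|\mu|^{1/(q+1-p)}\sim C(r-e^{T})^{p/(p-2)}$.

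Your barrier route recovers the exponent $p/(p-2)$ correctly, and functions $a+(C\mp\delta)((r-\rho)^{+})^{p/(p-2)}$ can indeed be checked to be sub/super-solutions near $r=\rho$. The gap is existence: squeezing via Corollary~\ref{DG} is an a priori estimate on a solution $u$ you already have, not a construction. To obtain a non-constant solution branching off the constant at $r=\rho$ you still need an extra mechanism---a Perron or monotone-iteration step between the barriers, a shooting argument from $r_{1}>\rho$ with data near $(a,0)$, or precisely the open-set phase-plane argument the paper uses. The same objection applies to your scalar reduction $y_{t}=c\,\mathrm{sgn}(y)|y|^{1/(p-1)}$ for $p>2$: the dominant balance $\overline{G}\approx-\overline{V}/(N-p)$ that feeds it is a centre-manifold statement, and no centre-manifold theorem is available when the field is merely H\"older at $\mathbf{M}_{0}$. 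The paper's energy-in-$G$ computation avoids both issues by working directly with the autonomous system and Peano existence.
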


\begin{proof}
We are in the case ($\mathcal{H}_{2}$) with\textbf{\ }$\gamma =0<S_{2}<S_{1}$%
, $\mu _{0}<\mu <0$, or in the case \textbf{(}$\mathcal{H}_{5}$\textbf{) }$%
\mu <\mu _{0}$. We consider the regions in $\mathbb{R}\times (0,\infty )$%
\begin{eqnarray*}
\mathcal{J}_{1} &=&\left\{ 0<V<F(G),G<0\right\} ,\qquad \mathcal{J}%
_{2}=\left\{ V>F(G),G>0\right\} , \\
\mathcal{J}_{3} &=&\left\{ V>F(G),G<0\right\} ,\qquad \mathcal{J}%
_{4}=\left\{ 0<V<F(G),G>0\right\} .
\end{eqnarray*}

(i) Case $p=2$. The former linearization analysis still applies: the
eigenvalues are the roots $\lambda _{1}<0<\lambda _{2}$ of 
\begin{equation*}
T(\lambda )=\lambda ^{2}+(N-2)\lambda -(q-1)\left\vert \mu \right\vert
\end{equation*}%
so $\mathbf{M}_{0}$ is still a saddle point as above, defining four
trajectories $\mathcal{T}_{1}$,$\mathcal{T}_{2}$,$\mathcal{T}_{3}$,$\mathcal{%
T}_{4}$, respectively in regions $\mathcal{J}_{1},\mathcal{J}_{2},\mathcal{J}%
_{3},\mathcal{J}_{4}$. System (\ref{linea}) becomes%
\begin{equation*}
\left\{ 
\begin{array}{ccc}
\overline{G}_{t} & = & (p\gamma -N+p)\overline{G}-\overline{V}, \\ 
\overline{V}_{t} & = & -(q-1)\left\vert \mu \right\vert \overline{G}.%
\end{array}%
\right.
\end{equation*}%
By reduction to the diagonal form we deduce that $\lim_{t\longrightarrow
-\infty }\overline{V}$ $e^{\lambda _{2}t}=l\neq 0$ on $\mathcal{T}_{3},%
\mathcal{T}_{4}$ (resp. $\lim_{t\longrightarrow \infty }\overline{V}$ $%
e^{\lambda _{1}t}=\lambda \neq 0$ on $\mathcal{T}_{1},\mathcal{T}_{2}$).
Since $U(t)=(\left\vert \mu \right\vert +\overline{V})^{\frac{1}{q-1}%
}=(\left\vert \mu \right\vert +\overline{V})^{\frac{1}{q-1}}$, there holds $%
U(t)-\left\vert \mu \right\vert ^{\frac{1}{q-1}}\sim _{t\longrightarrow
-\infty }\frac{1}{q-1}\left\vert \mu \right\vert ^{\frac{2-q}{q-1}}\overline{%
V}\sim _{t\longrightarrow -\infty }C_{i}e^{\lambda _{2}t}$ on $\mathcal{T}%
_{3},\mathcal{T}_{4}$ (resp. $U(t)-\left\vert \mu \right\vert ^{\frac{1}{q-1}%
}\sim _{t\longrightarrow \infty }C_{i}e^{\lambda _{1}t}$ on $\mathcal{T}_{1}$%
,$\mathcal{T}_{2}$).\bigskip

(ii) Cases $p\neq 2$.\medskip

$\bullet $ Case $p<2$. Then the linearization is still valid, with $%
T(\lambda )=\lambda ^{2}+(N-p)\lambda $, that means $\lambda
_{1}=-(N-p)<0=\lambda _{2}$. Relatively to the eigenvalue $\lambda _{1}$,
there is an eigenvector $(1,0)$, there exists precisely two trajectories
with slope $0$, $\mathcal{T}_{2}$ in the region $\mathcal{J}_{2}$ and $%
\mathcal{T}_{1}$ in the region $\mathcal{J}_{1}$, ending at the point as $%
t\rightarrow \infty $. Relatively to the eigenvalue $0$, the central
manifold, of dimension $1$, is directed by $(1,-(N-p))$, we find at least
two trajectories on it, converging to $\mathbf{M}_{0}$ as $t\rightarrow
\infty $ or $t\rightarrow \infty $. Since $\lim_{t\longrightarrow \infty }%
\frac{V_{t}}{G_{t}}=-(N-p)$, then $(V,G)$ is necessarily in one of the
regions $\mathcal{J}_{3},\mathcal{J}_{4}$ ; as a consequence, the
convergence holds as $t\rightarrow -\infty $, so we find two trajectories $%
\mathcal{T}_{3},\mathcal{T}_{4}$. Moreover, consider for example $\mathcal{T}%
_{4};$ we know that $\lim_{t\longrightarrow -\infty }\frac{\overline{V}}{G}%
=-(N-p)$, then 
\begin{equation*}
\overline{V}_{t}=-(q+1-p)(\left\vert \mu \right\vert +\overline{V}%
)\left\vert G\right\vert ^{\frac{2-p}{p-1}}G)\sim _{t\longrightarrow -\infty
}-k\left\vert G\right\vert ^{\frac{2-p}{p-1}}G\sim k(N-p)^{-\frac{1}{p-1}}%
\overline{V}^{\frac{1}{p-1}},
\end{equation*}%
thus by integration, $\overline{V}\sim _{t\longrightarrow -\infty
}c\left\vert t\right\vert ^{-\frac{p-1}{2-p}}$, hence $u^{q+1-p}-\left\vert
\mu \right\vert \sim _{t\longrightarrow -\infty }c_{1}\left\vert \ln
r\right\vert ^{-\frac{p-1}{2-p}}$, so that $u-\left\vert \mu \right\vert ^{%
\frac{1}{q+1-p}}\sim _{r\longrightarrow 0}C_{4}\left\vert \ln r\right\vert
^{-\frac{p-1}{2-p}}$. We obtain a similar result for the trajectory $%
\mathcal{T}_{3}$. Next consider the trajectory $\mathcal{T}_{2};$ there
holds $G_{t}<0$,$V_{t}<0$ and $\lim_{t\longrightarrow \infty }\frac{%
\overline{V}}{G}=0$, then $G_{t}\sim _{t\longrightarrow \infty }-(N-p)G$,
thus $G=O(e^{(-(N-p)+\varepsilon )t})t$. Moreover, we still have $\overline{V%
}_{t}\sim _{t\longrightarrow -\infty }-k\left\vert G\right\vert ^{\frac{2-p}{%
p-1}}G$, then $\overline{V}_{t}=O(e^{\frac{-(N-p)+\varepsilon }{p-1}t})$,
thus $\overline{V}=O(e^{\frac{-(N-p)+\varepsilon }{p-1}t})$, and $\left\vert
G\right\vert ^{\frac{p}{p-1}}=O(e^{\frac{-(N-p)+\varepsilon }{p-1}pt})$;
then $G_{t}=-(N-p)G+O(e^{\frac{-(N-p)+\varepsilon }{p-1}t})$, which implies
that $\lim_{t\longrightarrow \infty }e^{(N-p)t}G=C>0;$ since $%
\lim_{t\longrightarrow \infty }U=\left\vert \mu \right\vert ^{\frac{1}{q+1-p}%
}$, we get $U_{t}\sim _{t\longrightarrow \infty }C^{\frac{1}{p-1}}e^{-\frac{%
(N-p)t}{p-1}}$, and $u-\left\vert \mu \right\vert ^{\frac{1}{q+1-p}}\sim
_{r\longrightarrow \infty }C_{2}r^{-\frac{N-p}{p-1}}$, where $C_{2}>0$.
Similarly for $\mathcal{T}_{1}$.$\medskip $

$\bullet $ Case $p>2$. Here we cannot linearize the system. We first
consider the region $\mathcal{J}_{4}$, which is positively invariant: all
the trajectories with one point in $\mathcal{J}_{4}$ converge to $\mathbf{A}%
_{2}$ as $t\rightarrow \infty $. Let $\mathcal{U}$ (resp. $\mathcal{V}$) be
the set of points $P$ of $\mathcal{J}_{4}$ such that the trajectory passing
by $P$ cuts the curve $\mathcal{C}_{2}$ (resp. the line $\mathcal{L}$ );
then $\mathcal{U}$ (resp. $\mathcal{V}$) is an open set in $\mathcal{J}_{4}$%
, since the intersections are transverse. Then $\mathcal{U}$ $\cup $ $%
\mathcal{V\neq J}_{4}$. Thus there exists at least a trajectory $\mathcal{T}%
_{4}$ in $\mathcal{J}_{4}$ converging to $\mathbf{M}_{0}$ as $t\rightarrow
\infty $. In the same way the region $\mathcal{J}_{3}$ is positivement
invariant, and there exists at least a trajectory $\mathcal{T}_{3}$ in $%
\mathcal{J}_{3}$ converging to $\mathbf{M}_{0}$ as $t\rightarrow -\infty $.
The region $\mathcal{J}_{1}$ (resp. $\mathcal{J}_{2}$) is negatively
invariant and as before it contains at least a trajectory $\mathcal{T}_{1}$
(resp. $\mathcal{T}_{2}$) converging to $\mathbf{M}_{0}$ as $t\rightarrow
\infty $.

Setting again $V=\left\vert \mu \right\vert +\overline{V}$, there holds 
\begin{equation*}
\left\{ 
\begin{array}{ccc}
G_{t} & = & (p-1)\left\vert G\right\vert ^{\frac{p}{p-1}}-(N-p)G-\overline{V}%
, \\ 
\overline{V}_{t} & = & -(q+1-p)(\left\vert \mu \right\vert +\overline{V}%
)\left\vert G\right\vert ^{\frac{2-p}{p-1}}G).%
\end{array}%
\right.
\end{equation*}%
By derivation, setting $k=(q+1-p)\left\vert \mu \right\vert $ 
\begin{equation*}
G_{tt}=(p\left\vert G\right\vert ^{\frac{2-p}{p-1}}G-(N-p))G_{t}-\overline{V}%
_{t},
\end{equation*}%
\begin{equation*}
G_{tt}+(N-p))G_{t}=\left\vert G\right\vert ^{\frac{2-p}{p-1}%
}G(pG_{t}+(q+1-p)(\left\vert \mu \right\vert +\overline{V}));
\end{equation*}%
near the fixed point, as $t\rightarrow \pm \infty $, $G$ and $\overline{V}$,
then also $G_{t}$ tends to $0$, then 
\begin{equation*}
G_{tt}+(N-p)G_{t}=k\left\vert G\right\vert ^{\frac{2-p}{p-1}%
}G(1+o(1)),\qquad k=(q+1-p)\left\vert \mu \right\vert .
\end{equation*}

First consider the trajectory $\mathcal{T}_{4}$ where $G>0$, $G_{t}>0$:%
\begin{equation*}
\frac{k}{2}G^{\frac{1}{p-1}}\leq G_{tt}+(N-p)G_{t}\leq 2kG^{\frac{1}{p-1}};
\end{equation*}%
by multiplication by $G_{t}$ we deduce that 
\begin{equation*}
(\frac{G_{t}^{2}}{2}-2k\frac{p-1}{p}G^{\frac{p}{p-1}})_{t}\leq 0.
\end{equation*}%
Then $\frac{G_{t}^{2}}{2}-2k\frac{p-1}{p}G^{\frac{p}{p-1}}$ is
nonincreasing, and tends to $0$ at $-\infty $, then $\frac{G_{t}^{2}}{2}\leq
2k\frac{p-1}{p}G^{\frac{p}{p-1}}$, thus $G^{-\frac{p}{2(p-1)}}G_{t}\leq c$,
hence $\frac{2(p-1)}{p-2}G^{\frac{p-2}{2(p-1)}}-ct$ is nonincreasing; this
is impossible if $t\rightarrow -\infty $. Denoting by $(T,\tau )$ the
maximal interval where the solution is positive, $T$ is finite, and as $%
t\rightarrow T$, then for $p>2$, $G^{\frac{p-2}{2(p-1)}}-c(t-T)\leq 0$. We
obtain a solution $u$ such that $u_{r}$ is zero for $r\leq e^{T}$, that
means $u$ is constant near the origin. Moreover $G_{t}\leq cG^{\frac{p}{%
2(p-1)}}=o(G^{\frac{1}{p-1}})$, then $G_{tt}\sim _{t\longrightarrow T}kG^{%
\frac{1}{p-1}}$, then for any $\varepsilon >0$, $(k-\varepsilon )G^{\frac{1}{%
p-1}}\leq G_{tt}\leq (k+\varepsilon )G^{\frac{1}{p-1}}$ for $t-T\leq
t_{\varepsilon }$, and by new integrations we get consecutively $G\sim
_{t\longrightarrow T}c_{k}(t-T)^{\frac{2(p-1)}{p-2}}$, 
\begin{equation*}
\frac{U_{t}}{U}\sim _{t\longrightarrow T}\left\vert \mu \right\vert ^{-\frac{%
1}{q+1-p}}U_{t}\sim _{t\longrightarrow T}-c_{k}(t-T)^{\frac{2}{p-2}},
\end{equation*}%
\begin{equation*}
U-\left\vert \mu \right\vert ^{\frac{1}{q+1-p}}\sim _{t\longrightarrow T}%
\frac{p-2}{p}c_{k}(t-T)^{\frac{p}{p-2}}\sim _{t\longrightarrow
T}C_{4}(r-e^{T})^{\frac{p}{p-2}},
\end{equation*}%
where $C_{4}<0$.. We get a similar result for the trajectory $\mathcal{T}%
_{3}:$ in this case $G<0$, $G_{t}<0$, we get the same conclusion by
considering $-G$.

Next consider $\mathcal{T}_{2}$, where $G>0$ and $G_{t}<0$. We find 
\begin{equation*}
2kG^{\frac{1}{p-1}}G_{t}\leq G_{t}G_{tt}+(N-p)G_{t}^{2}\leq \frac{k}{2}G^{%
\frac{1}{p-1}}G_{t};
\end{equation*}%
hence $\frac{G_{t}^{2}}{2}-\frac{k}{2}\frac{p-1}{p}G^{\frac{p}{p-1}}$is
nonincreasing and tends to $0$ as $t\rightarrow \infty $; thus $\frac{%
G_{t}^{2}}{2}\geq \frac{k}{2}\frac{p-1}{p}G^{\frac{p}{p-1}}$, consequently $%
G^{-\frac{p}{2(p-1)}}G_{t}\leq -c$, then $G^{\frac{p-2}{2(p-1)}}+ct$ is
nonincreasing, hence again we get a solution $u$ such that $u_{r}$ is zero
for $r\geq e^{T}$, that means $u$ is constant for large $r$. Then $%
u(r)-\left\vert \mu \right\vert ^{\frac{1}{q+1-p}}=C_{2}((R-r)^{+})^{\frac{p%
}{p-2}}$, with $C_{2}>0$. The result is similar for $\mathcal{T}_{1}$.
\end{proof}

\begin{remark}
\label{singu} Here we make a comment on the regularity of system (\ref{SGV}%
). Let us write it under the form%
\begin{equation*}
\left\{ 
\begin{array}{ccc}
G_{t} & = & f(G(t),V(t)), \\ 
V_{t} & = & g(G(t),V(t)).%
\end{array}%
\right.
\end{equation*}%
If $p\leq 2$, the functions $f$,$g$ are of class $C^{1}$, so the system has
no singular point. If $p>2$, the system is singular at $G=0$, that means $%
u^{\prime }=0$, since $g$ is only continuous at $G=0$. However, for any
given $\widetilde{V}>0$, $\widetilde{V}\neq -\mu $, $\gamma \neq 0$, the
point $(0,\widetilde{V})$ is not a fixed point, even if $\gamma =0$; then
there is only one trajectory passing by the point $(0,\widetilde{V})$.
Indeed consider the Cauchy conditions $G(t_{0})=0,V(t_{0})=\widetilde{V}$.
There holds $G_{t}(t_{0})=-\mu -\widetilde{V}\neq 0$. Then one can define $t$
as a $C^{1}$ function $t=\psi (G)$ near $t_{0}$, and (\ref{SGV}) is
equivalent to%
\begin{equation*}
\frac{d}{dG}(\psi ,V)=\mathcal{F}(G,V)=(\frac{1}{f(G,V)},\psi (G)g(G,V))
\end{equation*}%
where $\mathcal{F}$ is of class $C^{1}$ with respect to $V$, and continuous
with respect to $G$, so the Cauchy Theorem can be applied.\medskip

In conclusion \textbf{the unique singular point is }$(0,\left\vert \mu
\right\vert )$\textbf{\ when }$\mu <0$\textbf{\ and }$p>2$.\medskip

$\bullet $ When $\gamma =0$, it is the fixed point $\mathbf{M}_{0}$, studied
at Lemma \ref{gammazero}.\medskip

$\bullet $ When $\gamma \neq 0$, $p>2$, then $(0,\left\vert \mu \right\vert
) $ is not a fixed point. Consider a trajectory $\mathcal{T}_{\mu }$ passing
by this point, and a solution $t\longmapsto U(t)$ passing by this point at
time $t_{0}$ (there exist at least one, from the Peano theorem). In that
case $G(t_{0})=G_{t}(t_{0})=0$, but $f$ is of class $C^{1}$, then $%
G_{tt}(t_{0})$ exists so $G$ is $C^{2}$ and $G_{tt}(t_{0})=-V_{t}(t_{0})=-%
\gamma (q+1-p)\left\vert \mu \right\vert \neq 0$. Then $G(t)\sim
_{t\longrightarrow t_{0}}-c\gamma (t-t_{0})^{2}$, with $c=\frac{%
(q+1-p)\left\vert \mu \right\vert }{2}$ thus $G$\text{\text{ has the sign of 
}}$-\gamma ;$ and $\mathcal{T}_{\mu }$ is tangent to the axis $\left\{
G=0\right\} $ at this point We do not know if $\mathcal{T}_{\mu }$ is unique.
\end{remark}

\subsubsection{Behaviour near the fixed points $\mathbf{A}_{1}$,$\mathbf{A}%
_{2}$}

Next we study the fixed points $A_{i}:$

\begin{lemma}
\label{fixai}Suppose that $\mu \geq \mu _{0}$. Then the points $%
A_{i}=(G_{i},0)$ are well defined (confounded when $\mu =\mu _{0}$), and the
associated eigenvalues are 
\begin{equation}
\rho _{i}=pS_{i}-N+p,\qquad \eta _{i}=(q+1-p)(\gamma -S_{i}).  \label{val}
\end{equation}

As a consequence, when $\mu >\mu _{0}$, if $\gamma >S_{1}$, then $\mathbf{A}%
_{1}$ is a source, and $\mathbf{A}_{2}$ is a saddle point; if $\gamma <S_{2}$%
, $\mathbf{A}_{1}$ is a saddle point and $\mathbf{A}_{2}$ is a sink. If $%
S_{2}<\gamma <S_{1}$, $\mathbf{A}_{1}$ and $\mathbf{A}_{2}$ are saddle
points. In those cases, corresponding eigenvectors to $\rho _{i}$,$\eta _{i}$
are 
\begin{equation}
\overrightarrow{u_{i}}=(1,0),\qquad \overrightarrow{v_{i}}%
=(1,pS_{i}-N+p-\eta _{i}),  \label{vel}
\end{equation}%
and the slope of $\overrightarrow{v_{i}}$ is $m_{i}=pS_{i}-N+p-\eta
_{i}=(q+1)S_{i}-(N+\theta )$.
\end{lemma}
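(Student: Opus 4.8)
The plan is to linearize the system (\ref{GVF}) at each of the two fixed points lying on the axis $\{V=0\}$ and then read off the signs of the eigenvalues from the position of $\gamma$ relative to $S_1,S_2$. The points themselves are already under control: $S_1,S_2$ are the roots of $\varphi (S)=0$, which exist precisely when $\mu\ge\mu_0$ and coincide (both equal to $\tfrac{N-p}{p}$) precisely when $\mu=\mu_0$; since $\varphi(S)=0$ is equivalent to $F(G)=0$ with $G=|S|^{p-2}S$ (Remark \ref{rema}), each $\mathbf{A}_i=(G_i,0)$, $G_i=|S_i|^{p-2}S_i$, makes the first line of (\ref{GVF}) vanish ($G_t=F(G_i)=0$) and the second vanish trivially (factor $V$), so it is indeed a fixed point.

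I would then compute the Jacobian. Writing $G_t=f(G,V)=F(G)-V$ and $V_t=g(G,V)=(q+1-p)V(\gamma-|G|^{\frac{2-p}{p-1}}G)$, at $\mathbf{A}_i$ one has $\partial_V f=-1$, while $\partial_G g(\mathbf{A}_i)=0$ because $g$ carries the factor $V$; thus the Jacobian is upper triangular and its eigenvalues are the diagonal entries $\partial_G f(\mathbf{A}_i)=F'(G_i)$ and $\partial_V g(\mathbf{A}_i)=(q+1-p)(\gamma-|G_i|^{\frac{2-p}{p-1}}G_i)$. Since $|G_i|=|S_i|^{p-1}$, direct substitution gives $|G_i|^{\frac{2-p}{p-1}}G_i=S_i$ and $F'(G_i)=p\,|G_i|^{\frac{1}{p-1}}\mathrm{sgn}(G_i)-(N-p)=pS_i-(N-p)=\rho_i$, so the second eigenvalue is $\eta_i=(q+1-p)(\gamma-S_i)$, as claimed in (\ref{val}). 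An eigenvector for $\rho_i$ is $\overrightarrow{u_i}=(1,0)$; for $\eta_i$, solving $(\rho_i-\eta_i)w_1-w_2=0$ yields $\overrightarrow{v_i}=(1,\rho_i-\eta_i)=(1,pS_i-N+p-\eta_i)$, and substituting $(q+1-p)\gamma=p+\theta$ collapses its slope $m_i=\rho_i-\eta_i$ to $(q+1)S_i-(N+\theta)$.

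For the classification I would use that when $\mu>\mu_0$ the inequalities $S_2<\tfrac{N-p}{p}<S_1$ are strict, so $\rho_1=pS_1-(N-p)>0$ and $\rho_2=pS_2-(N-p)<0$, while the sign of $\eta_i$ is exactly that of $\gamma-S_i$ (recall $q+1-p>0$). If $\gamma>S_1$, then $\eta_1>0$ and also $\eta_2>0$ (since $S_1>S_2$), so both eigenvalues at $\mathbf{A}_1$ are positive, i.e. $\mathbf{A}_1$ is a source, whereas at $\mathbf{A}_2$ the eigenvalues have opposite signs, i.e. $\mathbf{A}_2$ is a saddle. If $\gamma<S_2$, then $\eta_2<0$ and $\eta_1<0$, so $\mathbf{A}_2$ is a sink and $\mathbf{A}_1$ a saddle. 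If $S_2<\gamma<S_1$, then $\eta_1<0<\rho_1$ and $\rho_2<0<\eta_2$, so both are saddles. The eigenvectors are $\overrightarrow{u_i}$, $\overrightarrow{v_i}$ from (\ref{vel}) and the slope identity is as above.

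The one genuine subtlety, and the point I expect to be the main obstacle, is that $g$ is only continuous (not $C^1$) at $G=0$ when $p>2$: there $\partial_G g$ fails to be continuous, so the linearization above is legitimate only when $G_i\ne 0$, i.e. $S_i\ne 0$. The value $S_i=0$ occurs only for $\mu=0$, and in that case (with $p>2$) the corresponding fixed point must be treated directly through the invariant regions, in the spirit of Remark \ref{singu}, rather than by linearization; the eigenvalue formulas still give the correct exponents and the source/sink/saddle dichotomy is unchanged, so the statement is unaffected.
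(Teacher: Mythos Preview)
Your proof is correct and follows essentially the same route as the paper: linearize (\ref{GVF}) at $\mathbf{A}_i$, observe that the Jacobian is upper triangular with diagonal entries $F'(G_i)=pS_i-(N-p)$ and $(q+1-p)(\gamma-S_i)$, and read off eigenvectors and the classification from the signs. Your added remark about the failure of $C^1$ regularity at $G_i=0$ when $p>2$ and $\mu=0$ is a caveat the paper does not make explicit in this lemma (it simply asserts the linearization is ``available for any $p>1$''), though it handles the point later in the proof of Theorem~\ref{H3rad}(iv).
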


\begin{proof}
Setting $G=G_{i}+\overline{G}$, $(p-1)\left\vert G_{i}+\overline{G}%
\right\vert ^{\frac{p}{p-1}}\simeq (p-1)\left\vert G_{i}\right\vert ^{\frac{p%
}{p-1}}+p\left\vert G_{i}\right\vert ^{\frac{2-p}{p-1}}G_{i}\overline{G}$
the linearized system at $A_{i}$ (here available for any $p>1$, and any $\mu
\geq \mu _{0}$) is%
\begin{equation}
\left\{ 
\begin{array}{ccccc}
G_{t} & = & ((p\left\vert G_{i}\right\vert ^{\frac{2-p}{p-1}}G_{i}-(N-p))%
\overline{G}-V & = & (pS_{i}-N+p)\overline{G}-V, \\ 
V_{t} & = & (q+1-p)(\gamma -\left\vert G_{i}\right\vert ^{\frac{2-p}{p-1}%
}G_{i})V & = & (q+1-p)(\gamma -S_{i})V,%
\end{array}%
\right.  \label{line}
\end{equation}%
and the eigenvalues $\lambda $ are given by 
\begin{equation*}
\det \left( 
\begin{array}{cc}
pS_{i}-N+p-\lambda & -1 \\ 
0 & (q+1-p)(\gamma -S_{i})-\lambda%
\end{array}%
\right) =0,
\end{equation*}%
so we get (\ref{val}) and (\ref{vel}).
\end{proof}

\begin{remark}
A point of discussion, first encountered when $\mu =0$ in \cite{BiGa} is the
repartition of the trajectories in case of a source or a sink, that means
two eigenvalues have the same sign. First suppose that $\rho _{1}>0$ and $%
\eta _{1}>0$. We get 
\begin{equation*}
\rho _{1}-\eta _{1}=pS_{1}-N+p-(q+1-p)(\gamma -S_{1})=(q+1)S_{1}-(N+\theta
)=m_{1}.
\end{equation*}%
If $\eta _{1}>\rho _{1}>0$ and $m_{1}$ $\neq 0$ we know that there exists
only one trajectory tangent to the vector $\overrightarrow{v_{1}}$, with
negative slope $m_{1}$, and all the other are tangent to $\overrightarrow{%
u_{1}}=(1,0)$, that means to the axis $\left\{ V=0\right\} $. On the
contrary, if $\rho _{1}>\eta _{1}>0$, there is only one trajectory tangent
to $\overrightarrow{u_{1}}$, and it is not admissible. It means that all the
trajectories are tangent to $\overrightarrow{v_{1}}$, with positive slope.
It is the case when $q>\widetilde{\mathbf{q}_{1}}$, where 
\begin{equation*}
\widetilde{\mathbf{q}_{1}}=\frac{N+\theta }{S_{1}}-1=\frac{\theta +p}{S_{1}}+%
\frac{N-p}{S_{1}}-1=\mathbf{q}_{1}-p+\frac{N-p}{S_{1}}<\mathbf{q}_{1}.
\end{equation*}%
\bigskip In the same way, consider the case where $\rho _{2}<0$ and $\eta
_{2}<0$. Then $\rho _{2}-\eta _{2}=m_{2}$. If $\eta _{1}>\rho _{1}>0$, there
exists only one trajectory tangent to the vector $\overrightarrow{v_{1}}$,
with negative slope, and all the other are tangent to $\left\{ V=0\right\} $
On the contrary, if $\rho _{1}>\eta _{1}>0$, there is only one trajectory
tangent to $\left\{ V=0\right\} $, and it is not admissible. It means that
all the trajectories are tangent to $\overrightarrow{v_{1}}$, with positive
slope. It is the case when $(q+1)S_{1}>N+\theta $. If $\mu <0$ it is
equivalent to $q>\widetilde{\mathbf{q}_{2}}$, where%
\begin{equation*}
\widetilde{\mathbf{q}_{2}}=\mathbf{q}_{2}-p+\frac{N-p}{S_{2}}>\mathbf{q}_{2}.
\end{equation*}%
If $\mu >0$ it is only possible if $N+\theta <0$, and $q<\widetilde{\mathbf{q%
}_{2}}$.
\end{remark}

\subsection{Proofs and comments}

We first consider the case ($\mathcal{H}_{1}$). In that case $\gamma \geq
S_{1}>0$ so there always hold $\theta +p>0$.$\medskip $\pagebreak

\begin{figure}[!h]
\begin{center}
 \includegraphics[keepaspectratio, width=11cm]{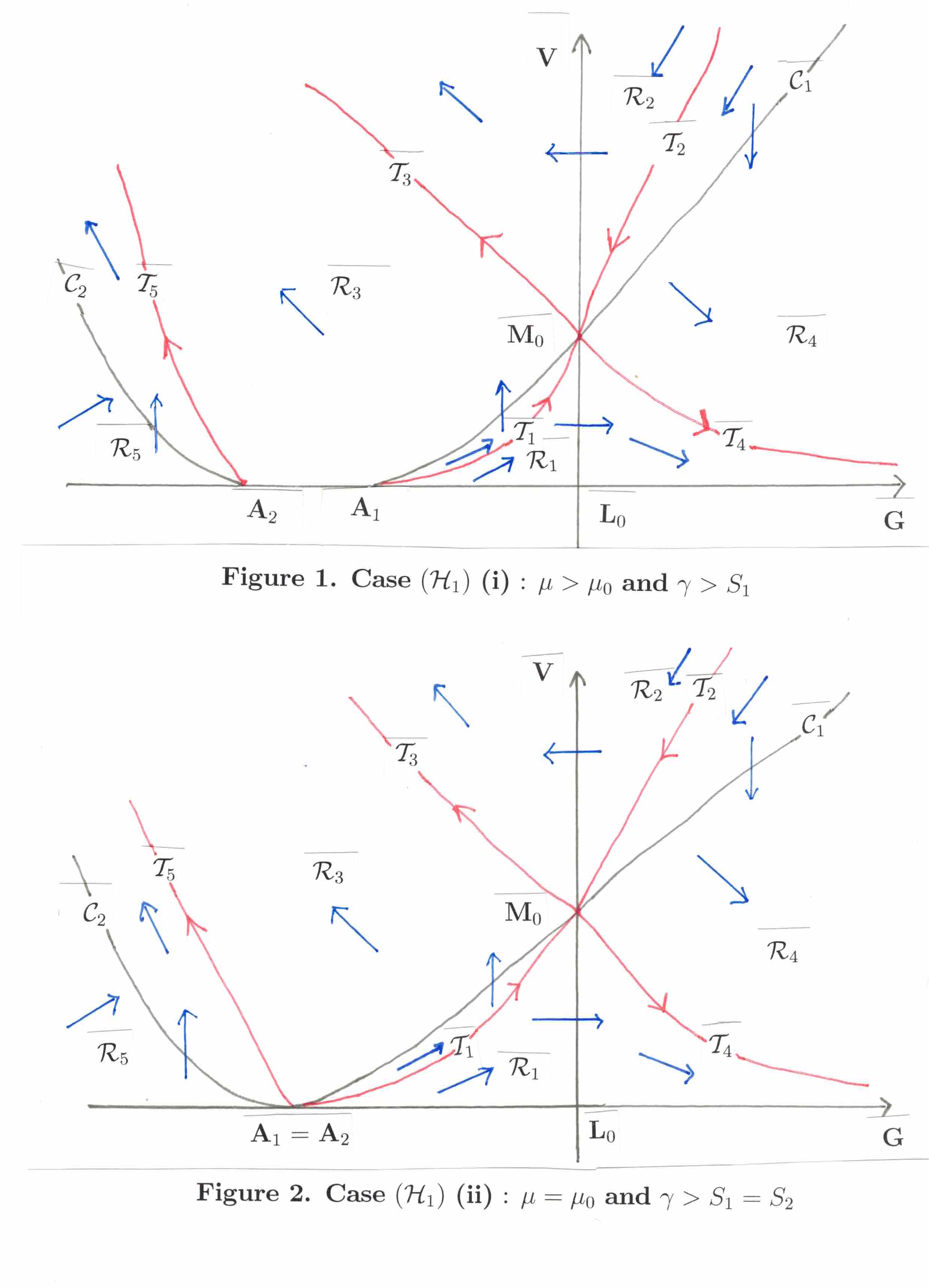}
 \end{center}
 \end{figure}

\begin{proof}[Proof of Theorem \protect\ref{H1rad}]
\textbf{Case (}$\mathcal{H}_{1}$\textbf{): }$\mu \geq \mu _{0}$\textbf{\ and 
}$\gamma \geq S_{1}.$ In that case the point $\mathbf{M}_{0}$ exists, with
the four trajectories $\mathcal{T}_{i},i=1,..,4$ associated.$\medskip $

(i) We first assume that $\mu >\mu _{0}$. We define the regions (see Figure
1) 
\begin{eqnarray*}
\mathcal{R}_{1} &\mathcal{=}&\left\{ V<F(G),G_{1}<G<G_{0}\right\} ,\qquad 
\mathcal{R}_{2}\mathcal{=}\left\{ V>F(G),G>G_{0}\right\} , \\
\mathcal{R}_{3} &=&\left\{ V>F(G),G<G_{0}\right\} ,\quad \mathcal{R}%
_{4}=\left\{ V<F(G),G>G_{0}\right\} ,\quad \mathcal{R}_{5}\mathcal{=}\left\{
V<F(G),G<G_{2}\right\} .
\end{eqnarray*}

$\bullet $ The region $\mathcal{R}_{1}$ is negatively invariant. Moreover
the slope of $F_{1}$ at $\mathbf{M}_{0}$ is 
\begin{equation*}
m=pG_{0}^{\frac{1}{p-1}}-N+p=p\gamma -N+p=\lambda _{1}+\lambda _{2}<\lambda
_{2},
\end{equation*}%
then $\mathcal{T}_{1}$ lies in $\mathcal{R}_{1}$ as $t\rightarrow \infty $,
then it stays in it, and necessarily converges to $\mathbf{A}_{1}$ (even in
the case $\rho _{1}=\eta _{1}$). Therefore there is a unique trajectory,
namely $\mathcal{T}_{1}$, joining $\mathbf{A}_{1}$ to $\mathbf{M}_{0}$, then 
$u$ satisfies (\ref{aa}).$\medskip $

$\bullet $ The trajectory $\mathcal{T}_{3}$ starts from $\mathbf{M}_{0}$ in
the region $\mathcal{R}_{3};$ and $\mathcal{R}_{3}$ is positively invariant:
indeed the vector field is entering $\mathcal{R}_{3}$ at any point or the
curve $\mathcal{C}_{2}$, except possibly at the singular point $%
(0,\left\vert \mu \right\vert )$ when $\mu <0$ and $p>2$. But no trajectory
with a point in $\mathcal{R}_{3}$ can pass by this point, since such
trajectories satisfy $G<0$, from then $\mathcal{T}_{3}$ stays in $\overline{%
\mathcal{R}_{3}}$. Consider any corresponding solution $u(r)=U(t)$ defined
on an maximal interval $(-\infty ,T)$. Suppose that $V$ is bounded$;$ then $%
F(G)$ is bounded, so $\left\vert G\right\vert $ is bounded, then $(G,V)$
converges to some $(\ell ,\Lambda )$, then $(G_{t},V_{t})$ has a finite
limit. It cannot happen that $(\ell ,\Lambda )=(0,-\mu )$, because we have
seen that the trajectories passing by this point are contained in the set $%
G<0$. Then $T=\infty $, thus $(G,V)$ converges to a fixed point; it is
impossible, since there is no other fixed point in $\overline{\mathcal{R}_{3}%
}$. Then $T$ is finite, and $\lim_{t\longrightarrow T}V=\infty ;$ if $%
\lambda =\lim_{t\longrightarrow T}G$ is finite, and $\lim_{t\longrightarrow
T}\frac{V_{t}}{V}=c>0$, which contradicts $\lim_{t=T}V=\infty $. Then $%
\lim_{t\longrightarrow T}G=-\infty $. It implies that the function $u$, such
that $u\sim _{r\longrightarrow 0}u^{\ast }$decreasing as $r\rightarrow 0$,
has a minimum point, and then is increasing to $\infty $ as $r\rightarrow
e^{T}$. Namely we obtain a solution $u$, satisfying (\ref{dd}), often called
large solution.$\medskip $

Moreover all the trajectories with one point in $\mathcal{R}_{3}$ present
the same type of behaviour, corresponding to large solutions.\medskip

$\bullet $ The trajectory $\mathcal{T}_{4}$ starts from $\mathbf{M}_{0}$ as $%
t\rightarrow -\infty $, in the region $\mathcal{R}_{4}$, where $%
V_{t}<0<G_{t} $ which is positively invariant, so it stays in it. Then $V$
is bounded. Here also it is impossible that $G$ to be bounded, since there
is no other fixed point in $\overline{\mathcal{W}}$. Consider any
corresponding solution $u(r)=U(t)$ defined on an maximal interval $(-\infty
,T)$.Then $\lim_{t\longrightarrow T}G=\infty ;$ and $\lim_{t\longrightarrow
T}V=l\geq 0$. Then $G_{t}\sim _{t\longrightarrow T}(p-1)G^{\frac{p}{p-1}}$,
thus $(G^{-\frac{1}{p-1}})_{t}\sim _{t\longrightarrow T}-1$, which is
impossible if $T=\infty $ since $\lim_{t\longrightarrow T}G^{-\frac{1}{p-1}%
}=0$. Then $T$ is finite and $G^{-\frac{1}{p-1}}=S=-\frac{U_{t}}{U}\sim
_{t\longrightarrow T}(T-t)^{-1};$ if $l>0$, $l\sim _{t\longrightarrow
T}e^{T(\theta +p)}U^{q+1-p}$, then $U$ has a positive limit, and this is
contradictory because $(T-t)^{-1}$ is not integrable. Then $%
\lim_{t\longrightarrow T}V=0$, that means $U$ is vanishing at $T$, and $u$
satisfies (\ref{cc}). All the trajectories with one point in the region $%
\mathcal{R}_{4}$ have the same type of behaviour, so they lead to solutions $%
U$ which are decreasing and vanish in finite time.\medskip

$\bullet $ The trajectory $\mathcal{T}_{2}$ ends to the point $\mathbf{M}%
_{0} $, in the region $\mathcal{R}_{2}$. Indeed we have seen that the slope
of the function $F_{1}$ at $\mathbf{M}_{0}$ is smaller than the slope of $%
\mathcal{T}_{2}$. This region is negatively invariant, so $\mathcal{T}_{2}$
stays in it. As in the case of region $\mathcal{R}_{3}$, we obtain that $V$
cannot be bounded, and that the trajectory is defined in a maximal interval $%
(T,\infty )$, such that $T$ is finite, and $\lim_{t=T}V=\infty
=\lim_{t\longrightarrow T}G$. The corresponding functions $u$ are decreasing
and satisfy $u\sim _{r\longrightarrow \infty }u^{\ast }$ and $%
\lim_{r\longrightarrow R}u=\infty $, where $R=e^{T}$, satisfying (\ref{gg}%
).\medskip

$\bullet $ There exists two types of trajectories with one point in $%
\mathcal{R}_{2}$:\medskip

(a) the trajectories with one point above $\mathcal{T}_{3}$ cross the line $%
\mathcal{L}$ and then pass into the region $\mathcal{R}_{3}$, where we still
have established the behaviour. They correspond to solutions defined on a
finite maximal interval $(R_{1},R_{2})$, such that $\lim_{r\longrightarrow
R_{1}}u=\infty =\lim_{r\longrightarrow R_{2}}u=\infty $, with a minimum
point inside, satisfying 
\begin{equation}
\lim_{r\rightarrow R_{1}}u=\infty ,\text{ \qquad\ }\lim_{r\rightarrow
R_{2}}u=\infty ;\text{ }  \label{hh}
\end{equation}

(b) the trajectories with one point under $\mathcal{T}_{3}$ cross the curve $%
\mathcal{C}_{2}$ and pass into the region $\mathcal{R}_{4}$ where we have
established the behaviour. They correspond to solutions defined on a finite
maximal interval $(R_{1},R_{2})$, such that $\lim_{r\longrightarrow
R_{1}}u=\infty $ and vanishing at $R_{2}$, satisfying 
\begin{equation}
\lim_{r\rightarrow R_{1}}u=\infty ,\qquad \lim_{r\rightarrow R_{2}}u=0.
\label{mm}
\end{equation}

$\bullet $ All the trajectories with one point in $\mathcal{R}_{1}$ above $%
\mathcal{T}_{1}$ cross the curve $C_{2}$ and enter $\mathcal{R}_{3}$, with
the same behaviour as $\mathcal{T}_{3}$. The corresponding solutions $u$
satisfy (\ref{dd}). All the trajectories with one point in $\mathcal{R}_{1}$
under $\mathcal{T}_{1}$ enter $\mathcal{R}_{4}$ and stay under $\mathcal{T}%
_{4}$, and present the same behaviour, and the corresponding solutions $u$
satisfy (\ref{ee}).$\medskip $

$\bullet $ The point $\mathbf{A}_{2}$ is a saddle point, its eigenvalues are 
$\rho _{2}<0<\eta _{2}$, in $\mathbb{R\times }\left[ 0,\infty \right) $
there is precisely one trajectory $\mathcal{T}_{5}$ starting from $\mathbf{A}%
_{2}$ and directed by $\overrightarrow{v_{2}}$, with a negative slope, and
two trajectories, ending at $\mathbf{A}_{2}$ and directed by $%
\overrightarrow{u_{2}}=(1,0)$ not admissible, since that are contained in
the set $\left\{ V=0\right\} $. The slope of $F_{2}$ at point $\mathbf{A}%
_{2} $ equal to is $M_{2}=p\left\vert G_{1}\right\vert ^{\frac{p-2}{p-1}%
}G_{1}-N+p $ is greater than the slope $m_{2}$ of $\overrightarrow{v_{2}}$,
thus $\Theta _{1}$ starts in the region $\left\{ (G,V)\mid
V>F(G),G<G_{2}\right\} \mathcal{\ }$contained in $\mathcal{R}_{3}$. The
corresponding solutions satisfy (\ref{ff}), and by the scaling (\ref{scal}),
for any $k>0$ there exists a unique solution $u$ satisfying (\ref{ff}%
).\medskip

$\bullet $The region $\mathcal{R}_{5}$ is negatively invariant. The
trajectories with one point $(g,v)\in C_{2}$ pass from $\mathcal{R}_{5}$ to $%
\mathcal{R}_{3}$ and stay under $\mathcal{T}_{5}$, and the corresponding
solutions $u$ are defined on an interval $(R_{1},R_{2})$ satisfy 
\begin{equation}
\lim_{r\rightarrow R_{1}}u=0,\qquad \lim_{r\rightarrow R_{2}}u=\infty .
\label{hm}
\end{equation}%
Moreover either $g<0$, and then $G<0$, thus $u$ is increasing, or $g>0$,
then $u$ has a maximum and a minimum point. Or eventually when $p>2$, $\mu
<0 $, there can exist (possibly mulptiple) trajectories tangent to the axis $%
G=0 $ at the point $(0,-\mu )$, such that $u$ is nondecreasing, with an
inflexion point.\bigskip

(ii) Next we suppose $\mu =\mu _{0}$, $\gamma >S_{1}$ (see Figure 2). Here a
great part of the analysis of Theorem \ref{H1rad} is still available. The
difference is that the point $\mathbf{A}_{1}$ is critical: from (\ref{val})
the eigenvalues are $\rho _{1}=pS_{1}-N+p=0$ and $\eta _{1}=(q+1-p)(\gamma
-S_{1})>0$ and the eigenvectors are $\overrightarrow{u_{1}}=(1,0)$ for $\rho
_{1}$ and $\overrightarrow{v_{1}}=(1,-\eta _{1})$, with a negative slope.$%
\medskip $

$\bullet $ There still exists a unique trajectory $\mathcal{T}_{5}$,
associated to $\eta _{1},$ such that the correponding solutions $u$ have the
behaviour (\ref{dd}).$\medskip $

$\bullet $ There exists a global trajectory joining $\mathbf{A}_{1}$ to $%
\mathbf{M}_{0}$, and it is still unique: indeed $\gamma \neq 0$, then $%
\mathbf{M}_{0}$ is a saddle point, so this trajectory is precisely $\mathcal{%
T}_{1}$ from Lemma \ref{fix}. This trajectory is on the central manifold of $%
\mathbf{A}_{1}$, it converges to $\mathbf{A}_{1}$ as $t\rightarrow -\infty $%
, and it tangent to the axis $V=0$.\medskip

For giving a more precise behaviour, we proceed as in Lemma \ref{critic}.
Setting $G=G_{1}+\overline{G}$, there holds $\lim_{t\longrightarrow -\infty }%
\frac{V}{\overline{G}}=0$ and%
\begin{equation*}
\left\{ 
\begin{array}{ccc}
\overline{G}_{t} & = & F_{1}(G_{1}+\overline{G})-V, \\ 
V_{t} & = & (q+1-p)(\gamma -(G_{1}+\overline{G})^{\frac{1}{p-1}})V.%
\end{array}%
\right.
\end{equation*}%
Now 
\begin{equation*}
F_{1}(G_{1}+\overline{G})=F_{1}(G_{2})+F_{1}^{\prime }(G_{2})\overline{G}+%
\frac{1}{2}F_{1}^{\prime \prime }(G_{1}+\tau \overline{G})\overline{G}^{2}=%
\frac{p}{2(p-1)}(G_{1}+\tau \overline{G})^{\frac{2-p}{p-1}}\overline{G}^{2}
\end{equation*}%
for some $\tau \in \left( 0,1\right) ,$ thus $F_{1}(G_{1}+\overline{G})\sim
_{t\longrightarrow -\infty }c\overline{G}^{2}$ with $c=\frac{1}{2}\frac{p}{%
p-1}S_{1}^{2-p},$ and 
\begin{equation*}
\overline{G}_{t}=c\overline{G}^{2}(1+O(\overline{G}))-V.
\end{equation*}%
There holds $V_{t}\sim _{t\longrightarrow -\infty }a_{1}V$ with $%
a_{1}=(q+1-p)(\gamma -S_{1})>0$. Thus $V=O(e^{a_{1}t/2})$ as $t\rightarrow
-\infty $. And $\overline{G}_{t}\leq 2c\overline{G}^{2}$ , hence $\overline{G%
}\geq \frac{C^{\prime }}{\left\vert t\right\vert }$for $t$ small enough,
thus $V=o(\overline{G}^{3})$. Therefore $\overline{G}_{t}\sim
_{t\longrightarrow -\infty }c\overline{G}^{2}$. We deduce that $\overline{G}%
(t)\sim _{t\longrightarrow -\infty }\frac{1}{c\left\vert t\right\vert }$; in
turn $\overline{G}_{t}=c\overline{G}^{2}(1+O(\frac{1}{\left\vert
t\right\vert }))=c\overline{G}^{2}+O(\frac{1}{\left\vert t\right\vert ^{3}})$%
\textbf{\ }and finally by integration $\overline{G}(t)=\frac{1}{c\left\vert
t\right\vert }+O(\frac{\ln \left\vert t\right\vert }{\left\vert t\right\vert
^{2}})$; then 
\begin{equation*}
S=-\frac{U_{t}}{U}=S_{1}+\frac{S_{1}^{2-p}}{(p-1)c}\frac{1}{\left\vert
t\right\vert }+O(\frac{\ln \left\vert t\right\vert }{\left\vert t\right\vert
^{2}})=S_{1}-\frac{2}{p}\frac{1}{t}+O(\frac{\ln \left\vert t\right\vert }{%
\left\vert t\right\vert ^{2}}),
\end{equation*}%
then thus $\ln (Ue^{S_{1}t}\left\vert t\right\vert ^{-\frac{2}{p}})=O(\frac{%
\ln \left\vert t\right\vert }{\left\vert t\right\vert ^{2}})$, and $\frac{%
\ln \left\vert t\right\vert }{\left\vert t\right\vert ^{2}}$ is integrable,
then $Ue^{S_{1}t}\left\vert t\right\vert ^{-\frac{2}{p}}$ admits some limit $%
\ell >0$, which is precisely (\ref{aaa}); by the scaling (\ref{scal}), using
the fact that $\gamma \neq S_{1}$, for any $\ell >0$\ there exists a unique
solution $u$\ satisfying (\ref{aaa}). Note that there is an infinity of
trajectories on the central manifold, corresponding to solutions $u$\
satisfying (\ref{ee}).
\end{proof}

\begin{remark}
\label{loc} In the description of the radial solutions, we have obtained all
the global and all the local solutions near $0$ or $\infty $, corresponding
to all the trajectories converging to a fixed point, from Lemma \ref{conver}%
. Moreover our decription of the phase plane is more complete: we have
described the other trajectories, corresponding to maximal solutions in an
interval $(R_{1},R_{2})$, with $0<R_{1}<R_{2}<0$, showing the existence of
solutions satisfying (\ref{hh}), (\ref{mm}) or (\ref{hm}).
\end{remark}

Next we study the case ($\mathcal{H}_{2}$), where $\mu \geq \mu _{0}\quad $%
and $\gamma <S_{2}$. Here it can happen that $\gamma <0$, that means $\theta
+p<0$. It appears in particular when $\mu >0$, since $S_{1}<0$.

\begin{remark}
\label{Kelvin} Our idea is to deduce the case ($\mathcal{H}_{2}$) from ($%
\mathcal{H}_{1}$). In case $p=2$ it follows from the Kelvin transform $%
u(x)=\left\vert x\right\vert ^{2-N}v(\frac{x}{\left\vert x\right\vert ^{2}})$%
. Indeed it maps equation (\ref{casp2}) into 
\begin{equation*}
-\Delta v+\mu \frac{v}{\left\vert x\right\vert ^{2}}+\left\vert x\right\vert
^{\widetilde{\theta }}u^{q}=0
\end{equation*}%
where $\widetilde{\theta }=(N-2)q-(N+2+\theta ),$ then $\gamma =\frac{\theta
+2}{q-1}$ is replaced by $\widetilde{\gamma }=\frac{\widetilde{\theta }}{q-1}%
=N-2-\gamma $. In the radial case the inversion $x\longmapsto \frac{x}{%
\left\vert x\right\vert ^{2}}$ corresponds to a change of $t$ into $-t,$ and 
$\gamma >S_{2}\Longleftrightarrow \widetilde{\gamma }<S_{1}$ since $%
S_{1}+S_{2}=N-2,$ then the equation in $u$ satisfies ($\mathcal{H}_{2}$) if
and only if the equation in $v$ satisfies ($\mathcal{H}_{1}$). When $p\neq 2$
we cannot use this argument, but we replace the Kelvin tranform by suitable
symmetry properties of the phase plane to reduce the study.\bigskip
\end{remark}

\pagebreak

\begin{figure}[!h]
\begin{center}
 \includegraphics[keepaspectratio, width=11cm]{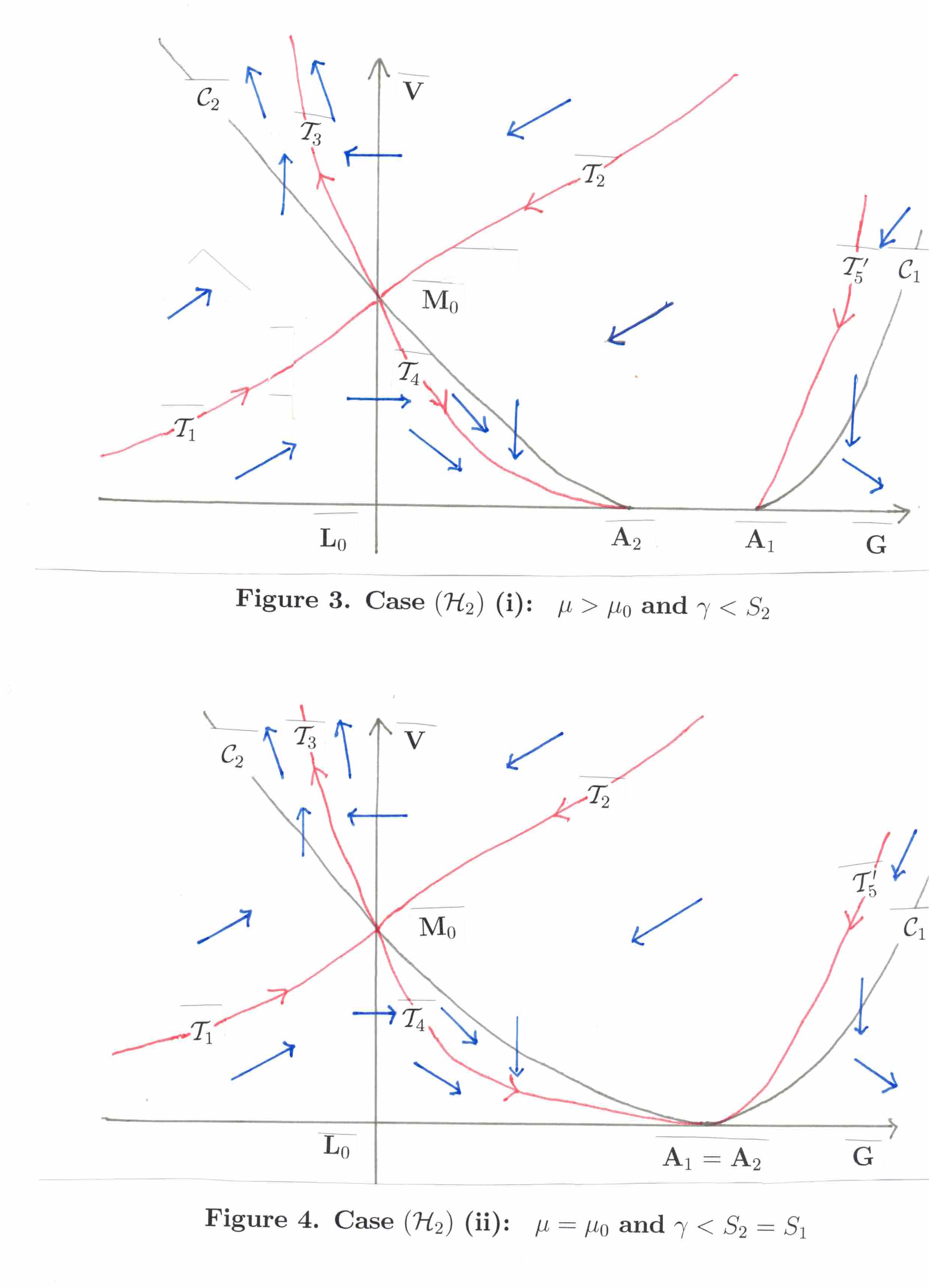}
 \end{center}
 \end{figure}

\begin{proof}[Proof of Theorem \protect\ref{H2rad}]
\textbf{Case (}$\mathcal{H}_{2}$\textbf{): }$\mu \geq \mu _{0}$\textbf{\ and 
}$\gamma <S_{2}$. Here the point $\mathbf{M}_{0}$ exists. \medskip

$\bullet $ When $\gamma \neq 0$ there exist precisely four trajectories $%
\mathcal{T}_{i}$, $i=1,..,4$ converging to $\mathbf{M}_{0}$ from Lemma \ref%
{fix} . The point $\mathbf{A}_{2}$ is a sink (if $\rho _{2}\neq \eta _{2}$), 
$\mathbf{A}_{1}$ is a saddle point when $\mu >\mu _{0}$. We note that 
\textbf{the phase plane has exactly the same shape} as the preceeding one,
after making $t\rightarrow -t$ and a transformation $G\longmapsto \varphi
(G) $ exchanging $G_{1}$ and $G_{2}$ (see Figure 3 for $\mu >\mu _{0}$ and
figure 4 for $\mu =\mu _{0}$). Thus as above we obtain the behaviour of the
four trajectories $\mathcal{T}_{i}$ and the existence of a unique trajectory 
$\mathcal{T}_{5}^{\prime }$ ending at $\mathbf{A}_{1}$ staying in the region 
$\left\{ (G,V)\mid V>F(G),G>G_{1}\right\} $. For simplicity we do not give
the detail of the proofs.\medskip

$\bullet $ The case $\gamma =0<S_{2}$ can happen when $\mu <0$. From Lemma %
\ref{gammazero}. When $p\leq 2$ there still exists precisely four
trajectories converging to $\mathbf{M}_{0}$ so we can conclude as above.
When $p>2,$ there exists at least a trajectory joining $\mathbf{A}_{1}=%
\mathbf{A}_{2}$ to $\mathbf{M}_{0}.$ of (\ref{pq}) satisfying This case is
delicate, because this trajectory could not be unique, since no
linearization is possible at $\mathbf{M}_{0},$ and the eigenvalues are $\rho
_{1}=0$ and $\rho _{2}=(q+1-p)(\gamma -S_{2})<0.$ Such trajectory correpond
to radial solutions $u$ of equation (\ref{pq}) satisfying (\ref{AAA}) with $%
\gamma =0,$ that is\label{att} 
\begin{equation*}
\lim_{r\rightarrow 0}u=\left\vert \mu \right\vert ,\qquad \lim_{r\rightarrow
\infty }r^{\frac{N-p}{p}}\left\vert \ln (r)\right\vert ^{\frac{2}{p}%
}u(r)=\ell >0.
\end{equation*}%
Here we show directly the uniqueness for given $\ell >0$, using a main
argument which will be used in the nonradial case: let $u$ and $\widetilde{u}
$ be two solutions with such a behaviour. then for any $\varepsilon >0,$ the
function $(1+\varepsilon )\widetilde{u}$ is a \textbf{supersolution }of (\ref%
{pq}) , greater than $u$ near $0$ and near $\infty .$ Then $(1+\varepsilon )%
\widetilde{u}\geq u,$ from Corollary \ref{DG}. As $\varepsilon
\longrightarrow 0$ we get $\widetilde{u}\geq u$ and then $\widetilde{u}=u.$ %
\label{dat}\medskip \medskip
\end{proof}

\begin{remark}
When $\gamma \neq 0$, the sign of $\gamma $\ is not apparent in our
conclusions. However, it appears in the behaviours of the different functions%
\textbf{\ }$u$.Recall that when $\mu >0$, then $S_{2}<0$, then $\gamma
<S_{2} $ implies $\gamma <0$. For example, consider the solutions satisfying
(\ref{AA}): when $0<\gamma <S_{2}$ , implying $\mu <0$, they are decreasing
from $\infty $ to $0$. When $\gamma <0<S_{2}$, they are increasing from $0$
to a maximum point, and then decreasing to \ $0$. When $\gamma =0<S_{2}$,
they are decreasing from $a^{\ast }$ to $0$.$\ $When $\gamma <S_{2}<0$ ,
implying $\mu >0$, they are increasing from $0$ to $\infty $. When $\gamma
<S_{2}=0$, implying $\mu =0$, they are increasing from $0$ to a
constant.\medskip \medskip
\end{remark}

\begin{remark}
Under the assumptions of Theorem \ref{H1rad}, As in the proof of Theorem \ref%
{H2rad}, there also exist solutions in an interval $(R_{1},R_{2})$, such
that respectively $\lim_{r\rightarrow R_{1}}u=\infty $,\ $\lim_{r\rightarrow
R_{2}}u=\infty $, or $\lim_{r\rightarrow R_{1}}u=0$, $\lim_{r\rightarrow
R_{2}}u=\infty $, or $\lim_{r\rightarrow R_{1}}u=\infty $, $%
\lim_{r\rightarrow R_{2}}u=0$.\medskip \medskip
\end{remark}

Next we consider the case ($\mathcal{H}_{3}$):\pagebreak

\begin{figure}[!h]
\begin{center}
 \includegraphics[keepaspectratio, width=11cm]{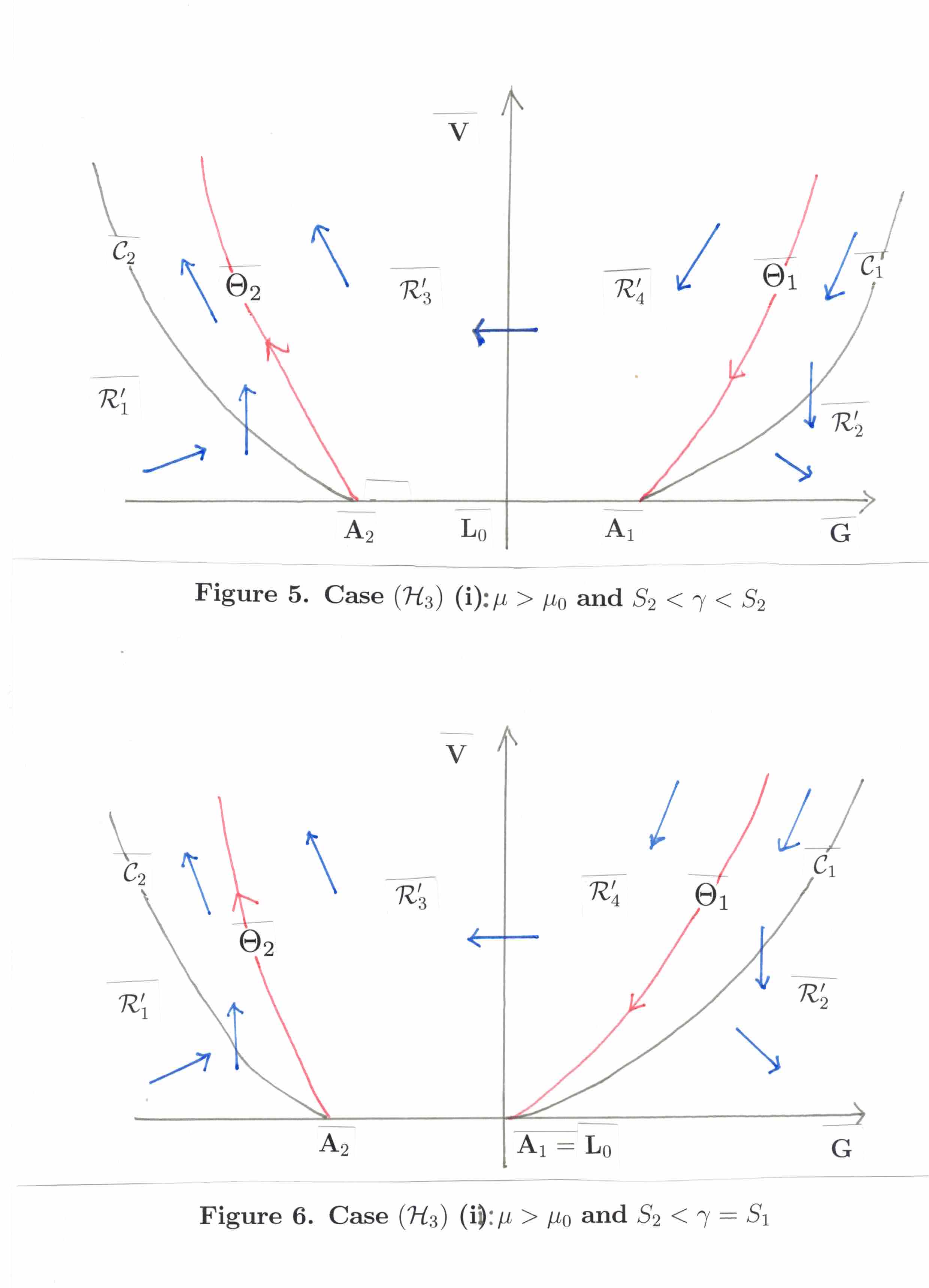}
 \end{center}
 \end{figure}
\medskip

\begin{proof}[Proof of Theorem \protect\ref{H3rad}]
\textbf{Case (}$\mathcal{H}_{3}$\textbf{): }$\mu >\mu _{0}$\textbf{\ and }$%
S_{2}\leq \gamma \leq S_{1}.$ Here $\mathbf{M}_{0}$ does not exist. We
consider the regions 
\begin{eqnarray*}
\mathcal{R}_{1}^{\prime } &\mathcal{=}&\left\{ V<F(G),G<G_{2}\right\}
,\qquad \mathcal{R}_{2}^{\prime }\mathcal{=}\left\{ V>F(G),G>G_{1}\right\} ,
\\
\mathcal{R}_{3}^{\prime } &\mathcal{=}&\left\{ V>F(G),G<G_{0}\right\}
,\qquad \mathcal{R}_{4}^{\prime }=\left\{ V<F(G),G>G_{0}\right\} .
\end{eqnarray*}

(i) We first assume that $S_{2}<\gamma <S_{1}$ (see Figure 5).\medskip

$\bullet $ The points $\mathbf{A}_{1}$ and $\mathbf{A}_{2}$ are saddle
points. For $i=1,2$, there exists a unique admissible trajectory, of
direction $\overrightarrow{v_{i}}$. the slope of $\overrightarrow{v_{2}}$,
eigenvector for $\eta _{2}>0$ is negative and the slope of $\overrightarrow{%
v_{1}}$ eigenvector for $\eta _{1}<0$ is positive. There is precisely one
trajectory $\Theta _{2}$ starting from $\mathbf{A}_{2}$ and one trajectory $%
\Theta _{1}$ ending in $\mathbf{A}_{1}$. And the slope of $\overrightarrow{%
v_{2}}$ is less than the slope of $F_{2}$ at $G_{2}$, and the slope of $%
\overrightarrow{v_{1}}$ is positive, greater than the slope of $F_{1}$ at $%
G_{1}$, so the two trajectories lie in the region 
\begin{equation*}
\left\{ V>F(G)\right\} =\mathcal{R}_{3}^{\prime }\cup \mathcal{R}%
_{4}^{\prime }\cup \mathcal{L},
\end{equation*}%
where $G_{t}<0$. The field on $\mathcal{C}_{1}$ is directed by $(0,-1)$ and
the field on $\mathcal{C}_{2}$ is directed by $(0,1)$. In $\mathcal{R}%
_{3}^{\prime }$,$\mathcal{R}_{4}^{\prime }$, there holds respectively $%
V_{t}>0$, and $V_{t}<0$, and $\mathcal{R}_{3}$ is positively invariant, and $%
\mathcal{R}_{4}^{\prime }\mathcal{\ }$is negatively invariant. Then $\Theta
_{2}$ stays in $\mathcal{R}_{3}^{\prime }$ and $\Theta _{1}$ in $\mathcal{R}%
_{4}^{\prime }$. As in case ($\mathcal{H}_{1}$), we get that the
corresponding solutions are local, respectively on $(0,R)$ and $(R^{\prime
},\infty )$, and satisfying (\ref{nn}) and (\ref{rr}).

$\bullet $ The solutions $u$ associated to other trajectories with one point
above $\Theta _{1}$ and $\Theta _{2}$ satisfy%
\begin{equation}
\lim_{r\rightarrow R_{1}}u=\infty ,\qquad \text{ }\lim_{r\rightarrow
R_{2}}u=\infty .  \label{tt}
\end{equation}

$\bullet $ The trajectories with a point in $\mathcal{R}_{3}^{\prime }$ and
under $\Theta _{2}$ have crossed the curve $\mathcal{C}_{2}$ and are issued
of $\mathcal{R}_{1}^{\prime }$, and the associated solutions $u$ satisfy 
\begin{equation}
\lim_{r\rightarrow R_{1}}u=0,\qquad \text{ }\lim_{r\rightarrow
R_{2}}u=\infty .  \label{yy}
\end{equation}

$\bullet $ The trajectories with one point under $\mathcal{C}_{1}$ cross $%
\mathcal{C}_{1}$ and then stay in $\mathcal{R}_{2}^{\prime }$, and the
solutions $u$ satisfy 
\begin{equation}
\lim_{r\rightarrow R_{1}}u=\infty ,\qquad \text{ }\lim_{r\rightarrow
R_{2}}u=0,  \label{vv}
\end{equation}

(ii) Next we assume $\mu >\mu _{0}$ and $\gamma =S_{1}$, hence $\gamma >0$
(see Figure 6). There still exists a unique trajectory $\Theta _{2}$
starting from $\mathbf{A}_{2}$ as above. At the point $\mathbf{A}_{1}$, the
eigenvalues are $\rho _{1}=pS_{2}-N+p>0$, and $\eta _{1}=(q+1-p)(\gamma
-S_{1})=0$. There still exists a unique trajectory, corresponding to $\rho
_{1}$, not admissible. There exists at least a trajectory $\mathcal{T}$ on
the central manifold, directed by the eigenvector $\overrightarrow{v_{1}}%
=(1,pS_{1}-N+p)$, which has a positive slope $m_{1};$ but a priori it can
converge to $\mathbf{A}_{1}$ as $t\rightarrow \infty $ or as $t\rightarrow
-\infty $. And this slope is equal to the slope of $F_{1}$, that is $%
M_{1}=p\left\vert G_{1}\right\vert ^{\frac{p-2}{p-1}}G_{1}-N+p$. Then $%
\mathcal{T}$ converges to $\mathbf{A}_{1}$ necessarily in the region $%
\mathcal{R}_{4}^{\prime }$ as $t\rightarrow \infty $. Setting $G=G_{1}+%
\overline{G}$, system (\ref{SGV}) can be written under the form%
\begin{equation*}
\left\{ 
\begin{array}{ccc}
\overline{G}_{t} & = & (pG_{1}^{\frac{1}{p-1}}-N+p)\overline{G}(1+o(1))-V,
\\ 
V_{t} & = & -\frac{q+1-p}{p-1}G_{1}^{\frac{2-p}{p-1}}\overline{G}V(1+o(1)),%
\end{array}%
\right.
\end{equation*}%
and $\frac{V}{\overline{G}}=m_{1}(1+o(1))$; then 
\begin{equation*}
V_{t}\sim _{t\longrightarrow \infty }-\frac{q+1-p}{p-1}\frac{G_{1}^{\frac{2-p%
}{p-1}}}{m_{1}}V^{2}=-V^{2}/b_{1},
\end{equation*}%
with $b_{1}=\frac{(p-1)(pS_{1}-N+p)S_{1}^{p-2}}{q+1-p}$. By integration we
get that $V=r^{\theta +p}u^{q+1-p}\sim _{t\longrightarrow \infty }\frac{b_{1}%
}{t}$, that means 
\begin{equation*}
u\thicksim _{r\longrightarrow \infty }r^{-S_{1}}(b_{1}/\ln r)^{\frac{1}{q+1-p%
}},
\end{equation*}%
so we get (\ref{rrr}) in any case. Our description of the solutions shows
that there is no global solution.\medskip

(iii) Next assume $\mu >\mu _{0}$ and $\gamma =S_{2}\neq 0$, then $\gamma >0$
if $\mu <0$, or $\gamma <0$ if $\mu >0$. Then $G_{1}$ is replaced by $G_{2}$
and $t$ by $-t$. Since $G_{2}\neq 0$, the result is similar and we get (\ref%
{nnn}).\medskip

(iv) Assume $\gamma =S_{2}=0$, that means $\mu =0$, and $p+\theta =0$. As we
mentioned in the proof of Lemma \ref{fixai}, the linearization (\ref{line})
at $\mathbf{A}_{2}=(0,0)$ is still valid with $\gamma =0$, even if $p>2$,
and the eigenvalues given by $\rho _{2}=pS_{2}-N+p<0$ and $\eta _{2}=0$. The
trajectory corresponding to $\rho _{2}$ is still nonadmissible. There exists
a trajectory $\mathcal{T}$ on the central manifold, directed by the
eigenvector $\overrightarrow{v_{2}}=(1,-(N-p))$, then $\frac{V}{G}\sim
-(N-p) $. And $\mathcal{T}$ converges to $\mathbf{A}_{2}$ necessarily in the
region $\mathcal{R}_{3}^{\prime }$ as $t\rightarrow -\infty ;$ moreover 
\begin{equation*}
V_{t}=-(q+1-p)V\left\vert G\right\vert ^{\frac{2-p}{p-1}}G=(q+1-p)V\left%
\vert G\right\vert ^{\frac{1}{p-1}}\sim _{t\longrightarrow -\infty
}(q+1-p)(N-p)^{-\frac{1}{p-1}}V^{\frac{p}{p-1}}.
\end{equation*}%
By integration, we obtain 
\begin{equation*}
V=u^{q+1-p}\sim _{t\longrightarrow -\infty }(-\frac{(q+1-p)(N-p)^{-\frac{1}{%
p-1}}}{p-1}t)^{1-p},
\end{equation*}%
thus (\ref{nno}) follows.\bigskip
\end{proof}

Finally we study the doubly critical case ($\mathcal{H}_{4}$), and the case (%
$\mathcal{H}_{5}$) where $\mu <\mu _{0}.$\pagebreak
\begin{figure}[!h]
\begin{center}
 \includegraphics[keepaspectratio, width=11cm]{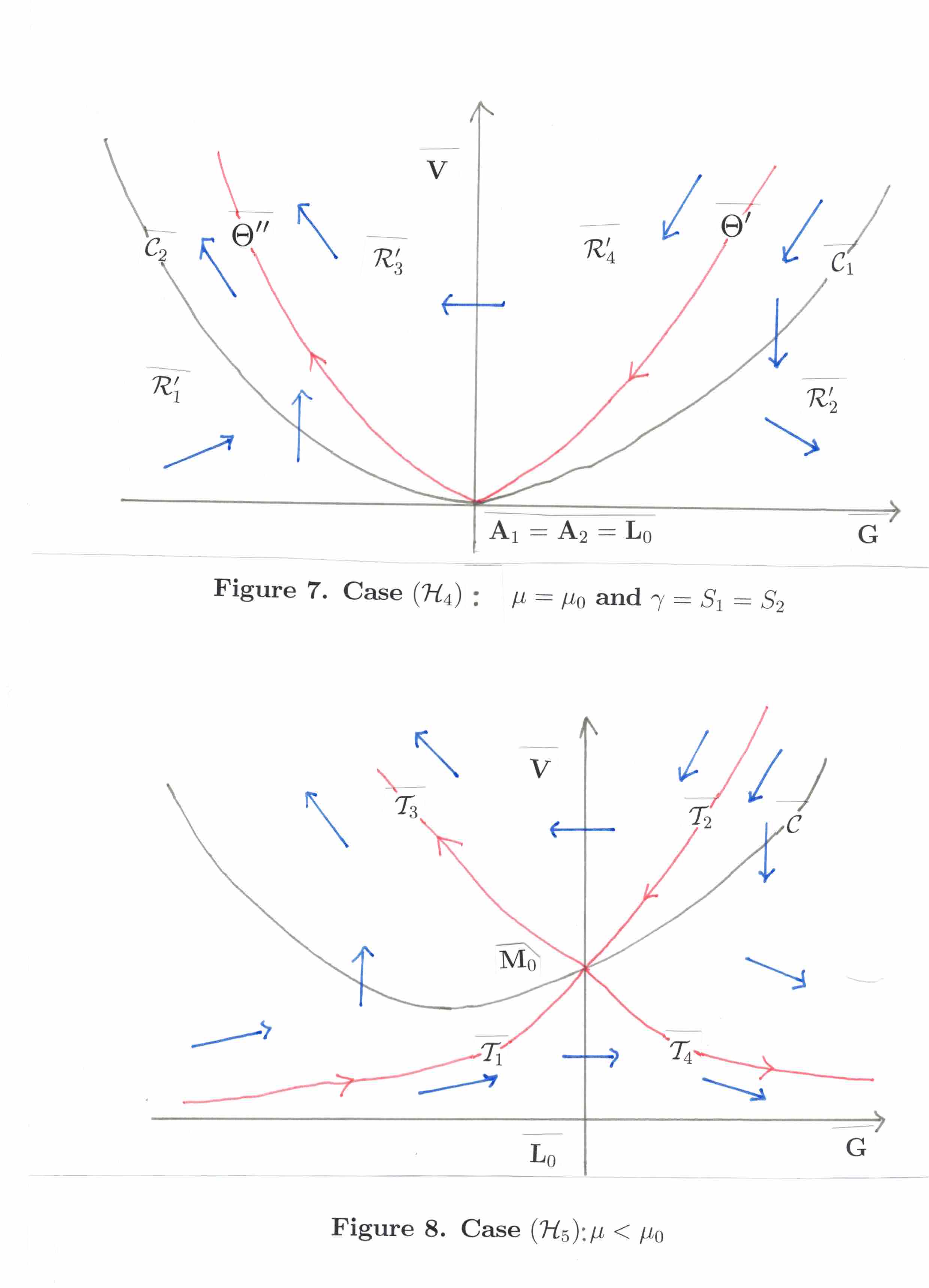}
 \end{center}
 \end{figure}

\begin{proof}[Proof of Theorem \protect\ref{H4rad}]
\textbf{Case (}$\mathcal{H}_{4}$\textbf{): }$\mu =\mu _{0}$\textbf{\ and }$%
\gamma =\frac{N-p}{p}.$ Here $\mathbf{M}_{0}$ does not exist, and $\mathbf{A}%
_{1}=\mathbf{A}_{2}=\mathbf{L}_{0}=((\frac{N-p}{p})^{p-1},0)$ (see Figure 7).%
$\medskip $

$\bullet $ The region 
\begin{equation*}
\mathcal{R}_{4}^{\prime }=\left\{ (G,V)\mid V>F(G),G>G_{0}\right\}
\end{equation*}%
is negatively invariant, then any trajectory defined on a maximal interval $%
(T_{1},T_{2})$ (finite or not) with one point in this region stays in it as $%
t\rightarrow T_{1}$, and necessarily $T_{1}$ is finite. Let $\mathcal{U}$
(resp. $\mathcal{V}$) be the set of points $P$ of $\mathcal{R}_{4}^{\prime }$
such that the trajectory passing by $P$ cuts the curve $\mathcal{C}_{2}$
(resp. the line $\mathcal{L}$ ); then $\mathcal{U}$ (resp. $\mathcal{V}$) is
an open set in $\mathcal{R}_{4}^{\prime }$, since the intersections are
transverse. Then $\mathcal{U}$ $\cup $ $\mathcal{V\neq R}_{4}^{\prime }$.
Then there exists at least one trajectory $\Theta ^{\prime }$ starting in $%
\mathcal{R}_{4}^{\prime }$ and converging to $\mathbf{L}_{0}=(G_{0},0)$ as $%
t\rightarrow \infty $. Similarly there exists at least one trajectory $%
\Theta ^{\prime \prime }$ in region 
\begin{equation*}
\mathcal{R}_{3}^{\prime }=\left\{ (G,V)\mid V>F(G),G<G_{0}\right\}
\end{equation*}%
converging to $\mathbf{L}_{0}=(G_{0},0)$ as $t\rightarrow -\infty $.$%
\medskip $

$\bullet $ Here for more precision we still use the energy function defined
at (\ref{fv}) with $\gamma =\frac{N-p}{p}$, equivalently $q=\mathbf{q}_{s}:$ 
\begin{equation*}
\mathcal{E}=V^{\frac{p}{q+1-p}}(\frac{F(G)}{p}-\frac{V}{q+1})=V^{\frac{p}{%
q+1-p}}(\frac{G_{t}+V}{p}-\frac{V}{q+1})=\frac{1}{p}V^{\frac{p}{q+1-p}%
}(G_{t}+\frac{(q+1-p)V}{q+1})
\end{equation*}%
satisfies $\mathcal{E}_{t}=0$, since then $D=N-p-p\gamma =0$, so that $%
\mathcal{E}$ is constant. Consider any trajectory converging to $\mathbf{A}%
_{1}=\mathbf{A}_{2}=((\frac{N-p}{p})^{p-1},0)$ as $t\rightarrow -\infty $
(or as $t\rightarrow \infty )$, we get that $\mathcal{E}$ tends to $0$. So
on such a trajectory, $\mathcal{E\equiv }0$. This gives a \textbf{first
integral} satisfied by such solution: there holds 
\begin{equation*}
\left\{ 
\begin{array}{ccc}
G_{t} & = & -\frac{(q+1-p)V}{q+1}, \\ 
V_{t} & = & (q+1-p)V(\gamma -\left\vert G\right\vert ^{\frac{2-p}{p-1}}G),%
\end{array}%
\right.
\end{equation*}%
then $G_{t}<0$, and $V_{t}+(q+1)(\gamma -\left\vert G\right\vert ^{\frac{2-p%
}{p-1}}G)G_{t}=0$. By integration, 
\begin{equation*}
\frac{V}{q+1}+\gamma G-\frac{p-1}{p}\left\vert G\right\vert ^{\frac{p}{p-1}%
}=C,
\end{equation*}%
\begin{equation*}
G_{t}+\frac{(q+1-p)V}{q+1}=0=G_{t}+(q+1-p)(-\gamma G+\frac{p-1}{p}\left\vert
G\right\vert ^{\frac{p}{p-1}}+C;
\end{equation*}%
and $C=(q+1-p)(\frac{N-p}{p}G_{0}-\frac{p-1}{p}G_{0}^{\frac{p}{p-1}})=\frac{1%
}{p}(q+1-p)\gamma ^{p}$, so that finally 
\begin{equation}
G_{t}+\frac{q+1-p}{p}F(G)=0.  \label{gt}
\end{equation}%
In the case $p=2$ we find 
\begin{equation*}
G_{t}+\frac{q-1}{2}(G-\frac{N-2}{2})^{2}=0,
\end{equation*}%
then $G=\frac{N-2}{2}+\frac{2}{(q-1)(t+C)}$, thus $\frac{V_{t}}{V}=-\frac{2}{%
t+C}$, implying $V=\frac{C_{1}}{(t+C)^{2}}$, and 
\begin{equation*}
\frac{(q-1)V}{q+1}=\frac{q-1}{q+1}\frac{C_{1}}{(t+C)^{2}}=-G_{t}=\frac{2}{q-1%
}\frac{1}{(t+C)^{2}},
\end{equation*}%
then $C_{1}=\frac{2(q+1)}{(q-1)^{2}}$and $V^{\frac{1}{q-1}}=r^{\frac{N-2}{2}%
}u$, so $u$ satisfies (\ref{co}).\medskip

\noindent In the general case $p>1$, we can solve the equation (\ref{gt}) by
quadrature, and get 
\begin{equation*}
-\frac{q+1-p}{p}(t+C)=\int_{G_{0}}^{G}\frac{dg}{F(g)}.
\end{equation*}%
Setting $G=G_{0}+\overline{G}$, there holds $F(G)\sim _{G\longrightarrow
G_{0}}c\overline{G}^{2}$, with $c=\frac{pG_{0}^{\frac{2-p}{p-1}}}{2}$; then $%
\overline{G}\sim -\frac{p}{c(q+1-p)}\frac{1}{t};$ and 
\begin{equation*}
\frac{(q+1-p)V}{q+1}=-G_{t}=\frac{q+1-p}{p}F(G)\sim _{t\longrightarrow \pm
\infty }\frac{1}{c}\frac{p}{q+1-p}\frac{1}{t^{2}},
\end{equation*}%
so that $u(r)=r^{-\frac{N-p}{p}}V^{\frac{1}{q+1-p}}$satisfies (\ref{ao}) or (%
\ref{bo}).\bigskip
\end{proof}

\begin{proof}[Proof of Theorem \protect\ref{H5rad}]
\textbf{Case (}$\mathcal{H}_{5}$\textbf{): }$\mu <\mu _{0}.$ Here the point $%
\mathbf{M}_{0}$ still exists (independently of the value of $\gamma $) and
it is the unique fixed point (see Figure 8). Any trajectory of global
solution of the system must join some fixed points, from Lemma \ref{conver},
then it is unique, reduced to the point $\mathbf{M}_{0}$, so $u^{\star }$ is
the unique global solution. There exists trajectories $\mathcal{T}_{i}$ ($%
i=1,..,4$) as before, and the corresponding solutions $u$ satisfy
respectively (\ref{t1}),(\ref{t2}),(\ref{t3}),(\ref{t4}). And all the
trajectories converging to $\mathbf{M}_{0}$ present one of these
types.\bigskip
\end{proof}

\textbf{Acknowledgment} $\quad $H. Chen is supported by NSFC of China, No.
12071189, 12361043, by Jiangxi Province Science Fund No. 20232ACB201001.


\begin{thebibliography}{99}
\bibitem{AbFePe} Abdellaoui B., Felli V. and Peral I., \textit{Existence and
nonexistence results for quasilinear elliptic equations involving the} $p$%
\textit{-Laplacian}, Boll. Unione Mat. Ital. Sez.B Artic. Ric. Mat. 9
(2006), 445-484.

\bibitem{Bi} Bidaut-V\'{e}ron M.F., Local and global behavior of solutions
of quasilinear equations of Emden-Fowler type, Arch. Rational Mech. Anal.
(1989), 3293-3324

\bibitem{BiGa} Bidaut-V\'{e}ron M.F. and Garcia-Huidobro M., \textit{%
Elliptic Hamilton Jacobi systems and Lane-Emden Hardy-H\'{e}non equations},
Nonlinear Anal. (2023), Paper No. 113228, 49 pp.

\bibitem{BiGi} Bidaut-V\'{e}ron M.F. and Giacomini H., \textit{A new
dynamical approach of Emden-Fowler equations and systems}, Adv. Diff. EQ.,
15 (2010), 1033-1082

\bibitem{BiGr} Bidaut-V\'{e}ron M.F and Grillot P., \textit{Asymptotic
behaviour of the solutions of sublinear elliptic equations with a potential}%
, Applicable Analysis,70,(1999),223-258

\bibitem{BrOs} Brezis,H. and Oswald L., \textit{\ Singular solutions for
some semilinear elliptic equations}, Arch. Rat. Mech. Anal. 99 (1987),
249-259

\bibitem{BrVe} Brezis,H. and V\'{e}ron L., \textit{Removable singularities
of some nonlinear elliptic equations}, Arch. Rat. Mech. Anal. 75 (1980), 1-6

\bibitem{CaGiSp} Caffarelli L., Gidas B. and Spruck J., \textit{Asymptotic
symmetry and local behavior of semilinear elliptic equations with critical
Sobolev growth,} Comm. Pure Appl. Math. 42 (1989), no. 3, 271--297.,

\bibitem{Ci} C\^{\i}rstea F., \textit{A complete classification of the
isolated singularities for nonlinear elliptic equations with inverse square
potential}, Memoirs AMS 227$,$1068 (2014)

\bibitem{CiDu} C\^{\i}rstea F., and Du Y., \textit{Isolated singularities
for weighted quasilinear elliptic equations}, J. Funct. Anal. 259 (2010),
174-202

\bibitem{CiFa} C\^{\i}rstea F., and F\u{a}rc\u{a}\c{s}eanu M., \textit{Sharp
existence and classification results for nonlinear elliptic equations in }$%
\mathbb{R}^{N}\backslash \left\{ 0\right\} $\textit{\ with Hardy potential},
J. Diff. Equations 292 (2021), 461-500

\bibitem{DuGu} Du Y. and Guo, Z., \textit{Boundary blow-up solutions and
their applications to quasilinear elliptic problems}, J. d'Analyse Math. 89
(2003), 277-302

\bibitem{FeTaWe} Feng Z., Tan C. and Wei, L, \textit{Uniqueness and
asymptotic behavior of positive solution of quasilinear elliptic equations
with Hardy potential,} Nonlinear Anal. 202 (2021), 112-152

\bibitem{FraPi} Fraas M. and Pinchover, Y.\ \textit{Positive Liouville
theorems and asymptotic behavior for }$p$\textit{-Laplacian type elliptic
equations with a Fuchsian potential},Confluentes Math. 3 (2011), no. 2,
291--323

\bibitem{FrVe} Friedman A. and V\'{e}ron L.\ \textit{Singular solutions of
some quasilinear elliptic equations}, ARMA(1986), 359-387

\bibitem{GiSp} Gidas B. and Spruck J.,\textit{Global and local behavior of
positive solutions of nonlinear elliptic equations}, Comm. Pura Applied
Math. (1980), 525-598

\bibitem{GuedVe} Guedda M. and V\'{e}ron L., Local and global properties of
solutions of quasilinear elliptic equations, J. Diff. Equ. 76 (1988), 159-189

\bibitem{GuVe} Guerch B. and V\'{e}ron L., \textit{Local properties of
stationary solutions of some non linear singular Schr\"{o}dinger equation}s,
Rev. Mat. Iberoamericana 7 (1991), no. 1, 65--114

\bibitem{ItaTa} Itakura K. and Tanaka S., A note on the asymptotic behaviour
of radial solutions to quasilinear elliptic equations with a Hardy
potential, Proc. A.M.S. 8(2021), 302-310

\bibitem{Mi} Miri S. El Hadi, \textit{Quasilinear elliptic problem with
Hardy potential and a reaction-absorption term}, Diff. Equ. Appl., 5 (2013),
11-125

\bibitem{OlScVa} Oliva F., Sciunzi B., Vaira G., \textit{Radial symmetry for
a quasilinear elliptic equation with a critical Sobolev growth and Hardy
potential}, J. Math. Pures Appl. 140, (2020), 89-109

\bibitem{Se64} Serrin J, \textit{Local behavior of solutions of quasi-linear
equations}, Acta Mathematica(1964), 247-302

\bibitem{Se65} Serrin J, \textit{Isolated singularities of solutions of
quasi-linear equations}, Acta Mathematica (1965),219-240

\bibitem{SeZo} Serrin J. and Zou H., \textit{Cauchy--Liouville and universal
boundedness theorems for quasilinear elliptic equations and inequalities, }%
Acta Math. 181 (1) (2002) 79--142.

\bibitem{To} Tolksdorf P., \textit{Regularity for more general class of
quasilinear elliptic equations, J. Diff. Equ. 51 (1984), 126-150}

\bibitem{Tr} Trudinger N.S., \textit{On Harnack type inequality and their
applications to quasilinear elliptic equations}, Comm. Pure Applied Math. 20
(1967), 721-747

\bibitem{Va} Vazquez, J.L., \textit{A strong maximum prnciple for some
quasilinear elliptic equations}, Applied Math. Optim. 12 (1984), 191-202

\bibitem{VaVe} Vazquez, J.L. and V\'{e}ron L., \textit{Removable
singularities of some strongly nonlinear elliptic equations}, Manuscripta
Math. 33, (1980), 129- 144

\bibitem{Ve81} V\'{e}ron L., \textit{Singular solutions of some nonlinear
elliptic equations}, Nonlinear Anal. 5 (1981), 225-242

\bibitem{WeDu} L. Wei and Du, Y., \textit{Exact singular behavior of postive
solutions to nonlinear elliptic equations with a Hardy potential}, J. Diff.
Equ. 262 (2017), 3864-3886

\bibitem{Xi15} Xiang C-L, \textit{Asymptotic behaviors of solutions to
quasilinear elliptic equations with critical Sobolev growth and Hardy
potential}, J. Differential Equations 259 (2015), no. 8, 3929--3954

\bibitem{Xi17} Xiang C-L, \textit{Gradient estimates for solutions to
quasilinear elliptic equations with critical Sobolev growth and Hardy
potential, }Acta Math. Sci. Ser. B 37 (2017), 58-68
\end{thebibliography}
\end{document}